\institution{The University Of British Columbia}
\title{Progress in Projection Theory}
\author{Paige Dote (Paige Bright)}
\newtheorem{theorem}{Theorem}[chapter]
\newtheorem*{theorem*}{Theorem}
\newtheorem{question}[theorem]{Question}
\newtheorem{proposition}[theorem]{Proposition}
\newtheorem{corollary}[theorem]{Corollary}
\newtheorem{conjecture}[theorem]{Conjecture}
\newtheorem{heuristic}[]{Heuristic}
\newtheorem{lemma}[theorem]{Lemma}
\theoremstyle{definition}
\newtheorem{definition}[theorem]{Definition}
\newtheorem*{definition*}{Definition}
\newtheorem{example}[theorem]{Example}
\newtheorem{remark}[theorem]{Remark}
\newtheorem{notation}[theorem]{Notation}
\newcommand{\R}{\mathbb{R}}
\newcommand{\de}{\delta}
\renewcommand{\hat}{\widehat}
\newcommand\numberthis{\addtocounter{equation}{1}\tag{\theequation}}
\DeclareMathOperator{\supp}{supp}
\begin{document}

\frontmatter

\maketitle

\newpage

\noindent The following individuals certify that they have read, and recommend to the Faculty of Graduate and Postdoctoral Studies for acceptance, the thesis entitled:
\\

\noindent \textbf{Progress in projection theory and other dimensional developments}

\bigskip\noindent
submitted by \textbf{Paige Dote (Paige Bright)}
in partial fulfillment of the requirements for the degree of \textbf{Master of Science} in	\textbf{Mathematics}.

\bigskip\noindent
\textbf{Examining Committee:}

\bigskip\noindent
Izabella {\L}aba, Professor, Mathematics, UBC \\
\textit{Co-Supervisor}

\bigskip\noindent
Pablo Shmerkin, Professor, Mathematics, UBC\\
\textit{Co-Supervisor}

\bigskip\noindent
Joshua Zahl, Associate Professor, Mathematics, UBC \\
\textit{Supervisory Committee Member}

\begin{abstract} 
We provide exposition into the field of projection theory, which lies at the intersection of incidence geometry and geometric measure theory. We first give the necessary preliminaries in Chapter \ref{ch:background}, focusing on incidences between points and lines and the definition of Hausdorff dimension. With this background in tow, in Chapter \ref{ch:Topics} we dive into thorough surveys on three topics in projection theory: orthogonal projections, Furstenberg sets, and radial projections. We particularly highlight the interconnectedness of these topics in both the discrete and continuum settings. Through these surveys, we also give the necessary background for discussing applications of projection theory in Chapter \ref{ch:problems}. 

The first application is on Beck-type problems, first studied in the discrete setting by J\'ozsef Beck in 1983 and in the continuum setting by Orponen, Shmerkin, and Wang in 2022. 
Given $X\subset \R^n$, these problems seek to understand how large the set of lines that contain at least 2 points of $X$, $\mathcal L(X)$, can be. To this end, we present a continuum Erd\H{o}s--Beck theorem due to myself and Marshall in 2024, which motivates and makes use of a dual Furstenberg set estimate due to myself, Fu, and Ren from the same year.

The second application is on Falconer-type (distance) problems which have been a prominent topic in both the discrete and continuum settings. Given $X\subset \R^n$, these problems seek to understand how large $X$ must be until the set of distinct ``distances'' between points of $X$ is large (for a reasonable notion of ``distance''). To this end, we present a Falconer-type distance problem for dot products due to myself, Marshall, and Senger in 2024, making use of both standard and modern results for orthogonal and radial projections.
\end{abstract}

\chapter{Lay Summary} 

This thesis focuses on problems in geometry and more specifically projection theory. Problems in this area often grapple with how points lie on lines and planes, as we will see throughout the thesis.

In Chapter \ref{ch:background}, we give foundational definitions and background in the discrete and continuum settings we will assume familiarity with throughout the thesis. In Chapter \ref{ch:Topics}, we apply these tools towards giving a survey on a few topics in projection theory, focusing on key results and techniques. In Chapter \ref{ch:problems}, we present two applications of the topics presented in the previous chapter to problems in projection theory. The first is on Beck-type problems, and the second is on a Falconer-type problem. These problems give insight into how different topics in projection theory work together geometrically, and can even give us new insight into the original topics themselves. Elucidating this insight is the primary goal of this thesis.

\chapter{Preface} 

Chapter \ref{ch:background} provides a standard introduction to discrete geometry and geometric measure theory. Chapter \ref{ch:Topics} is largely expository, providing surveys on topics in projection theory. As part of these surveys, I present a few modified arguments from \textit{Exceptional Set Estimates for Radial Projections in $\mathbb{R}^n$} \cite{BrightGan} and \textit{Exceptional Set Estimates in Finite Fields} \cite{BrightGan2} over finite fields, both published in the journal \textit{Annales Fennici Mathematici}. This research was undertaken with Shengwen Gan in equal collaboration. The former, \cite{BrightGan}, was a problem suggested by Larry Guth. Gan and I met daily to collaboratively obtain our results, and co-wrote \cite{BrightGan} in 2022. The latter, \cite{BrightGan2}, was inspired by probing the methods in \cite{BrightGan} further the following semester. Together, Gan and I identified the problem, designed our approach, and co-wrote \cite{BrightGan2}.

Chapter \ref{ch:problems} presents two applications of projection theory. The first application, covered in Section \ref{SEC-BECKTYPE}, is on Beck-type theorems. This research was undertaken with Caleb Marshall in equal collaboration. At the University of Pennsylvania Study Guide Writing Workshop 2023, Marshall and I posed the question, developed our methodology for attacking it together, and co-wrote the paper over the following year. The work with Marshall \cite{BrightMarshall} also makes use of a Furstenberg set estimate presented in Section \ref{sec:DualFurstenberg}, which gives an introduction to the preprint \textit{Radial Projections in $\R^n$ Revisited} \cite{BrightFuRen}. The research that went into \cite{BrightFuRen} was undertaken jointly with Yuqiu Fu and Kevin Ren in equal collaboration. Fu, Ren, and I collaboratively identified the question as being approachable by generalizing a previous result of Fu and Ren \cite{FuRen} using methodology I studied at the 2023 workshop, and with this approach we resolved the problem and co-wrote the paper. Both \cite{BrightFuRen} and \cite{BrightMarshall} are intended for publication, though they have not yet been submitted. 

The second application, covered in Section \ref{SEC:FALCONERTYPE}, is on a Falconer-type problem for dot products. This section is adapted from the preprint \textit{Pinned Dot Product Set Estimates} \cite{BrightMarshallSenger}, under peer review after being submitted for publication. This research was undertaken jointly with Caleb Marshall and Steven Senger in equal collaboration. We identified the problem and approach at the Joint Mathematics Meeting 2024, and co-wrote \cite{BrightMarshallSenger} in 2024.

Generative AI was not used for any aspect of the work within this thesis.

\tableofcontents   


\chapter{Acknowledgments}

Over the past few years, whenever people would ask me what drew me to mathematics, and especially harmonic analysis, I've always say that it's the people. Having had the opportunity to attend MIT and the University of British Columbia, I now find myself in the fortunate and fulfilling position of having many people to acknowledge.

I'd like to first thank my high school math teachers who pushed me to pursue my passions early on, including Sandra Atkins, Lisa Portela, Travis McDonald, Heather Walker, Maria Nogin, and Adnan Sabuwala.

Next, I must thank Larry Guth for sparking my interest in harmonic analysis and projection theory in the first place. After my first year of my undergraduate studies at MIT, I was starting to get more interested in analysis and reached out to Larry for a reading project. He agreed, and ever since that summer I've been hooked. From this first reading project to the research opportunities since, and everything in between, I am deeply grateful for Larry's support throughout my time as an undergraduate. I look forward to getting to be a part of his research group as a PhD student this fall. To this end, I must also extend my appreciation to the members of Larry's Fourier analysis reading group, including but not limited to Alex Ortiz, Marjorie Drake, Yuqiu Fu, Alex Cohen, Dmitrii Zakharov, Jiahui Yu, Manik Dhar, Dominique Maldague, Sarah Tammen, Felipe Hern\'andez, and Shengwen Gan. This group constantly exemplified the type of graduate student I want to be, and I am excited to pay it forward. I especially thank Alex Ortiz and Marjorie Drake, with whom I've had many great discussions about mathematics and life (and the intersection thereof).

It's also worth mentioning that for the reading project, Larry suggested Alex Iosevich's \textit{A View from the Top} \cite{IosevichAView}, a book which introduces the world of harmonic analysis from the perspective of discrete geometry and combinatorics. The pedagogical thought that went into the writing of this book (and much of Iosevich's work) has proven deeply insightful, and inspired how I went about writing this thesis. As such, I must extend deep appreciation to Alex Iosevich and past students of his who I have had the pleasure to meet and work with over the years, especially Krystal Taylor and Steven Senger. 

At the University of British Columbia, I must thank my supervisors Izabella {\L}aba, Pablo Shmerkin, and Joshua Zahl. They gave me an immense amount of time and room to grow as a graduate student, mathematician, and person for which I am deeply grateful. I have a great deal more I hope to one day learn from them. I am similarly thankful for the opportunity to become a part of the larger harmonic analysis and fractal geometry community at UBC, including Caleb Marshall, Malabika Pramanik, Alexia Yavicoli, Angel Cruz, Yuveshen Mooroogen, Simone Maletto, Andrew Alexander, Mukul Choudhuri, Chenjian Wang, Junjie Zhu, Matthew Bull-Weizel, Yuhan Chu, and Sushrut Tadwalkar. I especially thank Yuve and Angel, people who have exemplified something I've believed for quite some time: caring about and putting time into education as a graduate student can make you both a greater mathematician and friend.

I also take this time to express my sincere gratitude and appreciation towards the organizers of the University of Pennsylvania Study Guide Workshop 2023: Joshua Zahl, Hong Wang, Yumeng Ou, and Philip Gressman. This workshop was deeply changing for me as a researcher and student. It was at this workshop that I got to collaborate with and get to know Ryan Bushling, Caleb Marshall, and Alex Ortiz---some of the first graduate students to make me feel more like a peer than a mentee. I especially thank Ryan for his thorough and kind comments on drafts of my thesis. Furthermore, at this workshop I was particularly inspired by Caleb and Joshua to apply to and eventually attend UBC in the first place. Lastly, through the workshop, I gained a much deeper appreciation for the paper of Orponen, Shmerkin, and Wang \cite{OSW}, which led to numerous projects, including \cite{BrightFuRen}, \cite{BrightMarshall}, and more that are still underway. I am proud to say that their paper is one of the first in my graduate studies that I have come to know like the back of my hand, and one that I will likely return to for many years to come.

Last but not least, I thank Caleb Marshall and Malabika Pramanik, to whom this thesis is dedicated. I've gotten the pleasure to know Caleb as a close colleague and an even closer friend over these past few years, and I look forward to all that awaits them in the future. How lovely it is to know someone in mathematics I thoroughly believe will be a lifelong collaborator and friend. Furthermore, though she was not an official advisor of mine, it has been great fun getting to know and be mentored by Malabika. Through her encouragement, my pipe dream of making Chapter \ref{ch:Topics} of this thesis a thorough and detailed survey of projection theory became a reality. She constantly encouraged me to write on topics I care deeply about---both mathematically and personally---which means more than I can ever say. I hope to put as much care into my work, and into the universe, as she does.

\chapter[Dedication]{}

\begin{center}
    \textit{To be and to become. \\ \, \\ To the years of work that got us here, \\ and to the years that lie ahead. \\\,\\ $\sim$ \\ \, \\ ``It is invaluable to have a friend who shares your \\ interests and helps you stay motivated.''\,-- Maryam Mirzakhani \\ \,\\ To the invaluable Caleb Z. Marshall-Cheng, \\ and to continuing to grow together. \\\,\\ $\sim$ \\ \, \\ ``People are fractal images of the universe.... \\ If you choose to care, then the universe cares.''\,-- Brennan Lee Mulligan \\ \, \\ To the thoughtful Dr. Malabika Pramanik, \\ and to continuing to put care back out into the universe.}
\end{center}

\mainmatter

\chapter{Introduction}\label{ch:intro}
The work of this thesis revolves around projection theory---a topic with roots in the discrete settings of incidence geometry and extremal combinatorics, and one that has recently utilized and motivated work on tools from the continuum settings of geometric measure theory and harmonic analysis.

The primary goal of this thesis is to present an introduction to various topics in projection theory, which has seen vast developments over the past few decades. To this end, we provide a reasonably thorough survey of select topics in this area---in particular, orthogonal projections, Furstenberg sets, and radial projections. Beyond being of personal interest, I hope that these surveys, and thus this thesis, may serve as a helpful resource to others. The secondary goal of this thesis is to provide specific and concrete applications of projection theory to other problems in fractal geometry---namely Beck-type and Falconer-type problems. These applications are well-motivated in their own right, and it is my hope that by presenting them the reader may gain some insight into how one can engage with and begin exploring this interesting intersection of incidence and fractal geometry.

\section{Beck-type Problems}

Beck-type problems will be the focus of Section \ref{SEC-BECKTYPE}. In the discrete setting, Beck-type problems were first studied by J\'ozsef Beck in 1983 \cite{Beck83}, who explored the following question:

\begin{question} \label{q:intro:beckquestion}
Given a set $X\subset \R^n$ ($n\geq 2$), how large must the set of lines that contain at least two points of $X$ be? In other words, how can we bound the size of
\[
\mathcal L(X) := \{\text{lines } \ell \text{ in } \R^n : |X\cap \ell| \geq 2\}
\]
below in terms of the size of $X$?
\end{question}

\begin{remark}
    For the introductory purposes, we focus our attention to $\R^2$, but it is worth noting that all of the results in this section adapt to $\R^n$.
\end{remark}

Beck studied this in the discrete geometry setting with $X \subset \R^2$ finite and cardinality being the notion of ``size.'' On the one hand, if $X$ is contained in a line, $\ell$, then $|\mathcal L(X)| = 1$ (namely $\mathcal L(X) = \{\ell\}$). On the other hand, we should heuristically expect that if our points in $X$ are randomly placed, roughly any two points in $X$ should determine a line in $\mathcal L(X)$, i.e. we should generically expect that $|\mathcal L(X)| \gtrsim |X|^2$. 

\begin{notation} \label{SimNotation}
    Here and throughout, $A\gtrsim B$ means there exists a positive constant $C$ such that $A\geq CB$, and $A\gtrsim_{\chi} B$ means there exists a positive constant $C:= C(\chi)$ depending on $\chi$ such that $A\geq CB$.
\end{notation}

Beck's theorem from incidence geometry states that these are the only two cases---either a large fraction of $X$ lies on a given line $\ell$, or $|\mathcal L(X)| \gtrsim |X|^2$.

\begin{theorem}[Discrete Beck's theorem \cite{Beck83}] \label{thm:introBeck}
For $X\subset \R^2$ finite, either
\begin{itemize}
    \item[1)] there exists a line $\ell$ with $|X\cap \ell| \gtrsim |X|$, or 
    \item[2)] $|\mathcal L(X)| \gtrsim |X|^2$.
\end{itemize}
\end{theorem}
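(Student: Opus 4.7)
The plan is a dyadic richness decomposition of $\mathcal L(X)$, controlled by the Szemer\'edi--Trotter incidence theorem (presumably available from the background chapter). Set $n = |X|$. The starting point is the double-counting identity
$$\sum_{\ell \in \mathcal L(X)} \binom{|X \cap \ell|}{2} = \binom{n}{2},$$
since every unordered pair of distinct points of $X$ lies on exactly one line. I would suppose condition~(1) fails, i.e., every line meets $X$ in fewer than $\alpha n$ points for a small constant $\alpha > 0$ to be chosen, and then extract~(2) from this identity by showing that ``rich'' lines cannot account for most of the right-hand side.

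For $k \geq 1$, set $L_k := \{\ell : 2^k \leq |X \cap \ell| < 2^{k+1}\}$. The standard corollary of Szemer\'edi--Trotter gives the rich-line bound
$$|L_k| \lesssim \frac{n^2}{2^{3k}} + \frac{n}{2^k}.$$
Fix a large threshold $r$ and call a line \emph{rich} if $|X \cap \ell| \geq r$ and \emph{poor} otherwise. Bounding the contribution of each $\ell \in L_k$ to the identity by $2^{2k}$ and summing over dyadic scales $r \leq 2^k \leq \alpha n$ yields
$$\sum_{\ell \text{ rich}} \binom{|X \cap \ell|}{2} \lesssim \sum_{r \leq 2^k \leq \alpha n} \left( \frac{n^2}{2^k} + n \cdot 2^k \right) \lesssim \frac{n^2}{r} + \alpha n^2.$$

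By choosing $r$ large enough and $\alpha$ small enough (both absolute constants depending only on the Szemer\'edi--Trotter constant), this is at most $\tfrac{1}{2}\binom{n}{2}$, so the poor lines contribute $\gtrsim n^2$ pairs to the identity. Since each poor line contributes fewer than $r^2$ pairs, we conclude $|\mathcal L(X)| \gtrsim n^2/r^2 \gtrsim n^2$, as desired.

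The main obstacle---more a careful bookkeeping than a deep difficulty---is balancing $\alpha$ and $r$ simultaneously so that both the $n^2/r$ term (from ``medium-rich'' lines whose Szemer\'edi--Trotter count is governed by the $n^2/2^{3k}$ piece) and the $\alpha n^2$ term (from the ``very rich'' lines near the maximal size, governed by the $n/2^k$ piece) are strictly beaten by $\binom{n}{2}$. This requires tracking the implicit constant in the rich-line estimate and selecting $r$ then $\alpha$ accordingly. A secondary technical point is to verify that the $k=1$ slice (lines through exactly two or three points of $X$) is correctly handled: these fall on the ``poor'' side of the split for any reasonable $r$, which is exactly the regime one needs since lines of bounded richness are precisely those whose abundance one is trying to establish.
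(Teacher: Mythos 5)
Your proposal is correct and follows essentially the same route as the paper's proof of Theorem~\ref{thm:ch4Beck}: a dyadic decomposition of $\mathcal L(X)$ by richness, the Szemer\'edi--Trotter rich-line bound $|L_k|\lesssim n^2/2^{3k}+n/2^k$, a geometric-series estimate showing that lines of intermediate richness account for a small fraction of the $\binom{n}{2}$ pairs, and the resulting dichotomy. The only cosmetic difference is that you carry two independent thresholds $\alpha$ and $r$, whereas the paper couples them as $\alpha=1/C$ and $r=C$ for a single parameter.
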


\begin{remark}
    In Section \ref{sec:discretebecklines}, we generalize the above statement to lines that contain pairs of distinct points from two sets $X,Y$. See Theorem \ref{thm:ch4Beckbivariable}.
\end{remark}

In the continuum setting, an analogous statement to Beck's theorem was obtained in 2022 by Orponen, Shmerkin, and Wang \cite{OSW} utilizing Hausdorff dimension. For the purposes of this introduction, the unfamiliar reader can simply think of Hausdorff dimension as a notion of size, denoted $\dim$, that is defined on metric spaces (see Section \ref{sec:HausdorffDim}). Orponen, Shmerkin, and Wang's ``continuum version of Beck's theorem'' states the following:

\begin{theorem}[Continuum Beck's theorem \cite{OSW}] \label{thm:introOSWBeck}
    For $X\subset \R^2$ Borel, either 
    \begin{itemize}
        \item[1)] there exists a line $\ell$ such that $\dim (X\setminus \ell) < \dim X$, or
        \item[2)] $\dim \mathcal L(X) \geq \min \{2\dim X, 2\}$.
    \end{itemize}
\end{theorem}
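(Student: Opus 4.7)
The plan is to reduce the lower bound on $\dim \mathcal{L}(X)$ to a quantitative tube non-concentration estimate for a Frostman measure on $X$, and to extract that estimate from the negation of case~(1) via the radial projection machinery developed later in this thesis.

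Suppose case~(1) fails; set $s := \dim X$, so $\dim(X \setminus \ell) = s$ for every line $\ell \subset \R^2$. For $\epsilon > 0$, let $\mu$ be a compactly supported Frostman measure on $X$ with $\mu(B(x, r)) \lesssim r^{s - \epsilon}$. Push $\mu \otimes \mu$ forward through the off-diagonal line map $(x, y) \mapsto \ell_{x, y}$ to obtain a probability measure $\nu$ supported on $\mathcal{L}(X)$. To conclude $\dim \mathcal{L}(X) \geq \min\{2s, 2\} - O(\epsilon)$, it then suffices to establish a Frostman bound $\nu(B(\ell_0, r)) \lesssim r^{\min\{2s, 2\} - O(\epsilon)}$ uniformly in $\ell_0$ and in all small $r > 0$.

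The driving observation is that an $r$-ball about $\ell_0$ in the space of lines, localized to a bounded window containing $\supp \mu$, consists only of lines contained in a tube $T = T(\ell_0, r)$ of width $\sim r$ about $\ell_0$. Hence a pair $(x, y)$ contributes to $\nu(B(\ell_0, r))$ only when both $x, y \in T$, giving
\[
\nu(B(\ell_0, r)) \leq \mu(T)^2.
\]
The proof therefore reduces to the tube non-concentration estimate $\mu(T) \lesssim r^{\min\{s, 1\} - O(\epsilon)}$ over all tubes $T$ of width $r$. This is strictly stronger than what Frostman's ball condition gives directly: covering $T$ by $\sim r^{-1}$ balls yields only $\mu(T) \lesssim r^{s - 1}$, which is too weak when $s < 1$; and when $s > 1$ the desired linear bound $\mu(T) \lesssim r$ rules out Furstenberg-type concentration entirely.

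The main obstacle is precisely promoting the qualitative hypothesis ``$\mu$ is non-concentrated on every line'' to this quantitative tube bound at every scale. My plan is to pass through the radial projection framework of Orponen, Shmerkin, and Wang: after refining $\mu$ to a subset of $X$ of full dimension on which a uniform Katz--Tao-type tube condition holds --- a standard but delicate regularization step --- the failure of case~(1) yields $\dim \pi_x(X \setminus \{x\}) \geq \min\{s, 1\}$ for all $x$ outside a small-dimensional exceptional set, and by the duality between radial projection bounds and tube-mass estimates this in turn gives the desired tube bound (up to an $\epsilon$ loss). Plugging back into the display above yields $\nu(B(\ell_0, r)) \lesssim r^{\min\{2s, 2\} - O(\epsilon)}$ and hence, letting $\epsilon \downarrow 0$, the theorem. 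Essentially all of the difficulty of the statement is concentrated in this regularization-plus-radial-projection step; the rest is bookkeeping.
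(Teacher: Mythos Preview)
Your reduction $\nu(B(\ell_0,r))\le\mu(T)^2$ is fine, but the step you call ``standard but delicate regularization'' --- passing to a refined Frostman measure on $X$ satisfying the uniform tube bound $\mu(T)\lesssim r^{\min\{s,1\}-O(\epsilon)}$ for \emph{every} tube $T$ of width $r$ --- is not achievable in general, and this is where the argument breaks. Take $X=C\times[0,1]$ with $C\subset\R$ a Cantor set of dimension $s-1\in(0,1)$; then case~(1) fails, yet any nonzero measure $\mu'$ supported on $X$ obeying $\mu'(T)\lesssim r$ for all vertical tubes would push forward to a $1$-Frostman measure on $C$, which is impossible since $\dim C<1$. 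So no refinement yields the tube condition you need, even though $\dim\pi_x(X\setminus\{x\})=1$ for every $x\in X$. The ``duality'' you invoke only runs one way here: a uniform tube bound through $x$ implies a radial projection dimension bound at $x$, but the converse lets one bad tube through each $x$ survive, and in the product example those bad vertical tubes are exactly what kill your estimate on $\nu$.

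The paper sidesteps this entirely. Rather than seeking a single measure with uniform tube decay, it uses the OSW radial projection theorem to produce a full-dimensional set $G\subset X$ of pins with $\dim\pi_x(X\setminus\{x\})>\sigma$ for every $x\in G$, observes that $\mathcal L(X)\supset\bigcup_{x\in G}\mathcal L_x$ is then a dual $(\sigma,\sigma)$-Furstenberg set, and applies the dual Furstenberg estimate $\dim\mathcal L\ge 2\sigma$ (Theorem~\ref{BFR-DUALFURST}, proved by a Cauchy--Schwarz/double-counting argument on triples $(x,x',\ell)$). The double-counting absorbs the overlaps between the families $\mathcal L_x$ at different scales without ever demanding a global tube bound on one measure; that is precisely the mechanism your approach is missing.
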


\begin{remark} 
We will refer to problems regarding lowerbounds on the size of $\mathcal L(X)$ as Beck-type problems. Furthermore, to define the Hausdorff dimension of the set of lines $\mathcal L(X)$, we need define the metric space structure on the set of affine lines in $\R^2$. See Section \ref{sec:HausdorffDim}.
\end{remark}

Firstly, note that the tools used to prove Theorem \ref{thm:introOSWBeck} (the statement of which is intuitive, if nothing else, from the discrete point of view) built upon and led to breakthroughs in projection theory. These breakthroughs involved the theory of orthogonal projections, Furstenberg sets, and radial projections. We introduce these topics in Section \ref{sec:introTaste}.

In regards to the lowerbound itself, note that the ``2'' term is natural in Theorem 1.3 Item 2 in that the set of affine lines in $\R^2$ is 2-dimensional. This can informally be seen by putting (non-vertical) affine lines into slope-intercept form, i.e. $\ell = \{(x,y): y = mx + b\}$. In particular, we can parameterize non-vertical lines by the 2-dimensional set of parameters $(m,b) \in \R^2$. Hence, in $\R^2$ it is always the case that $\dim \mathcal L(X) \leq 2$ (see Section \ref{sec:HausdorffDim}).  Furthermore, loosely speaking, the ``$2\dim X$'' term in the minimum is the analogue of $|X|^2$ in Theorem \ref{thm:introBeck}. This type of logarithmic discrete-to-continuum phenomenon motivates the first application of projection theory we will discuss within this thesis known as tlle \textit{Erd\H{o}s--Beck theorem} (resp. continuum Erd\H{o}s--Beck). In the discrete setting, Beck proved the following bound conjectured by Erd\H{o}s:

\begin{theorem}[Discrete Erd\H{o}s--Beck theorem \cite{Beck83}] \label{thm:introerdosBeck}
    Let $X\subset \R^2$ be a finite set and $0 \leq t \leq |X|-2$
    be such that 
    \[ \max_{\ell \in \mathcal L(X)} |X\cap  \ell| =|X|-t.
    \]
    Then, $|\mathcal L(X)|\gtrsim |X|t.$
\end{theorem}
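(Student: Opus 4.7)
Write $n = |X|$, and let $\ell_{0}$ be a line realizing the maximum $|\ell_{0} \cap X| = n-t$; partition $X = Y \sqcup Z$ with $Y = X \cap \ell_{0}$ (so $|Y| = n-t$) and $Z = X \setminus \ell_{0}$ (so $|Z| = t$). The case $t = 0$ is vacuous, so assume $t \geq 1$, whence $n - t \geq 2$. The plan is to bound $|\mathcal L(X)|$ below by controlling the subfamily
\[
\mathcal L' := \{\ell \in \mathcal L(X) : \ell \cap Y \neq \emptyset \text{ and } \ell \cap Z \neq \emptyset\}
\]
via a Cauchy--Schwarz double count, appealing to Beck's theorem (Theorem \ref{thm:introBeck}) to dispose of the regime in which $\ell_{0}$ fails to contain a positive fraction of $X$.

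Let $c_{0} \in (0,1)$ be the implicit constant in Theorem \ref{thm:introBeck}. If $n - t < c_{0} n$, then no line carries $\gtrsim |X|$ points in the Beck sense, so alternative $(2)$ of that theorem yields $|\mathcal L(X)| \gtrsim n^{2} \geq nt$ directly (using $t \leq n$). I may therefore assume $n - t \geq c_{0} n$, i.e. that $\ell_{0}$ is ``rich,'' and it remains to prove the bound in this regime.

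For each $p \in Z$ the $n-t$ lines joining $p$ to the points of $Y$ are pairwise distinct---any coincidence would force $p \in \ell_{0}$---so the $t(n-t)$ ordered pairs in $Z \times Y$ give incidences into $\mathcal L'$. Conversely, each $\ell \in \mathcal L'$ differs from $\ell_{0}$ (since $\ell \cap Z \neq \emptyset$ while $\ell_{0} \cap Z = \emptyset$), hence $|\ell \cap Y| \leq |\ell \cap \ell_{0}| \leq 1$; combined with $\ell \cap Y \neq \emptyset$ this forces $|\ell \cap Y| = 1$, so exactly $|\ell \cap Z|$ of the $t(n-t)$ pairs map to $\ell$. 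Thus
\[
\sum_{\ell \in \mathcal L'} |\ell \cap Z| \;=\; t(n-t).
\]
Since every pair of distinct points of $Z$ lies on a unique line, $\sum_{\ell \in \mathcal L'} \binom{|\ell \cap Z|}{2} \leq \binom{t}{2}$, and consequently
\[
\sum_{\ell \in \mathcal L'} |\ell \cap Z|^{2} \;\leq\; t(t-1) + t(n-t) \;\leq\; tn.
\]
Cauchy--Schwarz then delivers
\[
|\mathcal L(X)| \;\geq\; |\mathcal L'| \;\geq\; \frac{\bigl(\sum_{\ell \in \mathcal L'} |\ell \cap Z|\bigr)^{2}}{\sum_{\ell \in \mathcal L'} |\ell \cap Z|^{2}} \;\geq\; \frac{t(n-t)^{2}}{n} \;\geq\; c_{0}^{2}\, nt,
\]
which is the desired estimate.

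The main obstacle I anticipate is simply careful bookkeeping in the double count: verifying that lines supported entirely in $Z$ are legitimately excluded from $\mathcal L'$, that $\ell_{0}$ itself contributes nothing to the $Z$-sums, and that the maximality of $\ell_{0}$ is used exactly once---to deduce $|\ell \cap Y| = 1$ for $\ell \in \mathcal L'$. The Beck-theorem reduction seems unavoidable in this approach because the Cauchy--Schwarz bound $t(n-t)^{2}/n$ degrades to $o(nt)$ as $n - t \to 0$, so the degenerate case where $\ell_{0}$ is small must be handled separately by an external appeal.
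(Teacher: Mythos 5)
Your proof is correct and rests on the same Cauchy--Schwarz double count as the paper's (given as Theorem \ref{thm:ch4erdosBeck}): the paper observes that $\mathcal L(X)$ contains a dual $(|X|-t,t)$-Furstenberg set pinned at $X\setminus\ell_0$ and then invokes Proposition \ref{discDFurstTotalProp}.(b), while you unfold the identical incidence calculation inline over the family $\mathcal L'$ of lines joining $Z = X\setminus\ell_0$ to $Y = X\cap\ell_0$, exploiting $|\ell\cap Y|=1$ for $\ell\in\mathcal L'$ to get the clean identity $\sum_{\ell\in\mathcal L'}|\ell\cap Z| = t(n-t)$. One point you handle more carefully than the paper's write-up: the Cauchy--Schwarz step gives $|\mathcal L'|\geq t(n-t)^2/n$, which is $\gtrsim nt$ only when $n-t\gtrsim n$; for $t$ near $n$ (e.g.\ $t=n-2$, the general-position case) one needs the Beck fallback, which you supply explicitly. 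The paper's closing deduction ``$|X|t - t^2 \gtrsim |X|t$'' silently assumes $t$ is bounded away from $|X|$, so your reduction via Theorem \ref{thm:introBeck} is a genuine improvement in rigor at that endpoint. The trade-off is that the paper's packaging displays the structural link to dual Furstenberg sets---the very step that is generalized to prove the continuum Erd\H{o}s--Beck theorem in Section \ref{sec:BMarshall}---whereas your self-contained count, though entirely correct and arguably cleaner here, does not surface that connection.
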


That is, if we are in the case where there exists a line with a large fraction of $X$, but we still know that we have at least $t$-many points of $X$ off of any particular line, then $|\mathcal L(X)| \gtrsim |X|t$. Upon learning of the continuum version of Beck's theorem of Orponen--Shmerkin--Wang, Marshall and I set out to prove a continuum Erd\H{o}s--Beck theorem. Motivated by the approach of \cite{OSW}, Marshall and I utilized the theory of radial projections and (dual) Furstenberg sets to obtain the following in 2024:

\begin{theorem}[Continuum Erd\H{o}s--Beck theorem 
\cite{BrightMarshall}] \label{IntroEB}
    Given a Borel set $X\subset \R^2$, suppose there exists a line $\ell'$ such that $\dim (X\setminus \ell')<\dim X$. Further, let $0< t < \dim X$ be such that $\dim (X\setminus \ell)\geq t$ for all lines $\ell$. Then,
    \[
    \dim \mathcal L(X) \geq \dim X + t.
    \]
\end{theorem}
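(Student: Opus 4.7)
Let $s := \dim X$. The concentration hypothesis $\dim(X \setminus \ell') < s$ forces $\dim(X \cap \ell') = s$, and since $X \cap \ell'$ is contained in the line $\ell'$, we have $s \leq 1$; in particular $s + t < 2$, consistent with the trivial upper bound $\dim \mathcal{L}(X) \leq 2$. Write $B := X \setminus \ell'$; applying the second hypothesis at $\ell = \ell'$ gives $\dim B \geq t$. The plan is to construct a dual Furstenberg configuration of lines inside $\mathcal{L}(X)$, using $X \cap \ell'$ as viewpoints and $B$ as targets, and then apply a dual Furstenberg set estimate of the sort established in \cite{BrightFuRen}.

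\textbf{Constructing the configuration.} For each $x \in X \cap \ell'$ and each direction $v \in \pi_x(B)$ (where $\pi_x : \R^2 \setminus \{x\} \to S^1$ is the radial projection), the line $\ell_{x,v}$ through $x$ with direction $v$ passes through $x \in X$ and through some point of $B \subset X$, hence lies in $\mathcal{L}(X)$. Set
\[
L := \{\ell_{x,v} : x \in X \cap \ell',\ v \in \pi_x(B),\ v \text{ not parallel to } \ell'\} \subset \mathcal{L}(X).
\]
For each fixed $x$, the pencil $L_x := \{\ell \in L : x \in \ell\}$ is in bi-Lipschitz correspondence with $\pi_x(B)$ minus the direction of $\ell'$, and for distinct $x, x' \in X \cap \ell'$ the only line through both is $\ell'$ itself, which we have excluded. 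So $L$ is genuinely parameterized by pairs $(x, v)$ with an $s$-dimensional base and fibers given by the radial projections.

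\textbf{Fibers from radial projections and assembly.} Fix $s' < s$ and $t' < t$, and let $\mu$ be a Frostman measure of exponent $s'$ on $X \cap \ell'$. Since $X \cap \ell'$ and $B$ are not both contained in a common line (by construction $B \not\subset \ell'$, and $\dim B \geq t > 0$), the OSW-style radial projection exceptional-set estimate \cite{OSW,BrightFuRen} gives
\[
\dim \{x \in X \cap \ell' : \dim \pi_x(B) < t'\} < s',
\]
so that $\dim L_x \geq t'$ for $\mu$-almost every $x$. The pair $(X \cap \ell', L)$ is now exactly the data of an $(s', t')$ dual Furstenberg configuration whose viewpoints are concentrated on the single line $\ell'$. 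The dual Furstenberg set estimate of \cite{BrightFuRen}, designed to handle precisely this singular source geometry, yields
\[
\dim L \geq s' + t'.
\]
Letting $s' \uparrow s$ and $t' \uparrow t$ gives $\dim \mathcal{L}(X) \geq \dim L \geq s + t$, as required.

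\textbf{Main obstacle.} The entire argument pivots on the two inputs in the last paragraph: the radial projection exceptional-set bound, and the dual Furstenberg estimate. Both are nontrivial precisely because the viewpoints $X \cap \ell'$ are constrained to the 1-dimensional set $\ell'$ rather than spread throughout $\R^2$; classical Marstrand-type radial projection theorems, and naive dual Furstenberg bounds, lose a full dimension in this configuration. Furnishing a sharp enough estimate that survives the 1-parameter source is what \cite{BrightFuRen} is engineered to do, and it is the essential new input beyond the parameterization and bookkeeping.
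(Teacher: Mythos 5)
You have the pins and targets swapped relative to the paper's proof, and this swap creates two gaps your argument does not close. The paper takes the pins to be $X \setminus \ell'$ (dimension $\geq t$) and the targets to be $X \cap \ell'$ (dimension $\dim X$): since $X \cap \ell' \subset \ell'$ and each pin $x$ lies off $\ell'$, the restriction $\pi_x|_{\ell'}$ is locally bi-Lipschitz, so $\dim \pi_x(X\cap\ell') = \dim X$ for \emph{every} pin $x$ (Lemma~\ref{LemRadProjLocBiLips}) and no exceptional-set analysis is needed. This makes $\mathcal L(X)$ a dual $(\dim X, t)$-Furstenberg set in the sense of Theorem~\ref{BFR-DUALFURST} (fiber exponent $\dim X$, pin exponent $t$), and since $\dim X \geq t$, the theorem gives $\dim \mathcal L(X) \geq \dim X + \min\{\dim X, t\} = \dim X + t$.

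Your version, pinning at $X \cap \ell'$ and targeting $B = X \setminus \ell'$, runs into trouble at both corresponding steps. First, the radial-projection claim $\dim\{x \in X\cap\ell' : \dim\pi_x(B) < t'\} < s'$ is not a consequence of the results you cite: Theorem~\ref{oswThm1.1} requires the pin set not to be contained in any line, and Corollary~\ref{corollaryRenRadExceptional} requires $\dim(X\setminus P) = \dim X$ for every $k$-plane $P$; your pin set $X\cap\ell'$ lies entirely on $\ell'$, so both hypotheses fail. Second---and this is fatal even if the first step were repaired---your configuration has pin dimension $\approx s$ and fiber dimension $\approx t' < s$, i.e., it is a dual $(t', s')$-Furstenberg set with fiber exponent $t'$ and pin exponent $s'$. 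Theorem~\ref{BFR-DUALFURST} then yields $\dim L \geq t' + \min\{t', s'\} = 2t'$, \emph{not} $s' + t'$; the theorem caps at twice the fiber exponent whenever the fiber exponent falls below the pin exponent. Your assertion that \cite{BrightFuRen} gives the stronger $s'+t'$ bound because it ``handles precisely this singular source geometry'' is not supported by anything stated there. The paper's choice of roles---larger fibers over a smaller pin set, so that the $\min$ in the dual Furstenberg estimate lands on the pin exponent---is exactly what produces $\dim X + t$, and it is precisely the step your proposal inverts.
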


\noindent The proof of Theorem \ref{IntroEB} is the content of Section \ref{sec:BMarshall}.

\section{Falconer-type Problems}

Falconer-type (distance) problems will be the focus of Section \ref{SEC:FALCONERTYPE}. We begin our discussion again with the discrete setting, which may best be motivated by the following question asked by Paul Erd\H{o}s in 1946 \cite{Erdos1946}:

\begin{question}\label{falconertypequestion}
    Given a set $A\subset \R^n$ $(n\geq 2)$, how large must the set of distances determined by $A$ be? In other words, how can we bound the size of
    \[
    \Delta(A) := \{|x-y| : x,y\in A\}
    \]
    below in terms of the size of $A$?
\end{question}

The \textit{Erd\H{o}s distance problem} conjectures that given $A\subset \R^n$ finite, then for all $\epsilon>0$ there exists a constant $C_\epsilon>0$ such that 
\[
|\Delta(A)| \geq C_\epsilon |A|^{\frac{2}{d}-\epsilon}.
\]
The sharpness of this conjecture can be observed by taking $A$ to be a lattice $A = \mathbb{Z}^n \cap [0,p]^n$. Furthermore, while the problem remains open in $\R^n$ for all $n\geq 3$, the problem was resolved in the plane by Guth and Katz \cite{GuthKatz}. 

In the continuum setting, an analogous question was introduced by Falconer in 1985 \cite{Falconer82} known as the \textit{Falconer distance problem}. The problem asks for a threshold value $s$ such that any compact set $A\subset \R^n$ with $\dim A > s$ satisfies that $\Delta(A)$ has positive Lebesgue measure. 

\begin{conjecture}[Falconer] \label{FalconerConj}
    Given a compact set $A\subset \R^n$ with $n\geq 2$, 
    \[
    \dim A > \frac{n}{2} \quad \text{ implies that } \quad \mathcal L^1(\Delta(A)) >0.
    \]
\end{conjecture}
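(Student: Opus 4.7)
The plan is to adopt the Fourier-analytic framework of Mattila. Given a compact $A \subset \R^n$ with $\dim A > n/2$, fix $s \in (n/2, \dim A)$ and, via Frostman's lemma, produce a Borel probability measure $\mu$ supported on $A$ satisfying $\mu(B(x,r)) \lesssim r^s$, equivalently having finite $s$-energy $I_s(\mu) < \infty$. Let $\delta_\mu$ denote the pushforward of $\mu \times \mu$ under the map $(x,y) \mapsto |x-y|$; its support is contained in $\Delta(A)$. It suffices to prove $\delta_\mu \in L^2(\R)$, since this forces $\delta_\mu$ to be absolutely continuous with respect to $\mathcal L^1$ and hence $\mathcal L^1(\Delta(A)) > 0$.

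By Plancherel and a polar-coordinates computation, the $L^2$ norm of $\delta_\mu$ is comparable to
\[
\int_0^\infty \sigma(\mu)(r) \, r^{n-1} \, dr, \qquad \sigma(\mu)(r) := \int_{S^{n-1}} |\hat{\mu}(r\omega)|^2 \, d\sigma(\omega),
\]
so the question reduces to proving a spherical averaging estimate of the form $\sigma(\mu)(r) \lesssim r^{-s}$ for some $s > n/2$. Using the decay $\hat{\sigma}(\xi) \lesssim |\xi|^{-(n-1)/2}$ of surface measure on $S^{n-1}$, together with the finite $s$-energy of $\mu$, one recovers Mattila's bound $\sigma(\mu)(r) \lesssim r^{-s}$ in the range $s \leq (n-1)/2$. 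This, however, is insufficient to close the conjecture.

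The main obstacle--and the reason the conjecture remains open--is pushing the spherical averaging bound past the threshold $(n-1)/2$ up to $n/2$. Purely harmonic-analytic techniques (refined Strichartz inequalities, decoupling, Kakeya-type estimates) have produced substantial partial progress but fall short of the sharp range: in the plane the current state of the art is $\dim A > 5/4$, due to Guth--Iosevich--Ou--Wang. A complementary, projection-theoretic route more in keeping with the spirit of this thesis is to attack the stronger \emph{pinned} version of the conjecture: find $x \in \supp \mu$ for which the pushforward of $\mu$ under $y \mapsto |x-y|$ has a nontrivial absolutely continuous part. Writing $y \in \R^n \setminus \{x\}$ in polar coordinates centered at $x$ decomposes this pushforward into the radial projection $y \mapsto (y-x)/|y-x|$ and the distance coordinate $|x-y|$, so this is precisely where the radial projection machinery surveyed in Chapter \ref{ch:Topics} would enter: the hope would be to combine a strong radial projection theorem with a slicing argument to recover one-dimensional Lebesgue measure in the distance variable for a typical $x$. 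The hard part will be that the currently available radial projection estimates are, quantitatively, essentially as strong as the Fourier-analytic bounds they would replace, so new input--likely a multiscale decomposition or a refined incidence/decoupling estimate capturing how $\mu$ sits near $r$-spheres at scale $1/r$--would be required to reach the threshold $n/2$. Absent such an ingredient, the approach will plateau at the same barrier as the classical one.
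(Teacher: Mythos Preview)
The statement you were asked to prove is Falconer's \emph{conjecture}, which the paper does not prove and which remains open in every dimension $n\ge 2$. There is therefore no proof in the paper to compare your attempt against; the paper explicitly records that the best known thresholds are $\dim A > 5/4$ in the plane (Guth--Iosevich--Ou--Wang) and $\dim A > \tfrac{n}{2}+\tfrac{1}{4}-\tfrac{1}{8n+4}$ for $n\ge 3$ (Du--Ou--Ren--Zhang), both strictly above $n/2$.

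To your credit, your write-up is not actually a proof proposal but an honest sketch of the Mattila-integral framework together with an accurate assessment of where and why it stalls. You correctly identify the spherical-average decay barrier at $(n-1)/2$, the current records, and the role radial projections have played in recent progress. This is all consistent with the paper's own discussion in Section~\ref{falconerprob}. The only thing to flag is a labeling issue: what you have written is a survey of obstructions, not a plan that could plausibly be carried to completion, and you say as much in your final paragraph. That is the correct conclusion---no argument along the lines you outline is currently known to reach the $n/2$ threshold.
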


\begin{remark}
Here, $\mathcal L^n$ is the Lebesgue measure on $\R^n$. See Section \ref{sec:HausdorffDim}. We also use $\mathcal L$ (without a numerical superscript) to denote families of lines.
\end{remark}

The study of this conjecture has utilized and motivated work on a number of tools in geometric measure theory and harmonic analysis, and the conjecture remains open for all $n$. Falconer \cite{Falconer85} proved that if $\dim A >\frac{n+1}{2}$ the conclusion holds, and if $\dim A \leq \frac{n}{2}$ the conclusion fails. The best lowerbounds towards the Falconer distance conjecture currently give:

\begin{theorem}[\cite{GuthIosevichOuWang,DuOuRenZhang}]
Let $n\geq 2$ and let
\[
f(n) = \begin{cases}
    \frac{5}{4}, & n=2 \quad \quad \text{(Guth--Iosevich--Ou--Wang \cite{GuthIosevichOuWang})} \\
    \frac{n}{2} + \frac{1}{4} - \frac{1}{8n + 4}, & n\geq 3 \quad \quad \text{(Du--Ou--Ren--Zhang \cite{DuOuRenZhang})}.
\end{cases}
\]
Then, for all compact sets $A\subset \R^n$, 
\[
\dim A > f(n) \implies \mathcal L^1(\Delta(A))>0.
\]
\end{theorem}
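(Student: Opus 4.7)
The plan is to invoke the Mattila framework, reducing the Falconer distance problem to $L^2$ spherical average estimates for Frostman measures, and then to apply the modern decoupling and radial projection techniques of Guth--Iosevich--Ou--Wang and Du--Ou--Ren--Zhang. Since $\dim A > f(n)$, Frostman's lemma (as developed in Chapter \ref{ch:background}) produces a Radon probability measure $\mu$ supported on $A$ and an exponent $s > f(n)$ satisfying $\mu(B(x,r)) \lesssim r^s$ for all $x \in \R^n$ and $r > 0$. Defining the distance measure $\nu$ as the pushforward of $\mu \times \mu$ under $(x,y) \mapsto |x-y|$, one has $\supp \nu \subseteq \Delta(A)$, so to prove $\mathcal{L}^1(\Delta(A)) > 0$ it suffices to show that $\nu$ is absolutely continuous with an $L^2$ density, or equivalently by Plancherel that $\int |\hat{\nu}(t)|^2 \, dt < \infty$.

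Mattila's identity expresses $\hat{\nu}(t)$ in terms of the spherical $L^2$ averages
\[
I_\mu(t) := \int_{S^{n-1}} |\hat{\mu}(t\xi)|^2 \, d\sigma(\xi),
\]
where $\sigma$ denotes surface measure on $S^{n-1}$. A standard computation reduces the finiteness of $\|\nu\|_{L^2}^2$ to a polynomial decay estimate of the form $I_\mu(t) \lesssim t^{-\beta(s,n)}$; the threshold $f(n)$ in the statement is essentially the infimum of exponents $s$ for which such a decay can be established.

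To produce the required spherical average bound, I would decompose $\mu$ into wave packets adapted to the sphere of radius $t$. At scale $\delta = t^{-1}$, partition $S^{n-1}$ into caps of diameter $\delta^{1/2}$ and, dually, decompose the support of $\mu$ into $\delta^{1/2} \times \cdots \times \delta^{1/2} \times 1$ tubes. Bourgain--Demeter decoupling for the sphere then reduces the desired estimate to a square function bound on wave packets, which in turn is controlled by how many tubes can accrue significant $\mu$-mass. It is precisely at this point that the radial projection and Furstenberg set estimates surveyed in Chapter \ref{ch:Topics} become indispensable: they prevent $\mu$ from concentrating along low-dimensional sub-configurations as seen from almost every point of its support.

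The hard part is achieving the quantitative exponents $\tfrac{5}{4}$ and $\tfrac{n}{2} + \tfrac{1}{4} - \tfrac{1}{8n+4}$: ordinary decoupling alone does not suffice. The plan is to follow Guth--Iosevich--Ou--Wang's breakthrough in $n=2$, which combines \emph{refined} decoupling with a good/bad decomposition of $\mu$ driven by a radial projection theorem of Orponen and a two-ends reduction; and then to follow Du--Ou--Ren--Zhang in $n \geq 3$, who combine higher-dimensional refined decoupling with sharper radial projection estimates to better control the geometry of the concentrated wave packets. Feeding the resulting bound on $I_\mu(t)$ back into Mattila's integral then yields the stated thresholds.
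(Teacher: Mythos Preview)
The paper does not prove this theorem. It is stated twice---once in the Introduction and once in Section~\ref{falconerprob}---as a cited result from \cite{GuthIosevichOuWang} and \cite{DuOuRenZhang}, and the surrounding text only surveys the history and the projection-theoretic ingredients (Orponen's radial projection theorem for $n=2$, Ren's for $n\ge 3$). There is no proof in the paper to compare your proposal against.

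Your outline is a fair high-level summary of the actual strategy in those papers: Frostman measure, pinned-distance reduction, refined decoupling, and a good/bad tube decomposition governed by radial projection estimates. A couple of technical corrections are worth flagging. First, both \cite{GuthIosevichOuWang} and \cite{DuOuRenZhang} work with the \emph{pinned} distance map $y\mapsto |x-y|$ for a well-chosen $x$, not the full pushforward of $\mu\times\mu$; the radial projection input is used to select a good pin $x$ so that the pushforward $\mu$ under $\pi_x$ has controlled concentration, and only then does one run the Fourier/decoupling argument on the resulting measure. Your sketch conflates this with the symmetric Mattila integral. Second, the sentence ``the threshold $f(n)$ \ldots\ is essentially the infimum of exponents $s$ for which such a decay can be established'' is not how the numerology arises: the exponent comes out of balancing the refined decoupling loss against the Frostman gain after the good/bad split, not from a spherical-average decay rate alone. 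Finally, ``I would decompose \ldots\ and then apply Bourgain--Demeter decoupling'' vastly understates the work: the refined decoupling inequalities and the weighted Fourier extension estimates in those papers are substantial results in their own right, not routine applications of \cite{BourgainDemeter}-style decoupling. As written, your proposal is a plausible table of contents for the cited papers rather than a proof.
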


Moreover, both groups of authors obtain \textit{pinned results}, showing that if $A\subset \R^n$ is compact and $\dim A>f(n)$, there exists an $a\in A$ so that the \textit{pinned distance set} pinned at $a,$
\[
\Delta^a(A) := \{|a-y| : y\in A\},
\]
has positive Lebesgue measure (and thus so too does $\Delta(A)$). 

It is interesting to note that both Guth--Iosevich--Ou--Wang and Du--Ou--Ren--Zhang make use of projection theoretic results; the former making use of Orponen's 2019 radial projection theorem \cite{OrponenRadProjSmoothness}, and the latter making use of Ren's 2023 radial projection theorem \cite{RenRadProj} (see Theorems \ref{orponenradprojtheorem} and \ref{renradprojTheorem} respectively). Motivated by these applications of projection theory, and the recent developments in the study of orthogonal and radial projections, Marshall, Senger, and I \cite{BrightMarshallSenger} studied an analogue of the Falconer distance problem for \textit{pinned dot product sets} in 2024. Given $A\subset \R^n$ and $a\in \R^n$, let 
\[
\Pi^a(A) := \{a\cdot y : y\in A\} \subset \R
\]
where $\cdot$ denotes the Euclidean dot product.

\begin{theorem}[\cite{BrightMarshallSenger}] \label{BMS-IntroDotProd}
    Let $A\subset \R^n$ be Borel with $\dim A > \frac{n+1}{2}$ and $n\geq 2$. Then, there exists an $a\in A$ such that 
    \[
    \mathcal L^1(\Pi^a(A)) >0.
    \]
\end{theorem}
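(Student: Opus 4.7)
The plan is to reduce Theorem~\ref{BMS-IntroDotProd} to a Marstrand-type question for orthogonal projections, and then to combine Falconer's classical exceptional-set estimate with an elementary cone-dimension calculation. The reduction rests on the identity $a \cdot y = |a|\,\pi_{a/|a|}(y)$, where $\pi_e:\R^n\to\R$, $\pi_e(y)=e\cdot y$, denotes the orthogonal projection onto the line $\mathrm{span}(e)$; this gives $\Pi^a(A)=|a|\cdot \pi_{a/|a|}(A)$, so $\mathcal L^1(\Pi^a(A))>0$ if and only if $\mathcal L^1(\pi_{a/|a|}(A))>0$. After discarding the (at most single) point $0$ from $A$, the task becomes to produce some $a\in A$ whose radial direction $a/|a|\in S^{n-1}$ is \emph{not} an exceptional direction for the orthogonal projection of $A$.

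To find such an $a$, I would first invoke Falconer's exceptional-set estimate for orthogonal projections onto lines: since $\dim A > (n+1)/2 > 1$, the set
\[
E := \bigl\{ e\in S^{n-1} : \mathcal L^1(\pi_e(A))=0 \bigr\}
\]
has $\dim E\leq n-\dim A$. Let $\rho_0:\R^n\setminus\{0\}\to S^{n-1}$, $\rho_0(a)=a/|a|$, denote radial projection from the origin; the ``bad'' points $a\in A$ (those with $a/|a|\in E$) lie in the cone $\rho_0^{-1}(E)=\{tv:t>0,\,v\in E\}$. Decomposing $\R^n\setminus\{0\}$ into dyadic annuli, on each of which $x\mapsto(\rho_0(x),|x|)$ is bilipschitz onto $S^{n-1}\times[2^k,2^{k+1}]$, the Hausdorff-dimension-of-product inequality yields
\[
\dim \rho_0^{-1}(E) \leq \dim E + 1 \leq n+1-\dim A.
\]

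The hypothesis $\dim A>(n+1)/2$ is precisely the threshold at which $n+1-\dim A<\dim A$, and this is what closes the argument. Fix any $s$ with $n+1-\dim A<s<\dim A$ and let $\mu$ be a Frostman measure on $A$ of exponent $s$. Such a $\mu$ assigns zero mass to every set of Hausdorff dimension strictly less than $s$, so in particular $\mu(\rho_0^{-1}(E))=0$; thus $\mu$-almost every $a\in A$ satisfies $a/|a|\notin E$, and for any such $a$ we obtain $\mathcal L^1(\Pi^a(A))>0$. The step I expect to require the most care is the cone-dimension bound $\dim\rho_0^{-1}(E)\leq\dim E+1$ for the possibly intricate Borel (or merely analytic) set $E$, though it reduces routinely to the bilipschitz annular decomposition above. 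A Fourier-analytic alternative---which could upgrade the conclusion to $\mu^a\in L^2(\R)$ for $\mu$-a.e.\ $a$---would be to bound $\int_A \|\pi_{a/|a|}\mu\|_{L^2(\R)}^2\,d\mu(a)=\int_{S^{n-1}}\int_\R |\hat\mu(te)|^2\,dt\,d\nu(e)$ with $\nu=(\rho_0)_*\mu$, invoking a radial-projection regularity result in the spirit of the Orponen or Ren theorems cited earlier to control $\nu$ as a density on $S^{n-1}$.
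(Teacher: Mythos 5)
Your argument is correct and relies on the same two ingredients as the paper's proof: the identity $\Pi^a(A) = |a|\,P_{a/|a|}(A)$ reducing pinned dot products to orthogonal projections (Lemma~\ref{DotProdKeyLemma}), and Falconer's exceptional-set estimate $\dim E_+(A)\leq n-\dim A$ (Theorem~\ref{FalconerNull}). The one place you diverge from the paper is in how the final dimension count is organized, and the difference is worth noting. The paper pushes $A$ forward to the sphere: it invokes the radial-projection lower bound $\dim\pi_0(A\setminus\{0\})\geq\dim A-1$ (Lemma~\ref{radprojtrivialbound}, stated but left unproved there as it ``requires slightly more Hausdorff dimensional machinery'') and compares this against $\dim E_+(A)$ directly on $\mathbb S^{n-1}$, concluding that $\pi_0(A\setminus\{0\})\not\subset E_+(A)$. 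You instead pull $E$ back to the cone $\rho_0^{-1}(E)\subset\R^n$, bound $\dim\rho_0^{-1}(E)\leq\dim E+1$ by the elementary dyadic-annulus covering argument, and compare against $\dim A$. These two bounds are logically equivalent (apply one to $\pi_0(A)$ or to $A\subset\rho_0^{-1}(E)$ to recover the other), and the arithmetic threshold $\dim A>\tfrac{n+1}{2}$ is identical, so this is the same proof in dual clothing. What your formulation buys is a genuinely elementary proof of the needed input---the cone bound is an upper bound obtainable by bare covering, whereas the radial-projection lower bound the paper cites does need Frostman or packing-dimension machinery. Your Frostman-measure phrasing also yields ``$\mu$-a.e.\ $a$ works'' in one pass, which the paper obtains as a separate corollary right after the proof via a contradiction argument; so in that sense you've pre-packaged the corollary into the theorem.
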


Our approach to Theorem \ref{BMS-IntroDotProd} makes use of a standard, but notably sharp, orthogonal projection and radial projection estimates. Furthermore, we obtain better results by applying more recent results for both topics, see Theorem \ref{BMS-DotProd2}. It is interesting to note that, like the Falconer distance problem, we do not conjecture the above lowerbound on $\dim A$ to be sharp. At least in the plane, we conjecture that that if $A\subset \R^2$ is Borel with $\dim A>1$, there exists an $a\in A$ such that $\mathcal L^1(\Pi^a(A))>0$. However, given our approach makes use of sharp orthogonal and radial projection estimates, this leads us to make a natural conjecture, see Conjecture \ref{BMS-DotProdConj}.

\section{A Taste of Projection Theory}\label{sec:introTaste}

In this section, I give a brief taste of some topics in projection theory that we will explore throughout the thesis. Note that the below problems are interesting in higher dimensions, but for illustrative purposes we stick to $\R^2$.

\subsection{Orthogonal Projections} Suppose you are standing outside on a sunny day from dawn til dusk, observing your shadow as it changes over time. You may notice that the only time of day you cannot see a large fraction of your shadow is at noon when the sun is right above you. In this way, at almost every time of day, your shadow is relatively large. How general is this phenomena, and can we make such statements mathematically precise?

The concept of a shadow here is made precise by orthogonal projections. Given $\theta \in \mathbb{S}^{1} \subset \R^2$, let $P_\theta : \R^2 \to \R$ denote the orthogonal projection onto the line through the origin in direction $\theta$, denoted $\ell_\theta$. More precisely, we have $P_\theta(x) = \theta \cdot x$ where $\cdot$ is the Euclidean dot product. Given $X \subset \R^2$ Borel, how large could $\dim P_\theta(X)$ be? 
Using the fact that orthogonal projections are Lipschitz maps, those familiar with Hausdorff dimension can show that
\[
\dim P_\theta(X) \leq \min\{\dim X, 1\} \quad \text{for all }\theta \in \mathbb{S}^{1}.
\]
Intuitively all this is saying is that the size of the shadow is at most the size of the object casting the shadow ($\dim X$), and at most the size of what the shadow is being cast upon (in this case, the line $\ell_\theta$, which has dimension 1). In 1954, Marstrand \cite{Marstrand54} showed that
\[
\dim P_\theta(X) = \min\{\dim X, 1\} \quad \text{for almost every $\theta \in \mathbb{S}^1$}.
\]
Here, ``almost every'' is with respect to the surface measure on the circle.
In other words, \textit{almost all of the time}, the orthogonal projection of $X$ is \textit{as large as it could be}---matching our intuition from our nice sunny day outside.

There are a number of ways to probe this concept further. For instance, we could ask questions of the form: how many times like ``noon'' are there? That is, how often is our shadow \textit{smaller} than what we would typically expect? Sharp bounds on the size of such ``exceptional'' (i.e. rare) directions where the orthogonal projection is small are known as exceptional set estimates. A survey on exceptional set estimates is presented in Section \ref{sec:ESE}.

\subsection{Furstenberg Sets} 

Suppose you are a spider trying to protect your home from collapsing under the weight of raindrops collecting on strands of your spiderweb. As the drops form, you frantically move water from one part of the web to another, knowing that if more than $N$ water droplets (in total) exist on the web, your home will wash away until the sun comes out to dry up all the rain. Furthermore, you know that if the drops aren't (roughly) evenly distributed across strands in your web, your home will collapse anyways due to its precarious position atop the water spout. If your web has $\geq t$-many strands and each strand holds $\geq s$-many droplets, is it possible to arrange the water to avoid being washed out by the rain?

Naively, one may expect that given the above $(s,t)$-conditions (viewing strands of your webs as lines in $\R^2$ and droplets as points on said lines), if $st \geq N$ there will be no way to move the water to avoid being washed away. However, if you prepare appropriately and make your web highly interconnected with many strands (so that each strand only needs a few drops each, and so that your strands overlap frequently allowing you to place droplets at their intersections) you may be able to get away with having $\sim s^{3/2}t^{1/2}$ drops of water on your web at a given time (see Proposition \ref{discreteFurst}).

In this example, the drops of water on your web form a \textit{discrete $(s,t)$-Furstenberg set}. A discrete analogue of the $(s,t)$\textit{-Furstenberg set problem} then asks: given a finite set $X\subset \R^n$ such that there exists $\geq t$-many lines with $\geq s$-many points from $X$ on each line, how large must $|X|$ be? 

In the continuum setting, the Furstenberg set problem asks the same replacing the cardinality of points and lines with Hausdorff dimension. In particular, a Borel set $X\subset \R^n$ is an $(s,t)$\textit{-Furstenberg set} if there exists a $\geq t$-dimensional set of lines $\mathcal L$ with $\dim (X\cap \ell) \geq s$ for all $\ell \in \mathcal L$ (see Definition \ref{FurstDEF}). The $(s,t)$\textit{-Furstenberg set problem} then asks for sharp lowerbounds on $\dim X$ for all choices of $s$ and $t.$ Progress on the $(s,t)$-Furstenberg set problem was made rapidly over the past 25 years, and in 2023 the sharp lowerbound for all ranges of $s$ and $t$ was resolved by Orponen--Shmerkin \cite{OrponenShmerkinABC} and Ren--Wang \cite{RenWang} (see Theorem \ref{FURSTENBERG}).
Notably, it is also within these papers that sharp exceptional set estimates were obtained in the plane (Theorem \ref{OSRW}). These topics are very much related! A survey on the $(s,t)$-Furstenberg set problem is presented in Section \ref{sec:Furstenberg}.

\subsection{Radial Projections} 

Suppose you are an astronaut in space looking for a place to park your spaceship, and as a devout space explorer you are most interested in being able to see stars in as many different directions as possible. To think of this as a projection problem, you can think about the ``directions'' of these stars from your ship as the corresponding notion of ``shadows.'' What is the largest number of ``shadows'' you could hope to see? Furthermore, if you have a limited set of places you can park, can you always find a nice view?

The concept of shadow here is made precise by radial projections. Given $x\in \R^2$ let $\pi_x: \R^2 \setminus \{x\}\to \mathbb{S}^{1}$ map the point $y \in \R^2 \setminus \{x\}$ to the \textit{direction} of $y$ from center point $x$. More precisely, we have $\pi_x(y) = \frac{y-x}{|y-x|}$ where $|\cdot|$ is the Euclidean distance. Given a Borel set $Y\subset \R^2$, how small or large could $\dim \pi_x(Y\setminus \{x\})$ be? One can directly show
\[
\max\{\dim Y - 1,0\} \leq \dim \pi_x(Y\setminus \{x\}) \leq \min\{\dim Y, 1\} \quad \text{for all $x\in \R^2$.}
\]
The lowerbound on $\dim \pi_x(Y\setminus \{x\})$ heuristically comes from the fact that each fiber of the radial projection is 1-dimensional (and Hausdorff dimension is always nonnegative). Furthermore, the upperbound can be argued formally by using that radial projection is a locally Lipschitz map, but the intuition is that the size of the shadow is at most the size of the object casting the shadow ($\dim Y$), and at most the size of what the shadow is being cast upon (in this case, the sphere $\mathbb{S}^1$, which has dimension 1). 

We will postpone our discussion of specific results in radial projections for the time being as the key statements we will focus on are relatively new in the literature. In fact, in Liu's radial projection paper from 2019 \cite{Liu2020}, he wrote ``There is very little known on the Hausdorff dimension of radial projections,'' though this is certainly not the case anymore a mere five years later. A survey on radial projections is presented in Section \ref{sec:RadProj}.

\subsection{Further Remarks}
When learning about these problems for the first time, it is completely reasonable to ask why they are related to one another---let alone how they might relate to Beck-type and Falconer-type problems. A direct answer to this question may be gleaned from the proofs within Chapter \ref{ch:problems}. However, a heuristic answer can be obtained by understanding that at its core, Beck-type problems and the Falconer-type problem for dot products (in the plane) involve considering how points can lie on lines, and relatedly how lines can pass through points. The same is true for the above three topics. 

Take orthogonal projections for instance. Given some $\alpha \in \R$, notice that $P_\theta^{-1}(\alpha)$ is a line in direction $\theta^\perp$. Hence, $P_\theta^{-1}(P_\theta(X))$ is a set of lines orthogonal to $\theta$ that cover $X$, denoted $\mathcal L_\theta$. In particular, the problem of finding sharp exceptional set estimates comes down to finding configurations of points $X$ such that there exists many $\theta \in \mathbb{S}^{1}$ so that the minimum number of lines needed to cover $X$ in direction $\theta^\perp$ is small (and hence, $P_\theta(X)$ is correspondingly small). Similarly for radial projections, $\pi_x^{-1}(\pi_x(Y \setminus \{x\}))$ is a set of lines covering $Y\setminus \{x\}$ that pass through $x$. Researching how such families of lines interact is at the heart of this subfield of analysis.

The connection between these projection theory problems and Beck-type/Falconer-type problems is a two-way street. While a number of Beck-type theorems have made use of radial projections and Furstenberg sets, it is interesting to study Beck-type and Falconer-type problems on their own. Furthermore, through the pursuit of Beck-type statements, we have gained new insights and new problems in projection theory to explore---insights I will expound upon by presenting the work of Marshall and I \cite{BrightMarshall}. Similarly, while Marshall, Senger, and I \cite{BrightMarshallSenger} utilize results in orthogonal and radial projections to study the Falconer-type problem for dot products, the pursuit of sharp bounds to this problem has led to new and interesting conjectures regarding these well-studied projection maps. In the following section, I discuss what some of this work entails as I prepare a roadmap for the thesis.

\section{A Roadmap through the Thesis}

In Chapter \ref{ch:background}, we will begin with necessary background on incidence geometry and geometric measure theory. In Section \ref{sec:Incidences}, we give a brief introduction to discrete geometry. In particular, given a set of points $P$ and lines $\mathcal L$, we consider the size of the set of point-line incidences, i.e.
\[
\mathcal I(P,\mathcal L) := \{(p,\ell) \in P \times \mathcal L : p \text{ lies on } \ell\}.
\]
We will present a few bounds on $|\mathcal I(P,\mathcal L)|$ over both Euclidean space and finite fields---bounds that will allow us to motivate sharp bounds for problems in the continuum setting. Before going into such problems, however, in Section \ref{sec:HausdorffDim} we define the corresponding notion of size in the continuum setting, namely Hausdorff dimension. We will also take this time to define key metric spaces and properties thereof that we will need throughout the thesis---namely the unit sphere $\mathbb{S}^{n-1}$, the set of $k$-dimensional subspaces in $\R^n$ (i.e. the Grassmannian $\mathcal G(n,k)$), and the set of $k$-dimensional affine planes in $\R^n$ (i.e. the affine Grassmannian $\mathcal A(n,k)$).

With these tools at our disposal, in Chapter \ref{ch:Topics} we will then be able to give a few surveys on orthogonal projections, Furstenberg sets, and radial projections (Sections \ref{sec:ESE}--\ref{sec:RadProj}). We will primarily focus on giving an exposition of the literature in both the discrete and continuum settings. That said, I will highlight a few arguments for each problem, including outlines of proofs from my work with collaborators in \cite{BrightGan, BrightGan2, BrightFuRen}. In Section \ref{BGanProofs}, I present a proof of Falconer's projection theorem over $\mathbb{F}_p^n$ using the high-low method following the approach in \cite{BrightGan}. To this end, we will also introduce the geometry of affine planes and Fourier analysis over finite fields. In Section \ref{sec:DualFurstenberg}, I discuss \textit{dual} Furstenberg sets, and present a Cauchy--Schwarz bound that can be adapted to a Hausdorff dimensional result due to myself, Fu, and Ren \cite{BrightFuRen}. Lastly, in Section \ref{BGanRadProjProof}, I give a proof of a Falconer-type radial projection theorem over $\mathbb{F}_p^n$ via the high-low method. To do so we adapt methodology from the Euclidean setting due to myself and Gan \cite{BrightGan}.

Finally, in Chapter \ref{ch:problems}, we present applications of the results in Chapter \ref{ch:Topics} towards problems in projection theory. In Section \ref{SEC-BECKTYPE} we will cover Beck-type theorems for lines. We begin with the discrete setting in Section \ref{sec:discretebecklines}, with a discussion and proof of Beck's theorem and the Erd\H{o}s--Beck theorem (Theorems \ref{thm:introBeck} and \ref{thm:introerdosBeck}). We also give a generalization of Beck's theorem that considers lines containing two distinct points from two (possibly different) sets $X,Y$ (Theorem \ref{thm:ch4Beckbivariable}). In doing so, we are able to obtain a discrete radial projection theorem that echoes the work of Orponen--Shmerkin--Wang \cite{OSW}.

This will segue nicely into the continuum setting in Section \ref{sec:OSWRen}, where we prove the continuum version of Beck's theorem due to Orponen, Shmerkin, and Wang (Theorem \ref{thm:introOSWBeck}) and discuss a higher dimensional analogue due to Ren (Theorem \ref{thm:ch4RenBeck}) \cite{OSW, RenRadProj}. Then, in Section \ref{sec:BMarshall}, we prove the continuum version of the Erd\H{o}s--Beck theorem (Theorem \ref{IntroEB}) due to myself and Marshall \cite{BrightMarshall}. This will include a number of sharp examples that led to a conjectured equality for $\dim \mathcal L(X)$ (Conjecture \ref{BM-EBConjecture}). 

In Section \ref{SEC:FALCONERTYPE}, we will cover Falconer-type theorems. We begin in the discrete setting in Section \ref{erdosprob} with a discussion of the Erd\H{o}s distance problem and the work of Guth and Katz \cite{GuthKatz}. Then, in Section \ref{falconerprob} we give an introduction to the Falconer distance problem, focusing our attention to ways in which the Falconer distance problem has utilized projection theory. In Section \ref{sec:BMS}, we conclude with a Falconer-type theorem for dot products due to myself, Marshall, and Senger \cite{BrightMarshallSenger} (Theorems \ref{BMS-DotProd} and \ref{BMS-DotProd2}). Our proof methodology makes use of both standard and modern estimates for orthogonal and radial projections, and though we do not conjecture our bound to be sharp, this leads us to make a conjecture on the relation between these two projection maps (Conjecture \ref{BMS-DotProdConj}).

\chapter{Preliminaries} \label{ch:background}
Over the past few years, much progress has been made regarding problems in projection theory, harmonic analysis, and geometric measure theory. Furthermore, many of the techniques that have been used to attack these problems involve discrete analogues in incidence geometry. 

The purpose of this chapter is to provide brief introductions to tools we will need in both the discrete and continuum settings respectively. In Section \ref{sec:Incidences}, we provide a brief introduction to incidence theory, specifically regarding incidences between points and lines. Incidences between points and lines, and in particular the Szemer\'edi--Trotter theorem, have motivated key techniques, results, and conjectures in the continuum setting. To this end, in Section \ref{sec:HausdorffDim} we will give the main definitions we need from the continuum setting---namely Hausdorff measures and dimension on various metric spaces. We limit the scope of Section \ref{sec:HausdorffDim} to the foundational definitions and lemmas we will assume familiarity with throughout the thesis. 

\section{An Intro to Incidences} \label{sec:Incidences}

Broadly speaking, in \textit{discrete} geometry, an incidence is a relationship between different geometric objects. We will primarily consider incidences between points and lines---in particular, when a point lies on a line or equivalently when a line passes through a point. Consider the following problem regarding incidences between points and lines.

\begin{question}\label{sec2:q:motivatingST}
    Let $P\subset \R^n$ be a finite set of points and let $\mathcal L$ be a finite set of lines in $\R^n$ (which need not pass through the origin). Define the set of \textit{incidences} between $P$ and $\mathcal L$ to be
    \[
    \mathcal I(P,\mathcal L) = \{(p,\ell) \in P \times \mathcal L : p \text{ lies on } \ell\}.
    \]
    How large can $|\mathcal I(P,\mathcal L)|$ be in terms of $|P|$ and $|\mathcal L|$?
\end{question}

At first glance, it may seem like the trivial bound ($|\mathcal I(P,\mathcal L)| \leq |P||\mathcal L|$) is the best answer one can hope for in regards to Question \ref{sec2:q:motivatingST}. That said, if $|\mathcal I(P,\mathcal L)| = |P||\mathcal L|$, this implies that every point in $P$ lies on every line in $\mathcal L$. In fact, such configurations exist, but in very specific instances. For example, you could take $P$ to equal a single point $\{p\}$, and let $\mathcal L$ be a finite set of lines that each pass through $p$; in this case, $|\mathcal I(P,\mathcal L)| = |\mathcal L| = |P||\mathcal L|$ (as $|P| = 1$). Similarly, you could take $\mathcal L$ to be a single line $\{\ell\}$, and let $P$ be a finite set of points lying on $\ell$. Notice that in both of these examples, the reason the bound $|P||\mathcal L|$ is sharp is due to either $|P|$ or $|\mathcal L|$ being equal to 1. In particular, you can have arbitrarily many lines through a single point, or arbitrarily many points on a single line. 

Once $\min\{|P|,|\mathcal L|\} \geq2$, however, it is impossible to find a configuration of points and lines such that every point lies on every line in $\R^n$. This is as two points uniquely determine a line, and two lines intersect at at most one point. Utilizing this, one can obtain the following Proposition.

\begin{proposition}\label{sec2:prop:STPrelim}
    Let $P\subset \R^n$ be a finite set of points and let $\mathcal L$ be a finite set of affine lines in $\R^n$. Then, 
    \[
    |\mathcal I(P,\mathcal L)| \leq \min\{|P||\mathcal L|^{1/2}+ |\mathcal L|, |\mathcal L||P|^{1/2} + |P|\}.
    \]
\end{proposition}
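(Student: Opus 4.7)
The plan is to prove the bound by a standard double-counting argument followed by Cauchy--Schwarz. By symmetry of the statement, it suffices to establish $|\mathcal I(P,\mathcal L)| \leq |P||\mathcal L|^{1/2} + |\mathcal L|$; the other inequality follows by repeating the argument with the roles of points and lines swapped. The key combinatorial input will be the fact noted just before the proposition: two distinct points in $\R^n$ determine a unique line (and dually, two distinct lines meet in at most one point).

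First, for each line $\ell \in \mathcal L$, let $r(\ell)$ denote the number of points of $P$ lying on $\ell$, so that $I := |\mathcal I(P,\mathcal L)| = \sum_{\ell \in \mathcal L} r(\ell)$. Counting pairs of distinct points from $P$ that share a common line from $\mathcal L$ in two ways, and using that each such pair lies on at most one line, I get
\[
\sum_{\ell \in \mathcal L} \binom{r(\ell)}{2} \leq \binom{|P|}{2}.
\]
Clearing the binomial coefficients and rearranging yields
\[
\sum_{\ell \in \mathcal L} r(\ell)^2 \leq |P|^2 - |P| + \sum_{\ell \in \mathcal L} r(\ell) \leq |P|^2 + I.
\]

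Next, I would apply Cauchy--Schwarz in the form $I = \sum_{\ell} r(\ell)\cdot 1 \leq |\mathcal L|^{1/2}\bigl(\sum_\ell r(\ell)^2\bigr)^{1/2}$, combined with the previous display, to obtain
\[
I^2 \leq |\mathcal L|\bigl(|P|^2 + I\bigr), \qquad \text{i.e.} \qquad I^2 - |\mathcal L|\,I - |\mathcal L||P|^2 \leq 0.
\]
Solving the quadratic and using $\sqrt{a+b}\leq \sqrt{a}+\sqrt{b}$ gives $I \leq |\mathcal L| + |P||\mathcal L|^{1/2}$, as desired. For the second bound, I set $s(p)$ to be the number of lines of $\mathcal L$ through $p\in P$; the dual input that two distinct lines meet in at most one point gives $\sum_p \binom{s(p)}{2} \leq \binom{|\mathcal L|}{2}$, and the identical Cauchy--Schwarz step yields $I \leq |P| + |\mathcal L||P|^{1/2}$.

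The argument is essentially forced once one identifies the right quantity to double-count, so there is no real technical obstacle; the only "thought" required is noticing that the combinatorial pairs inequality and Cauchy--Schwarz together give a quadratic in $I$, and that this quadratic can be solved cleanly enough to match the stated form of the bound.
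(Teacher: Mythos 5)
Your proof is correct and matches the paper's approach in all essentials: both arguments bound $\sum_{\ell} |P\cap\ell|^2$ by $|\mathcal I(P,\mathcal L)| + |P|^2$ using the fact that two distinct points determine at most one line, then apply Cauchy--Schwarz to get the quadratic $I^2 \leq |\mathcal L|(I + |P|^2)$ and solve. Your use of $\sum_\ell \binom{r(\ell)}{2}\leq\binom{|P|}{2}$ is a notational variant of the paper's split into diagonal and off-diagonal terms in $\sum_{\ell}\sum_{p,p'}\mathbf 1_\ell(p)\mathbf 1_\ell(p')$, and you make the final quadratic-solving step explicit where the paper simply says ``rearranging.''
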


\begin{remark}
    We include the proof of the above bound as an analogous argument will be used in the continuum setting due to work of myself, Fu, and Ren \cite{BrightFuRen}, see Section \ref{sec:DualFurstenberg}.
\end{remark}

\begin{proof}[Proof of Proposition \ref{sec2:prop:STPrelim}]
We prove the first bound, and the second follows similarly. Firstly, notice that 
\[
|\mathcal I(P,\mathcal L)| = \sum_{\ell \in \mathcal L} |P\cap \ell|,
\]
and thus by the Cauchy--Schwarz inequality we have that 
\begin{align*}
    |\mathcal I(P,\mathcal L)|^2 &= \left(\sum_{\ell \in \mathcal L} |P\cap \ell|\cdot \mathbf{1}_{\mathcal L}(\ell)\right)^2 \\
    &\leq \left(\sum_{\ell \in \mathcal L} |P\cap \ell|^2\right)\left(\sum_{\ell \in \mathcal L}\mathbf{1}_{\mathcal L}(\ell)^2\right) \\
    &= |\mathcal L| \left(\sum_{\ell \in \mathcal L} |P\cap \ell|^2\right).
\end{align*}
On the other hand, notice that we can write the sum in the last inequality as sum of indicator functions: 
\[
    \sum_{\ell \in \mathcal L} |P\cap \ell|^2 = \sum_{\ell \in \mathcal L} \left(\sum_{p\in P} \mathbf{1}_{\ell}(p)\right)^2 = \sum_{\ell \in \mathcal L} \sum_{p,p'\in P} \mathbf{1}_\ell(p)\mathbf{1}_\ell(p').
\]
At this point, we want to use that there exists at most one line through two \textit{distinct} points, so we break the above sum into two parts: one where $p = p'$ and one where $p\neq p'.$ In particular,
\[
\sum_{\ell \in \mathcal L} |P\cap \ell|^2 = \left[\sum_{\ell \in \mathcal L} \sum_{p\in P} \mathbf{1}_\ell(p)^2 \right]+ \left[\sum_{\ell \in \mathcal L} \sum_{p\neq p'} \mathbf{1}_\ell(p)\mathbf{1}_\ell(p')\right].
\]
Notice that the first term is precisely $|\mathcal I(P,\mathcal L)|$ as $\mathbf{1}_{\ell}(p)^2 = \mathbf{1}_\ell(p)$. Additionally, the second term is at most $|P|^2$ as given two points $p\neq p'$, there exists at most one line $\ell$ such that $\mathbf{1}_\ell(p)\mathbf{1}_\ell(p') = 1$. Note that this line may not be contained in $\mathcal L$, but in either case this implies we have the bound
\[
\sum_\ell |P\cap \ell|^2 \leq |\mathcal I(P,\mathcal L)| + |P|^2,\]
which implies 
\[
|\mathcal I(P,\mathcal L)|^2 \leq |\mathcal L|(|\mathcal I(P,\mathcal L)| + |P|^2).
\]
Rearranging the above inequality, it follows that 
\[
|\mathcal I(P,\mathcal L)| \leq |P||\mathcal L|^{1/2}+ |\mathcal L|.
\]
This gives the first term of the Proposition. The second follows similarly by instead using that two distinct points uniquely determine a line and
\[
|\mathcal I(P,\mathcal L)| = \sum_{p \in P} |\{\ell \in \mathcal L : p \in \ell\}|. \qedhere
\]
\end{proof}

We briefly note that the key property we used is that two lines intersect in at most one point and two points uniquely determine a line, which aren't unique to Euclidean geometry. For instance, consider the field of prime characteristic $\mathbb{F}_p$. Over $\mathbb{F}_p^2$, one can define affine lines to be sets of the form 
\[
L(\theta,b) := \{\theta t + b : t\in\mathbb{F}_p\} = \{(x,y) \in \mathbb{F}_p^2 : y = \theta x + b\}
\]
where $\theta, b\in \mathbb{F}_p^2$ and $\theta \neq 0$ (see Definition \ref{FpAffineLines}). One can show that distinct affine lines over $\mathbb{F}_p$ intersect in at most one point, and thus Proposition \ref{sec2:prop:STPrelim} also holds over $\mathbb{F}_p$. Furthermore, over $\mathbb{F}_p$, Proposition \ref{sec2:prop:STPrelim} is sharp! There are a few geometric lemmas one would need to quickly confirm to see this:
\begin{itemize}
    \item There are $p^2 + p$ unique affine lines over $\mathbb{F}_p^2$.
    \item Every affine line contains exactly $p$ points of $\mathbb{F}_p^2$.
\end{itemize}
Both of these follow from $p$ being prime. Hence, let $P = \mathbb{F}_p^2$ and $\mathcal L$ be any set of $p^2$ distinct affine lines (of which we know we have enough by the first bullet point). Since each line passes through exactly $p$-many points, we have 
\[
|\mathcal I(P,\mathcal L)| = \sum_{\ell \in \mathcal L} |\mathcal I(P,\ell)| = |\mathcal L| p = p^3,
\]
while Proposition \ref{sec2:prop:STPrelim} gives
\[
|\mathcal I(P,\mathcal L)| \lesssim (p^2)^{3/2} = p^3.
\]
Hence, up to a multiplicative constant, the Cauchy--Schwarz incidence bound \textit{cannot} be improved over $\mathbb{F}_p$ for arbitrary sets of points and lines. For a more detailed discussion of the geometry of finite fields, see Section \ref{BGanProofs}.

That said, Proposition \ref{sec2:prop:STPrelim} \textit{can} be improved over Euclidean space using the topology of $\R$ (namely, that $\R$ is ordered). The sharp bound for point-line incidences in $\R^n$ is given by the Szemer\'edi--Trotter theorem which states:

\begin{theorem}[Szemer\'edi--Trotter \cite{SzemerediTrotter}] \label{SZEMEREDITROTTER}
    Let $P\subset \mathbb{R}^n$ be a finite set of points, and $\mathcal L$ be a finite set of affine lines. Then, 
    \[
    |\mathcal I(P,\mathcal L)| \leq 4( |P|^{2/3}|\mathcal L|^{2/3} + |P| + |\mathcal L|).
    \]
\end{theorem}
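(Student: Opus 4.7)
The plan is to reduce the statement over $\R^n$ to the planar case, and then apply Sz\'ekely's proof via the crossing lemma, which strikes me as the cleanest way to get the Szemer\'edi--Trotter bound once the machinery is in place. For the reduction, since $P$ and $\mathcal{L}$ are finite, a generic linear map $\pi : \R^n \to \R^2$ is injective on $P$, sends distinct lines of $\mathcal{L}$ to distinct affine lines in $\R^2$ (no line collapses to a point and no two distinct lines are identified, each of these being a measure-zero condition on the space of projections), and preserves point-line incidences. Thus it suffices to prove the bound in $\R^2$, which I now assume.

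In the plane, I would build a drawn graph $G$ whose vertex set is $P$ and whose edges, for each line $\ell \in \mathcal{L}$ with $|P \cap \ell| \geq 2$, are the $|P \cap \ell| - 1$ consecutive segments joining adjacent points of $P \cap \ell$ along $\ell$. Writing $I := |\mathcal{I}(P, \mathcal{L})|$, the total number of edges satisfies $|E(G)| \geq I - |\mathcal{L}|$, since each line with $k \geq 1$ incidences contributes $\max\{k-1,0\}$ edges and thus at most $1$ to the deficit. On the other hand, two edges coming from different lines cross at most once (two distinct lines meet in at most one point), so the crossing number of the drawing obeys $\operatorname{cr}(G) \leq \binom{|\mathcal{L}|}{2} \leq |\mathcal{L}|^2/2$.

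Next, I would apply the crossing lemma: any graph drawn in $\R^2$ with $n$ vertices and $e \geq 4n$ edges has crossing number at least $e^3/(64 n^2)$. If $I - |\mathcal{L}| \geq 4|P|$, this gives
\[
\frac{(I - |\mathcal{L}|)^3}{64 |P|^2} \leq \operatorname{cr}(G) \leq \frac{|\mathcal{L}|^2}{2},
\]
so $I \leq |\mathcal{L}| + 32^{1/3} |P|^{2/3} |\mathcal{L}|^{2/3} \leq 4\bigl(|P|^{2/3}|\mathcal{L}|^{2/3} + |\mathcal{L}|\bigr)$, using $32^{1/3} < 4$. In the complementary regime $I - |\mathcal{L}| < 4|P|$, we have $I \leq 4|P| + |\mathcal{L}|$ directly. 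Combining the two cases yields the stated bound $I \leq 4(|P|^{2/3}|\mathcal{L}|^{2/3} + |P| + |\mathcal{L}|)$.

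The main obstacle in this approach is really the crossing lemma itself, which has to be invoked as a black box: its standard proof deletes each vertex of a drawn graph independently with probability $1-p$ and then applies the Euler-formula bound $|E| \leq 3|V| - 6$ for planar graphs, optimizing $p$ to trade the two resulting bounds. An alternative route, closer to Szemer\'edi and Trotter's original argument, partitions the plane into cells using cuttings (or polynomial partitioning) and bounds incidences cell-by-cell; this avoids graph theory but is combinatorially heavier. Given the expository aim of this chapter, I would favor Sz\'ekely's proof for its brevity and for the way it makes the essential role of planar topology---as opposed to the situation over $\mathbb{F}_p$ discussed earlier---transparent.
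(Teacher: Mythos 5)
Your proof is correct. The thesis deliberately omits the proof of the planar Szemer\'edi--Trotter theorem and instead points to two references---Sz\'ekely's crossing-number argument and the polynomial-partitioning proof in Guth's book---so there is no in-text proof to compare against line by line; you have filled in the gap via the Sz\'ekely route. Your reduction from $\R^n$ to $\R^2$ matches what the thesis proves in Corollary~\ref{genericprojCorollary0}, though you project in one step onto a generic $2$-plane rather than inducting one hyperplane at a time; both are fine, and yours is slightly more direct. In the planar argument, the edge-count lower bound $|E(G)| \ge I - |\mathcal L|$, the crossing upper bound by $\binom{|\mathcal L|}{2}$, and the invocation of the crossing lemma ($\mathrm{cr}(G) \ge e^3/(64n^2)$ when $e \ge 4n$) are all accurately stated, and your constant-tracking ($32^{1/3} < 4$, together with the split at $I - |\mathcal L| < 4|P|$) really does recover the explicit constant $4$ in the stated bound. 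One small remark on the reduction: for the upper bound you only need the one-sided fact that $p\in\ell$ forces $\pi(p)\in\pi(\ell)$ (automatic from linearity of $\pi$) together with injectivity of $\pi$ on $P$ and on $\mathcal L$; the stronger claim that a generic $\pi$ introduces no spurious incidences is also true, but it is not required to conclude.
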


\begin{remark}
    The constant 4 can be slightly improved upon, though we do not discuss this further here.
\end{remark}

Note that in the above bound, the terms $|P|$ and $|\mathcal L|$ are certainly needed given that you can have $|P|$-many points on a line, and $|\mathcal L|$-many lines through a point. There are a number of ways to prove the Szemer\'edi--Trotter theorem. One method, due to Sz\'ekely \cite{Szekely97}, uses graph theory and crossing numbers. There are a number of references which follow this graph theoretic approach, and a detailed proof can be found in an expository paper I mentored the writing of in 2024 \cite{primesSTpaper}. One can also prove the Szemer\'edi--Trotter theorem using polynomial partitioning, which can be found in Guth's \textit{Polynomial Methods in Combinatorics} \cite[Chapter 10]{GuthPolynomialMethods}. 

Given how fundamental the Szemer\'edi--Trotter theorem (Theorem \ref{SZEMEREDITROTTER}) is to this area of harmonic analysis and geometric measure theory, and thusly how often this result has been exposited in the literature, we omit the proof. However, there are a few equivalent versions of Theorem \ref{SZEMEREDITROTTER} whose statements and proofs (of equivalency) will be useful throughout. These equivalent statements regard \textit{rich points} and \textit{rich lines}, and we will spend the rest of the section on this \textit{rich} topic.

\begin{definition}\label{richpointsandlines}
Let $r\geq 2$. Given $\mathcal L$ is a finite set of affine lines in $\R^n$, we let $P_r(\mathcal L)$ denote the set of $r$-rich points:
\[
P_r(\mathcal L) := \{p \in \R^n: \text{$p$ lies on $\geq r$-many $\ell \in \mathcal L$}\}.
\]
Similarly, given $P \subset \R^n$ is finite, we let $\mathcal L_r(P)$ denote the set of $r$-rich lines:
\[
\mathcal L_r(P) := \{\ell \subset \R^n : \text{$\ell$ contains $\geq r$-many $p \in P$}\}.
\]
\end{definition}

\noindent Using this notation, we have the following proposition.

\begin{proposition}\label{STEquivalencies}
    The following statements are equivalent:
    \begin{enumerate}
        \item[a)] Theorem \ref{SZEMEREDITROTTER} in $\R^2$;
        \item[b)] Given a finite set of affine lines $\mathcal L$ in $\R^2$, for all $r\geq 2$ we have 
        \[
        |P_r(\mathcal L)| \lesssim \frac{|\mathcal L|^2}{r^3} + \frac{|\mathcal L|}{r};
        \]
        \item[c)] Given a finite set of points $P$ in $\R^2$, for all $r\geq 2$ we have
        \[
        |\mathcal L_r(P)| \lesssim \frac{|P|^2}{r^3} + \frac{|P|}{r}.
        \]
    \end{enumerate}
\end{proposition}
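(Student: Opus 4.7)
The plan is to prove two implications directly from Theorem \ref{SZEMEREDITROTTER}, namely (a) $\Rightarrow$ (b) and (a) $\Rightarrow$ (c), and then recover (a) from either (b) or (c) by a dyadic decomposition of the incidence count. In fact, since the roles of points and lines are symmetric in Theorem \ref{SZEMEREDITROTTER}, the equivalence (b) $\Leftrightarrow$ (c) will drop out essentially for free by exchanging the roles of $P$ and $\mathcal L$ in the arguments below, so the work is concentrated in showing (a) $\Leftrightarrow$ (b).

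For (a) $\Rightarrow$ (b), the idea is to apply Theorem \ref{SZEMEREDITROTTER} with the point set chosen to be $P_r(\mathcal L)$ itself. By definition each point of $P_r(\mathcal L)$ lies on at least $r$ lines of $\mathcal L$, so
\[
r\,|P_r(\mathcal L)| \;\leq\; |\mathcal I(P_r(\mathcal L),\mathcal L)| \;\lesssim\; |P_r(\mathcal L)|^{2/3}|\mathcal L|^{2/3} + |P_r(\mathcal L)| + |\mathcal L|.
\]
Using $r\ge 2$ I would absorb the second summand on the right into the left, leaving $r\,|P_r(\mathcal L)| \lesssim |P_r(\mathcal L)|^{2/3}|\mathcal L|^{2/3} + |\mathcal L|$. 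Splitting into the two cases according to which term on the right dominates and solving for $|P_r(\mathcal L)|$ produces exactly the two summands $|\mathcal L|^2/r^3$ and $|\mathcal L|/r$ appearing in (b). The proof of (a) $\Rightarrow$ (c) runs identically with $\mathcal L_r(P)$ in place of $P_r(\mathcal L)$.

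For (b) $\Rightarrow$ (a), I would write $r(p) := |\{\ell \in \mathcal L : p \in \ell\}|$ and decompose
\[
|\mathcal I(P,\mathcal L)| \;=\; \sum_{p \in P} r(p) \;=\; \sum_{j\ge 1}\bigl|\{p \in P : r(p)\geq j\}\bigr|,
\]
bounding each summand by $\min\{|P|,|P_j(\mathcal L)|\}$ and applying (b) for $j \geq 2$. Splitting the sum at the threshold $j^\ast \sim (|\mathcal L|^2/|P|)^{1/3}$ where the trivial bound $|P|$ balances against $|\mathcal L|^2/j^3$, the small-$j$ part and the $|\mathcal L|^2/j^3$ part of the large-$j$ sum each telescope to $\lesssim |P|^{2/3}|\mathcal L|^{2/3}$, matching the main term of Theorem \ref{SZEMEREDITROTTER}. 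The argument (c) $\Rightarrow$ (a) proceeds identically after swapping points and lines and summing over line richness.

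The step I expect to be most delicate is controlling the correction terms $|\mathcal L|/r$ in (b) and $|P|/r$ in (c) when passing back to (a). Summed naively over dyadic ranges of $j$, these contribute an extraneous logarithmic factor. To avoid this, I would invoke a dyadic pigeonhole on the richness \emph{before} summing: restrict attention to points whose richness lies in a single dyadic band $[2^k,2^{k+1})$ that captures a constant fraction of all incidences, so that the correction term contributes at most $|\mathcal L|$ (respectively $|P|$) in total rather than $|\mathcal L|\log|\mathcal L|$. Modulo this bookkeeping, the three statements are directly interchangeable up to the absolute constants hidden in $\lesssim$.
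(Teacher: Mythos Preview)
Your overall architecture matches the paper's: (a) $\Rightarrow$ (b) by applying Szemer\'edi--Trotter to the point set $P_r(\mathcal L)$, and (b) $\Rightarrow$ (a) by a dyadic decomposition in richness with the split at $j^\ast \sim (|\mathcal L|^2/|P|)^{1/3}$. You also correctly spot the danger that the secondary term $|\mathcal L|/j$ summed over dyadic $j$ costs a logarithm.

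The gap is in your fix for that logarithm. Pigeonholing onto a single dyadic richness band $[2^k,2^{k+1})$ does \emph{not} capture a constant fraction of the incidences: there are $\sim \log|\mathcal L|$ bands, so pigeonhole only guarantees a $1/\log|\mathcal L|$ fraction, and the factor you tried to remove reappears when you multiply back up. The paper avoids this differently: it caps the dyadic sum at richness $\sqrt{|\mathcal L|}$, observing that for $2 \le 2^j \le \sqrt{|\mathcal L|}$ the term $|\mathcal L|^2/2^{3j}$ dominates $|\mathcal L|/2^j$, so the secondary term never enters. The very rich points $P_+ = P_{\sqrt{|\mathcal L|}+1}(\mathcal L)$ are handled separately: by (b) there are $\lesssim \sqrt{|\mathcal L|}$ of them, and then the Cauchy--Schwarz bound of Proposition~\ref{sec2:prop:STPrelim} gives $|\mathcal I(P_+,\mathcal L)| \lesssim |\mathcal L|$ directly, with no log.

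One smaller difference: for (b) $\Leftrightarrow$ (c) the paper uses point-line duality (Definition~\ref{point-lineduality}) explicitly rather than appealing to the symmetry of Theorem~\ref{SZEMEREDITROTTER}. Your route via symmetry is fine for the bare equivalence, but the duality map is what the paper later reuses (e.g.\ for dual Furstenberg sets), so it is worth making explicit.
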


We spend the rest of this section sketching the equivalency of the above statements. A complete proof, on which this sketch is based, can be found in Sheffer's \textit{Polynomial Methods and Incidence Theory} \cite[Chapter 1]{ShefferPolynomialMethods}.

\begin{proof}[Proof Sketch of Proposition \ref{STEquivalencies}]
    We first show (a) implies (b). Let $P_r := P_r(\mathcal L)$. If $r$ is sufficiently small (e.g., on the same order of magnitude as the implicit constant in Theorem \ref{SZEMEREDITROTTER}), then (b) follows immediately from noting that there are $\binom{|\mathcal L|}{2}\sim |\mathcal L|^2$ intersection points in a set of $|\mathcal L|$ lines. Thus, we may assume $r$ is sufficiently large and consider $|\mathcal I(P_r, \mathcal L)|$. Given each point in $P_r(\mathcal L)$ lies in at least $r$-many lines in $\mathcal L$, it follows that 
    \[
    |P_r| r \leq |\mathcal I(P_r,\mathcal L)|.
    \]
    By the Szemer\'edi--Trotter theorem we have 
    \[
    |P_r| r \leq |\mathcal I(P_r,\mathcal L)| \lesssim |P_r|^{2/3} |\mathcal L|^{2/3} + |P_r| + |\mathcal L|.
    \]
    Given $r$ is sufficiently large, $|P_r|$ is never the dominating term on the right hand side. If first term is dominant, it follows that $|P_r(\mathcal L)|\lesssim \frac{|\mathcal L|^2}{r^3}$, and if the third term is dominant then $|P_r(\mathcal L)|\lesssim \frac{|\mathcal L|}{r}$. Hence, we have
    \[
    |P_r(\mathcal L)| \lesssim \frac{|\mathcal L|^2}{r^3} + \frac{|\mathcal L|}{r}.
    \]

    We now loosely sketch how (b) implies (a). Fix a set of $n$-many points $P$ and $m$-many affine lines $\mathcal L$. If $n$ is sufficiently large ($n\gg m^2$), then Szemer\'edi--Trotter follows immediately from the bound of $|P|$ in Proposition \ref{sec2:prop:STPrelim}. Hence, we may assume $n \lesssim m^2$. 
    
    We then dyadically decompose our points in $\mathcal P$ based on how rich they are. In particular, let $n_j$ denote the number of points of $P$ that lie on at least $2^j$-many points of $\mathcal L$ and fewer than $2^{j+1}$, and let $P_+$ denote the set of points of $P$ that are $(\sqrt{m} +1)$-rich. Then, it follows that 
    \[
    |\mathcal I(P,\mathcal L)| \leq \sum_{j = 0}^{(\log n)/2} n_j 2^{j+1} + \mathcal |\mathcal I(P_+, \mathcal L)|.
    \]
    
    Naively plugging in the upperbound of $n$ for each $n_j$ into the above inequality won't give the desired bounds, however we can adapt the idea accordingly. Let $k = \lceil \log(m^{2/3}/n^{1/3})\rceil$ and note 
    \begin{equation}\label{ch2eq1}
    \sum_{j = 0}^{(\log n)/2} n_j 2^{j+1} = \sum_{j = 0}^{k} n_j 2^{j+1} + \sum_{j = k+1}^{(\log n)/2} n_j 2^{j+1}.
    \end{equation}
    Plugging in $n_j \leq n$ into the first sum in \eqref{ch2eq1} gives 
    \[
    \sum_{j=0}^k n_j 2^{j+1} \leq \sum_{j=0}^{k} n2^{j+1} \lesssim n(m^{2/3}/n^{1/3})
    \sim n^{2/3} m^{2/3}.
    \]
    Note that we apply this naive bound to the \textit{first sum} as we cannot apply the $r$-rich points bound. However, for the second sum in \eqref{ch2eq1}, since $2\leq j \leq (\log n)/2$, it follows by ii) that $n_j \lesssim \frac{m^2}{2^{3j}}$. Applying this bound to the second sum similarly gives the bound of $\lesssim n^{2/3} m^{2/3}$. Lastly, by (b), we have $|P_+| \lesssim \sqrt{m}$ and thus applying Proposition \ref{sec2:prop:STPrelim} again gives $|\mathcal I(P_+,\mathcal L)| \lesssim m$. These three bounds complete the proof that (b) implies (a).

    Lastly, we show that (b) is equivalent to (c) by utilizing \textit{point-line duality} (which is, to be clear, the main way we are using that we are in $\R^2$). 
    
    \begin{definition}[Point-Line Duality] \label{point-lineduality}
    Let $p = (p_1,p_2) \in \R^2$ and $\ell = mx+b$. We define the \textit{dual} of $p$, denoted $\mathbf{D}(p)$, to be the line given by $y = p_1 x - p_2$. Similarly, we define the \textit{dual} of $\ell$, denoted $\mathbf{D}^\ast(\ell)$, to be the point $(m,-b)$.
    \end{definition}

    In particular, the notion of point-line duality is encapsulated in noting: $p$ is incident to $\ell$ if and only if $\mathbf{D}^\ast(\ell)$ is incident to $\mathbf{D}(p)$. Hence, it follows that $|\mathcal I(P,\mathcal L)| = |\mathcal I(\mathbf{D}^\ast(\mathcal L), \mathbf{D}(P))|$ and $p \in P_r(\mathcal L)$ if and only if $\mathbf{D}(p) \in \mathcal L_r(\mathbf{D}^\ast(\mathcal L))$. Applying point-line duality to (b) gives (c) and vice versa.
\end{proof}

We briefly note that the above statements hold in higher dimensions.

\begin{corollary}\label{genericprojCorollary0}
    Theorem \ref{SZEMEREDITROTTER}, and statements Proposition \ref{STEquivalencies}.(b) and (c) hold in $\R^n$ for all $n \geq 2$.
\end{corollary}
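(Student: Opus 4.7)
The plan is to deduce both the Szemer\'edi--Trotter bound and the rich-point/rich-line statements in $\R^n$ from their $\R^2$ analogues by means of a generic linear projection. Fix a finite set of points $P$ and a finite set of affine lines $\mathcal L$ in $\R^n$. I will choose a 2-plane $V\subset \R^n$ in the complement of a lower-dimensional subvariety of the Grassmannian $\mathcal G(n,2)$ and show that the orthogonal projection $\pi_V:\R^n\to V\cong \R^2$ satisfies:
\begin{enumerate}
\item[(i)] $\pi_V$ is injective on $P$;
\item[(ii)] $\pi_V(\ell)$ is an affine line for every $\ell\in\mathcal L$, and the induced map $\mathcal L\to\{\text{affine lines in } V\}$ is injective;
\item[(iii)] for every $p\in P$ and $\ell\in\mathcal L$, $p\in\ell$ if and only if $\pi_V(p)\in\pi_V(\ell)$.
\end{enumerate}
Granting such a $V$, we have $|\pi_V(P)|=|P|$, $|\pi_V(\mathcal L)|=|\mathcal L|$, and $|\mathcal I(P,\mathcal L)|=|\mathcal I(\pi_V(P),\pi_V(\mathcal L))|$. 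More generally, a point $p\in \R^n$ is $r$-rich for $\mathcal L$ iff $\pi_V(p)$ is $r$-rich for $\pi_V(\mathcal L)$, and analogously for $r$-rich lines. Applying Theorem \ref{SZEMEREDITROTTER} and the $\R^2$ assertions of Proposition \ref{STEquivalencies} inside $V$ then yields the $\R^n$ conclusions.

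To verify that a generic $V$ satisfies (i)--(iii), I will argue that each condition fails only on a finite union of proper algebraic subvarieties of $\mathcal G(n,2)$, which is a null set with respect to the natural rotation-invariant measure. For (i), $\pi_V(p)=\pi_V(p')$ for distinct $p,p'\in P$ forces $p-p'\in V^\perp$; for each fixed pair this is a codimension-$1$ condition on $V$. For the first half of (ii), $\pi_V(\ell)$ collapses to a point iff the direction vector of $\ell$ lies in $V^\perp$, again codimension $1$ per line. For the second half of (ii), if $\ell_1\neq\ell_2$ have affine hull $U$ of dimension $d\in\{2,3\}$, then $\pi_V(\ell_1)=\pi_V(\ell_2)$ forces $\dim(U\cap V^\perp)\geq d-1$, whereas the generic intersection dimension is $\max(0,d-2)$, so this is a positive-codimension condition per pair. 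Finally, for (iii), the linearity of $\pi_V$ always preserves existing incidences, so we need only exclude \emph{new} ones: for each $p\notin\ell$ with $\ell=\{a+tv:t\in\R\}$, the condition $\pi_V(p)\in\pi_V(\ell)$ is equivalent to $V^\perp$ meeting the affine line $\{p-a-tv:t\in\R\}$ (which does not pass through the origin since $p\notin\ell$), again a proper subvariety condition. A finite union of such subvarieties remains lower-dimensional, so a generic $V$ satisfies all conditions simultaneously.

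The main obstacle is the injectivity-on-lines condition in (ii) when $n\geq 4$: for two skew lines $\ell_1,\ell_2$, the affine hull $U$ has dimension $3$ and the preimage cylinder $\pi_V^{-1}(\pi_V(\ell_1))$ has dimension $n-1\geq 3$, so it can genuinely contain $\ell_2$ for non-generic choices of $V^\perp$. The required codimension count---that the $(n-2)$-plane $V^\perp$ intersects the $3$-plane $U$ in a subspace of dimension at most $1$, rather than being forced up to a $2$-dimensional intersection---still describes a proper subvariety of $\mathcal G(n,2)$, but the bookkeeping requires stratifying the finitely many line pairs according to the dimension of their affine hull before taking the union of bad loci. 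Once that stratification is in place, the argument reduces cleanly to the planar statements already established.
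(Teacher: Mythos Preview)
Your proposal is correct and uses the same core idea as the paper---a generic linear projection that preserves points, lines, and incidences---but with a minor difference in execution. The paper argues by induction on $n$, projecting onto a generic hyperplane $V\in\mathcal G(n,n-1)$ at each step; because $V^\perp$ is then a single line, the bad locus (directions lying in the finite set $S(P)$, directions of lines in $\mathcal L$, etc.) is literally finite, so the genericity argument is immediate. You instead project in one shot to a $2$-plane, which forces you to track positive-codimension subvarieties of $\mathcal G(n,2)$ rather than finitely many bad choices. Both work; the paper's inductive route trades a short induction for a simpler bad-locus analysis, while your route avoids induction at the cost of the Grassmannian bookkeeping you flagged in your final paragraph.
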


\begin{proof}
    We proceed by induction on $n$, with the base case of $n=2$ being covered by Proposition \ref{STEquivalencies}. Hence, suppose the statement holds for $n-1\geq 2.$

    To prove the statement for $n$, we project onto a generic $(n-1)$-plane in $\R^n$. We show why this holds for Theorem \ref{SZEMEREDITROTTER}, and the remaining two statements follow nearly identically.
    
    Given a $(n-1)$-dimensional subspace $V$ through the origin in $\R^n$, let $P_V: \R^n \to \R^{n-1}$ denote orthogonal projection onto $V$. Let $P$ be a finite set of points in $\R^n$ and $\mathcal L$ be a finite set of affine lines in $\R^n$. Then, for all by finitely many $V$, $P_V(P)$ is a set of $|P|$-many \textit{distinct} points, and $P_V(\mathcal L)$ is a finite set of $|\mathcal L|$-many \textit{distinct} lines. To see this for $P$, note that the $V$ for which this fails satisfy $V^\perp \in S(P)$, the direction set of $P$:
    \[
    S(P) := \left\{\frac{x-y}{|x-y|} : x,y\in P, x\neq y\right\} \subset \mathbb{S}^{n-1}.
    \]
    Similarly, for all but finitely many $V$, $P_V(p)$ lies on $P_V(\ell)$ if and only if $p$ lies on $\ell$. Thus, for all but finitely many $(n-1)$-dimensional subspaces $V$,
    \[
    |\mathcal I(P,\mathcal L)| = |\mathcal I(P_V(P),P_V(\mathcal L))|.
    \]
    Applying the inductive hypothesis gives the desired result.
\end{proof}

As we noted before the proof of Proposition \ref{STEquivalencies}, the ideas above will be utilized throughout the thesis. As such, we conclude with the key ideas and how we will utilize them later:
\begin{itemize}
    \item Point-line duality will be useful in discussing the notion of dual Furstenberg sets and a theorem of myself, Fu, and Ren \cite{BrightFuRen}. See Section \ref{sec:DualFurstenberg}.
    \item In the continuum setting, a number of our desired Hausdorff dimension statements will not immediately follow from the analogous $k$-planar statement and a generic projection onto a $k$-dimensional subspace, which will make a number of radial projection and Beck-type statements more challenging to prove.
    \item Lastly, the dyadic decomposition proof method will be utilized in proving Beck's theorem. There are two things worth noting now in this regard. Firstly, Beck's theorem can be obtained as a \textit{corollary} of the Szemer\'edi--Trotter theorem. In fact, the original papers of Beck \cite{Beck83} and Szemer\'edi--Trotter \cite{SzemerediTrotter} were released in the same year and journal! Secondly, while Beck's theorem can be obtained as a corollary, this wasn't how Beck originally proved his result. In particular, Beck utilized an $\epsilon$-improvement method which inspired the proof methodology of Orponen--Shmerkin--Wang \cite{OSW}! See Section \ref{SEC-BECKTYPE}.
\end{itemize}

\section{An Intro to Hausdorff Dimension} \label{sec:HausdorffDim}

Broadly speaking, in \textit{fractal} geometry, Hausdorff dimension is a notion of ``size'' that is in some sense more refined than the notion of Lebesgue measure. As such, we begin with recalling the definition of Lebesgue measure (on $\R^n$) and explain why Hausdorff dimension is an essential tool for detecting the size of a set in geometric measure theory.

\begin{definition}[Lebesgue measure]
    Let $A\subset \R^n$ be Borel. Then, we define the \textit{Lebesgue measure} of $A$ to be
    \[
    \mathcal L^n(A) := \inf \left\{ \sum_i \omega_nr_i^n : A\subset \bigcup_i B(x_i,r_i) \right\}
    \]
    where $\{B(x_i,r_i)\}_i$ is a countable collection of open balls and $\omega_n$ is the volume of a ball of radius 1 in $\R^n$.
\end{definition}

\begin{remark}
    Note that the main aspect of $\R^n$ we used here was the ``dimension'' of the ambient space, $n$, and the notion of open balls with respect to the Euclidean metric. As such, Lebesgue measure can be defined on a wide variety of ``nice enough'' (e.g. complete and separable) metric spaces, and for the purposes of this thesis we will only consider such spaces.
\end{remark}

When it comes to Hausdorff dimension, the sets we will explore will have \textit{null} Lebesgue measure. One such class of sets are \textit{Kakeya sets}---an important class of sets in harmonic analysis and geometric measure theory.

\begin{definition}[Kakeya sets] \label{def:kakeya}
    Let $K \subset \R^n$ be compact. We say $K$ is a \textit{Kakeya set} if for every direction $\theta \in \mathbb{S}^{n-1}$, there exists a unit line segment $\overline{\ell_\theta}$ in the direction of $\theta$ such that $\overline{\ell_\theta} \subset K$.
\end{definition}

One simple but motivating example of a Kakeya set is the ball of radius $1/2$, which has positive Lebesgue measure. Is it the case that all Kakeya sets have positive measure? No! In 1928, Besicovitch \cite{Besicovitch17} constructed a Kakeya set with null Lebesgue measure. Thus it's natural to ask:

\begin{question}\label{q:vagueKakeya}
How small can a Kakeya set \textit{be}? 
\end{question}
This is where Hausdorff dimension comes in.

\begin{definition}[Hausdorff measure and dimension]
    Let $(X,d)$ be a separable metric space, $A\subset X$, and $\de>0$. Then, we define \begin{itemize}
    \item the \textit{$s$-dimensional Hausdorff content at scale $\de$} via
    \[
    \mathcal H^s_\de(A) = \inf \left\{\sum_i r_i^s : A\subset \bigcup B_i \text{ such that } r_i:= \mathrm{diam}(B_i) < \de\right\},
    \]
    \item the \textit{$s$-dimensional Hausdorff measure} of $A$ as  $\mathcal H^s(A) = \lim\limits_{\de \searrow 0} \mathcal H^s_\de(A)$, and
    \item the \textit{Hausdorff dimension} of $A$ as 
    \[
    \dim A = \inf\{s : \mathcal H^s(A) = 0\} = \sup\{s : \mathcal H^s(A) = \infty\}.
    \]
    \end{itemize}
\end{definition}

\begin{remark}
    Note that the main way that we are using $(X,d)$ is separable is in knowing that there exists a countable cover of $A$ by balls of radius $< \de$ for all $\de>0$. In particular, if $(X,d)$ is not separable, there can exist a set $A\subset X$ without such countable coverings, in which case $\mathcal H^s(A) = \infty$ for all $s$. We will only consider separable metric spaces in this thesis.
\end{remark}

Notice that on $\R^n$ with the Euclidean metric, $\mathcal H^n$ is, up to some constant, equivalent to $\mathcal L^n$. Furthermore, as one should expect, $\dim \R^n = n$, and thus if $A\subset \R^n$ we have $\dim A \leq n$. Hence, for any Borel set $A\subset \R^n$ with positive Lebsegue measure, $\dim A = n$. However, it is \textit{not} the case that having full dimension implies having positive Lebesgue measure. We quickly sketch how to see this over $\R$. 

Firstly, one can note that there exists Cantor sets with any fractional Hausdorff dimension of the form $\frac{\log j}{\log k}$ for integers $j\geq 1$ and $k\geq 2$. For example, it is a standard exercise to show the standard middle-thirds Cantor set $\mathcal C \subset \R$ has $\dim \mathcal C = \frac{\log 2}{\log 3}$, see \cite[Example 2.7]{FalconerBook}. Furthermore, Hausdorff dimension interacts nicely with countable unions.

\begin{lemma}\label{countablestability}
Let $(X,d)$ be a separable metric space. Then, given a countable collection of Borel sets $A_i \subset X$,
\[
\dim \left(\bigcup_i A_i\right) = \sup_i \dim A_i.
\]
\end{lemma}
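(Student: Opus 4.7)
The plan is to prove the two inequalities separately. For the easy direction ($\geq$), monotonicity of $\mathcal H^s$ under set inclusion gives, for every index $j$, that $\mathcal H^s(A_j) \leq \mathcal H^s\!\left(\bigcup_i A_i\right)$ for every $s \geq 0$ and every scale $\delta > 0$. Consequently $\dim A_j \leq \dim\left(\bigcup_i A_i\right)$ for each $j$, and taking the supremum over $j$ yields $\sup_i \dim A_i \leq \dim\left(\bigcup_i A_i\right)$. Monotonicity itself is immediate from the infimum definition of $\mathcal H^s_\delta$: any $\delta$-cover of a superset is a $\delta$-cover of the subset.

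For the nontrivial direction ($\leq$), the key tool is countable subadditivity of $\mathcal H^s$, which I would establish first. Fix $s \geq 0$ and $\delta > 0$, let $\varepsilon > 0$, and for each $i$ select a $\delta$-cover $\{B_{i,j}\}_j$ of $A_i$ with $\sum_j \mathrm{diam}(B_{i,j})^s \leq \mathcal H^s_\delta(A_i) + \varepsilon 2^{-i}$. The union $\{B_{i,j}\}_{i,j}$ is a countable $\delta$-cover of $\bigcup_i A_i$ (here is where separability enters, ensuring the doubly-indexed collection remains a legitimate countable cover), and summing gives
\[
\mathcal H^s_\delta\!\left(\bigcup_i A_i\right) \leq \sum_i \mathcal H^s_\delta(A_i) + \varepsilon.
\]
Sending $\varepsilon \to 0$ and then $\delta \searrow 0$, together with the monotone convergence of $\mathcal H^s_\delta$ as $\delta \searrow 0$, yields $\mathcal H^s\!\left(\bigcup_i A_i\right) \leq \sum_i \mathcal H^s(A_i)$.

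With subadditivity in hand, fix any $s > \sup_i \dim A_i$. Then $s > \dim A_i$ for every $i$, so by the definition of Hausdorff dimension $\mathcal H^s(A_i) = 0$ for all $i$. Subadditivity now gives $\mathcal H^s\!\left(\bigcup_i A_i\right) \leq \sum_i 0 = 0$, hence $\dim\!\left(\bigcup_i A_i\right) \leq s$. Taking the infimum over all such $s$ yields $\dim\!\left(\bigcup_i A_i\right) \leq \sup_i \dim A_i$, completing the proof.

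The main (mild) obstacle is the interchange of limits in establishing countable subadditivity: one must be careful to pass to $\varepsilon \to 0$ before $\delta \searrow 0$, and to justify why $\sum_i \mathcal H^s_\delta(A_i) \to \sum_i \mathcal H^s(A_i)$ is fine for the argument (a monotone convergence/Fatou observation suffices since $\mathcal H^s_\delta \nearrow \mathcal H^s$ as $\delta \searrow 0$). Everything else is direct unpacking of the definitions given in the excerpt.
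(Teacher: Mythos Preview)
Your proof is correct and is exactly the standard argument; the paper itself states Lemma~\ref{countablestability} without proof, so there is nothing to compare against. One small note: your parenthetical about separability is slightly misplaced --- the doubly-indexed family $\{B_{i,j}\}_{i,j}$ is automatically countable as a countable union of countable sets; separability is used earlier (as the paper's remark following the definition of Hausdorff measure indicates) to guarantee that each $A_i$ admits a countable $\delta$-cover in the first place, so that $\mathcal H^s_\delta(A_i)$ is finite rather than $+\infty$.
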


\begin{remark}
    This property is referred to as the \textbf{countable stability} of Hausdorff dimension. It is perhaps interesting to note that this property is what makes Hausdorff dimension so nice; there are other notions of dimension for which this statement fails. For example, for those familiar with Minkowski dimension, one can prove that the set 
    \[
    F = \left\{0, 1,\frac{1}{2}, \frac{1}{3}, \frac{1}{4},\cdots \right\}\subset \R
    \]
    has Minkowski dimension $\frac{1}{2}$ whereas $\dim F = 0$, see \cite[Example 3.5]{FalconerBook}.
\end{remark}

Hence, we can let $A_i \subset \R$ be Cantor sets with Hausdorff dimension $\frac{\log i}{\log(i+1)}$ for all $i\geq 1$ and let $A= \bigcup_i A_i$. By the definition of Hausdorff dimension, it follows that $\mathcal H^1(A_i) = 0$ for all $i\geq 1$. Thus, while $\dim A = \sup_i \frac{\log i}{\log(i+1)} = 1$ by Lemma \ref{countablestability}, by countable subadditivity of measures
\[
\mathcal H^1(A) \leq \mathcal \sum_i \mathcal H^1(A_i) = 0.
\]
This example shows that having full Hausdorff dimension does not imply having positive measure. In this way, Hausdorff dimension is a more refined notion of size in comparison to Lebesgue measure. 

As such, when it comes to Kakeya sets and answering Question \ref{q:vagueKakeya}, we have the following major open conjecture.

\begin{conjecture}[Kakeya Conjecture] \label{kakeyaconj}
    Every Kakeya set $K \subset \R^n$ has \[\dim K = n.\]
\end{conjecture}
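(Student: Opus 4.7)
Note that this statement is a famous open problem for $n \geq 3$, so any proof proposal is necessarily aspirational. Working within the projection-theoretic framework developed in this thesis, the plan breaks naturally into the planar case (where a complete argument is within reach) and the higher-dimensional case (which contains the main obstacle).

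For $n = 2$ (a result originally due to Davies), the plan is to recognize a Kakeya set as a $(1,1)$-Furstenberg set in the sense of Section \ref{sec:Furstenberg}. Given a Kakeya set $K \subset \R^2$, for each $\theta \in \mathbb{S}^1$ let $\ell_\theta \in \mathcal{A}(2,1)$ be the affine line extending the unit segment $\overline{\ell_\theta} \subset K$ guaranteed by Definition \ref{def:kakeya}, and form the family $\mathcal{L} := \{\ell_\theta : \theta \in \mathbb{S}^1\}$. The map $\theta \mapsto \ell_\theta$ is locally Lipschitz away from any single direction, so the image family has $\dim \mathcal{L} \geq 1$. Moreover, each $\ell \in \mathcal{L}$ meets $K$ in a set containing a unit segment, so $\dim(K \cap \ell) \geq 1$. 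Hence $K$ is a $(1,1)$-Furstenberg set, and the sharp $(s,t)$-Furstenberg estimate of Orponen--Shmerkin and Ren--Wang, evaluated at $(s,t)=(1,1)$, yields $\dim K \geq 2$.

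For $n \geq 3$ the analogous plan is to regard $K$ as a higher-dimensional Furstenberg-type set: its associated family of lines has Hausdorff dimension $n - 1$ (it parametrizes $\mathbb{S}^{n-1}$ up to antipodal identification), and each such line meets $K$ in a set of dimension at least $1$. One would then hope for an estimate of the shape $\dim K \geq \min(1 + (n-1), n) = n$. The main obstacle is precisely that no sharp higher-dimensional Furstenberg (or equivalently Kakeya-maximal) estimate of this form is known. Unconditional progress has proceeded via Bourgain's bush and Wolff's hairbrush arguments, the Katz--Tao arithmetic method, the polynomial/slicing refinements of Katz--{\L}aba--Tao, and, for $n = 3$, the very recent resolution due to Wang and Zahl; but none of these techniques currently extend to $n \geq 4$.

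A realistic partial proposal in that range is to try to bootstrap the planar argument by slicing: project $K$ onto generic $2$-planes, record how much of the direction set $\mathbb{S}^{n-1}$ survives in each slice, and combine a two-dimensional Furstenberg input with exceptional-set estimates for orthogonal projections of the type surveyed in Section \ref{sec:ESE}. The step I expect to be the true obstruction is exactly the one that has blocked this program for decades: controlling the joint dimension of the slice $K \cap V$ and the induced planar direction set finely enough to rule out $\dim K < n$, without losing an $\varepsilon$-factor at every scale in the multiscale analysis. Absent a genuinely new projection-theoretic input beyond the tools assembled in Chapter \ref{ch:Topics}, this step cannot presently close the gap, which is why the conjecture remains open.
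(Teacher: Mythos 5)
You have correctly recognized that the labeled statement is a \emph{conjecture}, not a theorem: the paper offers no proof, explicitly records that the Kakeya conjecture is open for $n \geq 4$, and cites Davies (1971) and Wang--Zahl (2025) for the only resolved cases $n=2,3$. There is thus no proof in the paper to compare against, and your proposal is rightly framed as aspirational.

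Your $n=2$ argument, however, contains a step that does not hold as stated. You claim that ``the map $\theta \mapsto \ell_\theta$ is locally Lipschitz away from any single direction,'' and use this to conclude $\dim \mathcal{L} \geq 1$. But Definition \ref{def:kakeya} only guarantees, for each $\theta$, the \emph{existence} of some unit segment in direction $\theta$ inside $K$; nothing forces the selection $\theta \mapsto \ell_\theta$ to be continuous, let alone Lipschitz, and for a typical Besicovitch-type construction it will be wildly discontinuous. Indeed, a Lipschitz selection would make $\bigcup_\theta \ell_\theta$ a Lipschitz image of a $2$-parameter family and you would nearly be proving positive measure, which Besicovitch's example rules out. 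The correct and much softer reason that $\dim\mathcal{L} \geq 1$ is the one implicit in the paper's remark that a Kakeya set is a $(1,n-1)$-Furstenberg set: the direction map $\mathcal{A}(2,1) \to \mathcal{G}(2,1)$ sending an affine line to its direction is Lipschitz, it restricts to a surjection $\mathcal{L} \twoheadrightarrow \mathcal{G}(2,1)$ since $K$ has a segment in every direction, and Lipschitz maps do not increase Hausdorff dimension, so $\dim\mathcal{L} \geq \dim\mathcal{G}(2,1) = 1$. (One also owes a word on Borel measurability of the selected family $\mathcal{L}$, which a standard measurable-selection argument handles.) With that repair, evaluating Theorem \ref{FURSTENBERG} at $(s,t)=(1,1)$ does give $\dim K \geq 2$ — though note this is substantial overkill: already Wolff's 1999 bound (Theorem \ref{WolffsBounds}), or even the elementary $s + \min\{s,t\}$ bound of Theorem \ref{ElementaryFurstenberg}, gives $\dim K \geq 2$ at $(1,1)$, and Davies' original 1971 argument is more elementary still. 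Your discussion of the $n \geq 3$ obstruction — that no sharp higher-dimensional Furstenberg or Kakeya-maximal input is available beyond $n=3$, and that slicing plus exceptional-set estimates loses $\varepsilon$-factors at every scale — is an accurate description of why the conjecture remains open.
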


\begin{remark}
    Of course, if $\mathcal H^n(K)>0$ this statement is trivial by the definition of Hausdorff dimension.
\end{remark}

One \textit{very} heuristic way to think about this conjecture is by parameter counting: we have an $(n-1)$-dimensional set of directions $\theta \in \mathbb{S}^{n-1}$, and a $1$-dimensional line segment $\ell_\theta$ in each direction. However, don't be fooled by the simplicity of this heuristic. The conjecture has only been resolved in dimensions 2 and 3---in $\R^2$ by Davies in 1971 \cite{Davies71} and in $\R^3$ by Wang--Zahl in 2025 over 50 years later \cite{WangZahlKakeya}. For a detailed background on the Kakeya conjecture, see \cite{Wolff99,KatzTaoKakeyaSurvey} as well as references within Wang and Zahl's paper.

We included the brief discussion of Kakeya sets here for a few reasons. Firstly, though we will not be discussing these sets and the tools analysts have developed to study them much further herewithin, let us suffice it to say that the study of Kakeya sets is of great importance in harmonic analysis and fractal geometry. Secondly, for the purposes of this thesis, some of the ideas seen in the above discussion will lend themselves nicely to the following chapter on projection theory. In short (over $\R^2$):

\begin{itemize}
    \item \textbf{Orthogonal projections}: We will consider \textit{exceptional set estimates}---size estimates on the set of subspaces $\theta$ such that orthogonal projection of a set onto the line in direction $\theta$ is \textit{smaller} than ``expected.'' By ``expected'' here, we mean that such \textit{exceptional} directions are quite rare, and in particular have null Lebesgue measure on $\mathbb{S}^1$. Hence, we will need Hausdorff dimension as our corresponding notion of size.
    \item \textbf{Furstenberg sets}: We will discuss a fractal version of Kakeya sets, where instead of taking a line segment in every direction $\theta \in \mathbb{S}^1$, for all $\ell$ in a $\geq t$-dimensional set of affine lines $\mathcal L$, we will take a $s$-dimensional subset of $\ell$. The question here being: for each $s$ and $t$, how small can such an ``$(s,t)$-Furstenberg set'' be?
    \item \textbf{Radial projections}: We will similarly discuss how radial projection maps distort the Hausdorff dimension of the set you are projecting. In particular, in this final section of Chapter \ref{ch:Topics} we will highlight how ideas from orthogonal projections and Furstenberg sets were used in the study of radial projections and vice versa.
\end{itemize}

\noindent As we can see from the above, we will need to define Hausdorff dimension, and thus metrics, on the sphere, subspaces of $\R^n$, and affine planes in $\R^n$. We define the necessary metric spaces now and give the main things to note about each for the purposes of the thesis.\footnote{For a more detailed exposition of these metric spaces, see \cite[Chapter 3]{Mattila95}.} Let $n\geq 2$ and $1\leq k \leq n-1.$

\begin{itemize}
    \item \textbf{The Grassmannian} $\mathcal G(n,k)$. We let $\mathcal G(n,k)$ denote the Grassmannian: the set of $k$-dimensional linear subspaces in $\R^n$. It is a standard exercise in manifold theory to show that $\mathcal G(n,k)$ is a compact smooth $k(n-k)$-dimensional manifold. In particular, $\mathcal G(n,k)$ is a separable metric space, and in fact its metric can be defined via orthogonal projections! Recall, given $V\in \mathcal G(n,k)$, let $P_V:\R^n \to \R^k$ denote the orthogonal projection onto $V$. Then, the metric on $\mathcal G(n,k)$ is given by \[d_{\mathcal G}(V,V') = \lVert P_V - P_{V'} \rVert_{\mathrm{op}} := \sup_{|x| = 1,x\in \R^n} |(P_V-P_{V'})x|.\] For intuition purposes, it may be helpful to think of $\mathcal G(n,1)$ (lines through the origin) as $\mathbb{S}^{n-1}$ up to antipodal points (as such linear subspaces intersect the unit sphere at antipodal points). Similarly, one can think of $V\in \mathcal G(n,n-1)$ (a hyperplane through the origin) in terms of its orthogonal subspace $V^\perp \in \mathcal G(n,1).$ We let $\gamma_{n,k}$ denote the invariant probability measure on $\mathcal G(n,k)$.

    \item \textbf{The Affine Grassmannian} $\mathcal A(n,k)$. We let $\mathcal A(n,k)$ denote the \textit{affine Grassmannian}: the set of $k$-dimensional affine planes in $\R^n$. Every affine flat $U\in \mathcal A(n,k)$ can be identified as a subspace $W\in \mathcal G(n,k)$ and a unique translation vector $x \in W^\perp \simeq \mathbb{R}^{n-k}$ where $U = W + x$. As such, it should be unsurprising that $\mathcal A(n,k)$ is a $\dim \mathcal G(n,k) + \dim \mathbb{R}^{n-k} = (k+1)(n-k)$-dimensional smooth manifold with metric:
    \[
    d_{\mathcal A}(U,U') = \lVert P_W - P_{W'} \rVert_{\mathrm{op}} + \lvert x- x' \rvert,
    \]
    where again $U = W + x$ and $U' = W'+x'$. We let $\lambda_{n,k}$ denote the invariant measure on $\mathcal A(n,k)$.
    
    \item \textbf{The Unit Sphere} $\mathbb{S}^{n-1}$. The unit sphere is a standard $(n-1)$-dimensional manifold, with the distance between points on the sphere being given by geodesic distance. The main reason we note the unit sphere here is that, as we mentioned on the previous bullet point, $\mathcal G(n,1)$ can be identified with $\mathbb{S}^{n-1}/\pm$. In fact, this has natural consequences in regards to Hausdorff dimension. In particular, consider $\mathcal H^s_{\mathcal G}$ is the $s$-dimensional Hausdorff measure defined on $\mathcal G(n,1)$ and similarly consider $\mathcal H^s_{\mathbb{S}}$. Then, given $A\subset \mathbb{S}^{n-1}$, it follows that $\mathcal H^s_{\mathcal G}(A/\pm) \sim \mathcal H^s_{\mathbb{S}}(A)$. The same is true in the converse direction. In particular, let $\ell_\theta \in \mathcal{G}(n,1)$ denote a line in direction $\theta \in \mathbb{S}^{n-1}$. Then, given $B\subset \mathcal{G}(n,1)$, let $B^\ast = \{\pm\theta : \ell_\theta\in B\}$. Then, $\mathcal H^s_{\mathcal G}(B) \sim \mathcal H^s_{\mathbb{S}}(B^\ast)$. In particular, as one should expect intuitively, an $s$-dimensional subset of the sphere can be naturally identified with an $s$-dimensional subset of the Grassmannian in a geometrically intuitive way. We let $\sigma^{n-1}:= \sigma$ denote the invariant probability measure on $\mathbb{S}^{n-1}$.
\end{itemize}

For our purposes, there are two main things of note. Firstly, each of these spaces (as separable metric spaces) have a notion of Borel sets and $s$-dimensional Hausdorff measures (with respect to each metric) such that $\mathcal H^{n-1}_{\mathbb{S}} \sim \sigma$, $\mathcal H^{k(n-k)}_{\mathcal G} \sim \gamma_{n,k}$, and $\mathcal H^{(k+1)(n-k)}_{\mathcal A} \sim \lambda_{n,k}$. Secondly, given a $\de$-ball $B$ (on each corresponding metric space), we have $\sigma^{n-1}(B) \sim \de^{n-1}$, $\gamma_{n,k}(B) \sim \de^{k(n-k)}$, and $\lambda_{n,k}(B)\sim \de^{(k+1)(n-k)}$. This is useful in emphasizing the connection between problems in the discrete (incidence theoretic) setting and their analogues in the continuum (measure theoretic) setting. To this end, we provide an extreme heuristic regarding Hausdorff dimension.

\begin{heuristic}[Note: Extreme] \label{hr:extreme}
    Let $(X,d)$ be a separable metric space, and let $A\subset X$ be a set with $\dim A = a$. Then, \textbf{heuristically}, for sufficiently small $\de$, it takes $\sim \de^{-a}$-many $\de$-balls to cover $A$.
\end{heuristic}

To be clear, this heuristic is not generally true, but it is true for sets with equal Hausdorff and \textit{Minkowski dimension}. In fact, this is roughly how one can define Minkowski dimension, though we don't go into this further here. Heuristic \ref{hr:extreme} (which can sometimes be made more precise in practice via $(\de,s)$-sets and $\de$-discretization) can be utilized throughout the subsequent chapter to motivate sharp conjectures and methods in projection theory. 

To see how one can do this, consider Beck's theorem (Theorem \ref{thm:introBeck}) which states that under nonconcentration conditions on a finite set $X \subset \R^2$, the set of lines that contain $\geq 2$ points of $X$, $\mathcal L(X)$, satisfies $|\mathcal L(X)| \gtrsim |X|^2$. Hence, roughly every two points of $X$ determine a line in $\mathcal L(X)\subset \mathcal A(2,1)$. Suppose instead that $X \subset \R^2$ is a Borel set. Applying Heuristic \ref{hr:extreme}, it takes $\sim \de^{-\dim X}$-many $\delta$-balls to cover $X$, and under the right nonconcentration conditions we should expect that any two $\delta$-balls covering $X$ should determine a $\delta$-neighborhood of a line in $\mathcal L(X)$. In other words, it should take roughly
\[
\binom{\de^{-\dim X}}{2} \sim \de^{-2\dim X}
\]
many $\delta$-tubes to cover $\mathcal L(X)$, and thus it is reasonable to expect $\dim \mathcal L(X) \geq 2\dim X$. Of course, this is nowhere \textit{near} a proof of the continuum version of Beck's theorem (Theorem \ref{thm:introOSWBeck}), but this heuristic of going from points and lines to $\delta$-balls and $\delta$-tubes, and thusly cardinality to Hausdorff dimension, highlights how the discrete analogues of problems in projection theory can be used to motivate conjectures in the continuum setting.

\chapter{Topics in Projection Theory} \label{ch:Topics}
In this chapter, we discuss orthogonal projections, Furstenberg sets, and radial projections. We also provide a literature review of some key breakthroughs for each of these topics over the past 30 years. 

The goal of this chapter is not to provide a detailed account of each major paper in this area. Rather, the purpose of this chapter is two-fold. The first purpose is to emphasize the connection between incidence geometry and these topics by providing key examples from the discrete world that motivate proof techniques and conjectures in the continuum. The second is to state the main results and Hausdorff-dimensional tools we will need in the study applications of projection theory in Chapter \ref{ch:problems}. 

\section{Orthogonal Projections} \label{sec:ESE}

\begin{heuristic} \label{hr:Marstrand}
    Large objects typically cast large shadows.
\end{heuristic}

This heuristic statement is a driving force for a number of problems in projection theory, and orthogonal projections are a prime example. Let us begin by making this ``Marstrand-type'' heuristic precise. Recall that given $V \in \mathcal G(n,k)$, we let $P_V: \R^n \to V\simeq \R^k$ denote the orthogonal projection onto $V$. If $k = n-1$, we will sometimes use $\theta^\perp$ and $\mathbb{S}^{n-1}$ in place of $V$ and $\mathcal{G}(n,n-1)$ respectively. More precisely, given $\theta \in \mathbb{S}^{n-1}$, $P_{\theta^\perp}: \R^n \to \theta^\perp$ projects onto the $(n-1)$-dimensional subspace with normal direction $\theta.$ 

Suppose $X \subset \R^n$ is a finite set of points. Notice (in alignment with Heuristic \ref{hr:Marstrand}), for all but finitely many $\theta \in \mathbb{S}^{n-1}$, $|P_{\theta^\perp}(X)| = |X|$---as large as it \textit{can be}. To see this, recall the definition of the \textit{direction set} of $X$:
\[
S(X) := \left\{\frac{x-y}{|x-y|} : x,y\in X, x\neq y\right\} \subset \mathbb{S}^{n-1}.
\]
Of course, $S(X)$ is a finite set since $X$ is. Furthermore, given $\theta \in S(X)^c$ it follows that $|P_{\theta^\perp}(X)| = |X|$. If this weren't the case, this would imply that there is some fiber of $P_{\theta^\perp}$ that is at least 2-rich which contradicts $\theta \in S(X)^c.$ This argument implies the following discrete statement.

\begin{proposition} \label{prop:discreteMarstrand}
    Given a finite set of points $X\subset \R^n$, for all but finitely many $\theta \in \mathbb{S}^{n-1}$,
    \[
    |P_{\theta^\perp}(X)| = |X|.
    \]
    Moreover, $|P_{\theta^\perp}(X)|<|X|$ if and only if $\theta \in S(X)$. 
\end{proposition}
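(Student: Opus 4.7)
The plan is to prove the biconditional in the second assertion, from which the first assertion follows immediately because $X$ is finite: the number of ordered pairs $(x,y)$ with $x\neq y$ is at most $|X|(|X|-1)$, so $S(X)$ is a finite subset of $\mathbb{S}^{n-1}$.

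First I would record the trivial bound $|P_{\theta^\perp}(X)| \leq |X|$, which holds for any map out of a finite set, with equality if and only if $P_{\theta^\perp}$ restricted to $X$ is injective. Next I would use the linear-algebraic fact that $P_{\theta^\perp}:\R^n \to \theta^\perp$ is linear with kernel equal to the line $\R\theta$. Consequently, for distinct $x,y\in X$, one has $P_{\theta^\perp}(x) = P_{\theta^\perp}(y)$ precisely when $x - y \in \R\theta$, i.e., when $\theta = \pm (x-y)/|x-y|$.

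To close the argument, I would observe that the set $S(X)$ is symmetric under negation: since the defining relation is not symmetric in the ordered pair $(x,y)$, whenever $(x-y)/|x-y|\in S(X)$ one also has $(y-x)/|y-x| = -(x-y)/|x-y|\in S(X)$, so $S(X) = -S(X)$. Therefore the condition ``$\theta = \pm(x-y)/|x-y|$ for some distinct $x,y\in X$'' is equivalent to $\theta \in S(X)$. Chaining the equivalences, $|P_{\theta^\perp}(X)| < |X|$ iff $P_{\theta^\perp}|_X$ fails to be injective iff $\theta \in S(X)$, which is the desired biconditional.

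There is no real obstacle here: the statement is essentially the contrapositive of the kernel characterization of $P_{\theta^\perp}$. The only point that warrants care is matching the sign convention in the definition of $S(X)$ with the unit vectors that realize a collision under $P_{\theta^\perp}$, but this is handled by the symmetry $S(X) = -S(X)$ noted above.
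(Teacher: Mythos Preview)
Your proof is correct and follows essentially the same approach as the paper: both identify the failure of injectivity of $P_{\theta^\perp}|_X$ with the existence of distinct $x,y\in X$ such that $x-y$ is parallel to $\theta$, i.e., $\theta\in S(X)$. Your write-up is in fact slightly more thorough, explicitly handling the sign symmetry $S(X)=-S(X)$ and arguing both directions of the biconditional, whereas the paper only spells out the direction $\theta\in S(X)^c \Rightarrow |P_{\theta^\perp}(X)|=|X|$.
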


In the Hausdorff-dimensional setting, the continuum analogue of Proposition \ref{prop:discreteMarstrand} is given by Marstrand's projection theorem.

\begin{theorem}[Marstrand's Projection Theorem \cite{Marstrand54,Mattila75MarstrandMattilaProj}] \label{MARSTRAND}
    Let $X \subset \R^n$ be  Borel. Then, for $\gamma_{n,k}$-almost every $V\in \mathcal G(n,k)$
    \begin{equation}\label{eqn:MarstrandEqual}
    \dim P_V(X) = \min\{\dim X, k\}.
    \end{equation}
\end{theorem}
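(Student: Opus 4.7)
The plan is to prove the two inequalities $\dim P_V(X) \leq \min\{\dim X, k\}$ (which will hold for \emph{every} $V$) and $\dim P_V(X) \geq \min\{\dim X, k\}$ (which will hold for $\gamma_{n,k}$-almost every $V$) separately. The upper bound is essentially free: $P_V$ is $1$-Lipschitz, so $\dim P_V(X) \leq \dim X$ by the standard Lipschitz-invariance of Hausdorff dimension, and $P_V(X) \subset V \simeq \R^k$ forces $\dim P_V(X) \leq k$. All substantive work sits in the lower bound.

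For the lower bound I would use the Frostman energy method. Fix any $0 < s < \min\{\dim X, k\}$. By Frostman's lemma, $X$ supports a Borel probability measure $\mu$ with finite $s$-energy
\[
I_s(\mu) := \int\!\!\int |x-y|^{-s}\, d\mu(x)\, d\mu(y) < \infty.
\]
For each $V$, consider the pushforward $\nu_V := (P_V)_\sharp \mu$, which is supported on $P_V(X)$. Since finiteness of $I_s(\nu_V)$ implies $\dim P_V(X) \geq s$ (the energy half of Frostman's theorem), it suffices to show $I_s(\nu_V) < \infty$ for $\gamma_{n,k}$-a.e.\ $V$. Writing this out and swapping the order of integration via Fubini gives
\[
\int_{\mathcal{G}(n,k)} I_s(\nu_V)\, d\gamma_{n,k}(V) = \int\!\!\int \left(\int_{\mathcal{G}(n,k)} |P_V(x-y)|^{-s}\, d\gamma_{n,k}(V)\right) d\mu(x)\, d\mu(y).
\]

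The crux, and the step I expect to be the main obstacle, is the singular-integral estimate on the Grassmannian: for every $z \in \R^n \setminus \{0\}$ and every $0 \leq s < k$,
\[
\int_{\mathcal{G}(n,k)} |P_V z|^{-s}\, d\gamma_{n,k}(V) \leq C(n,k,s)\, |z|^{-s}.
\]
By the $O(n)$-invariance of $\gamma_{n,k}$, writing $z = |z|\omega$ with $\omega \in \mathbb{S}^{n-1}$ reduces the claim to a constant bound on $\int |P_V\omega|^{-s} d\gamma_{n,k}(V)$ that is independent of $\omega$. Finiteness reduces by a layer-cake argument to the tail bound $\gamma_{n,k}(\{V : |P_V\omega| \leq r\}) \lesssim r^k$. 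This is the codimension count ``$\omega$ lies near the $(n-k)$-plane $V^\perp$'' and is the geometric heart of Marstrand's theorem; it is exactly this codimension that pins the threshold at $s<k$ and explains why the theorem cannot be pushed past $k$.

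Granting the bound, the double integral is controlled by $C \cdot I_s(\mu) < \infty$, so $I_s(\nu_V) < \infty$ off a $\gamma_{n,k}$-null set $E_s \subset \mathcal{G}(n,k)$, giving $\dim P_V(X) \geq s$ outside $E_s$. To finish, select a countable sequence $s_j \nearrow \min\{\dim X, k\}$; the countable union $\bigcup_j E_{s_j}$ is still null, and for every $V$ outside it, $\dim P_V(X) \geq \sup_j s_j = \min\{\dim X, k\}$. Combining with the Lipschitz upper bound gives equality \eqref{eqn:MarstrandEqual} for $\gamma_{n,k}$-almost every $V$, as desired.
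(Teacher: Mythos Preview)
Your proposal is correct and follows the standard potential-theoretic (Frostman energy) approach. The paper does not actually give its own detailed proof of this theorem; it cites the original sources and remarks that ``standard proofs of Marstrand's, Kaufman's and Falconer's projection theorems are potential theoretic---making use of Frostman measures and energy integrals,'' referring the reader to Mattila's book---so your argument is precisely the one the paper alludes to. The only additional observation the paper makes is Proposition~\ref{MarstrandFromNontrivialESE}, which notes that Marstrand's theorem can alternatively be recovered as a corollary of any nontrivial exceptional set estimate via a countable-union argument, but this is presented as a logical relationship rather than as the paper's chosen proof.
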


\begin{remark}
The above result was obtained in the plane by Marstrand in 1954 \cite{Marstrand54} and generalized to higher dimensions by Mattila in 1975 \cite{Mattila75MarstrandMattilaProj}. As such, Theorem \ref{MARSTRAND} is also referred to as the Marstrand--Mattila projection theorem. Furthermore, given $\dim X >k$, they more strongly showed that $\mathcal L^k(P_V(X)) >0$ for $\gamma_{n,k}$-almost every $V \in \mathcal G(n,k)$.
\end{remark}

Note that $\min\{\dim X, k\}$ is as large as $\dim P_V(X)$ \textit{could be}. The $\dim X$ term follows as $P_V$ is a Lipschitz map (see \cite[Corollary 2.4.a]{FalconerBook}); the $k$ term follows as $P_V(X) \subset V \simeq \R^k$. In other words, Marstrand's projection theorem gives that $\dim P_V(X)$ is \textit{typically} as large as it \textit{can be}. We refer to such projection theoretic statements as being \textit{Marstrand-type}. 

Though Theorem \ref{MARSTRAND} holds for almost every $V \in \mathcal G(n,k)$, it is an interesting question to quantify how large the set of $V$ such that \eqref{eqn:MarstrandEqual} fails \textit{can be}. This question lies at the heart of \textit{exceptional set estimates}.

\begin{heuristic}\label{hr:exceptional}
    It is \textit{exceptional} for large objects to cast small shadows.
\end{heuristic}

Let's begin yet again with the discrete setting. Let $X \subset \R^n$ be finite and
\[
E_s(X) = \{\theta \in \mathbb{S}^{n-1}: |P_{\theta^\perp}(X)| < s\}.
\]
Notice by Proposition \ref{prop:discreteMarstrand}, if $s>|X|$, $E_s(X)$ is infinite. Otherwise, $E_s(X)$ is finite (as $E_s(X) \subset S(X)/\pm$ for all $s\leq |X|$) and moreover we have:

\begin{proposition}\label{discreteESE}
Let $X \subset \R^n$ be a finite set of points.
Then,
\begin{itemize}
    \item[a)] (Product Set) $| E_{|X|^{1/2}}(X)| \leq 1$,
    \item[b)] (Cauchy--Schwarz) $|E_s(X)| \lesssim s$ for $s\leq |X|/2$, and
    \item[c)] (Szemer\'edi--Trotter) $|E_s(X)| \lesssim \max\{s^2 |X|^{-1}, 1\}$ for $s\leq |X|/2$.
\end{itemize}
\end{proposition}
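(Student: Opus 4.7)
My plan is to treat the three parts in sequence, each resting on the basic fact that $\theta \in E_s(X)$ forces $X$ to be contained in fewer than $s$ parallel lines in direction $\theta$ (namely, the nonempty fibers of $P_{\theta^\perp}$ over $X$). For part (a), I would argue by contradiction: if two directions $\theta_1, \theta_2$ with $\theta_1 \neq \pm\theta_2$ both lie in $E_{|X|^{1/2}}(X)$, then for each $i = 1, 2$ the set $X$ is covered by fewer than $|X|^{1/2}$ lines in direction $\theta_i$. Since two nonparallel lines meet in at most one point, $X$ is confined to fewer than $|X|^{1/2} \cdot |X|^{1/2} = |X|$ intersection points of these two line families, a contradiction.

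For part (b), I would use a Cauchy--Schwarz pair count. Fix $\theta \in E_s(X)$ and write the nonempty fiber sizes as $a_1, \ldots, a_m$ with $m < s$ and $\sum_i a_i = |X|$. Cauchy--Schwarz gives $\sum_i a_i^2 \geq |X|^2/m > |X|^2/s$, so the number of ordered pairs $(p, p') \in X \times X$ with $p \neq p'$ and $p - p'$ parallel to $\theta$ equals $\sum_i a_i(a_i - 1) \geq |X|^2/s - |X| \geq |X|^2/(2s)$, using $s \leq |X|/2$. Summing over $\theta \in E_s(X)$ and noting that each ordered pair contributes to at most the two directions $\pm(p-p')/|p-p'|$, the total is at most $2|X|^2$, giving $|E_s(X)| \cdot |X|^2/(2s) \lesssim |X|^2$, hence $|E_s(X)| \lesssim s$.

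For part (c), I would upgrade the count using the Szemer\'edi--Trotter theorem. Let $\mathcal{L}_\theta$ be the set of lines in direction $\theta$ meeting $X$, so $|\mathcal{L}_\theta| = |P_{\theta^\perp}(X)| < s$, and set $\mathcal{L} = \bigcup_{\theta \in E_s(X)} \mathcal{L}_\theta$ (a disjoint union, since lines in distinct directions are themselves distinct), so $|\mathcal{L}| < |E_s(X)| \cdot s$. Each $x \in X$ lies on exactly one line of $\mathcal{L}_\theta$ per direction $\theta$, so $|\mathcal{I}(X, \mathcal{L})| = |E_s(X)| \cdot |X|$. Applying Szemer\'edi--Trotter (Theorem \ref{SZEMEREDITROTTER}, extended to $\R^n$ via Corollary \ref{genericprojCorollary0}),
\[
|E_s(X)| \cdot |X| \lesssim |X|^{2/3} |\mathcal{L}|^{2/3} + |X| + |\mathcal{L}|.
\]
The three-case analysis proceeds as follows: if the first term dominates, substituting $|\mathcal{L}| < |E_s(X)| s$ gives $|E_s(X)| \lesssim s^2/|X|$; if the second dominates, $|E_s(X)| \lesssim 1$; the third would force $|X| \lesssim s$, which is incompatible with $s \leq |X|/2$ up to implicit constants, and in the narrow leftover range $s \sim |X|$, part (b) already yields $|E_s(X)| \lesssim s \sim s^2/|X|$.

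The main obstacle is the third branch of the Szemer\'edi--Trotter dichotomy in part (c): either the hypothesis $s \leq |X|/2$ rules it out directly, or one must appeal to part (b) to absorb the narrow leftover regime $s \sim |X|$. Parts (a) and (b) are, by comparison, essentially one-line counting arguments once the fiber decomposition is set up.
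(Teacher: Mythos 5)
Your proposal is correct and follows essentially the same route as the paper: part (a) by the two-family intersection count, and part (c) by the incidence count $|\mathcal{I}(X,\mathcal{L})| = |E_s(X)|\cdot|X|$ with $|\mathcal{L}| < |E_s(X)|s$, followed by the three-term Szemer\'edi--Trotter dichotomy with exactly the same fallback to (b) on the narrow range $s \sim |X|$. The one cosmetic difference is in (b): you run a direct Cauchy--Schwarz pair count over fibers and sum over directions, whereas the paper reuses the identical incidence setup from (c) and then invokes its earlier Cauchy--Schwarz incidence bound (Proposition \ref{sec2:prop:STPrelim}); the two are the same computation unfolded in different orders, and yours is arguably the more self-contained presentation.
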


\begin{remark}
    Note that the third bound is the strongest and sharp for all $s$. Even so, we note all three bounds as their continuum analogues are important results towards the sharp bound that will be discussed later.
\end{remark}

\begin{proof}[Proof of Proposition \ref{discreteESE}]
Notice by definition that for all $\theta \in E_s(X)$, it takes at most $s$-many lines in direction $\theta$ to cover $X$. Denote this set of fibers $\mathcal L_\theta := P_\theta^{-1}(P_\theta(X))$, and let 
\[
\mathcal L = \bigcup_{\theta \in E_s(X)} \mathcal L_\theta.
\]

To see statement (a), suppose $\#E_{|X|^{1/2}}(X) \geq 2$, meaning that we have two distinct $\theta,\theta'$ and corresponding fibers $\mathcal L_\theta$, $\mathcal L_{\theta'}$ covering $|X|$ in $<|X|^{1/2}$-many parallel lines in directions $\theta^\perp$ and $(\theta')^\perp$ respectively. It follows that 
\begin{align*}
|X| &\leq |\mathcal L_\theta| |\mathcal L_{\theta'}| \\
&= |P_\theta^{-1}(P_\theta(X))| |P_{\theta'}^{-1} (P_{\theta'}(X))| \\
&< |X|^{1/2} |X|^{1/2} = |X|.
\end{align*}
This gives a contradiction.

We now set up the incidence arguments that give statements (b) and (c). For ease of notation, let $|E_s(X)| = t$. It follows that $|\mathcal L| < st$. We then consider the set of incidences between $X$ and $\mathcal L$. Firstly, notice that for each $x\in X$, there are $t$-many lines in $\mathcal L$ that contain $x$ given that i) $\mathcal L_\theta$ covers $X$, ii) $\mathcal L_\theta \cap \mathcal L_{\theta'} = \emptyset$ for all $\theta\neq \theta'$, and iii) $|E_s(X)| = t$. Hence, 
\begin{equation} \label{ESESetUp}
|X|t = \sum_{x\in X} |\mathcal I(x, \mathcal L)| = |\mathcal I(X,\mathcal L)|.
\end{equation}

From here, Proposition \ref{discreteESE}.(b) follows from Proposition \ref{sec2:prop:STPrelim} and (c) follows from Szemer\'edi--Trotter. As the proofs are similar, we only present the latter. Firstly, note that if $|X|/12 < s < |X|/2$, then $s \sim |X|$ and thus Proposition \ref{discreteESE}.(c) follows by (b). Hence, we assume $s \leq |X|/12.$ Applying Theorem \ref{SZEMEREDITROTTER} with $|\mathcal L|< st$ to \eqref{ESESetUp}, we obtain
\[
|X|t < 4(|X|^{2/3}s^{2/3}t^{2/3} + |X| +st).
\]
Notice that the third term can never be dominant if $s\leq |X|/12$. Doing case analysis on the first two terms gives
\[
|E_s(X)| = t \leq \max\{12^3 s^{-2} |X|,12\}
\]
proving the desired result.
\end{proof}

Each of the above bounds has an interesting continuum counterpart, the third of which is sharp for all ranges of $s$ and was resolved when $n = 2$ by Orponen--Shmerkin \cite{OrponenShmerkinABC} and Ren--Wang \cite{RenWang} last year. We discuss this further as we turn our attention to the continuum setting for the remainder of the section.

We begin by setting up some notation we will utilize throughout. Fix integers $n$ and $k$ with $1\leq k \leq n-1$, and let $X \subset \R^n$ be Borel. Then, given $0 < s \leq \min\{\dim X, k\}$, let 
\[
E_{s}^{n,k}(X) := \{V \in \mathcal G(n,k) : \dim P_V(X) < s\}.
\]
We will often drop the $n,k$ from the superscript if it is clear from context or unimportant for the discussion. Note that it is also interesting to explore variants of $E_s(X)$ where one replaces $\dim P_V(X) < s$ with another sense of ``size,'' i.e. null $\mathcal H^k$-measure, empty interior, or other notions of dimension (see e.g. \cite[Theorem 5.12]{Mattila15}), though we do not discuss this further here.

By Marstrand's projection theorem, we have $\gamma_{n,k}(E_{s}^{n,k}(X)) = 0$. (One might a priori be concerned about measurability of $E_s(X)$, but rest assured: given $X$ is Borel, so too is $E_s(X)$. See e.g. \cite[Statement A]{Kaufman68}.) As such, how else might we quantify the size of $E_s(X)$? Just as we do with Kakeya sets, we turn to Hausdorff dimension! Exceptional set estimates seek to find bounds on $\dim E_{s}^{n,k}(X)$ (as a subset of the Grassmannian) in terms of $\dim X,s,n,$ and $k$. By non-trivial here, we mean upperbounds \textit{strictly less than} the dimension of the ambient manifold, i.e. $\dim \mathcal G(n,k) = k(n-k)$. We note here that such bounds imply Marstrand's projection theorem as a corollary, and in this way Heuristic \ref{hr:exceptional} is stronger than Heuristic \ref{hr:Marstrand}.

\begin{proposition} \label{MarstrandFromNontrivialESE}
    Theorem \ref{MARSTRAND} follows as a corollary from nontrivial exceptional set estimates. In particular, if $X \subset \R^n$ is Borel and \[\dim E_{s}^{n,k}(X) < k(n-k) \quad \quad \text{for all } 0< s< \min\{\dim X, k\},\] 
    then 
    \[
    \dim P_V(X) = \min\{\dim X, k\} \quad \quad \text{for } \gamma_{n,k}\text{-a.e. $V \in \mathcal G(n,k).$}
    \]
\end{proposition}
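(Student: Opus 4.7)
The plan is to show that the ``bad set''
\[
B := \{V \in \mathcal{G}(n,k) : \dim P_V(X) < \min\{\dim X, k\}\}
\]
has $\gamma_{n,k}$-measure zero, and then to combine this with the trivial bound $\dim P_V(X) \leq \min\{\dim X, k\}$ (valid for \emph{every} $V$ since $P_V$ is Lipschitz, giving $\dim P_V(X) \leq \dim X$, and since $P_V(X) \subset V \simeq \R^k$, giving $\dim P_V(X) \leq k$). Once $\gamma_{n,k}(B) = 0$ is in hand, every $V$ outside a $\gamma_{n,k}$-null set satisfies both inequalities, so equality holds $\gamma_{n,k}$-a.e., which is exactly Theorem \ref{MARSTRAND}.

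First I would set $d := \min\{\dim X, k\}$ (assuming $d > 0$, else the statement is vacuous) and rewrite $B$ as a countable union over rationals:
\[
B = \bigcup_{\substack{s \in \mathbb{Q} \\ 0 < s < d}} E_s^{n,k}(X).
\]
The $\supseteq$ direction is immediate; for $\subseteq$, if $V \in B$ then $\dim P_V(X) < d$, so any rational $s$ with $\dim P_V(X) < s < d$ places $V \in E_s^{n,k}(X)$. This representation is the whole reason we care about \emph{all} subcritical $s$ in the hypothesis rather than just one.

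Next I would invoke the hypothesis: for each such $s$, $\dim E_s^{n,k}(X) < k(n-k)$. The crucial input from Section \ref{sec:HausdorffDim} is that $\gamma_{n,k}$ is comparable to the top-dimensional Hausdorff measure $\mathcal{H}^{k(n-k)}_{\mathcal{G}}$ on $\mathcal{G}(n,k)$. Therefore any subset of $\mathcal{G}(n,k)$ whose Hausdorff dimension is strictly less than the ambient dimension $k(n-k)$ must have vanishing $\mathcal{H}^{k(n-k)}_{\mathcal{G}}$-measure, and hence vanishing $\gamma_{n,k}$-measure. Applying this to each $E_s^{n,k}(X)$ with $s \in \mathbb{Q} \cap (0,d)$ and using countable subadditivity of $\gamma_{n,k}$ gives $\gamma_{n,k}(B) = 0$, which completes the proof when combined with the trivial upper bound from the first paragraph.

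I do not expect a serious obstacle; the argument is essentially bookkeeping. The two small points that need care are (i) measurability of the sets $E_s^{n,k}(X)$ so that the countable union makes sense measure-theoretically, which is handled by the Borel measurability statement cited in the excerpt (\cite{Kaufman68}), and (ii) the passage from ``Hausdorff dimension strictly below $k(n-k)$'' to ``$\gamma_{n,k}$-null,'' which relies on the comparability $\gamma_{n,k} \sim \mathcal{H}^{k(n-k)}_{\mathcal{G}}$ rather than on Lemma \ref{countablestability} directly (the latter only bounds $\dim B$, which a priori could still equal $k(n-k)$ as a supremum of strict inequalities).
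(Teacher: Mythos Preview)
Your proposal is correct and takes essentially the same approach as the paper: both write the bad set as a countable union of exceptional sets $E_s^{n,k}(X)$ for $s$ strictly below $\min\{\dim X,k\}$, use the hypothesis to get each has Hausdorff dimension strictly below $k(n-k)$, invoke $\gamma_{n,k}\sim \mathcal{H}^{k(n-k)}_{\mathcal G}$ to make each null, and finish with countable subadditivity. The only cosmetic difference is that you index by rationals in $(0,d)$ whereas the paper uses the explicit sequence $s_i = \min\{\dim X,k\} - i^{-1}$.
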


\begin{proof}
Let $\dim X = a$. We want to show that 
\[
E_{\min\{a,k\}}(X) := \{V : \dim P_V(X) < \min\{a,k\}\}.
\]
is $\gamma_{n,k}$-null. To do so, note that 
\[
E_{\min\{a,k\}}(X) = \bigcup_{i = \lceil \min\{a,k\}^{-1}\rceil}^\infty\{V : \dim P_V(X) < \min\{a,k\} - i^{-1}\}.
\]
In particular, each set on the right hand side is of the form $E_{s_i}(X)$ were $s_i = \min\{a,k\}-i^{-1}$. Given that $\dim E_{s_i}(X) < k(n-k)$ by assumption, it follows by the definition of Hausdorff dimension that $\mathcal H^{k(n-k)}(E_{s_i}(X)) = 0.$ Recalling that $\gamma_{n,k}\sim \mathcal H^{k(n-k)}$ and using countable subadditivity of measures, the claim follows.
\end{proof}

One of the first exceptional set estimates is due to Kaufman in 1968 over $\R^2$ \cite{Kaufman68}, which was generalized by Mattila to higher dimensions in 1975 \cite{Mattila75MarstrandMattilaProj}.

\begin{theorem}[Kaufman's Projection Theorem \cite{Kaufman68, Mattila15}] \label{KAUFMAN}
    Let $X\subset \R^n$ be Borel and fix $1\leq k \leq n-1$. Then for $0< s \leq \min\{\dim X, k\}$,
    \[
    \dim E_{s}^{n,k}(X) \leq k(n-k) - k + s.
    \]
\end{theorem}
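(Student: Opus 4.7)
The plan is a classical potential-theoretic (energy) argument in the same spirit as the standard proof of Marstrand's theorem (Theorem \ref{MARSTRAND}), but with the uniform Grassmannian measure $\gamma_{n,k}$ replaced by a Frostman measure supported on the candidate exceptional set. Assume for contradiction that $\dim E_s^{n,k}(X) > k(n-k) - k + s$, fix a parameter $u$ strictly between these two quantities, and invoke Frostman's lemma on the separable metric space $\mathcal G(n,k)$ to produce a nonzero Borel probability measure $\nu$ supported on $E_s^{n,k}(X)$ with $\nu(B(V,r)) \lesssim r^u$. Because $u > k(n-k) - k + s$ is strict, we can also pick $s' < s$ satisfying both $s' < \dim X$ and $u > k(n-k) - k + s'$; Frostman applied to $X$ then yields a nonzero Borel measure $\mu$ on $X$ with finite $s'$-energy $I_{s'}(\mu) := \iint |x-y|^{-s'}\, d\mu(x)\, d\mu(y) < \infty$.

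The heart of the argument is a Frostman-type bound in the Grassmannian: uniformly in $e \in \mathbb S^{n-1}$ and $\epsilon > 0$,
\[
\nu\bigl(\{V \in \mathcal G(n,k) : |P_V(e)| < \epsilon\}\bigr) \lesssim \epsilon^{u - (k(n-k) - k)}.
\]
This follows because the set on the left is the $\epsilon$-neighborhood of the submanifold $\{V : V \subset e^\perp\} \cong \mathcal G(n-1,k)$, which has dimension $k(n-1-k) = k(n-k) - k$, and hence can be covered by $\lesssim \epsilon^{-(k(n-k)-k)}$ balls of radius $\epsilon$ in $\mathcal G(n,k)$, each carrying $\nu$-mass $\lesssim \epsilon^u$. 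Plugging this into a layer-cake decomposition, together with the homogeneity $|P_V(z)| = |z|\,|P_V(z/|z|)|$, produces the single-scale pointwise estimate
\[
\int_{\mathcal G(n,k)} |P_V(z)|^{-s'} \, d\nu(V) \lesssim |z|^{-s'}, \qquad z \in \R^n \setminus \{0\},
\]
where convergence at small scales uses precisely the strict gap $u - (k(n-k)-k) > s'$.

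Fubini on the triple integral then gives
\[
\int I_{s'}(P_{V*}\mu) \, d\nu(V) = \iint \int |P_V(x-y)|^{-s'}\, d\nu(V)\, d\mu(x)\, d\mu(y) \lesssim I_{s'}(\mu) < \infty,
\]
so $I_{s'}(P_{V*}\mu) < \infty$, and hence $\dim P_V(X) \geq s'$, for $\nu$-a.e. $V$. To convert this into a contradiction with $\dim P_V(X) < s$ on $\supp \nu \subset E_s^{n,k}(X)$, one decomposes $E_s^{n,k}(X) = \bigcup_{m \geq 1} \{V : \dim P_V(X) \leq s - 1/m\}$; by the countable stability of Hausdorff dimension (Lemma \ref{countablestability}), at least one piece already has dimension exceeding $k(n-k) - k + s$, and the whole argument is rerun with $\nu$ supported on that piece and $s'$ chosen in $(s - 1/m, s)$, yielding the genuine contradiction $\dim P_V(X) \geq s' > s - 1/m \geq \dim P_V(X)$.

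The main obstacle is really the parameter-juggling at the boundary case $s = \dim X$: one must simultaneously keep $u > k(n-k) - k + s$ strict, $s' < \dim X$, and $u > k(n-k) - k + s'$, which is why passing to the dyadic level sets $\{V : \dim P_V(X) \leq s - 1/m\}$ (rather than working directly with $E_s^{n,k}(X)$) and invoking countable stability is essential. The geometric covering estimate in the second step is where the codimension $k$ of $\{V : V \subset e^\perp\}$ inside $\mathcal G(n,k)$ enters, and it is exactly this codimension that produces the $-k$ loss in Kaufman's bound $k(n-k) - k + s$.
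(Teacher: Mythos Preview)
Your proposal is correct and follows the standard potential-theoretic route. Note, however, that the paper does \emph{not} actually give a proof of Theorem~\ref{KAUFMAN}: immediately after stating Kaufman's and Falconer's projection theorems, the text simply remarks that ``Standard proofs of Marstrand's, Kaufman's and Falconer's projection theorems are potential theoretic---making use of Frostman measures and energy integrals'' and defers to \cite[Theorems 4.1 and 5.1]{Mattila15}. What you have written is precisely this standard argument, carried out in full: a Frostman measure $\nu$ on the putative large exceptional set, the codimension-$k$ covering estimate for $\{V:\,|P_V(e)|<\epsilon\}$ in $\mathcal G(n,k)$, the resulting kernel bound $\int |P_V(z)|^{-s'}\,d\nu(V)\lesssim |z|^{-s'}$, and the Fubini/energy conclusion. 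Your use of countable stability to pass to a level set $\{V:\dim P_V(X)\le s-1/m\}$ before choosing $s'$ is the right way to close the endpoint $s=\dim X$, and your identification of the submanifold $\{V:V\subset e^\perp\}\cong\mathcal G(n-1,k)$ as the source of the ``$-k$'' in the bound is exactly the point.
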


We briefly make a few remarks on Kaufman's projection theorem. Firstly, when $k=n-1$, Theorem \ref{KAUFMAN} gives $\dim E_{s}(X) \leq s$ akin to the bound $\#E_s(X) \lesssim s$ we saw in the discrete setting. In this way, Proposition \ref{discreteESE}.(b) is a discrete analogue of Kaufman's projection theorem, though of course the proof of the continuum statement is harder to obtain. Secondly, notice that the upperbound from Kaufman's projection theorem is monotone increasing in the $s$ variable. On the one hand, we should expect as much from an exceptional set estimate given $E_s(X) \subset E_{s'}(X)$ if $s\leq s'$. On the other hand, by Heuristic \ref{hr:Marstrand}, one should also expect that (for fixed $s$) the larger $\dim X$ is the smaller $\dim E_s(X)$ should be. While we don't see this in Kaufman's projection theorem (which is uniform in $\dim X$), we see this in Falconer's exceptional set estimate from 1982 \cite{Falconer82}.

\begin{theorem}[Falconer's Projection Theorem \cite{Falconer82}] \label{FALCONER}
    Let $X\subset \R^n$ be Borel and fix $1\leq k \leq n-1$. Then, for $0 < s \leq \min\{\dim X, k\}$,
\[
\dim E_{s}^{n,k}(X) \leq \max\{k(n-k) + s - \dim X, 0\}.
\]
\end{theorem}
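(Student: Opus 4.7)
The plan is a potential-theoretic contradiction argument refining the Kaufman--Mattila method behind Theorem \ref{KAUFMAN}. Suppose for contradiction that $\dim E_s^{n,k}(X) > u$ for some fixed $u > \max\{k(n-k) + s - \dim X,\, 0\}$, and pick $t < \dim X$ with $u > k(n-k) + s - t$; equivalently, $\tau := s + k(n-k) - u$ satisfies $\tau < t$. Frostman's lemma, applied on $\R^n$ and on the $k(n-k)$-dimensional manifold $\mathcal{G}(n,k)$, produces nonzero compactly supported Borel measures $\mu$ on $X$ and $\nu$ on $E_s^{n,k}(X)$ with $\mu(B(x,r)) \lesssim r^t$ and $\nu(B(V,r)) \lesssim r^u$. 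In particular, the $\tau$-energy $I_\tau(\mu) := \iint |x-y|^{-\tau}\, d\mu(x)\, d\mu(y)$ is finite.

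The contradiction will emerge from two-sided bounds on the averaged energy $\mathcal{E} := \int_{\mathcal{G}(n,k)} I_s(P_V \mu)\, d\nu(V)$. For the lower bound, for each $V \in E_s^{n,k}(X)$ the push-forward $P_V \mu$ is a nonzero finite Borel measure concentrated on $P_V(X) \subset V$, a set of Hausdorff dimension strictly less than $s$. Since any positive measure of finite $s$-energy must be supported on a set of Hausdorff dimension at least $s$, we obtain $I_s(P_V \mu) = +\infty$ for every such $V$, forcing $\mathcal{E} = +\infty$.

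For the upper bound, I would invoke the Fourier--Plancherel identity $I_s(P_V \mu) = c_{s,k} \int_V |\widehat{\mu}(\xi)|^2 |\xi|^{s-k}\, d\xi$ (valid because $\widehat{P_V \mu}(\xi) = \widehat{\mu}(\xi)$ for $\xi \in V$), combined with a $\nu$-averaged Grassmannian slicing inequality bounding $\int_{\mathcal{G}(n,k)} \int_V g(\xi)\, d\xi\, d\nu(V)$ by a power-weighted integral of $g$ over $\R^n$. Plugging in $g = |\widehat{\mu}|^2 |\xi|^{s-k}$ collapses the double integral to a constant multiple of $I_\tau(\mu) < \infty$, contradicting the lower bound. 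The technical heart of the argument is the Grassmannian slicing inequality itself: it requires careful coarea/layer-cake bookkeeping to translate the Frostman bound $\nu(B(\cdot,r)) \lesssim r^u$ into pointwise control on the $\nu$-weight of subspaces containing a prescribed direction, reflecting the codimension-$(n-k)$ structure of $\{V : \xi \in V\}$ inside $\mathcal{G}(n,k)$. When $\nu = \gamma_{n,k}$, so $u = k(n-k)$, this recovers the standard slicing formula underlying Marstrand's theorem; the deviation from Haar in the $u$-Frostman case produces exactly the penalty factor that yields the sharp critical exponent $\tau = s + k(n-k) - u$.
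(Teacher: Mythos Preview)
The paper does not actually prove Theorem~\ref{FALCONER} in the Euclidean setting: immediately after the statement it simply refers the reader to the standard potential-theoretic arguments in \cite{Mattila15} and \cite{Falconer82}, and then in Section~\ref{BGanProofs} proves only a \emph{finite field} analogue (Theorem~\ref{DISCFalconer}) via the high-low method. Your sketch \emph{is} that standard potential-theoretic proof the paper cites, so there is no disagreement in approach to report.

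Your outline is correct. The energy identity $I_s(P_V\mu) = c_{s,k}\int_V |\hat\mu(\xi)|^2\,|\xi|^{s-k}\,d\mathcal H^k(\xi)$ and the lower bound $I_s(P_V\mu)=\infty$ for every $V\in E_s^{n,k}(X)$ are routine; the substantive step, as you say, is the $\nu$-weighted slicing inequality on $\mathcal G(n,k)$. A quick sanity check on your exponents: with $\nu = \gamma_{n,k}$ (so $u = k(n-k)$) your $\tau$ collapses to $s$ and the slicing reduces to the standard coarea identity underlying Marstrand--Kaufman, as it should. For general Frostman $\nu$ the inequality you invoke is exactly what is established in the references the paper cites, so your appeal to it is legitimate; just be aware that carrying it out in full for general $k$ requires the codimension-$(n-k)$ geometry of $\{V\in\mathcal G(n,k): \xi\in V\}$, which is where the factor $k(n-k)-u$ enters, and this is genuinely more delicate than the $k=1$ case.

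It may interest you that the paper's high-low argument over $\mathbb F_p^n$ is structurally the same story: Plancherel replaces the energy identity, and the count $\binom{n-1}{k-1}_p\sim p^{(k-1)(n-k)}$ of $k$-planes through a fixed nonzero frequency is the finite-field avatar of your Grassmannian slicing inequality.
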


Standard proofs of Marstrand's, Kaufman's and Falconer's projection theorems are potential theoretic---making use of Frostman measures and energy integrals; see \cite[Theorems 4.1 and 5.1]{Mattila15} and \cite[Theorem 1.(ii)]{Falconer82}. In Section \ref{BGanProofs}, I present a Fourier analytic proof of Falconer's projection theorem over finite fields. The argument over finite fields is an adaptation of the argument due to myself and Gan \cite{BrightGan}. We discuss this further in the following section, though for now carry on with the literature review.

In 2012, D. Oberlin \cite[(1.8)]{Oberlin2s-aConjecture} conjectured that for Borel $X\subset \R^2$, 
\begin{equation} \label{sharpESEeqn}
\dim E_s(X) \leq \max\{2s-\dim X, 0\}.
\end{equation}
He obtained the statement for $\dim X \leq 1$ and $s = \frac{\dim X}{2}$ using restricted Radon transforms, which is what motivates this conjecture. Also notice that when $s=\dim X$ or $1$, this upperbound agrees with that from Kaufman's and Falconer's projection theorems respectively. One can further motivate this conjecture by thinking of Proposition \ref{discreteESE}.(c) as a discrete analogue to \eqref{sharpESEeqn}. Additionally, one can think of the case when $s = \frac{\dim X}{2}$ as the continuum counterpart to Proposition \ref{discreteESE}.(a). This particular special case was obtained by Bourgain in 2010 \cite{BourgainProjectionTheorem} for all ranges of $\dim X.$

\begin{theorem}[Bourgain's Projection Theorem \cite{BourgainProjectionTheorem}] \label{BOURGAIN}
    Let $X\subset \R^2$ be Borel with $0 < \dim X < 2$. Then,
    \[
    \dim \{\theta : \dim P_\theta(X) \leq \dim X/2\} = 0.
    \]
\end{theorem}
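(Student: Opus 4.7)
Let $s = \dim X \in (0,2)$ and $E = \{\theta \in \mathbb{S}^1 : \dim P_\theta(X) \leq s/2\}$. The plan is to argue by contradiction, assuming $\dim E =: \tau > 0$, and to derive a contradiction via Bourgain's discretized sum-product theorem, which is the engine of his 2010 paper.

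First, I would pass to a $\delta$-discretized picture. By Frostman's lemma, take Borel probability measures $\mu$ supported on $X$ and $\nu$ supported on a subset $E' \subset E$ satisfying $\mu(B(x,r)) \lesssim r^{s-\eta}$ and $\nu(B(\theta,r)) \lesssim r^{\tau-\eta}$ for a small auxiliary exponent $\eta > 0$ to be chosen. For all but countably many dyadic scales $\delta = 2^{-k}$, a standard pigeonholing produces a $\delta$-separated $(\delta, s-\eta)$-set $X_\delta \subset X$ of cardinality $N_\delta(X_\delta) \approx \delta^{-(s-\eta)}$ and a $\delta$-separated $(\delta, \tau - \eta)$-set $E_\delta \subset E'$ with the property that $N_\delta(P_\theta(X_\delta)) \lesssim \delta^{-(s/2 + \eta)}$ for every $\theta \in E_\delta$, where $N_\delta(\cdot)$ denotes the $\delta$-covering number.

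The heart of the argument is to extract sum-product structure from the hypothesis $\theta \in E$. Parameterize directions locally as $\theta = (\cos\alpha, \sin\alpha)$ so that $P_\theta(x) = x_1 \cos\alpha + x_2 \sin\alpha$, fix a base direction $\theta_0 \in E_\delta$, and set $A := P_{\theta_0}(X_\delta) \subset \R$, a set of $\delta$-covering number $\approx \delta^{-s/2}$. For any $\theta \in E_\delta$ with $|\theta - \theta_0| \gtrsim 1$, the map $x \mapsto (P_{\theta_0}(x), P_\theta(x))$ is bi-Lipschitz, and a short computation shows that $N_\delta(P_\theta(X_\delta))$ controls $N_\delta(A + t(\theta) \cdot A)$ from above, where $t(\theta)$ is a smooth parameter depending only on $\theta_0$ and $\theta$. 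Thus, for many $\theta \in E_\delta$ we obtain $N_\delta(A + t(\theta) A) \lesssim \delta^{-s/2 - O(\eta)}$, meaning $A$ has abnormally small $\delta$-sumsets against a positive-dimensional family of multiplicative dilates.

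The main obstacle is invoking the correct quantitative form of Bourgain's discretized sum-product theorem: for every $\sigma \in (0,1)$ there exist $\epsilon = \epsilon(\sigma) > 0$ and $\kappa = \kappa(\sigma) > 0$ such that whenever $A \subset [1,2]$ is a $(\delta, \sigma, \delta^{-\epsilon})$-set and $T \subset [1,2]$ is a $(\delta, \kappa, \delta^{-\epsilon})$-set, there exists $t \in T$ with $N_\delta(A + tA) \geq \delta^{-\epsilon} N_\delta(A)$. Applied with $\sigma = s/2$ and with $T$ the image of $E_\delta$ under the smooth change of variables $\theta \mapsto t(\theta)$ (which preserves the Frostman condition at exponent $\kappa$, provided $\kappa \leq \tau - \eta$), this produces some $\theta \in E_\delta$ with $N_\delta(P_\theta(X_\delta)) \geq \delta^{-(s/2 + \epsilon)}$. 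Choosing $\eta < \epsilon/4$ at the outset yields the desired contradiction, forcing $\tau = 0$. Passing $\delta \to 0$ along the subsequence where the pigeonholing succeeds, and applying the countable stability of Hausdorff dimension (Lemma \ref{countablestability}) to handle the small-$\eta$ exhaustion of $E$ by the sets where the projection dimension falls strictly below $s/2 - 1/k$, completes the argument. The deepest ingredient is the discretized sum-product theorem itself, whose proof requires a multiscale induction and is the genuine technical obstacle; the reduction above, while requiring careful bookkeeping with the Frostman exponents and the change of variables $\theta \mapsto t(\theta)$, is comparatively standard machinery.
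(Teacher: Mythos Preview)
The paper does not give its own proof of this theorem: it simply cites Bourgain \cite{BourgainProjectionTheorem} and remarks that the dimensional statement is a corollary of a discretized result (specifically \cite[Theorem 3]{BourgainProjectionTheorem}), whose statement and proof are omitted. So there is no argument in the thesis to compare against beyond ``invoke Bourgain's discretized projection theorem.''

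Your plan is in the right spirit---Bourgain's proof is indeed driven by discretized sum-product phenomena---but the reduction you sketch has a genuine gap. You set $A = P_{\theta_0}(X_\delta)$ and then assert that a short computation gives $N_\delta(P_\theta(X_\delta)) \gtrsim N_\delta(A + t(\theta)A)$. This implication points the wrong way. Writing $x$ in coordinates adapted to $\theta_0$ and a transversal direction, one has $X_\delta \subset A \times B$ with $A, B$ the two projections, and hence $P_\theta(X_\delta) \subset \alpha A + \beta B$; so smallness of the sumset would force smallness of the projection, not the reverse. Knowing that $P_\theta(X_\delta)$ is small for many $\theta$ does \emph{not} by itself give that $A + tA$ (or $A + tB$) is small for many $t$, because $X_\delta$ can be a very sparse subset of the product $A \times B$. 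Extracting genuine sum-product input from the projection hypothesis requires an additional structural step---some form of Balog--Szemer\'edi--Gowers together with a multiscale argument---to pass from ``$X_\delta$ has many small projections'' to ``there is a set $A'$ with controlled Frostman behavior and $A' + tA'$ small for a $(\delta,\kappa)$-set of $t$'s.'' That passage is not bookkeeping; it is part of the substance of Bourgain's 2010 paper, and your outline skips over it.
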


\begin{remark}
In fact, the above result is a corollary of a discretized result \cite[Theorem 3]{BourgainProjectionTheorem}. For the purposes of this thesis, and with the unfamiliar reader in mind, we omit the discretized theorems (presenting their Hausdorff dimensional consequences instead) and simply/heuristically note that such discretized results are more ``quantitative.'' Furthermore, note that if $\dim X = 0$ or $2$, $\dim P_\theta(X) = \dim X/2$ for all $\theta$. Hence, the range on $\dim X$ is natural in the above statement.
\end{remark}

As we will see throughout this chapter, Bourgain's projection theorem has led to massive breakthroughs in this area of geometric measure theory. As such, let's spend some time discussing the history of Theorem \ref{BOURGAIN}. A key result towards Bourgain's projection theorem is the following result proved by Edgar and Miller \cite{EdgarMiller} and conjectured by Erd\H{o}s and Volkmann \cite{ErdosVolkmann}:

\begin{theorem}[\cite{EdgarMiller}] \label{ctmsubringstatement}
    A Borel subring of $\R$ cannot have Hausdorff dimension strictly between 0 and 1.
\end{theorem}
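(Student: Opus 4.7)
The natural plan is to combine the ring structure of $R$ with a projection theorem to derive a contradiction on the dimension of a carefully chosen exceptional direction set.

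\textbf{Setup.} Assume for contradiction that $R\subset\R$ is a Borel subring with $0<\dim R<1$. By Davies's theorem on the regularity of analytic sets, pass to a compact subset $K\subset R$ with $\dim K$ arbitrarily close to $\dim R$; in particular $\dim K>0$.

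\textbf{Exploiting the ring structure via projections.} Set $X:=R\times R\subset\R^{2}$. For each $t\in R$, let $\theta_t:=(1,t)/\sqrt{1+t^2}\in\mathbb S^{1}$, so that $P_{\theta_t}(x,y)=(x+ty)/\sqrt{1+t^2}$ and hence
\[
P_{\theta_t}(X)=\tfrac{1}{\sqrt{1+t^{2}}}\bigl(R+tR\bigr)\subset\tfrac{1}{\sqrt{1+t^{2}}}\,R,
\]
since $R$ is closed under addition and multiplication. Thus $\dim P_{\theta_t}(X)\leq\dim R$ for every $t\in R$. Since $t\mapsto\theta_t$ is bi-Lipschitz on compact subsets of $\R$, the set
\[
\Theta:=\{\theta_t:t\in K\}\subset\mathbb S^{1}
\]
has $\dim\Theta=\dim K>0$ and $\Theta\subset\{\theta:\dim P_{\theta}(X)\leq\dim R\}$.

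\textbf{Applying Bourgain's projection theorem.} The product inequality gives $\dim X\geq 2\dim R>0$, while the standard product bound $\dim_{H}(R\times R)\leq\dim_{H}R+\overline{\dim}_{B}R\leq\dim R+1<2$ ensures $\dim X<2$. Moreover, since $\dim X\geq 2\dim R$, we have $\dim R\leq\dim X/2$, so
\[
\Theta\subset\bigl\{\theta\in\mathbb S^{1}:\dim P_{\theta}(X)\leq\dim X/2\bigr\}.
\]
Bourgain's projection theorem (Theorem \ref{BOURGAIN}) now asserts that the latter set has Hausdorff dimension $0$, yielding $\dim K\leq 0$, a contradiction.

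\textbf{Main obstacle and remark.} Once Theorem \ref{BOURGAIN} is on the table the argument is almost tautological; the only subtlety is verifying $\dim(R\times R)<2$, which uses the upper box dimension form of the product inequality. The real obstacle is historical/logical: Theorem \ref{ctmsubringstatement} predates and feeds into Bourgain's projection theorem, so a genuinely self-contained proof of Edgar--Miller cannot invoke Theorem \ref{BOURGAIN} directly. With only Theorems \ref{KAUFMAN} and \ref{FALCONER}, the same setup (taking exceptional-set parameter $s=\dim R+\epsilon$ and letting $\epsilon\to 0$, $\dim K\to\dim R$) gives only $\dim X\leq 1$ and hence the weaker bound $\dim R\leq 1/2$. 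The crucial bootstrap from $1/2$ down to $0$ is the part one expects to be hard: it requires sum--product/discretized-ring input (the discrete analogue of Bourgain's projection theorem), which is what Edgar and Miller supply in \cite{EdgarMiller}.
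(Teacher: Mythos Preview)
The paper does not give its own proof of this theorem; it is stated with citation to \cite{EdgarMiller} as historical background for the discussion of Bourgain's projection theorem. In fact, the paragraph immediately following the statement remarks that Bourgain's 2010 projection theorem (Theorem~\ref{BOURGAIN}) ``recovers his discretized subring result from 2003 and much more''---which is precisely the deduction you carry out. Your argument is correct: $R\times R$ has Hausdorff dimension in $(0,2)$ (the upper bound $\dim_H R+\overline{\dim}_B R\le \dim R+1<2$ is valid since $R\subset\R$ forces $\overline{\dim}_B R\le 1$ automatically), and Theorem~\ref{BOURGAIN} then kills the positive-dimensional exceptional direction set $\Theta$.

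Your closing remark about circularity is apt historically but not logically: the proof of Theorem~\ref{BOURGAIN} does not invoke Theorem~\ref{ctmsubringstatement}, so the deduction is legitimate even if it inverts the order of discovery. One small correction to your final sentence: Edgar and Miller's original argument in \cite{EdgarMiller} does \emph{not} proceed via sum--product or discretized-ring methods---their proof is more direct, using a dichotomy for Borel measurable subgroups. It was Bourgain's independent 2003 proof \cite{BourgainSubring} that went through the discretized sum--product route conjectured by Katz--Tao, and that is the line of argument that later evolved into Theorem~\ref{BOURGAIN}.
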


We note that Theorem \ref{ctmsubringstatement} does not hold over $\mathbb{C}$ and $\mathbb{F}_{p^2}$ (for a prime $p$) which have ``half-dimensional'' subrings (namely $\R$ and $\mathbb{F}_p$ respectively). The existence of such subrings often leads to challenging and important obstacles in the study of incidence geometry and geometric measure theory, as results that hold in one geometry may not (naively) hold in another.

Around the same time as Edgar and Miller's result in 2003, Bourgain  \cite{BourgainSubring} independently proved Theorem \ref{ctmsubringstatement} from a discretized statement conjectured by Katz and Tao \cite{KatzTaoSubringConjecture}. A few years later in 2010, Bourgain proved Theorem \ref{BOURGAIN}, the discretized statement of which recovers his discretized subring result from 2003 and much more.

Katz and Tao showed in \cite{KatzTaoSubringConjecture} that their conjecture (that Bourgain \cite{BourgainSubring} resolved) has implications for Furstenberg sets (a fractal analogue of Kakeya sets, see Section \ref{sec:Furstenberg}). This connection garnered much attention, and in 2022, Orponen and Shmerkin \cite{OrponenShmerkinEpsilonImprove} used Bourgain's projection theorem to generalize the result of Katz--Tao and Bourgain. We cannot give justice to the importance of this result until the subsequent section, but for now note that as a consequence of Orponen and Shmerkin's work they obtained the following $\epsilon$-improvement to Kaufman's projection theorem.

\begin{theorem}[\cite{OrponenShmerkinEpsilonImprove}] \label{OSEpsilonProj}
    For every $0 < s < t \leq 1$, there exists $\epsilon:= \epsilon(s,t)>0$ such that for all $X\subset \R^2$ Borel with $\dim X = t$,
    \[
    \dim E_s(X) \leq s-\epsilon.
    \]
\end{theorem}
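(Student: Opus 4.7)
The plan is to derive Theorem \ref{OSEpsilonProj} by contradiction, leveraging Bourgain's projection theorem (Theorem \ref{BOURGAIN}) through its $\delta$-discretized counterpart. Suppose, for the sake of contradiction, that no such $\epsilon = \epsilon(s,t) > 0$ exists. Then there is a sequence $\epsilon_n \searrow 0$ and Borel sets $X_n \subset \R^2$ with $\dim X_n = t$ and $\dim E_s(X_n) > s - \epsilon_n$. By Frostman's lemma, equip $X_n$ with a probability measure $\mu_n$ satisfying $\mu_n(B(x,r)) \lesssim r^{t - \epsilon_n}$, and the exceptional set $E_s(X_n)$ with a probability measure $\nu_n$ satisfying $\nu_n(B(\theta,r)) \lesssim r^{s - 2\epsilon_n}$. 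The strategy is to transfer this configuration to a single scale $\delta > 0$ and contradict the $\epsilon$-improvement coming from Bourgain's theorem.

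The discretization step proceeds as follows. For a suitably small scale $\delta > 0$, the support of $\mu_n$ produces a $(\delta, t)$-set $P_n \subset X_n$ of roughly $\delta^{-t}$ many $\delta$-separated points, while the support of $\nu_n$ produces a $(\delta, s - 2\epsilon_n)$-set $\Theta_n \subset S^1$ of roughly $\delta^{-s + O(\epsilon_n)}$ directions. Crucially, for each $\theta \in \Theta_n$, by the definition of $E_s(X_n)$ we have $\dim P_\theta(X_n) < s$, so after additional pigeonholing in scale we may arrange that $P_\theta(P_n)$ is covered by at most $\delta^{-s + O(\epsilon_n)}$ many $\delta$-intervals simultaneously for every $\theta \in \Theta_n$. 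This places us in the hypotheses of a discretized Marstrand-type statement about $(\delta,t)$-sets projected through a $(\delta,\kappa)$-set of directions.

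The main input is the discretized form of Bourgain's projection theorem, which roughly asserts that for any $0 < \kappa \leq t < 2$ there exists $\eta = \eta(\kappa, t) > 0$ such that, given a $(\delta, t)$-set $P$ and a $(\delta, \kappa)$-set $\Theta$ (with appropriate nonconcentration constants), there must exist some $\theta \in \Theta$ with $|P_\theta(P)|_\delta \gtrsim \delta^{-\min\{\kappa, t\}/2 - \eta}$. Applied naively this upgrade lives near the threshold $s = t/2$, so to reach arbitrary $s < t$ one bootstraps: one rescales around a ``rich'' subconfiguration located inside $P_n$ at an intermediate scale $\Delta \in (\delta, 1)$, identifies a sub-$(\Delta, t')$-set paired with a sub-$(\Delta, \kappa')$-set of directions where the hypotheses of Bourgain's theorem apply cleanly, and cascades the resulting $\eta$-improvement back to the original scale. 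Choosing $n$ large enough that $O(\epsilon_n)$ is absorbed by the gain from this multi-scale application produces the desired contradiction $\delta^{-s + O(\epsilon_n)} < \delta^{-s - \eta/2}$.

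The hard part is precisely this multi-scale bootstrapping. Theorem \ref{BOURGAIN} as stated only furnishes an improvement at the ``halfway'' threshold, and one must convert this single-scale gain into a uniform $\epsilon$-improvement for every $s$ strictly below $t$. This requires delicate pigeonholing across a dyadic family of scales, careful control of the Frostman constants of the intermediate subsets $P'_n \subset P_n$ and $\Theta'_n \subset \Theta_n$ that arise, and a quantitative version of the passage from Hausdorff-dimensional hypotheses to $(\delta,s,C)$-set hypotheses that loses at most $\delta^{-\epsilon_n}$ factors. Once this scheme is in place, the contradiction is immediate, but assembling it is the technical core of \cite{OrponenShmerkinEpsilonImprove}.
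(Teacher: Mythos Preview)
The thesis does not give its own proof of Theorem \ref{OSEpsilonProj}; it is quoted from \cite{OrponenShmerkinEpsilonImprove} as part of the literature survey, with the remark that both this theorem and Theorem \ref{OSEpsilonFurst} follow from a common discretized statement whose proof uses Bourgain's projection theorem. So there is no in-paper argument to compare against, only the cited one.

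Your sketch gets the large-scale architecture right: the input is indeed the $\delta$-discretized form of Bourgain's projection theorem, and one does pass through Frostman measures and $(\delta,t)$-sets. You also correctly flag the central difficulty, namely that Bourgain's theorem only yields a gain at the $t/2$ threshold, so some mechanism is needed to reach every $s<t$.

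Where the proposal has a genuine gap is in that mechanism. The ``multi-scale bootstrapping on projections'' you describe --- rescaling around a rich sub-$(\Delta,t')$-configuration and cascading the $\eta$-gain --- does not, as written, move the threshold: a rescaled $(\delta,t)$-set is still a $(\delta',t)$-set, so Bourgain again only outputs $\gtrsim \delta'^{-t/2-\eta}$, and no amount of iteration pushes the exponent past $t/2+\eta$ toward a general $s<t$. The actual route in \cite{OrponenShmerkinEpsilonImprove} does not bootstrap the projection problem directly. It passes to the (dual) Furstenberg formulation: from a $(\delta,t)$-set $P$ and a $(\delta,s)$-set $\Theta$ with $|P_\theta(P)|_\delta \lesssim \delta^{-s}$ one builds an incidence/Furstenberg configuration whose ``easy'' (Cauchy--Schwarz) bound is $2s$, and then feeds the discretized form of Theorem \ref{BOURGAIN} into Orponen's packing-dimension $\epsilon$-improvement scheme \cite{OrponenPackingEpsilonImprove} to beat $2s$ by a fixed $\epsilon(s,t)$. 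The exceptional-set statement and the Furstenberg statement (Theorems \ref{OSEpsilonProj} and \ref{OSEpsilonFurst}) then fall out together from this single discretized bound. So the missing idea in your outline is the passage through the Furstenberg side; without it, the bootstrapping paragraph is a placeholder rather than an argument.
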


This $\epsilon$-improvement to Kaufman's projection theorem may seem like a small step towards proving \eqref{sharpESEeqn}, but in fact it was one of the key breakthroughs towards Oberlin's conjecture for exceptional set estimates. The $\epsilon$-improvement in \cite{OrponenShmerkinEpsilonImprove} ultimately led to the sharp estimates for exceptional sets in the plane in work of Orponen--Shmerkin and Ren--Wang \cite{OrponenShmerkinABC,RenWang}.

\begin{theorem}[\cite{OrponenShmerkinABC,RenWang}] \label{OSRW}
    Let $X\subset \R^2$ Borel and $s \leq \min\{\dim X,1\}$. Then,
    \[
    \dim \{\theta : \dim P_\theta(X) \leq s\} \leq \max\{2s-\dim X,0\}.
    \]
\end{theorem}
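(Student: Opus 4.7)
The plan is to reduce Theorem \ref{OSRW} to a $\delta$-discretized incidence statement and then bootstrap the $\epsilon$-improvement of Kaufman's theorem (Theorem \ref{OSEpsilonProj}) through a multi-scale induction, using Bourgain's projection theorem (Theorem \ref{BOURGAIN}) as the critical-case input. The desired bound $\max\{2s-\dim X,0\}$ precisely interpolates between Kaufman's $\dim E_s \le s$ (at $s=\dim X$) and Bourgain's $\dim E_{a/2}=0$ (at $s=\dim X/2$, i.e. the critical case), so any proof must somehow connect these two endpoints.

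First I would pass to the $\delta$-discretized setting. Let $a := \dim X$ and assume for contradiction that $\dim E_s(X) > \max\{2s-a,0\} + \eta$ for some $\eta > 0$. Via Frostman measures on $X$ and on $E_s(X)$, extract at scale $\delta$ a $(\delta,a)$-set $X_\delta$ of cardinality $\approx \delta^{-a}$ and a $(\delta,u)$-set of directions $\Theta \subset S^1$ of cardinality $\approx \delta^{-u}$ with $u = \max\{2s-a,0\}+\eta$, such that for each $\theta \in \Theta$ the set $X_\delta$ is covered by a family $\mathcal{T}_\theta$ of $\approx \delta^{-s}$ many $\delta$-tubes in direction $\theta^\perp$. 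The total family $\mathcal{T} := \bigcup_{\theta \in \Theta} \mathcal{T}_\theta$ satisfies $|\mathcal{T}| \lesssim \delta^{-u-s}$, while every point of $X_\delta$ lies in $\approx \delta^{-u}$ tubes of $\mathcal{T}$, producing a very rich incidence configuration between $\delta^{-a}$ points and $\delta^{-u-s}$ tubes.

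Next I would run a multi-scale refinement. Choose an intermediate scale $\Delta = \delta^{\alpha}$ for some $\alpha \in (0,1)$ to be optimized. By a pigeonhole and Katz--Tao sub-refinement, locate a $\Delta$-ball $B$ and a $(\Delta,u)$-subset $\Theta_\Delta$ so that the pair $(X_\delta \cap B,\, \Theta_\Delta)$, rescaled to unit scale, gives a $(\Delta/\delta, s)$-set of projections---that is, at this intermediate scale the configuration saturates Kaufman's bound. Applying Theorem \ref{OSEpsilonProj} at this scale produces a strict improvement $\dim P_\theta(X_\delta \cap B) \le s - \epsilon_0$ for a large subset of $\Theta_\Delta$, where $\epsilon_0 = \epsilon_0(s,a,\eta) > 0$. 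Iterating this improvement over the $\sim \log(1/\delta)/\log(1/\Delta)$ dyadic sub-scales and summing the gains contradicts the standing hypothesis $\dim P_\theta(X) \le s$ uniformly in $\theta \in \Theta$. Letting $\delta \to 0$ and $\eta \to 0$ then yields the sharp dimensional bound.

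The main obstacle is precisely Step 3: translating the infinitesimal, qualitative $\epsilon_0$ of Theorem \ref{OSEpsilonProj} into a \emph{quantitatively compounding} gain across scales. Naively iterating degrades constants and the $\epsilon$-gains fail to close on themselves. The two extant proofs circumvent this via genuinely new machinery---the Orponen--Shmerkin approach \cite{OrponenShmerkinABC} develops an $ABC$-type sum-product theorem for sets in $\mathbb{R}$ that directly generalizes Bourgain's discretized projection theorem and forces gains to multiply rather than cancel, while the Ren--Wang approach \cite{RenWang} leverages sharp Furstenberg-set incidence bounds (dual to the exceptional set estimate) to control the tube configuration of Step 2 directly. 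In either route, the delicate bookkeeping of the $(\delta,s)$-set structure under multi-scale pigeonholing is the crux, and it is exactly this structural bilinearity---and not any single clever estimate---that makes the sharp bound accessible.
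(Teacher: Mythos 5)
The paper does not prove Theorem \ref{OSRW}; it cites it from \cite{OrponenShmerkinABC,RenWang} without argument, so there is no in-paper proof to compare your proposal against. Your proposal correctly identifies the shape of the problem---discretize via $(\delta,s)$-sets, interpolate between Kaufman's bound at $s = \dim X$ and Bourgain's at the critical case $s = \dim X/2$, and try to run a multi-scale bootstrap off the $\epsilon$-improvement in Theorem \ref{OSEpsilonProj}---but as written it is not a proof, and you in effect say so yourself.

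The gap is your Step 3, and it is also an internal inconsistency. You first assert that applying Theorem \ref{OSEpsilonProj} at each intermediate scale and ``summing the gains'' contradicts the hypothesis, and then in the very next paragraph concede that this is exactly what fails: $\epsilon_0 = \epsilon_0(s,t)$ is nonquantitative and noneffective, the relevant Frostman and covering exponents drift under rescaling and pigeonholing, and the gains do not compound under naive iteration. Those two paragraphs contradict each other; the first claims the argument closes, the second (correctly) explains that it does not. A real proof must replace that step with machinery that forces the improvements to multiply across scales---the discretized sum-product/ABC theorem of \cite{OrponenShmerkinABC}, or the sharp Furstenberg incidence estimate of \cite{RenWang} equivalent to Theorem \ref{FURSTENBERG} in this paper. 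Your description of those two routes is accurate, but invoking them wholesale is acknowledging the depth of the theorem, not supplying the missing step. As exposition of why the result is hard this is solid; as a proof it has precisely the hole you diagnose in it.
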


We conclude this section with a brief discussion of select higher dimensional results. After the work of Orponen--Shmerkin--Wang \cite{OSW}, Ren obtained a higher-dimensional $\epsilon$-improvement to Kaufman's projection theorem (for hyperplanes) in 2023 \cite{RenRadProj}.

\begin{theorem}[\cite{RenRadProj}] \label{RenTheoremKaufman}
    For every $k<s<t\leq n$, there exists $\epsilon:= \epsilon(s,t)>0$ such that for all $X\subset \R^n$ Borel with $\dim X = t$, 
    \[
    \dim \{\theta \in \mathbb{S}^{n-1} : \dim P_{\theta^\perp}(X) \leq s\} \leq s-\epsilon.
    \]
\end{theorem}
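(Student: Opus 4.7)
The plan is to mimic the planar $\epsilon$-improvement argument of Orponen--Shmerkin (Theorem \ref{OSEpsilonProj}) but with a hyperplane analog of Bourgain's projection theorem (Theorem \ref{BOURGAIN}) as the key input. I would argue by contradiction: suppose for some fixed $s<t$ in the stated range the conclusion fails, so that for every $\eta>0$ there is a Borel set $X\subset \R^n$ with $\dim X = t$ and an exceptional set $E\subset \mathbb{S}^{n-1}$ with $\dim E \geq s-\eta$ on which $\dim P_{\theta^\perp}(X) \leq s$. The goal is to show this cannot persist as $\eta \to 0$.

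Next I would $\de$-discretize. Via Frostman measures on $X$ and on $E$, and a standard pigeonhole at a small scale $\de>0$, I extract a $(\de,t)$-set $\tilde X \subset X$ and a $(\de,s-\eta)$-set $\tilde E \subset E$ such that for every $\theta \in \tilde E$, the $\de$-neighborhood of $P_{\theta^\perp}(\tilde X)$ is covered by at most $\de^{-s-o_\eta(1)}$ balls of radius $\de$ in $\theta^\perp$. The hypothesis $k<s$ is precisely what prevents $\tilde E$ from concentrating inside any proper linear subspace of $\mathbb{S}^{n-1}$ of dimension $\leq k$ once $\eta$ is small, which is the non-concentration condition one needs on the set of directions to invoke a Bourgain-type theorem. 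Then I would apply the following higher-dimensional discretized Bourgain-type theorem: for any $(\de,t)$-set $\tilde X \subset \R^n$ and any $(\de,s-\eta)$-set $\tilde E \subset \mathbb{S}^{n-1}$ satisfying the appropriate non-concentration in proper subspaces, there exists $\epsilon_0 := \epsilon_0(s,t,n)>0$ and a positive-proportion subset $\tilde E' \subset \tilde E$ with
\[
|P_{\theta^\perp}(\tilde X)|_\de \geq \de^{-s-\epsilon_0} \quad \text{for all } \theta \in \tilde E'.
\]
This upgrades Kaufman's single-scale $\de^{-s}$ bound by a definite amount $\epsilon_0$. Choosing $\eta \ll \epsilon_0$ from the outset and $\de$ small enough, the upper bound $\de^{-s-o_\eta(1)}$ from discretization contradicts the lower bound $\de^{-s-\epsilon_0}$, producing the desired $\epsilon := \epsilon(s,t)>0$.

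The main obstacle is establishing the higher-dimensional Bourgain-type input with the correct non-concentration hypotheses. In the plane (Theorem \ref{BOURGAIN}) the only way a $(\de,u)$-set of directions can degenerate is by concentrating near a single point, but in $\R^n$ a set of unit normals can degenerate in many more ways, e.g.\ by lying near any proper linear subspace. The role of the assumption $k<s$ is exactly to rule out such degenerations for $(\de,s-\eta)$-sets of directions when $\eta$ is sufficiently small, but formulating and proving the robust, multi-scale, discretized projection theorem that delivers a uniform $\epsilon_0$ from this non-concentration --- typically by an inductive self-improvement scheme in the spirit of Bourgain's original discretization argument and its refinements by Shmerkin and others --- is the technical heart of the proof. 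The Frostman-and-pigeonhole reduction to $(\de,s)$-sets and the final contradiction are comparatively routine once that input is in hand.
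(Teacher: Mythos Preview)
The thesis does not prove this theorem; it is quoted as a result of Ren \cite{RenRadProj}, so there is no in-paper proof to compare against directly. The thesis does, however, indicate Ren's route: the key ingredient is an $\epsilon$-improvement to the \emph{dual} $(s,t)$-Furstenberg set problem in $\R^n$ (see the discussion surrounding Theorem~\ref{renradprojTheorem} and Section~\ref{sec:DualFurstenberg}), from which both this hyperplane Kaufman improvement and the radial projection theorem follow --- paralleling the planar situation where Theorems~\ref{OSEpsilonProj} and~\ref{OSEpsilonFurst} descend from a common discretized statement.

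Your proposal has a genuine gap. The ``higher-dimensional discretized Bourgain-type theorem'' you invoke --- that a $(\de,s-\eta)$-set of directions with suitable non-concentration forces $|P_{\theta^\perp}(\tilde X)|_\de \geq \de^{-s-\epsilon_0}$ on a positive proportion of $\tilde E$ --- is essentially the single-scale form of the very statement you are trying to prove. You correctly flag it as the ``technical heart,'' but you give no mechanism to establish it; the Frostman/pigeonhole reductions you outline are routine and do not touch the actual content. The planar analogy is apt at the level of strategy, but misleading about what is available: in $\R^2$, Bourgain's projection theorem (Theorem~\ref{BOURGAIN}) was an \emph{external input} proved in 2010, which Orponen--Shmerkin could then exploit. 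In $\R^n$ for $n\geq 3$ no such hyperplane projection input existed prior to Ren's work; producing it \emph{is} the theorem, not a lemma one can cite. Ren's actual argument detours through the dual Furstenberg problem rather than proving a hyperplane Bourgain theorem head-on, and that detour is where the work happens.
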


Theorem \ref{RenTheoremKaufman} was then used to obtain radial projection results in higher dimensions following the methodology of Orponen--Shmerkin--Wang (see Section \ref{sec:RadProj}). A year later in 2024, Cholak et al. \cite{CholakEtAl} obtained sharp exceptional set estimates for orthogonal projections onto lines and hyperplanes.

\begin{theorem}[\cite{CholakEtAl}] \label{cholak}
    Let $X\subset \R^n$ be Borel. Then, for all $s\leq \min\{\dim X,i\}$ (for $i=1$ and $n-1$ respectively),
    \begin{align*}
        \dim E_{s}^{n,1}(X) &\leq \max\{n-2-\lfloor \dim X-s\rfloor, n-2+2s-\dim X \} \\
        \dim E_{s}^{n,n-1}(X) &\leq \max\{2s-\dim X, \lceil s \rceil -1\}.
    \end{align*}
    Moreover, these bounds are sharp.
\end{theorem}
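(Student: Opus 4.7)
The plan is to prove the upper bounds by induction on $n$, using Theorem \ref{OSRW} (the sharp planar result of Orponen--Shmerkin and Ren--Wang) as the base case, and to handle the two terms inside each maximum by complementary arguments. The sharpness is then established by constructing explicit examples matching each term.

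I would first tackle the hyperplane case ($k = n-1$). The bound $\max\{2s - \dim X, \lceil s \rceil - 1\}$ naturally splits into two regimes. The $2s - \dim X$ term should be obtained by a bootstrapping argument in the spirit of \cite{OrponenShmerkinABC, RenWang}, using Ren's higher-dimensional $\epsilon$-improvement of Kaufman's projection theorem (Theorem \ref{RenTheoremKaufman}) as the initial dimension gain, and then iteratively slicing the Grassmannian by generic one-parameter families of hyperplane directions (great circles in $\mathbb{S}^{n-1}/\pm$) to reduce to the planar sharp estimate. The $\lceil s \rceil - 1$ term, by contrast, encodes a subspace-concentration phenomenon: taking $X$ to be a Frostman-regular subset of an integer-dimensional subspace $W$ produces exceptional directions forming a sub-Grassmannian of integer dimension $\lceil s \rceil - 1$, and the upper bound says this is the only such mechanism. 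To prove it, I would argue by contradiction: if $\dim E_s^{n,n-1}(X) > \lceil s \rceil - 1$, the exceptional directions cannot be squeezed into a single $\lceil s \rceil$-dimensional subspace of $\R^n$, and a Mattila-type slicing/integral-geometric argument then forces $\dim P_{\theta^\perp}(X) \geq s$ on a positive-measure set of such directions.

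The line case ($k = 1$) follows by a related but distinct argument. Since $\mathcal{G}(n, 1)$ and $\mathcal{G}(n, n-1)$ have the same ambient dimension $n - 1$ but the projections have ranges of different dimensions, the two terms behave differently. For the $n - 2 + 2s - \dim X$ part I would again bootstrap Ren's $\epsilon$-improvement, now for line projections, against the planar sharp bound. For the $n - 2 - \lfloor \dim X - s \rfloor$ part I would run an integer dimension-counting argument: if the exceptional set of line directions exceeded $n - 2 - \lfloor \dim X - s \rfloor$, one could extract a rich family of orthogonal hyperplanes whose preimages foliate $X$ by fibers of typical dimension less than $\dim X - s$, contradicting $\dim X$ via Marstrand-type slicing. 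For sharpness of both integer-type terms, take $X$ to be a Frostman subset of an appropriate integer-dimensional subspace; for the $2s - \dim X$ and $n - 2 + 2s - \dim X$ terms, take $X$ to be a product of Cantor-type sets in suitable coordinates, directly generalizing the extremal examples behind Theorem \ref{OSRW}.

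The main obstacle I anticipate is the slicing/induction step itself, specifically passing from a dimension gain on a generic sub-Grassmannian back to a sharp dimension bound on the full exceptional set. The quantity $\dim P_{\theta^\perp}(X)$ is not monotone under restriction to subfamilies of directions, so the slicing must be chosen carefully (likely via a Marstrand-slicing argument on the Grassmannian, using its natural invariant measure) so that the exceptional dimension is preserved under restriction. A further subtlety is that the two terms in each maximum must be handled in tandem: when the integer bound $\lceil s \rceil - 1$ (resp.\ $n - 2 - \lfloor \dim X - s \rfloor$) dominates, the bootstrapping argument toward the linear bound must be stopped in time to avoid producing a strictly weaker estimate in that regime, and the integer-concentration argument must be deployed instead.
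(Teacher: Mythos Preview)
The thesis does not actually prove this theorem; it is quoted from \cite{CholakEtAl} and the only ``proof'' the thesis offers is a two-sentence description of their method: an inductive approach on $n$ using slicing-type theorems together with \emph{Kaufman's projection theorem}, and sharpness via inductively embedding sharp examples $A\subset \R^{n-1}$ into $\R^n$ as $A\times\{0\}$ and $A\times\R$.

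Your proposal is in the right general spirit (induction on $n$, slicing on the Grassmannian, the planar sharp bound as base case) but diverges from the described argument in its central mechanism. You want to bootstrap from Ren's $\epsilon$-improvement (Theorem~\ref{RenTheoremKaufman}) and iterate toward the sharp planar bound; the Cholak et al.\ argument, as summarized here, uses Kaufman's theorem directly at each inductive step rather than any $\epsilon$-improvement machinery. This matters: your bootstrapping scheme would need a quantitative control on the $\epsilon$ at every stage and a way to terminate the iteration exactly at the claimed bound---you yourself flag this as the ``main obstacle,'' and indeed it is not clear that such an iteration converges to the sharp constant without further input. The slicing-plus-Kaufman route avoids this entirely, because Kaufman already gives a clean closed-form bound that propagates through the induction.

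Your sharpness constructions are also different in flavor. You propose building examples from scratch in $\R^n$ (Frostman subsets of integer subspaces, products of Cantor sets); the approach described in the thesis is instead to take a sharp example in $\R^{n-1}$ and promote it to $\R^n$ by crossing with a point or with $\R$. Both can work, but the inductive embedding is what makes the sharpness argument align with the inductive proof of the upper bound.
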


\begin{remark}\label{miniOPremark}
    As noted in \cite[Remark 1.5]{CholakEtAl}, $\dim E_{s}^{n,k}(X) = 0$ for all $s\leq \max\{\dim X-n+k,0\}$. In particular, notice that for all $X\subset \R^n$ and $V\in \mathcal G(n,k)$, $X\subset P_V(X) \times V^\perp$, so $\dim P_V(X) \geq \dim X -(n-k)$ for all $V\in \mathcal G(n,k)$. I.e., if $s \leq \max\{\dim X-n+k,0\}$, we have $E_s(X) =\emptyset.$
\end{remark}

To prove the upperbounds in Theorem \ref{cholak}, Cholak et al. used an inductive approach, making use of slicing-type theorems and Kaufman's projection theorem. To construct sharp examples for $k = 1$ and $n-1$, the authors embedded sharp examples from $A\subset \R^{n-1}$ into $\R^n$ by considering $A\times \{0\}$ and $A \times \R$. In fact, this method gives lowerbounds on the quantity 
\[
T_{n,k}(a,s) = \sup_{X\subset \R^n, \dim X = a} \dim \{V \in \mathcal G(n,k) : \dim P_V(X) < s\}
\]
that they conjecture to be sharp for \textit{all} $k$; see \cite[Conjecture 1.3]{CholakEtAl}. 

To this end, it is worth noting that the sharpness of Theorem \ref{OSRW} can be shown by considering sharp examples for Szemer\'edi--Trotter. Roughly speaking, one can take sharp examples from Szemer\'edi--Trotter, thicken every point to a $\de$-ball, and (heuristically) apply Proposition \ref{discreteESE}.(c). Of course, making such a sketch rigorous can be a bit technical---one of my favorite write ups can be seen in \cite[Section 3]{FraserdeOrellana}---but this is just to show yet another reason why incidence geometry is so fruitful in the study of geometric measure theory (and vice versa).  On the one hand, insight into open problems in geometric measure theory can be gained by grappling with difficult open problems in incidence geometry. On the other hand, incidence geometry can be used to create and prove discrete analogues of problems in geometric measure theory, and sharp examples from incidence geometry can be discretized to prove the sharpness of results in the continuum.

\subsection{The High-Low Method: Falconer's Bound} \label{BGanProofs}

In this section, we prove a finite field analogue of Falconer's projection theorem (Theorem \ref{FALCONER}) using a Fourier analytic method known as the high-low method. Before getting into the proof, we begin with a few remarks about harmonic analysis over finite fields $\mathbb{F}_p^n$ for $p$ prime.

As we have tried to elucidate in the previous subsection, there is a very close tie between discrete problems in incidence geometry and continuum problems in geometric measure theory. Exploring the interplay within these two areas of mathematics has proven deeply fruitful, and finite fields have served as a very helpful playground for such exploration. 

Studying harmonic analytic problems over $\mathbb{F}_p^n$ is natural for a number of reasons. Perhaps most importantly, the necessary geometric objects are defined over finite fields (e.g. lines and planes), allowing us to create analogues of problems in $\R^n$ over $\mathbb{F}_p^n.$ Furthermore, studying such problems over $\mathbb{F}_p^n$ allows us to ask if our proofs over $\R^n$ truly require properties of Euclidean space or simply require properties of geometric objects that hold in vast generality. E.g., are we using facts that are specific to $\R$, or geometric properties such as ``two lines intersect in at most one point''? Questions like this implore us to explore our proof methods quite closely and look for new ideas, proof methods, and results in \textit{both} directions. To this end, it is important to note that there are open problems in geometric measure theory and harmonic analysis over $\R^n$ that are resolved over $\mathbb{F}_p^n$ (such as the Kakeya problem \cite{Dvir08}), and open problems over $\mathbb{F}_p^n$ that are resolved over $\R^n$ (such as the Furstenberg set problem and sharp exceptional set estimates for orthogonal projections when $n = 2$ \cite{RenWang}).

Whether or not an argument from geometric setting transfers over to the other (perhaps with slight modification), it is interesting to ask \textit{why}. On the one hand, if a proof doesn't transfer over it may be interesting to search for a  counterexample to the analogous statement, and if you can't find a counterexample, it may be interesting to search for a new proof entirely. On the other hand, if a proof does transfer over, it can lead the researcher to find improvements to their own methods.

Thankfully, when it comes to the high-low method approach to Falconer's projection theorem over $\mathbb{F}_p^n$, the proof methodology of myself and Gan over $\R^n$ \cite{BrightGan} carries over quite nicely and is is much technically simpler due to the geometry of $\mathbb{F}_p^n$. The proof is also recorded and refined in a subsequent paper of myself and Gan over $\mathbb{F}_q^n$ for prime powers $q = p^r$ \cite[Lemma 21]{BrightGan2}, though we present the proof over $\mathbb{F}_p^n$ as the main geometric concepts needed for the proof are conveyed in this setting.

We will begin with presenting the necessary geometry and Fourier analysis over $\mathbb{F}_p^n$, and then prove a finite field version of Falconer's projection theorem (Theorem \ref{DISCFalconer}). The high-low method can similarly obtain a radial projection result over $\mathbb{F}_p^n$ (Theorem \ref{highlowLPT}) which we present in Section \ref{BGanRadProjProof}.
\vspace{.2cm}

\textbf{\textit{Geometry over $\mathbb{F}_p$}}: We begin with defining affine lines.

\begin{definition}[Lines in $\mathbb{F}_p^n$] \label{FpAffineLines}
An \textit{affine line} in $\mathbb{F}_p^n$ is a set of the form
\[
L(\theta,b) = \{\theta t + b : t \in \mathbb{F}_p\}
\]
for some $\theta ,b\in \mathbb{F}_p^n$ with $\theta \neq 0$.
\end{definition}

If the point $0 \in L(\theta,b)$, then we say that $L(\theta,b)$ is a 1-dimensional linear subspace of $\mathbb{F}_p^n$. It is a standard counting exercise to show there are 
\[ \frac{p^n - 1}{p-1} = p^{n-1} + p^{n-2} + \cdots + p + 1 \sim_n p^{n-1}
\] 
unique 1-dimensional subspaces of $\mathbb{F}_p^n$, and thus the number of unique affine lines is $\sim_n p^{2(n-1)}$. 

\begin{remark}
Note that here, building on Notation \ref{SimNotation}, the statement $A\sim_n B$ implies that there exists a constants $C_1(n),C_2(n)>0$ such that 
\[
C_1(n) B \leq  A \leq C_2(n) B,
\]
and these constants are, importantly, independent of other variables in the context of the statement. For instance, letting $C_1 = 1$ and $C_2 = n$, we have
\[
C_1 p^{n-1} \leq p^{n-1} + p^{n-2} + \cdots + p + 1 \leq C_2 p^{n-1},
\]
and notably $C_1,C_2$ are independent of $p$ for all $n.$
\end{remark}

The former statement can be obtained by noting that $L(\theta,b) = L(\theta',b)$ if and only if $\theta = \lambda \theta'$ for some $\lambda \in \mathbb{F}_p$. The latter statement can be obtained by noting that each affine line $L(\theta,b) = L(\theta, 0) + b$ for some unique translation vector $b \in \theta^\perp$ where 
\[
\theta^\perp = \{x \in \mathbb{F}_p^n : x\cdot \theta = 0\} \simeq \mathbb{F}_p^{n-1}
\]
and
\[
x\cdot y = x_1 y_1 + x_2 y_2 + \cdots + x_n y_n \pmod p.
\]
Let $\mathcal{G}(\mathbb{F}_p^n, 1)$ denote the set of $1$-dimensional linear subspaces in $\mathbb{F}_p^n$. For each $\theta \in \mathcal G (\mathbb{F}_p^n, 1)$, we can associate a $0\neq \tilde{\theta} \in \mathbb{F}_p^n$ such that $\theta = L(\tilde{\theta},0)$. We use this identification throughout, and by abuse of notation we let $\tilde{\theta} = \theta$.

\begin{remark} \label{FiniteFieldSubspaces}
More generally, the number of $k$-dimensional linear subspaces of $\mathbb{F}_p^n$ is 
\[
\binom{n}{k}_p := \frac{(p^n - 1) (p^n - p)\cdots (p^n - p^{k-1})}{(p^k - 1) (p^k - p) \cdots (p^k - p^{k-1})} \sim \frac{p^{nk}}{p^{k^2}} = p^{k(n-k)},
\]
and the number of $k$-dimensional subspaces that contain a fixed $\ell$-dimensional subspace in $\mathbb{F}_p^n$ is $\binom{n-\ell}{k - \ell}_p \sim p^{(k-\ell)(n-k)}$ for $\ell \leq k$. Let $\mathcal{G}(\mathbb{F}_p^n, 1)$ denote the set of $k$-dimensional linear subspaces in $\mathbb{F}_p^n$. We will need this notation when discussing Falconer's projection theorem over $\mathbb{F}_p^n$ and orthogonal projections onto $k$-planes in $\mathcal{G}(\mathbb{F}_p^n,k)$ for $1 \leq k \leq n-1$. Similarly, let $\mathcal A(\mathbb{F}_p^n,k)$ denote affine $k$-planes over $\mathbb{F}_p^n$.
\end{remark}

We now define orthogonal projection onto 1-dimensional subspaces.

\begin{definition}
    Given $\theta \in \mathcal G(\mathbb{F}_p^n, 1)$, let $P_\theta : \mathbb{F}_p^n \to \mathbb{F}_p$ be given by 
    \[
    P_\theta(x) = x\cdot \theta
    \]
    where $x\cdot \theta := x_1 \theta_1 + \cdots + x_n \theta_n.$
\end{definition}

When $n = 2$, it may be instructive for the reader to parameterize $\theta \in \mathcal{G}(\mathbb{F}_p^n,1)$ via the directions $\{(1,t) : t \in \mathbb{F}_p\}\cup \{(0,1)\}$ and show that the level sets of $P_{(1,t)} : \mathbb{F}_p^2 \to \mathbb{F}$ are affine lines in direction $(1,-t^{-1})$, and the level sets of $P_{(0,1)}$ are affine lines in direction $(1,0)$.\footnote{It may be instructive for the reader to generalize this to higher dimensions, i.e. to find a set $\Theta \subset \mathbb{F}_p^n \setminus 0$ such that $L(\Theta,0) := \bigcup_{\theta \in \Theta}L(\theta, 0 ) = \mathcal G(\mathbb{F}_p^n, 1)$ and $|\Theta| = p^{n-1} + p^{n-2} + \cdots + 1$.} Geometrically, what this is saying is that the orthogonal complement to a line in direction $\theta$ in $\mathbb{F}_p^2$ is a 1-dimensional linear subspace $\theta^\perp := \{x\in \mathbb{F}_p^2 : x \cdot \theta = 0\}$. For $n\geq 2,$ 
\[
\theta^\perp:= \{x\in \mathbb{F}_p^n : x \cdot \theta = 0\} \in \mathcal G(\mathbb{F}_p^n, n-1).
\]
In this way, for $\theta \in \mathcal G(\mathbb{F}_p^n,1)$, $P_\theta: \mathbb{F}_p^n \to \mathbb{F}_p$ is the natural analogue to orthogonal projection in $\R^n.$

\begin{remark}
    More generally, given a $k$-plane $V$ of $\mathbb{F}_p^n$, we can define 
    \[
    V^\perp  := \{x\in \mathbb{F}_p : x\cdot v = 0 \text{ for all } v\in V\} \in  \mathcal{G}(\mathbb{F}_p^n, n-k),\]
    and let $P_V: \mathbb{F}_p^n\to V$ be such that $P_V(x) = y$ if and only if $x\in V^\perp + y$.
\end{remark}

\textbf{\textit{Fourier Analysis over $\mathbb{F}_p$}}: We first define the Fourier transform.

\begin{definition}[Fourier Transform in $\mathbb{F}_p^n$]
    Let $f: \mathbb{F}_p^n \to \mathbb{C}$. We define the \textit{Fourier transform} of $f$ to be the function $\hat{f}:\mathbb{F}_p^n\to \mathbb{C}$ given by 
    \[
    \hat{f}(\xi) := \sum_{x\in \mathbb{F}_p^n} f(x) e^{-2\pi i x\cdot \xi/p}.
    \]
\end{definition}

The Fourier transform is a deeply important tool throughout mathematics, so much so that we do not attempt to convey the weight of this object within this thesis. For our purposes, let us suffice to say that the Fourier transform nicely encapsulates symmetries of $\mathbb{F}_p^n$ as an abelian group. For instance, the Fourier transform behaves nicely with translations as can be see in the following lemma.

\begin{lemma} \label{FourierTranslation}
    Let $f: \mathbb{F}_p^n \to \mathbb{C}$, $v\in \mathbb{F}_p^n$, and let $T_vf(x) = f(x+v)$. Then, 
    \[
    \widehat{T_vf}(\xi) = e^{2\pi i v\cdot \xi/p} \hat{f}(\xi).
    \]
\end{lemma}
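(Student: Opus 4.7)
The plan is to prove this identity directly from the definition of the Fourier transform via a change of summation variable, taking advantage of the fact that $\mathbb{F}_p^n$ is an abelian group under addition. First I would unpack the left-hand side by substituting the definition of $T_v f$:
\[
\widehat{T_v f}(\xi) = \sum_{x\in \mathbb{F}_p^n} (T_v f)(x)\, e^{-2\pi i x\cdot \xi/p} = \sum_{x\in \mathbb{F}_p^n} f(x+v)\, e^{-2\pi i x\cdot \xi/p}.
\]

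Next, I would perform the change of variable $y = x+v$, so that $x = y-v$. Since $v \in \mathbb{F}_p^n$ and translation by $-v$ is a bijection of $\mathbb{F}_p^n$ onto itself (with inverse translation by $v$), summing over $x \in \mathbb{F}_p^n$ is the same as summing over $y \in \mathbb{F}_p^n$. This rewrites the expression as
\[
\widehat{T_v f}(\xi) = \sum_{y\in \mathbb{F}_p^n} f(y)\, e^{-2\pi i (y - v)\cdot \xi/p}.
\]

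Using bilinearity of the dot product (interpreted modulo $p$ inside the exponential), I would split the exponent via $(y-v)\cdot \xi = y\cdot \xi - v\cdot \xi$ and pull the $y$-independent factor $e^{2\pi i v\cdot \xi/p}$ out of the sum, obtaining
\[
\widehat{T_v f}(\xi) = e^{2\pi i v\cdot \xi/p} \sum_{y\in \mathbb{F}_p^n} f(y)\, e^{-2\pi i y\cdot \xi/p} = e^{2\pi i v\cdot \xi/p}\, \hat f(\xi),
\]
which is the claim. There is really no obstacle here: the only structural facts used are that $(\mathbb{F}_p^n, +)$ is an abelian group (so the translation $x \mapsto x+v$ is a bijection of the summation index set) and that the dot product is bilinear. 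This kind of translation-to-modulation identity is the reason the Fourier transform interacts cleanly with convolution and projection operators, and it will be invoked freely in the finite-field arguments that follow.
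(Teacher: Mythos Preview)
Your proof is correct and essentially identical to the paper's own argument: both unfold the definition, apply the change of variables $y = x+v$, and factor out $e^{2\pi i v\cdot \xi/p}$. There is nothing to add.
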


\begin{proof}
    This is a direct calculation via a change of variables. In particular, 
    \begin{align*}
    \widehat{T_vf}(\xi) &= \sum_{x\in \mathbb{F}_p^n} f(x+v) e^{-2\pi i x\cdot \xi/p} \\
    &= \sum_{y\in \mathbb{F}_p^n}f(y) e^{-2\pi i (y-v)\cdot \xi/p} \\
    &= e^{2\pi i v\cdot \xi/p}\hat{f}(\xi)
    \end{align*}
    where the second line follows by the change of variables $y = x+v.$ 
\end{proof}

We can also begin to see how the Fourier transform behaves with respect to affine $k$-planes. 

\begin{proposition} \label{FourierOfPlane}
    Let $V$ be an affine $k$-dimensional plane of $\mathbb{F}_p^n$. Then,
    \[
    \hat{\mathbf{1}_V}(\xi) = p^{k} e^{2\pi i b\cdot \xi/p}\mathbf{1}_{V^\perp}(x)
    \]
    where $b\in V^\perp$ is such that $V - b \in \mathcal G(\mathbb{F}_p^n,k).$
    In particular, $\supp\widehat{\mathbf{1}_V} \subset \mathbf{1}_{V^\perp}$.
\end{proposition}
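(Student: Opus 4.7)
The plan is to reduce to the linear case by applying the translation lemma (Lemma \ref{FourierTranslation}), since an affine $k$-plane $V$ can be written uniquely as $V = W + b$ where $W \in \mathcal G(\mathbb{F}_p^n, k)$ is a linear subspace and $b \in W^\perp$. The reduction is clean: $\mathbf{1}_V(x) = \mathbf{1}_W(x - b)$, i.e.\ $\mathbf{1}_V = T_{-b}\mathbf{1}_W$, so Lemma \ref{FourierTranslation} gives $\hat{\mathbf{1}_V}(\xi) = e^{-2\pi i b\cdot \xi/p}\hat{\mathbf{1}_W}(\xi)$ (the precise sign in the phase is dictated by the Fourier transform convention adopted earlier; either way the absolute value and the support are what matter).

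The core of the argument is therefore to compute $\hat{\mathbf{1}_W}(\xi)$ when $W$ is a linear $k$-dimensional subspace. By definition,
\[
\hat{\mathbf{1}_W}(\xi) = \sum_{x\in W} e^{-2\pi i x\cdot \xi/p}.
\]
The map $\chi_\xi : W \to \mathbb{C}$ defined by $\chi_\xi(x) = e^{-2\pi i x\cdot \xi/p}$ is a group homomorphism from the additive group $(W, +)$ to $\mathbb{C}^\times$, that is, a character of $W$. By the standard orthogonality of characters on a finite abelian group (which in this setting is an elementary direct check: if $\chi_\xi$ is nontrivial on $W$, pick $x_0 \in W$ with $\chi_\xi(x_0) \neq 1$ and use $\chi_\xi(x_0)\sum_{x\in W}\chi_\xi(x) = \sum_{x\in W}\chi_\xi(x+x_0) = \sum_{x\in W}\chi_\xi(x)$), the sum equals $|W| = p^k$ when $\chi_\xi \equiv 1$ on $W$ and $0$ otherwise.

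The condition $\chi_\xi \equiv 1$ on $W$ translates exactly to $x \cdot \xi \equiv 0 \pmod{p}$ for every $x \in W$, i.e.\ $\xi \in W^\perp$. So $\hat{\mathbf{1}_W}(\xi) = p^k \mathbf{1}_{W^\perp}(\xi)$. Combining with the translation step gives the claimed formula (up to the sign in the phase, which is a convention issue), and the support containment $\supp \hat{\mathbf{1}_V} \subset W^\perp = V^\perp$ is immediate.

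No step here should be a real obstacle; the only subtle point is the character orthogonality computation on $W$, but since $W$ is itself isomorphic to $\mathbb{F}_p^k$ as an abelian group, it reduces to the familiar fact that $\sum_{t\in \mathbb{F}_p} e^{2\pi i at/p}$ equals $p$ if $a \equiv 0 \pmod p$ and $0$ otherwise, applied coordinate by coordinate after choosing a basis for $W$.
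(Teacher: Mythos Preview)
Your proof is correct and follows essentially the same approach as the paper: both reduce to the linear case via Lemma \ref{FourierTranslation} and then evaluate $\sum_{x\in W} e^{-2\pi i x\cdot\xi/p}$ by the roots-of-unity cancellation argument. The only cosmetic difference is that the paper treats $k=1$ first and then peels off one basis vector for general $k$, whereas you phrase the same computation as character orthogonality on the abelian group $W$; you even note that this unpacks to the same coordinate-by-coordinate calculation.
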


\begin{proof}
By Lemma \ref{FourierTranslation}, it suffices to prove the statement for $k$-dimensional \textit{subspaces} $V\in \mathcal G(\mathbb{F}_p^n, k)$. We begin by proving the statement with $k = 1$. 

Let $\theta \in \mathcal G(\mathbb{F}_p^n,1)$ and let $L:= L(\theta, 0)$. Then, by definition, 
\[
\hat{\mathbf{1}_L}(\xi) := \sum_{x\in \mathbb{F}_p^n} \mathbf{1}_L(x) e^{-2\pi i x\cdot \xi/p} = \sum_{t\in \mathbb{F}_p} e^{-2 \pi i (\theta t) \cdot \xi /p}.
\]
Here, we use that $\mathbf{1}_L(x) = 0$ unless $x = \theta t$ for some $t\in \mathbb{F}_p$. If $\theta \cdot \xi = 0$, the claim is clear. Now, notice that if $\theta\cdot \xi \neq 0$, the above sum equals zero. This follows as if $\theta \cdot \xi \neq 0$, one can apply the change of variables $t \mapsto t' (\theta\cdot\xi)^{-1}$ (which is invertible) and obtain 
\[
\sum_{t\in \mathbb{F}_p}e^{-2 \pi i (\theta t) \cdot \xi /p} = \sum_{t' \in \mathbb{F}_p} e^{-2\pi i t'/p} = 0
\]
as we are summing over the $p$-th roots of unity. Hence, 
\[
\supp \hat{\mathbf{1}_V}:= \{\xi \in \mathbb{F}_p^n : \hat{\mathbf{1}_L}(\xi) \neq 0\}\subseteq \{\xi \in \mathbb{F}_p^n : \theta \cdot \xi = 0\} := \theta^\perp.
\]
The same proof idea works for larger values of $k$, which can be seen by noting that for $V\in \mathcal G(\mathbb{F}_p^n, k)$, then $V$ is spanned by $k$-many linearly independent vectors $v_1,\dots, v_k \in \mathbb{F}_p^n$. By definition we have
\[
\hat{\mathbf{1}_V}(\xi) = \sum_{x\in \mathbb{F}_p^n} \mathbf{1}_V(x) e^{-2\pi i x\cdot \xi} = \sum_{x\in V} e^{-2\pi i x\cdot \xi}.
\]
If $\xi \in V^\perp$, the claim is clear. Now, suppose $\xi \notin V^\perp$. Without loss of generality, assume $v_1\cdot \xi \neq 0$. It therefore follows that $\xi \notin v_1^\perp$. Furthermore, let $W = \mathrm{span}\{v_2,\dots, v_k\}$, and notice that for all $x\in V$, $x = w + tv_1$ for some $w\in W$ and $t\in \mathbb{F}_p$. Then,
\[
\sum_{x\in V} e^{-2\pi i x\cdot \xi} = \sum_{w\in W} e^{-2\pi i w\cdot \xi} \sum_{t\in \mathbb{F}_p} e^{-2\pi i tv_1 \cdot \xi} := \sum_{w\in W} \left(e^{-2\pi i w\cdot \xi} \hat{\mathbf{1}}_{L(v_1,0)}(\xi)\right) = 0.
\]
The last line follows by the case $k = 1$ and $\xi \notin v_1^\perp$, completing the proof. 
\end{proof}

This is one of the nice properties of dealing over finite fields: given $f : \mathbb{F}_p^n \to \mathbb{C}$ with $\supp f$ contained in a $k$-plane $V$, it follows that $\supp \hat{f}\subset V^\perp$. Over Euclidean space, one can define the Fourier transform of a (nice enough, e.g. Schwartz) function $f : \R^n \to \mathbb{C}$ as a function $\hat{f}:\R^n \to \mathbb{C}$ as 
\[
\hat{f}(\xi) := \int f(x) e^{-2\pi i x\cdot \xi}\,\mathrm dx.
\]
Then, if $f$ is compactly supported on a $\delta$-neighborhood of a $k$-plane $V$ (which can be thought about as a $1\times \dots \times 1 \times \delta$ rectangular prism centered around $V$), then $\hat{f}$ is \textit{essentially} supported on a $\delta^{-1}$-neighborhood of $V^\perp$ (i.e. a $1\times \dots \times 1 \times \de^{-1}$ rectangular prism centered around $V^\perp$). This is made precise by \textit{dual ellipsoids} which are discussed in Wolff's lecture notes \cite{WolffLectureNotes}.

This unfortunately is not precisely true without the word ``essentially'' here---in fact, it is a standard exercise in harmonic analysis to show that if $f,\hat{f}\in L^2(\R^n)$ are compactly supported,\footnote{Note that it is nontrivial to show that the Fourier transform is well-defined on $L^2(\R^n)$, let alone $L^p(\R^n)$ for $1\leq p < \infty$ (which requires the theory of tempered distributions).} then $f = 0$. This is the main reason why we present the high-low method over $\mathbb{F}_p^n$ as opposed to $\R^n$.

The last Fourier analytic statement we need is Plancherel's theorem.

\begin{theorem}[Plancherel] \label{PLANCHEREL}
    Let $f : \mathbb{F}_p^n \to \mathbb{C}$. Then, 
    \[
    \sum_{x\in \mathbb{F}_p^n} |f(x)|^2 = p^{-n} \sum_{\xi \in \mathbb{F}_p^n} |\hat{f}(\xi)|^2.
    \]
\end{theorem}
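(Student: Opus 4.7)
The plan is to expand the right-hand side directly from the definition of the Fourier transform, swap the order of summation so that the $\xi$-sum moves innermost, and then collapse it using orthogonality of the additive characters $\xi \mapsto e^{2\pi i v \cdot \xi/p}$ on $\mathbb{F}_p^n$. Concretely, I would first write
\[
\sum_{\xi \in \mathbb{F}_p^n} |\hat{f}(\xi)|^2 = \sum_{\xi \in \mathbb{F}_p^n} \hat{f}(\xi)\,\overline{\hat{f}(\xi)} = \sum_{x,y \in \mathbb{F}_p^n} f(x)\overline{f(y)} \sum_{\xi \in \mathbb{F}_p^n} e^{-2\pi i (x-y)\cdot \xi/p},
\]
after interchanging the two finite sums (which is permitted because every sum has finitely many terms).

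The key step is then to establish the character orthogonality relation
\[
\sum_{\xi \in \mathbb{F}_p^n} e^{2\pi i v\cdot \xi/p} = \begin{cases} p^n, & v = 0 \\ 0, & v \neq 0 \end{cases},
\]
which I would prove by the same trick used in Proposition \ref{FourierOfPlane}. If $v = 0$ every summand is $1$ and we obtain $p^n$. If $v \neq 0$, pick a coordinate $v_j \neq 0$, and perform the change of variable $\xi_j \mapsto v_j^{-1}\xi_j'$ (which is a bijection of $\mathbb{F}_p$ since $v_j$ is a unit). The $\xi_j$-sum becomes $\sum_{\xi_j' \in \mathbb{F}_p} e^{2\pi i \xi_j'/p}$, which is a sum over all $p$-th roots of unity and therefore vanishes, killing the whole sum.

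Applying this orthogonality relation with $v = x - y$ forces only the diagonal terms $x = y$ to survive, giving
\[
\sum_{\xi \in \mathbb{F}_p^n} |\hat{f}(\xi)|^2 = p^n \sum_{x \in \mathbb{F}_p^n} f(x)\overline{f(x)} = p^n \sum_{x \in \mathbb{F}_p^n} |f(x)|^2,
\]
and dividing both sides by $p^n$ yields the claimed identity. There is no substantial obstacle here: the argument is a routine calculation on a finite abelian group, and the only ingredient beyond unwinding definitions is the character orthogonality above, which is itself a direct consequence of the vanishing of complete sums of nontrivial $p$-th roots of unity.
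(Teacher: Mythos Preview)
Your proposal is correct and matches the approach the paper sketches: expand $\sum_\xi |\hat f(\xi)|^2$ as a double sum over $x,y$, then split into the cases $x=y$ and $x\neq y$ via character orthogonality (the paper mentions exactly this, referring the reader to the same roots-of-unity trick used in Proposition~\ref{FourierOfPlane}). The paper also notes Fourier inversion as an alternative route but does not carry it out.
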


It is quite instructive to work through this proof, using that 
\[
\sum_{\xi \in 
\mathbb{F}_p^n} |\hat{f}(\xi)|^2 = \sum_{\xi \in \mathbb{F}_p^n} \sum_{x,y\in \mathbb{F}_p^n} f(x) \overline{f(y)} e^{-2\pi i (x - y)\cdot \xi/p}
\]
and considering the cases where $x = y$ and $x\neq y$. Alternatively, one can use the Fourier inversion formula which gives 
\[
f(x) = p^{-n}\sum_{\xi \in \mathbb{F}_p^n} \hat{f}(\xi) e^{2\pi i x\cdot \xi/p}.
\]
For a complete proof of both Plancherel and Fourier inversion, albeit with a different normalization of the Fourier transform, see \cite[Section 6.1]{ZhaoGTAC}. 

\textbf{\textit{Falconer's Projection Theorem over $\mathbb{F}_p^n$}}: We are now ready to state and prove the $\mathbb{F}_p^n$ analogue of Falconer's projection theorem  (Theorem \ref{FALCONER}).

\begin{theorem}[Falconer's Projection Theorem over $\mathbb{F}_p^n$] \label{DISCFalconer}
    Let $p$ be prime, $X\subset \mathbb{F}_p^n$, and fix $1\leq k \leq n-1$. Furthermore, let 
    \[
    E_s(X) := \{V\in \mathcal G(\mathbb{F}_p^n, k) : |P_V(X)|< s\}.
    \]
    Then, for all $0 < s \leq \min\{|X|, \frac{1}{2}p^k\}$,
    \[
    |E_s(X)| \lesssim p^{k(n-k)}s|X|^{-1}.
    \]
\end{theorem}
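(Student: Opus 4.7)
The proof will follow the high-low philosophy: isolate the $\xi=0$ (``low'') Fourier mode of $\mathbf{1}_X$, show it is dominated by the nonzero (``high'') modes precisely when $s\leq p^k/2$, and then redistribute the high modes over the Grassmannian via Plancherel.

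\textbf{Step 1: Cauchy--Schwarz on the fibers.} Let $f=\mathbf{1}_X$ and, for $V\in\mathcal G(\mathbb{F}_p^n,k)$, put $N(V):=\sum_{\pi\parallel V^\perp}|X\cap \pi|^2$, the sum being over affine translates of $V^\perp$. Cauchy--Schwarz yields
\[
|X|^2 = \Bigl(\sum_{\pi\parallel V^\perp} |X\cap\pi|\Bigr)^{\!2} \leq |P_V(X)|\cdot N(V),
\]
so for every $V\in E_s(X)$ we have $N(V) > |X|^2/s$.

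\textbf{Step 2: Fourier expression for $N(V)$.} Writing $N(V)=\langle f\ast f^*,\mathbf{1}_{V^\perp}\rangle$ and applying Plancherel together with Proposition \ref{FourierOfPlane} (so $\hat{\mathbf{1}_{V^\perp}}=p^{n-k}\mathbf{1}_V$), I compute
\[
N(V) \;=\; p^{-k}\sum_{\xi\in V}|\hat f(\xi)|^2.
\]
The contribution from $\xi=0$ is $p^{-k}\hat f(0)^2=p^{-k}|X|^2$ (the ``low'' piece). The assumption $s\leq \tfrac12 p^k$ gives $|X|^2/s\geq 2p^{-k}|X|^2$, so for $V\in E_s(X)$ the ``high'' piece
\[
H(V) \;:=\; p^{-k}\sum_{\xi\in V\setminus\{0\}}|\hat f(\xi)|^2
\]
must satisfy $H(V) \gtrsim |X|^2/s$, i.e.\ $\sum_{\xi\in V\setminus 0}|\hat f(\xi)|^2 \gtrsim p^k|X|^2/s$.

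\textbf{Step 3: Sum over $V\in E_s(X)$ and swap order of summation.} Summing the previous inequality and exchanging the two sums gives
\[
|E_s(X)|\cdot \frac{p^k|X|^2}{s} \;\lesssim\; \sum_{\xi\neq 0} |\hat f(\xi)|^2\cdot \#\{V\in E_s(X):\xi\in V\}.
\]
Any fixed nonzero $\xi$ is contained in at most $\#\{V\in\mathcal G(\mathbb{F}_p^n,k):\xi\in V\}\sim p^{(k-1)(n-k)}$ subspaces (Remark \ref{FiniteFieldSubspaces} with $\ell=1$). Combined with Plancherel (Theorem \ref{PLANCHEREL}) this yields
\[
\sum_{\xi\neq 0}|\hat f(\xi)|^2 \cdot \#\{V\in E_s(X):\xi\in V\} \;\lesssim\; p^{(k-1)(n-k)}\cdot p^n|X|.
\]
Rearranging produces $|E_s(X)|\lesssim p^{k(n-k)}s/|X|$, as claimed.

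\textbf{Main obstacle.} The conceptual crux is Step 2: recognizing that the projection datum $|P_V(X)|$ is cleanly encoded by the $L^2$-mass of $\hat f$ restricted to the dual subspace $V$. Once this identity is in hand the finite-field setting is very forgiving, because $\hat{\mathbf{1}_{V^\perp}}$ is \emph{exactly} supported on $V$---in contrast with $\R^n$, where one has to lose factors from the non-compactly-supported ``dual ellipsoid'' heuristic discussed after Proposition \ref{FourierOfPlane}. The only other place where care is needed is the cut-off $s\leq p^k/2$, which is precisely the threshold that forces the high-frequency contribution to dominate the trivial mode $\xi=0$; any weaker hypothesis would let the low term absorb the lower bound from Step 1 and the argument would degenerate.
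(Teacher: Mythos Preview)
Your proof is correct and takes a cleaner route than the paper's. The paper builds the auxiliary function $f=\sum_{V\in E_s(X)}f_V$, where $f_V$ is the indicator of the union of fibers $V^\perp+b$ covering $X$, bounds $\lVert f\mathbf{1}_X\rVert_2^2$ below by $|X|\,|E_s(X)|^2$, and then applies Plancherel with a high/low split; this forces a preliminary reduction to the regime $|X|<p^{n-k}s$ (otherwise $E_s(X)=\emptyset$) in order to absorb the zero-frequency term. You instead work directly with $\mathbf{1}_X$, using the per-subspace energy identity $N(V)=p^{-k}\sum_{\xi\in V}|\hat f(\xi)|^2$ together with Cauchy--Schwarz on the fibers. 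The two arguments share the same Fourier ingredients---Proposition~\ref{FourierOfPlane}, the count $\binom{n-1}{k-1}_p\sim p^{(k-1)(n-k)}$ of $k$-planes through a fixed nonzero vector, and Plancherel---but your packaging avoids the case split on $|X|$ and makes the role of the threshold $s\le p^k/2$ completely transparent (it is exactly what forces the high term to dominate). The paper's version is chosen deliberately to parallel the $\R^n$ high-low method of \cite{BrightGan}, which is its pedagogical point; your version is the more standard finite-field $L^2$ energy argument and is arguably the shorter path to the inequality itself.
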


\begin{proof}
    We will prove the above statement when $k = 1$ in detail, and then briefly highlight what changes for arbitrary $k$. 
    
    Let $k = 1$ and fix some $0 < s \leq \min\{|X|, \frac{1}{2}p^k\}$. Notice that if $|X|\geq p^{n-1}s$, then $E_s(X) = \emptyset$. This follows as for all $\theta$,
    \[
    X\subset P_{\theta}^{-1}(P_\theta(X)) \implies |X| \leq p^{n-1} |P_\theta(X)|.
    \]
    Hence, if $\theta \in E_s(X)$, it'd follows that $|X| < p^{n-1} s$ reaching a contradiction. Therefore, we only need deal with the case $|X| \leq p^{n-1} s$ in proving Falconer's projection theorem, a fact we use near the end of the proof. It may be helpful to note that this reduction is akin to Remark \ref{miniOPremark}.

    For all $\theta \in \mathcal G(\mathbb{F}_p^n, 1)$, the level sets of $P_\theta(X)$ are hyperplanes that are translations of $\theta^\perp.$ For each $\theta$, let $B_\theta \subset \theta := L(\theta,0)$ be such that 
    \[
    P_\theta^{-1}(P_\theta(X)) = \bigcup_{b\in B_\theta} \theta^\perp + b.
    \]
    Then, we let 
    \[
    f_\theta = \sum_{b \in B_\theta} \mathbf{1}_{\theta^\perp + b} \quad \quad \text{ and } f = \sum_{\theta \in E_s(X)} f_\theta.
    \]
    We prove Theorem \ref{DISCFalconer} by bounding the $L^2$-norm of $f\mathbf{1}_X$ above and below. 

    We firstly claim that
    \begin{equation} \label{HighLowOP0}
    |X| |E_s(X)|^2= \sum_{x\in \mathbb{F}_p^n} |f(x) \mathbf{1}_X(x)|^2.
    \end{equation}
    To see this, notice that for all $x\in X$ (for which the argument of the above sum is nonzero) and each $\theta \in E_s(X)$, there exists a unique $b \in B_\theta$ such that $x \in \theta^\perp + b.$ Then, applying Plancherel (Theorem \ref{PLANCHEREL}) to \eqref{HighLowOP0}, we have 
    
    \begin{equation} \label{HighLowOP1}
    |X||E_s(X)|^2 = p^{-n}\sum_{\xi \in \mathbb{F}_p^n} |\widehat{f\mathbf{1}_X}(\xi)|^2 = p^{-n}\sum_{\xi \in \mathbb{F}_p^n} |\sum_{\theta \in E_s(X)} \widehat{f_\theta \mathbf{1}_X}(\xi)|^2.
    \end{equation}
    The last line follows by the linearity of the Fourier transform. By Proposition \ref{FourierOfPlane}, notice that $\supp \hat{f_\theta} \subset L(\theta,0)$. In particular, notice that when $\theta \neq \theta'$, 
    \[
    L(\theta,0) \cap L(\theta',0) = \{0\}.
    \]
    Hence, it follows that when $\xi \neq 0$, 
    \begin{equation} \label{HighLowOP2}
    |\sum_{\theta \in E_s(X)} \widehat{f_\theta \mathbf{1}_X}(\xi)|^2 = \sum_{\theta \in E_s(X) } | \widehat{f_\theta \mathbf{1}_X}(\xi)|^2.
    \end{equation}
    As such, we split the sum in \eqref{HighLowOP1} into cases where $\xi \neq 0$ and $\xi = 0$, or the \textit{high} frequencies and the \textit{low} frequencies respectively. By \eqref{HighLowOP2}, we have 
    \begin{align*}
        p^{-n} \sum_{\xi \neq 0} |\sum_{\theta \in E_s(X)}\widehat{f_\theta \mathbf{1}_X}(\xi)|^2 &= p^{-n} \sum_{\xi \neq 0} \sum_{\theta \in E_s(X)} |\widehat{f_\theta\mathbf{1}_X}(\xi)|^2 \\
        &\leq p^{-n} \sum_{\xi \in \mathbb{F}_p^n} \sum_{\theta \in E_s(X)} |\widehat{f_\theta\mathbf{1}_X}(\xi)|^2 \\
        &= \sum_{\theta \in E_s(X)} \sum_{x\in \mathbb{F}_p^n} |f_\theta(x) \mathbf{1}_X(x)|^2.
    \end{align*}
    Here, the second inequality comes from adding back in the zero frequency, and the last line follows by applying Plancherel again. In particular, notice  
    \[
    \sum_{\theta \in E_s(X)} \sum_{x\in \mathbb{F}_p^n} |f_\theta(x) \mathbf{1}_X(x)|^2 \leq \sum_{\theta \in E_s(X)}  p^{n-1} s \leq |E_s(X)| p^{n-1} s
    \]
    as there are $p^{n-1}$-many points in each fiber of $P_\theta$. We now deal with the 0 frequency term. By definition of the Fourier transform and the Cauchy--Schwarz inequality, we have
    \[
        p^{-n} |\widehat{f\mathbf{1}_X}(0)|^2 := p^{-n} \left|\sum_{x\in X} \sum_{\theta \in E_s(X)} f_\theta(x)\cdot 1 \right|^2 \leq  p^{-n} |X| \sum_{x\in X} |\sum_{\theta \in E_s(X)} f_\theta(x)|^2.
    \]
    As we argued earlier, for each $x\in X$ and each $\theta \in E_s(X)$, there is a unique $b\in B_\theta$ such that $x\in \theta^\perp + b$. In particular, we have 
    \[
    p^{-n} |X| \sum_{x\in X} |\sum_{\theta \in E_s(X)} f_\theta(x)|^2 \leq p^{-n} |X|^2 |E_s(X)|^2 \leq \frac{1}{2}|X| |E_s(X)|^2.
    \]
    The last line follows by the assumption that $|X| \leq p^{n-1}s$ and $s\leq \min\{|X|, p/2\}$. Hence, combining \eqref{HighLowOP0} and \eqref{HighLowOP1} with the above inequalities, we see
    \[
    |X||E_s(X)|^2 \leq |E_s(X)| p^{n-1}s + \frac{1}{2} |X| |E_s(X)|^2.
    \]
    Rearranging the above inequality, we obtain 
    \[
    |E_s(X)| \leq 2p^{n-1} s|X|^{-1}.
    \]
    which concludes the proof for $k=1$. The above argument goes largely unchanged for general values of $k$. In particular, for general $k$, we let 
\[
f_V = \sum_{b \in B_V} \mathbf{1}_{V^\perp + b} \quad \quad \text{ and } \quad \quad f= \sum_{V\in E_s(X)} f_V
\]
where $B_V$ is such that 
\[
P_V^{-1}(P_V(X)) = \bigcup_{b\in B_V} V^\perp + b.
\]
Considering the $L^2$-norm of $f\mathbf{1}_X$ again, we see 
\[
|X| |E_s(X)|^2 \leq \sum_{x\in \mathbb{F}_p^n} |f(x)|^2 = p^{-n} \sum_{\xi\in \mathbb{F}_p^n} |\widehat{f\mathbf{1}_X}(\xi)|^2.
\]
To deal with the right hand side, we again split the sum into when $\xi = 0$ and $\xi \neq 0$. The low frequency term, when $\xi = 0$, can be dealt with in the same manner as in the previous argument. In particular, we assume $|X| < p^{n-k} s$ (as otherwise $E_s(X) = \emptyset$), and use that $s\leq \min\{|X|, \frac{1}{2}p^{k}\}$ to see 
\[
p^{-n} |X|^2 |E_s(X)|^2 \leq \frac{1}{2} |X||E_s(X)|^2.
\]
The high frequency term, when $\xi \neq 0$, is a bit harder to deal with. In particular, it may no longer be the case that $\hat{f}_V$ and $\hat{f}_{V'}$ have disjoint support away from the origin if $V\neq V'$ as they are $k$-planes for some $k\geq 2$. Notice that when $\xi \neq 0$ and if $f_V(\xi) \neq 0$, there are $\leq\binom{n-1}{k-1}_p \sim p^{(k-1)(n-k)}$ choices of $V' \neq V$ such that $f_{V'}(\xi) \neq 0$. This follows as for $\xi \in \supp \widehat{f}_{V'}$, it must be the case that $L(\xi,0) \subset V'$. In particular, as we noted in Remark \ref{FiniteFieldSubspaces}, there are at $\binom{n-1}{k-1}_p$ many $k$-planes that contain the fixed line $L(\xi,0)$. Thus, 
\begin{align*}
    p^{-n} \sum_{\xi \neq 0} |\sum_{V\in E_s(X)} \widehat{f_V \mathbf{1}_X}(\xi)|^2 &\lesssim p^{-n} \sum_{\xi \in \mathbb{F}_p^n} \sum_{V\in E_s(X)} p^{(k-1)(n-k)} |\widehat{f_V1_X}(\xi)|^2 \\
    &\leq p^{(k-1)(n-k)} \sum_{V\in E_s(X)} \sum_{x\in \mathbb{F}_p^n}|f_V(x)\mathbf{1}_X(x)|^2 \\
    &\leq p^{(k-1)(n-k)} |E_s(X)|p^{n-k}s \\
    &= |E_s(X)| p^{k(n-k)} s.
\end{align*}
Combining this with the low frequency term, we obtain the desired bound: 
\[
|E_s(X)|\lesssim p^{k(n-k)} s|X|^{-1}. \qedhere
\]
\end{proof}

We make a few concluding remarks about orthogonal projections over finite fields. Firstly, using $p^{(k-1)(n-k)}$ to deal with the upperbound for the high frequency terms can be quite lossy. If one is more careful in their analysis of the high term, as Gan and I were in \cite{BrightGan2}, one can improve Theorem \ref{DISCFalconer}. Furthermore, as we noted earlier and as is discussed within \cite{BrightGan2}, the above arguments also hold over $\mathbb{F}_q^n$ where $q$ is a prime \textit{power}. This is an important distinction to make, as there are some exceptional set estimates that are known to fail over $\mathbb{F}_q^n$. For example, based on the statement of Falconer's projection theorem over $\mathbb{F}_p^n$, it is reasonable to conjecture that there is a finite field analogue to Theorem \ref{OSRW} as Chen did in \cite{Chen2018}:

\begin{conjecture} \label{finitefieldchenconjecture}
    Let $X\subset \mathbb{F}_p^2$ for $p$ prime. Then, for all $\epsilon>0$,
    \[
    |E_s(X)| \lesssim p^\epsilon \max\{s^2 |X|^{-1}, 1\}.
    \]
\end{conjecture}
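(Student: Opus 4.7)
The conjecture is the $\mathbb{F}_p^2$ analog of the sharp planar exceptional set estimate (Theorem \ref{OSRW}), with the $p^{\epsilon}$ factor playing the role that sub-polynomial losses do throughout finite-field harmonic analysis. My plan is to mirror the Orponen--Shmerkin / Ren--Wang strategy \cite{OrponenShmerkinABC, RenWang} from the Euclidean setting: reduce to an incidence problem, obtain an $\epsilon$-improvement via sum-product, then bootstrap via multi-scale induction.

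I would begin exactly as in the proof of Proposition \ref{discreteESE}.(c). Set $t := |E_s(X)|$, and for each $\theta \in E_s(X)$ let $\mathcal L_\theta$ be the (at most $s$) parallel lines in direction $\theta^\perp$ covering $X$. Since the families $\mathcal L_\theta$ have pairwise distinct directions, putting $\mathcal L := \bigcup_{\theta \in E_s(X)} \mathcal L_\theta$ gives $|\mathcal L| \le st$, and every $x \in X$ lies on exactly $t$ lines of $\mathcal L$, so
\[
|X|\,t \;=\; |\mathcal I(X, \mathcal L)|.
\]
The conjecture would follow from an incidence bound of the shape $|\mathcal I(P, \mathcal L)| \lesssim p^{\epsilon}\bigl(|P|^{2/3}|\mathcal L|^{2/3} + |P| + |\mathcal L|\bigr)$. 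An easy case analysis shows that the Cauchy--Schwarz bound (Proposition \ref{sec2:prop:STPrelim}) already gives $|E_s(X)| \lesssim s$, implying the conjecture whenever $|X| \le p^{\epsilon} s$; likewise the finite-field Falconer bound (Theorem \ref{DISCFalconer}) implies the conjecture once $s \ge p^{1-\epsilon}$. So I may restrict attention to the interesting range $p^{\epsilon} s < |X|$ with $s < p^{1-\epsilon}$.

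Since the Szemer\'edi--Trotter exponents $(2/3, 2/3)$ are themselves only conjectural over $\mathbb{F}_p^2$, I would first aim for the finite-field analog of Theorem \ref{OSEpsilonProj}: an $\epsilon$-improvement of Cauchy--Schwarz. The Bourgain--Katz--Tao sum-product theorem supplies an incidence inequality $|\mathcal I(P, \mathcal L)| \lesssim N^{3/2 - \beta}$ whenever $|P| = |\mathcal L| = N \le p^{2 - \alpha}$, for some $\beta = \beta(\alpha) > 0$. This is the exact structural parallel of how Bourgain's projection theorem (Theorem \ref{BOURGAIN}) --- itself powered by the absence of proper subfields of $\mathbb{F}_p$, in perfect analogy with Theorem \ref{ctmsubringstatement} --- drives the $\epsilon$-improved Kaufman bound in the continuum. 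Inserting this bound into the incidence identity above already yields $|E_s(X)| \lesssim p^{\epsilon} s^{1 - \eta}$ for some $\eta > 0$, a genuine improvement over Cauchy--Schwarz in the interesting range.

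\textbf{Main obstacle.} The crux is bootstrapping this $\epsilon$-improvement to the sharp exponent $s^2/|X|$, in the spirit of the multi-scale induction of \cite{OrponenShmerkinABC, RenWang}. In the continuum, that argument relies on Shmerkin's inverse theorem for convolutions together with a continuous range of $\delta$-discretization scales; over $\mathbb{F}_p$, however, there are only $O(\log p)$ dyadic scales, and the Bourgain--Katz--Tao gain degrades near the extremes $|A| \sim p^{1/2}$ and $|A| \sim p$. Bridging this gap --- either by proving a finite-field convolution inverse theorem compatible with the coarse scale structure, or by refining the high-low argument of Theorem \ref{DISCFalconer} with additive-combinatorial control on how the individual $\widehat{f_\theta}$ interact on intermediate frequency shells --- is where I expect the real difficulty to lie.
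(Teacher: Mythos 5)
This statement is labeled a \emph{conjecture} in the paper, not a theorem, and the paper does not prove it: it attributes the conjecture to Chen~\cite{Chen2018}, notes its sharpness is verified in~\cite{BrightGan2}, remarks that it \emph{fails} over non-prime $\mathbb{F}_q$, and cites the Lund--Pham--Vinh bound $|E_s(X)| \lesssim \min\{s^{5/2}|X|^{-1}, s^6|X|^{-3}, s\}$ as the best current progress. So there is no in-paper proof to compare your proposal against, and you should not expect one to exist: the problem is open.

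As a research sketch your proposal is structurally sound and honestly flagged as incomplete. The reduction to the incidence identity $|X|t = |\mathcal I(X,\mathcal L)|$ mirrors Proposition~\ref{discreteESE}.(c) correctly; the disposal of the easy ranges ($|X| \le p^{\epsilon}s$ via Cauchy--Schwarz, $s \ge p^{1-\epsilon}$ via Theorem~\ref{DISCFalconer}) checks out; and the analogy you draw between Bourgain--Katz--Tao over $\mathbb{F}_p$ and Bourgain's projection theorem over $\R$ (i.e.\ between ``no intermediate subfields'' and Theorem~\ref{ctmsubringstatement}) is exactly the parallel the paper itself develops. Two caveats worth recording though. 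First, the step from ``BKT gives $|\mathcal I| \lesssim N^{3/2-\beta}$ when $|P|=|\mathcal L|=N$'' to ``$|E_s(X)| \lesssim p^{\epsilon}s^{1-\eta}$'' is not automatic, since here $|P|=|X|$ and $|\mathcal L|\le st$ can be badly unbalanced; you need a version of the $\epsilon$-improvement that tolerates that imbalance, or a genuine reduction to the balanced case. Second, and as you correctly identify, the real obstruction is the bootstrap: in the Euclidean argument of~\cite{OrponenShmerkinABC,RenWang} the $\epsilon$-gain is amplified across a continuum of scales, while $\mathbb{F}_p$ has essentially one scale, so no finite-field analogue of the multi-scale induction is currently available. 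Your proposal is therefore a plausible program whose main step is precisely where the conjecture remains open, not a proof.
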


The sharpness of this conjecture is proven in \cite[Example 13]{BrightGan2}, and it is worth noting that it is known to \textit{fail} over $\mathbb{F}_q^n$ where $q = p^r$ for some $r\geq 2.$ If $q=p^2$ and $X \subset \mathbb{F}_q^2$ is the subgroup isomorphic to $\mathbb{F}_p^2$, then 
\[
|\{\theta \in \mathcal G(\mathbb{F}_q^2, 1) : |P_\theta(X)|\leq p\}| = p \gg \max\{p^2 |X|^{-1},1\}.
\]
Hence, in the study of exceptional set estimates over finite fields, it is important to note if results hold for $\mathbb{F}_q^n$ for any prime power $q$ or if they specifically require prime order fields. The best progress towards proving Conjecture \ref{finitefieldchenconjecture} over $\mathbb{F}_p^2$ was obtained in 2023 by Lund, Pham, and Vinh \cite{LundPhamVinh} who proved
\[
|E_s(X)| \lesssim \min\{s^{5/2}|X|^{-1}, s^6|X|^{-3}, s\}.
\]
For more on exceptional set estimates over finite fields, see \cite{LundPhamVinh,BrightGan2,GanESEandFurst}.

\newpage 
\section{Furstenberg Sets} \label{sec:Furstenberg}

Recall that a Kakeya set (Definition \ref{def:kakeya}) is a compact set $K \subset \R^n$ with a unit line segment in every direction $\theta \in \mathbb{S}^{n-1}$. In this Section, we will discuss $(s,t)$-Furstenberg sets---a fractal version of Kakeya sets---and the associated $(s,t)$-Furstenberg set problem.

\begin{definition}[$(s,t)$-Furstenberg set] \label{FurstDEF}
Let $n\geq 2$, $0 < s \leq 1$, and $0 < t\leq 2(n-1)$. We say a Borel set $F\subset \R^n$ is an $(s,t)$\textit{-Furstenberg set} if there exists a Borel set of lines $\mathcal L \subset \mathcal A(n,1)$ such that $\dim \mathcal L \geq t$ and $\dim (F\cap \ell) \geq s$ for all lines $\ell \in \mathcal L$.
\end{definition}

\begin{remark}
    The ranges on $s$ and $t$ are natural as $\dim \mathcal A(n,1) = 2(n-1)$ and for every line $\ell \in \mathcal A(n,1)$, $\dim \ell = 1$. Furthermore, notice that a Kakeya set $K\subset \R^n$ is a $(1,n-1)$-Furstenberg set.
\end{remark}

Just as the Kakeya problem (Conjecture \ref{kakeyaconj}) seeks to find lowerbounds to the dimension of Kakeya sets, the $(s,t)$-Furstenberg set problem seeks to find lowerbounds to the dimension of $(s,t)$-Furstenberg sets. We motivate the sharp bounds for Furstenberg sets by once again studying a discrete model example. 

\begin{proposition} \label{discreteFurst}
    Let $t>0$ and $s\geq 2$. Let $F\subset \R^n$ be a finite set such that there exists a finite set of affine lines $\mathcal L \subset \mathcal A(n,1)$ with $|\mathcal L|\geq t$ and $|F\cap \ell| \geq s$ for all $\ell \in \mathcal L$. Then, for $s\geq 2$,
    \begin{itemize}
        \item[a)] (Cauchy--Schwarz I) $|F|\gtrsim st^{1/2}$,
        \item[b)] (Cauchy--Schwarz II) $|F| \gtrsim \min\{s^2,st\}$, and
        \item[c)] (Szemer\'edi--Trotter) $|F| \gtrsim \min\{st, s^{3/2}t^{1/2}\}$ for $s\geq 2$.       
    \end{itemize}
\end{proposition}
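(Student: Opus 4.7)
The plan is to bound the number of point-line incidences $|\mathcal{I}(F, \mathcal{L})|$ from below using the Furstenberg hypotheses, then apply three successively stronger upper bounds (the two Cauchy--Schwarz estimates from Proposition \ref{sec2:prop:STPrelim} for (a) and (b), and the Szemer\'edi--Trotter bound from Theorem \ref{SZEMEREDITROTTER} for (c)) and rearrange. This mirrors precisely the philosophy of Section \ref{sec:Incidences}: the same incidence bounds that give upper estimates for $|\mathcal{I}(F,\mathcal{L})|$ translate, under discrete Furstenberg-type lower bounds on $|\mathcal{I}(F,\mathcal{L})|$, into lower estimates on $|F|$.

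The key observation that drives all three parts is that since $|\mathcal{L}| \geq t$ and $|F \cap \ell| \geq s$ for every $\ell \in \mathcal{L}$, summing incidences line-by-line yields
\[
|\mathcal{I}(F, \mathcal{L})| \;=\; \sum_{\ell \in \mathcal{L}} |F \cap \ell| \;\geq\; st.
\]
By Corollary \ref{genericprojCorollary0}, we may freely assume $n=2$ after a generic projection (which preserves cardinalities of both points and lines), so the planar Szemer\'edi--Trotter bound applies.

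For (a), I would apply the first estimate in Proposition \ref{sec2:prop:STPrelim} to obtain $st \leq |F| t^{1/2} + t$. Since $s \geq 2$, the first term dominates and rearranging gives $|F| \gtrsim s t^{1/2}$. For (b), I would use the symmetric estimate $|\mathcal{I}(F,\mathcal{L})| \leq |\mathcal{L}| |F|^{1/2} + |F|$ to get $st \leq t |F|^{1/2} + |F|$; case analysis on which of the two right-hand terms dominates yields $|F| \gtrsim s^2$ or $|F| \gtrsim st$ respectively, so $|F| \gtrsim \min\{s^2, st\}$. For (c), Szemer\'edi--Trotter gives $st \leq 4(|F|^{2/3} t^{2/3} + |F| + t)$; since $s \geq 2$ forces $st \geq 2t$, the linear-in-$t$ term is absorbed, and depending on which of the remaining two terms dominates one gets either $|F| \gtrsim s^{3/2} t^{1/2}$ or $|F| \gtrsim st$, hence $|F| \gtrsim \min\{st, s^{3/2}t^{1/2}\}$.

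There is no substantial obstacle here: the three bounds are genuinely just corollaries of the three incidence estimates, and the only care required is the case analysis choosing which term on the right-hand side dominates. The mild sharpness in (c) of taking the minimum reflects the fact that Szemer\'edi--Trotter itself only improves on Cauchy--Schwarz when the $|F|^{2/3} |\mathcal{L}|^{2/3}$ term is the leading one; in the opposite regime one recovers the bound $|F| \gtrsim st$ that already follows from the trivial observation $|F| \geq |F \cap \ell| \geq s$ combined with an averaging argument (or directly from $|\mathcal{I}(F,\mathcal{L})| \leq |F| \cdot \max_x |\{\ell \in \mathcal{L} : x \in \ell\}|$). The interest of (c) lies in the $s^{3/2} t^{1/2}$ regime, which is the genuine Szemer\'edi--Trotter gain over the Cauchy--Schwarz bound in (a) and which motivates the sharp exponent $s + t/2$ appearing in the continuum Furstenberg conjecture.
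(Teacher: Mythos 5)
Your overall strategy is exactly the paper's: bound incidences below by $|\mathcal{I}(F,\mathcal L)| \geq s|\mathcal L| \geq st$, then apply the three incidence upper bounds and rearrange. The generic projection to the plane is harmless but unnecessary, since Theorem~\ref{SZEMEREDITROTTER} is stated for all $\R^n$.

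There is, however, a genuine flaw in your handling of part (c). You claim that because $s \geq 2$ forces $st \geq 2t$, ``the linear-in-$t$ term is absorbed.'' But the Szemer\'edi--Trotter bound carries a coefficient of $4$, so the linear term is $4t$, and $st \geq 2t$ does not allow you to absorb $4t$ into $st$ with a positive remainder. Absorption only works once $s$ exceeds some absolute constant (roughly $s \geq 8$ or, with the paper's bookkeeping, $s \geq 13$). The paper handles the low range $2 \leq s < 13$ by observing that part (a) already gives $|F| \gtrsim st^{1/2}$, which for bounded $s$ dominates $s^{3/2}t^{1/2}$ and hence $\min\{st, s^{3/2}t^{1/2}\}$. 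You need to make the same reduction; as written, your argument for (c) fails for small $s \geq 2$.

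Two smaller remarks. First, when you replace $|\mathcal L|$ by $t$ on the right-hand sides of your incidence bounds (e.g.\ writing $st \leq |F|t^{1/2} + t$), you are substituting in a direction that does not immediately preserve the inequality, since $|\mathcal L| \geq t$ only makes those right-hand sides \emph{larger}. The paper avoids this by keeping $|\mathcal L|$ throughout and only invoking $|\mathcal L| \geq t$ in the final rearranged form; alternatively, one can note that the hypotheses permit discarding lines so that $|\mathcal L| = \lceil t \rceil$, after which the substitution is honest. You should do one of these explicitly. Second, your closing parenthetical---that $|F| \gtrsim st$ ``already follows from the trivial observation $|F| \geq s$ combined with an averaging argument'' or from $|\mathcal I(F,\mathcal L)| \leq |F|\cdot\max_x|\{\ell : x \in \ell\}|$---is incorrect: the maximal line-multiplicity of a point can be as large as $|\mathcal L| \sim t$ (a pencil), in which case this bound only gives $|F| \gtrsim s$. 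The $st$ branch of (c) genuinely requires the incidence machinery.
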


\begin{proof}
    We once again proceed by applying point-line incidences. Consider the set of points
    \[
    P = \left(\bigcup_{\ell \in \mathcal L} F\cap \ell\right) \subset F
    \]
    and lines $\mathcal L$ as in the statement of the Proposition. We obtain lowerbounds on $P$ and thusly $F$. Notice that 
    \begin{equation}\label{FurstSetUp}
    |\mathcal L| s \leq |\mathcal I(P,\mathcal L)|
    \end{equation}
    as every line $\ell \in \mathcal L$ intersects $P$ in $\geq s$-many points.

    To obtain Statement (a), we apply the first Cauchy--Schwarz bound on point line incidences (Proposition \ref{sec2:prop:STPrelim}) to \eqref{FurstSetUp}, getting 
    \[
    |\mathcal L|s \leq |\mathcal I(P,\mathcal L)| \leq |P| |\mathcal L|^{1/2} + |\mathcal L| \implies  |\mathcal L|^{1/2}(s-1) \leq |P|.
    \]
    Using that $s\geq 2$ and $|\mathcal L|\geq t$ gives $st^{1/2} \lesssim |P|$.

    To obtain Statement (b), we instead apply the second Cauchy--Schwarz bound to get
    \[
    |\mathcal L| s \leq |\mathcal I(P,\mathcal L)| \leq |\mathcal L| |P|^{1/2} + |P|.
    \]
    If the first term dominates, it follows that $s^2 \lesssim |P|$. If the second term dominates, we can use that $|\mathcal L| \geq t$ to obtain $st \lesssim |P|$.

    We lastly prove Statement (c). Note that if $2\leq s < 13$, the claim follows by (a); hence, we suppose $s\geq 13.$ Applying Szemer\'edi--Trotter (Theorem \ref{SZEMEREDITROTTER}) to \eqref{FurstSetUp}, we have 
    \[
    |\mathcal L| s \leq |\mathcal I(P,\mathcal L)| \leq 4(|P|^{2/3} |\mathcal L|^{2/3} + |P| + |\mathcal L|).
    \]
    Note that the third term cannot dominate as $s\geq 13$. If the second term dominates, then $st\lesssim |P|$. If the first term, then $s^{3/2} t^{1/2} \lesssim |P|$. Hence, Statement (c) follows by taking the minimum of these two bounds.
\end{proof}

Just as was the case with orthogonal projections, the continuum counterpart of the Szemer\'edi--Trotter bound (Proposition \ref{discreteFurst}.(c)) is sharp and was obtained in $\R^2$ by Orponen--Shmerkin \cite{OrponenShmerkinABC} and Ren--Wang \cite{RenWang} last year.

\begin{theorem}[\cite{OrponenShmerkinABC,RenWang}] \label{FURSTENBERG}
    Let $0< s \leq 1$ and $0< t\leq 2$. Then, any $(s,t)$-Furstenberg set $F\subset \R^2$ satisfies
    \[
    \dim F \geq \min\left\{s + t, \frac{3s+t}{2}, s+1\right\}.
    \]
\end{theorem}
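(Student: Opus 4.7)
The plan is to reduce the Furstenberg set problem to its $\delta$-discretized analogue and then establish each of the three lower bounds in its natural regime. After a standard Frostman measure and dyadic pigeonholing reduction (cf.\ the discussion following Heuristic \ref{hr:extreme}), one may assume one has a $(t,\delta)$-set of tubes $\mathcal{T}$ with $|\mathcal{T}|\approx \delta^{-t}$, and for each $T\in\mathcal{T}$ an $(s,\delta)$-set of $\delta$-balls $F_T\subset T$ with $|F_T|\approx \delta^{-s}$. The goal is to show that $F\supseteq \bigcup_T F_T$ has $\delta$-covering number at least $\delta^{-u}$, where $u=\min\{s+t,\,(3s+t)/2,\,s+1\}$. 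The three regimes are delimited by $t\leq s$ (first bound dominates), $s\leq t\leq 2-s$ (middle bound dominates), and $t\geq 2-s$ (last bound dominates); compare directly with the three discrete bounds in Proposition \ref{discreteFurst}.

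For the $s+t$ regime I would run a Kaufman--Wolff $L^{2}$ incidence argument: using that two $\delta$-tubes intersect in a region of measure at most $\delta^{2}/\angle(T,T')$ and that $\mathcal{T}$ is a $(t,\delta)$-set, Cauchy--Schwarz applied to $\sum_T \mathbf{1}_{F_T}$ gives $|F|_\delta\gtrsim \delta^{-s-t}$ (the continuum version of Proposition \ref{discreteFurst}.(b)). For the $s+1$ regime I would use a Marstrand-type slicing argument: if $t\geq 1$, the direction set of $\mathcal L$ carries dimension at least $\min\{t-1,1\}>0$ transverse to any fixed line, so combining the $s$-dimensional slices on a positive-dimensional one-parameter subfamily of directions with the transverse parameter yields an extra unit of dimension on top of $s$.

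The genuine difficulty is the middle bound $(3s+t)/2$, which plays the role of the continuum Szemer\'edi--Trotter estimate for tubes. The strategy here, following Orponen--Shmerkin \cite{OrponenShmerkinABC} and Ren--Wang \cite{RenWang}, is to establish a sharp $r$-rich-tube bound of the form $|\mathcal{P}_r(\mathcal{T})|\lesssim \delta^{-\varepsilon}(|\mathcal{T}|^2/r^3 + |\mathcal{T}|/r)$ and then mimic the proof of Proposition \ref{discreteFurst}.(c). This rich-tube estimate is \emph{equivalent}, via point--tube duality (compare Definition \ref{point-lineduality} and the equivalence in Proposition \ref{STEquivalencies}), to the sharp exceptional set estimate of Theorem \ref{OSRW}. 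Thus one must prove Theorem \ref{OSRW} first; the main obstacle is doing so, which is where Bourgain's projection theorem (Theorem \ref{BOURGAIN}) enters: iterating the $\varepsilon$-improvement of Kaufman (Theorem \ref{OSEpsilonProj}) through a multiscale/induction-on-scales scheme, together with an ABC-sum-product-type decomposition, upgrades the Kaufman bound $\dim E_s(X)\leq s$ all the way to $\max\{2s-\dim X,0\}$, and hence dualizes to the sharp $(3s+t)/2$ Furstenberg bound. Finally one combines the three regimes by taking the minimum, and transfers the $\delta$-discretized estimate back to Hausdorff dimension via standard potential-theoretic arguments.
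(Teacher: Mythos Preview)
The paper does not prove Theorem \ref{FURSTENBERG}; it cites the result from \cite{OrponenShmerkinABC,RenWang} and then gives a historical survey of partial results (Wolff, Molter--Rela, H\'era--Keleti--M\'ath\'e, Lutz--Stull, H\'era--Shmerkin--Yavicoli, Orponen--Shmerkin's $\epsilon$-improvement, Fu--Ren) rather than a proof. So there is no ``paper's own proof'' to compare your proposal against.

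As a sketch of the literature your outline is broadly reasonable for the $s+t$ and $(3s+t)/2$ regimes, but your account of the $s+1$ bound is off. You describe a ``Marstrand-type slicing argument'' that extracts a one-parameter family of directions and adds a unit of dimension transversally; this is not how the sharp $s+1$ bound is obtained. The paper explicitly attributes the $s+1$ bound (for $s+t\geq 2$) to Fu--Ren \cite{FuRen}, who prove it via a high-low method incidence estimate between $\delta$-balls and $\delta$-tubes, not via slicing. A naive slicing argument of the kind you sketch would at best give something like $s+\min\{t,1\}$ for a $t$-dimensional family of \emph{directions}, which is weaker and not the right object here (the hypothesis is on affine lines, not directions). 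Also note the regime for $s+1$ is $t\geq 2-s$, not $t\geq 1$.

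For the $(3s+t)/2$ bound, your identification of the duality with Theorem \ref{OSRW} and the role of Bourgain's projection theorem is correct in spirit, but ``iterating the $\epsilon$-improvement through a multiscale scheme'' hides essentially all of the content of \cite{OrponenShmerkinABC,RenWang}; this is not a gap you can fill with the tools surveyed in the thesis, which is precisely why the paper cites rather than proves the theorem.
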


For the remainder of this section, we present an abridged history of the Furstenberg set problem, starting just 25 years prior to the above result. Some of the first Furstenberg set estimates were recorded in a survey paper on Kakeya sets by Wolff in 1999 \cite{Wolff99}. 

Technically, Wolff considers a special class of $(s,1)$-Furstenberg sets with a line in every direction. More precisely, he considers sets $F \subset \R^2$ such that for every direction $\theta \in \mathbb{S}^1$, there exists a line $\ell_\theta$ in direction $\theta$ with $\dim (F\cap \ell_\theta) \geq s$. In the literature \cite{OrponenShmerkinEpsilonImprove}, this class of $(s,1)$-Furstenberg sets is sometimes referred to simply as \textit{$s$-Furstenberg sets} and obtaining sharp lowerbounds to such sets is correspondingly called the $s$-Furstenberg set problem. Though Wolff considered $s$-Furstenberg sets, a modification of his approach more generally gives:

\begin{theorem}[\cite{Wolff99}] \label{WolffsBounds}
For $s\leq 1$, every $(s,1)$-Furstenberg set $F\subset \R^2$ has 
\[
\dim F \geq \max\left\{2s, s + \frac{1}{2}\right\}.
\]
\end{theorem}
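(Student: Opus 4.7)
The plan is to mimic the two Cauchy--Schwarz incidence arguments of Proposition \ref{discreteFurst} at a discretized scale, obtaining each of the two bounds $s+\tfrac12$ and $2s$ from one of the two directions of Cauchy--Schwarz. Since the bounds coincide when $s=\tfrac12$, handling both cases via the same discretized incidence setup is natural.

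First I would set up the discretization. Fix $\dim F =: u$ and suppose for contradiction $u < \max\{2s, s+\tfrac12\}$. Given a small dyadic scale $\delta>0$, cover $F$ by $N(\delta)$ balls of radius $\delta$; we may assume $N(\delta) \lesssim \delta^{-u-\varepsilon}$ for arbitrarily small $\varepsilon>0$, and let $E_\delta$ denote the collection of $\delta$-balls in the cover. Since $\dim \mathcal{L} \geq 1$, a standard Frostman/pigeonholing argument produces, at some scale $\delta$, a family $T$ of $\gtrsim \delta^{-1+\varepsilon}$ essentially $\delta$-separated $\delta$-tubes, and a subfamily $T' \subset T$ of comparable cardinality such that for every $\tau \in T'$ the associated line $\ell \in \mathcal{L}$ satisfies $|E_\delta \cap \tau| \gtrsim \delta^{-s+\varepsilon}$. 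The incidence count therefore satisfies
\[
\mathcal{I} := \#\{(p,\tau) \in E_\delta \times T' : p \in \tau\} \gtrsim \delta^{-(1+s)+O(\varepsilon)}.
\]

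Next I would run the two Cauchy--Schwarz estimates. For the bound $\dim F \geq s + \tfrac12$, writing $\mathcal{I} = \sum_{\tau \in T'} |E_\delta \cap \tau|$ and applying Cauchy--Schwarz in $\tau$, expanding the square, and using that two distinct $\delta$-separated points lie in $\lesssim 1$ tubes of $T'$, one obtains
\[
\mathcal{I}^2 \lesssim |T'|\bigl(\mathcal{I} + N(\delta)^2\bigr),
\]
exactly as in Proposition \ref{sec2:prop:STPrelim}. In the regime $\mathcal{I} \leq N(\delta)^2$ this gives $N(\delta) \gtrsim \delta^{-s - 1/2 + O(\varepsilon)}$; the other regime forces $\mathcal{I} \lesssim |T'| \lesssim \delta^{-1}$, contradicting $\mathcal{I} \gtrsim \delta^{-1-s+O(\varepsilon)}$. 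For the bound $\dim F \geq 2s$, writing $\mathcal{I} = \sum_{p \in E_\delta} \#\{\tau \in T' : p \in \tau\}$ and applying Cauchy--Schwarz in $p$, the resulting cross-term counts pairs of $\delta$-separated tubes meeting at $p$; since two such tubes intersect in $\lesssim 1$ $\delta$-ball, this yields
\[
\mathcal{I}^2 \lesssim N(\delta)\bigl(\mathcal{I} + |T'|^2\bigr),
\]
giving either $N(\delta) \gtrsim \delta^{-(1+s) + O(\varepsilon)}$ (which exceeds $\delta^{-2s}$ when $s \leq 1$) or $N(\delta) \gtrsim \mathcal{I}^2/|T'|^2 \gtrsim \delta^{-2s+O(\varepsilon)}$. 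Sending $\varepsilon \to 0$ and then $\delta \to 0$ yields both bounds.

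The main obstacle will be the discretization step itself: one needs a single scale $\delta$ at which a positive-proportion subfamily of the lines \emph{simultaneously} has $\gtrsim \delta^{-s}$ of its $\delta$-balls contained in $E_\delta$, and for which the directions are essentially $\delta$-separated so that the ``distinct-pair'' count of $\lesssim 1$ tube through two separated points (respectively $\lesssim 1$ intersection ball between two separated tubes) is valid. This is accomplished by combining Frostman measures on $\mathcal{L}$ and on each $F \cap \ell$ with a double-pigeonhole argument over scales and line multiplicities. The Cauchy--Schwarz incidence steps themselves are formally identical to Proposition \ref{discreteFurst}; the technical weight of the proof is carried entirely by organizing the discretization so that both incidence inequalities can be applied cleanly.
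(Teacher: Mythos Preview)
The paper does not actually prove this theorem; it only remarks that Wolff's bounds are the continuum analogues of Proposition~\ref{discreteFurst}.(a) and (b) and that ``Wolff proved the above bounds via a Cauchy--Schwarz type argument,'' referring the reader to \cite{Wolff99}. Your plan---discretize and run the two Cauchy--Schwarz incidence inequalities of Proposition~\ref{sec2:prop:STPrelim} at scale $\delta$---is exactly this approach, and you correctly identify the discretization/separation step as the place where the real work lies.

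One small caveat worth flagging: your claim that ``two distinct $\delta$-separated points lie in $\lesssim 1$ tubes of $T'$'' is not literally true for points at distance $\sim\delta$ (and dually, two $\mathcal A(2,1)$-separated tubes need not meet in $\lesssim 1$ ball if they are nearly parallel). In Wolff's actual argument for the $s+\tfrac12$ bound this is handled by a dyadic decomposition over the angle between pairs of tubes, and for the $2s$ bound one either arranges direction-separation or handles the parallel case directly (where the bound is easy). These refinements live inside the ``double-pigeonhole'' step you already anticipate, so the overall scheme is sound.
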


These bounds are akin to the maximum of Proposition \ref{discreteFurst}.(a) and (b) with $s\leq t$. In fact, Wolff proved the above bounds via a Cauchy--Schwarz type argument; for proofs we refer the reader to \cite{Wolff99}.

Before moving on, we make a few historical remarks about the content of \cite{Wolff99} and the origins of the name ``Furstenberg sets.'' Firstly, the study of Furstenberg sets is motivated by Furstenberg's work on $\times 2$, $\times 3$-invariant sets \cite{Furstenberg70}. Furthermore, Wolff states that ``it is unlikely that the author [himself] was the first to observe these bounds; in all probability they are due to Furstenberg and Katznelson.'' Lastly, in unpublished work recalled in \cite[Remark 1.5]{Wolff99}, Furstenberg conjectured that the special case of $(s,1)$-Furstenberg sets that Wolff considers satisfy $\dim F\geq \frac{3s+1}{2}$. In the survey, Wolff presents the sharpness of the Furstenberg set conjecture ($t= 1$) based on a grid example and Szemer\'edi--Trotter. The construction generalizes to $t\leq 1$ proving the sharpness of $\frac{3s+t}{2}$ in Theorem \ref{FURSTENBERG}.

The result and methods of Wolff were generalized in 2010 by Molter--Rela \cite{MolterRela} considering a class of $(s,t)$-Furstenberg sets with lines in a \textit{fractal} set of directions $\Theta \subset \mathbb{S}^1$ (with $\dim \Theta \geq t$). For Furstenberg sets $F\subset \R^2$ within this class, they obtain 
\[
\dim F \geq \max\left\{2s + t - 1, s + \frac{t}{2}\right\}.
\]
Instead of considering lines in a $\geq t$-dimensional set of \textit{directions}, one can more generally consider a $\geq t$-dimensional set of \textit{affine lines} (as per Definition \ref{FurstDEF}). In the plane, these considerations are more or less equivalent for $t\leq 1$ (see \cite[Section 5]{BrightDhar}), but for the remainder of this section we only consider Furstenberg sets associated to affine subspaces. 

One paper to make this distinction for the Furstenberg set problem is due to H\'era--Keleti--M\'ath\'e in 2019 \cite{HeraKeletiMathe}, obtaining that an $(s,t)$-Furstenberg set $F\subset \R^n$ satisfies
\[
\dim F \geq 2s + \min\{t,1\} - 1.
\]
Under the same assumptions, a subsequent result of H\'era in 2019 \cite{HeraFurstenberg} gives
\[
\dim F \geq s + \frac{t}{2},
\]
proving the continuum analogue of Proposition \ref{discreteFurst}.(a) in all dimensions. We note here that these two papers also consider Furstenberg sets associated to \textit{$k$-dimensional} affine subspaces, though we don't discuss this further here. For a survey on Furstenberg sets associated to $k$-planes, and new results on this topic, we refer the reader to a recent paper of myself and Dhar \cite{BrightDhar}.

The continuum counterpart to Proposition \ref{discreteFurst}.(b) was obtained in the plane by Lutz--Stull in 2020 \cite{LutzStull} and in all dimensions by H\'era--Shmerkin--Yavicoli in 2021 \cite{HeraShmerkinYavicoli}. They show an $(s,t)$-Furstenberg set $F\subset \R^n$ satisfies 
\[
\dim F \geq s + \min\{s,t\}.
\]
We make a few remarks about their proofs before moving on. Regarding the proof of Lutz and Stull, we remark that they apply a Kolmogorov complexity argument which has since proven fruitful for various problems in geometric measure theory (see e.g. \cite{KolmogorovEx1,KolmogorovEx2,KolmogorovEx3,KolmogorovEx4}). Regarding the proof of H\'era--Shmerkin--Yavicoli in 2021, we remark that they apply a Cauchy--Schwarz argument and $(\de,s)$-set machinery. Though we won't show their argument here, an analogous argument for \textit{dual} Furstenberg sets (due to myself, Fu, and Ren \cite{BrightFuRen}) will be presented in Section \ref{sec:DualFurstenberg}. Lastly, notice that when $0 < t\leq s \leq 1$, the bound of Lutz--Stull and H\'era--Shmerkin--Yavicoli gives the bound of $s + t$ as in Theorem \ref{FURSTENBERG}. The sharpness of this result can be seen by taking the product of an $s$-dimensional Cantor set and a $t$-dimensional Cantor set, see e.g. the ``Cantor target'' in \cite{MolterRela}.

In summary, the above bounds give:

\begin{theorem}[\cite{MolterRela,HeraFurstenberg, LutzStull,HeraKeletiMathe}] \label{ElementaryFurstenberg}
    Let $0< s\leq 1$ and $0 < t \leq 2(n-1)$, and let $F\subset \R^n$ be an $(s,t)$-Furstenberg set. Then,
    \[
    \dim F \geq \max\left\{s + \min\{s,t\}, s + \frac{t}{2}\right\}.
    \]
\end{theorem}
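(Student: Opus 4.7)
The plan is to prove the two bounds separately, each as the continuum counterpart of one of the incidence estimates in Proposition \ref{discreteFurst}. Part (a) of that proposition, which gives $|F| \gtrsim st^{1/2}$, will yield $\dim F \geq s + t/2$ (H\'era's bound), and part (b), which gives $|F|\gtrsim \min\{s^2, st\}$, will yield $\dim F \geq s + \min\{s,t\}$ (the Lutz--Stull / H\'era--Shmerkin--Yavicoli bound). Both arguments proceed by $\delta$-discretizing the Furstenberg configuration and then invoking the appropriate form of the Cauchy--Schwarz incidence estimate from Proposition \ref{sec2:prop:STPrelim}.

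First I would fix $0 < s' < s$ and $0 < t' < t$ and apply Frostman's lemma twice: once on $\mathcal L$ to obtain a measure $\nu$ with $\nu(B(\ell,r)) \lesssim r^{t'}$ on $\mathcal A(n,1)$, and once (by a standard pigeonhole/regularity step) on each slice $F \cap \ell$ for $\ell$ in a positive-$\nu$-measure subfamily $\mathcal L' \subset \mathcal L$ to obtain measures $\mu_\ell$ with $\mu_\ell(B(x,r)) \lesssim r^{s'}$. For each scale $\delta > 0$ this produces a $\delta$-separated family of $\delta$-tubes $\mathcal T_\delta \subset \mathcal L'$ with $|\mathcal T_\delta| \gtrsim \delta^{-t'}$ and a $\delta$-separated family of balls $\mathcal P_\delta$ inside $F$ so that each tube of $\mathcal T_\delta$ contains $\gtrsim \delta^{-s'}$ balls of $\mathcal P_\delta$. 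Writing $N := |\mathcal P_\delta|$, the goal reduces to a lower bound on $N$ in terms of $\delta$, since it will follow from $N \gtrsim \delta^{-\alpha}$ (modulo polylogarithmic losses) that $\dim F \geq \alpha$.

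The incidence count satisfies $|\mathcal I(\mathcal P_\delta,\mathcal T_\delta)| \gtrsim \delta^{-s'-t'}$. For the $s + t/2$ bound I would apply the Cauchy--Schwarz argument behind the first term in Proposition \ref{sec2:prop:STPrelim}, whose only geometric input is that two distinct lines meet in at most one point; its $\delta$-discretized form asserts that for any two $\delta$-separated tubes in $\mathcal T_\delta$, their common ``fat intersection'' contains $O(1)$ balls of $\mathcal P_\delta$. Chasing the inequalities exactly as in the proof of Proposition \ref{sec2:prop:STPrelim} gives
\begin{equation*}
\delta^{-2(s'+t')} \lesssim |\mathcal I(\mathcal P_\delta,\mathcal T_\delta)|^2 \lesssim |\mathcal T_\delta|\bigl(N |\mathcal T_\delta|^{1/2} + |\mathcal T_\delta|^2\bigr),
\end{equation*}
and after absorbing the secondary term (which requires a case split on whether $t' \leq 2s'$) yields $N \gtrsim \delta^{-s'-t'/2}$. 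For the $s + \min\{s,t\}$ bound I would instead apply the dual form of Proposition \ref{sec2:prop:STPrelim}, based on the fact that two distinct points determine at most one line: at scale $\delta$ this becomes ``two $\delta$-balls at distance $\gtrsim 1$ lie in $O(1)$ common tubes of $\mathcal T_\delta$,'' and the analogous bookkeeping produces $N \gtrsim \min\{\delta^{-2s'}, \delta^{-s'-t'}\}$. Sending $s' \nearrow s$, $t' \nearrow t$, and $\delta \to 0$ in both bounds gives the theorem.

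The main technical obstacle is the rigorous execution of the $\delta$-discretization, and in particular verifying the two ``two objects determine $O(1)$ common incidences'' statements with enough uniformity. In $\R^2$ this comes from a transversality argument after restricting to an angle-separated subfamily of tubes, at the cost of only $\log(1/\delta)^{O(1)}$ factors which are absorbed when passing from box-counting to Hausdorff dimension. In $\R^n$ for $n \geq 3$ the added difficulty is that generic pairs of lines do not meet at all, which helps for the incidence side but requires care in pairs of nearly coplanar tubes; this can be handled either by a generic projection onto a plane in the spirit of Corollary \ref{genericprojCorollary0} (restoring the $\R^2$ picture) or by working directly with the higher-dimensional $\delta$-tube incidence machinery. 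The remaining subtlety is the standard one of producing a single $\delta$-set of tubes on which all the Frostman bounds for the fibre measures $\mu_\ell$ hold simultaneously with uniform constants; a two-step pigeonhole over dyadic mass levels handles this, again with only polylogarithmic loss.
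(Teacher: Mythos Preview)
Your overall strategy --- discretize to $(\delta,s)$-sets and run the two Cauchy--Schwarz incidence bounds from Proposition~\ref{sec2:prop:STPrelim} --- is indeed how the cited papers proceed, and is also how the paper sketches the analogous dual bound in Section~\ref{sec:DualFurstenberg}. However, there are two concrete errors in your execution.

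First, you have the two geometric inputs swapped. Look back at the proof of Proposition~\ref{sec2:prop:STPrelim}: the bound $|\mathcal I| \leq |P||\mathcal L|^{1/2} + |\mathcal L|$ comes from expanding $\sum_\ell |P\cap \ell|^2 = \sum_{p,p'} \#\{\ell: p,p'\in\ell\}$ and using that two \emph{points} determine at most one line; the dual bound $|\mathcal I| \leq |\mathcal L||P|^{1/2} + |P|$ uses that two \emph{lines} meet in at most one point. Tracing Proposition~\ref{discreteFurst}, part (a) (hence $s+t/2$) uses the former, and part (b) (hence $s+\min\{s,t\}$) uses the latter --- exactly opposite to what you wrote. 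Your displayed inequality is also garbled: with the correct assignment, the $s+t/2$ argument reads $\delta^{-s'-t'} \lesssim |\mathcal I| \lesssim N|\mathcal T_\delta|^{1/2} + |\mathcal T_\delta|$, which immediately gives $N \gtrsim \delta^{-s'-t'/2}$.

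Second, neither discretized geometric claim holds as you state it. ``Two $\delta$-separated tubes share $O(1)$ balls'' is false: tubes at angle $\theta$ overlap in a region of length $\sim \delta/\theta$, and your proposed fix --- restrict to an angle-separated subfamily --- would discard almost all of $\mathcal T_\delta$, destroying the $\delta^{-t'}$ count. Likewise ``two balls at distance $\gtrsim 1$ share $O(1)$ tubes'' is fine at that scale, but most pairs of balls are closer than $1$. The actual arguments in \cite{HeraFurstenberg,HeraShmerkinYavicoli} do not reduce to a single-scale transversality: they use the Frostman conditions on $\nu$ and on the $\mu_\ell$ to control, scale by scale, how many tubes pass through two balls at distance $\sim 2^{-j}$ (respectively, how many balls lie in the intersection of two tubes at angle $\sim 2^{-j}$), and then sum the resulting geometric series in $j$. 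This multi-scale use of the $(\delta,s)$- and $(\delta,t)$-set hypotheses is the essential content, not a technicality you can absorb into polylogarithmic factors.
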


Notice that when $t = 2s$, both terms in Theorem \ref{ElementaryFurstenberg} gives $\dim F \geq 2s$ whereas Theorem \ref{FURSTENBERG} gives $\dim F \geq 2s + \epsilon$ for some $\epsilon:= \epsilon(s,t)>0.$ As it turns out, beating Wolff's 1999 bound of $2s$ was a big feat---and for good reason too: this bound is sharp over $\mathbb{C}$! 

To draw this comparison, we define the \textit{complex dimension} of a set $X \subset \mathbb{C}^2$, denoted $\dim_{\mathbb{C}}X$, to be \textit{half} the Hausdorff dimension of $X\subset \R^4$ (identifying $\mathbb{C}^2$ with $\R^4$). We similarly define the complex dimension of a set of affine complex lines $\mathcal L$ in $\mathbb{C}^2$ as half $\dim \mathcal L$ (again identifying $\mathbb{C}^2\simeq\R^4$). 

\begin{example}
Consider $\R^2$ embedded into $\mathbb{C}^2$ given by $F=\mathbb{C} \times \{0\}\subset \mathbb{C}^2$, and consider the set of all (complex) lines contained in $F$ denoted $\mathcal{L}$. For each line $\ell \subset \R^2$, $\dim (\R^2 \cap \ell) = 1$. Thus, $\dim_{\mathbb{C}}(F\cap \ell) = 1/2$ for all $\ell \in \mathcal L$. Furthermore, $\dim \mathcal A(\R^2,1) = 2$, and thus $\dim_{\mathbb{C}}\mathcal L = 1$. Hence, $F$ is a (complex analogue of) an $(1/2,1)$-Furstenberg set with $\dim_{\mathbb{C}}F = 1$!
\end{example}

The main obstacle here is that there exists a half-dimensional subring of $\mathbb{C}$; namely $\dim_{\mathbb{C}} \R = 1/2.$ However, recall from Theorem \ref{ctmsubringstatement} that no Borel subring of $\R$ has Hausdorff dimension strictly between 0 and 1! 

This is where Bourgain's discretized statement of Theorem \ref{ctmsubringstatement}, which resolved a conjecture of Katz and Tao, enters the scene. In particular, the prior work of Katz and Tao from 2001 \cite{KatzTaoSubringConjecture} in conjunction with Bourgain's result from 2003 \cite{BourgainSubring} gives that $(1/2,1)$-Furstenberg sets $F\subset \R^2$ satisfy $\dim F\geq 1 + c$ for an absolute constant $c>0.$ 

This result cascaded into a number of papers in 2021 on so-called ``$\epsilon$-improvements'' over the $2s$ bound for the $(s,t)$-Furstenberg sets in the plane (for $s< t$). In \cite{HeraShmerkinYavicoli}, H\'era, Shmerkin, and Yavicoli ``simplify, clarify, adapt and quantify many of the steps'' of Katz and Tao's approach to obtain an $\epsilon$-improvement for $(s,2s)$-Furstenberg sets. Later that year, the $\epsilon$ obtained by H\'era--Shmerkin--Yavicoli was further quantified in work of Di Benedetto and Zahl \cite{BenedettoZahl}. Lastly, utilizing the discretized statement of Bourgain's projection theorem from 2010 (Theorem \ref{BOURGAIN}), in 2021 Orponen--Shmerkin obtained:

\begin{theorem}[\cite{OrponenShmerkinEpsilonImprove}] \label{OSEpsilonFurst}
    Let $0 < s<t\leq 2$. Then, there exists an $\epsilon:= \epsilon(s,t)>0$ such that any $(s,t)$-Furstenberg set $F \subset \R^2$ satisfies
    \[
    \dim F \geq 2s + \epsilon.
    \]
\end{theorem}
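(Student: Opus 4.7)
The plan is to argue by contradiction, leveraging Bourgain's discretized projection theorem (the quantitative form underlying Theorem \ref{BOURGAIN}) to improve upon Wolff's bound. Suppose, for contradiction, that for every $\eta > 0$ there exists an $(s,t)$-Furstenberg set $F_\eta \subset \R^2$ with $\dim F_\eta < 2s + \eta$. I would first $\de$-discretize at a small scale $\de > 0$ depending on $\eta$: this produces a $\de$-separated point set $F_\de$ with $|F_\de| \lesssim \de^{-2s - \eta}$ and a $\de$-tube family $\mathcal{T}$ with $|\mathcal{T}| \gtrsim \de^{-t}$, each tube meeting $\gtrsim \de^{-s}$ points of $F_\de$. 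By a Frostman-type pigeonholing over all dyadic scales, up to $\de^{-O(\eta)}$ losses I can further arrange that $F_\de$ is non-concentrated with exponent $2s$, and that the direction set $\Theta \subset \mathbb{S}^1$ of $\mathcal{T}$ is non-concentrated with exponent $\min\{t,1\}$, at every scale between $\de$ and $1$.

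Next, I would analyze the near-saturation of Wolff's Cauchy--Schwarz incidence argument (compare Theorem \ref{WolffsBounds} and Proposition \ref{discreteFurst}(a)) to extract projective structure. With $|F_\de| \approx \de^{-2s}$ and $|\mathcal{I}(F_\de, \mathcal{T})| \gtrsim \de^{-s-t}$, the Cauchy--Schwarz step is essentially saturated, so the second moment $\sum_p r(p)^2$ (with $r(p) := \#\{T \in \mathcal{T} : p \in T\}$) attains its minimum up to a $\de^{-O(\eta)}$ factor. Pigeonholing on $p$ produces a positive-density subset of typical points for which $r(p) \gtrsim \de^{s-t-O(\eta)}$ and whose incident tube directions form a non-concentrated $(\de, t-s-O(\eta))$-set $\Theta_p \subset \mathbb{S}^1$. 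Centering coordinates at such a typical point $p$ and selecting a suitable subset $X_p \subseteq F_\de$, one would translate this Furstenberg data into an orthogonal-projection statement of the form ``$|P_{\theta^\perp}(X_p)| \lesssim \de^{-s - O(\eta)}$ for every $\theta \in \Theta_p$'': every direction in $\Theta_p$ is exceptional for projections of $X_p$ at level $\dim X_p / 2 + O(\eta)$.

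The final step, and the main obstacle, is to invoke the discretized version of Bourgain's projection theorem: there is an absolute constant $\epsilon_0 = \epsilon_0(s, t) > 0$ such that any sufficiently non-concentrated $(\de, 2s)$-set $X \subset \R^2$ and any non-concentrated $(\de, t - s)$-set $\Theta' \subset \mathbb{S}^1$ admit some $\theta \in \Theta'$ with $|P_{\theta^\perp}(X)| \geq \de^{-s - \epsilon_0}$. Applied with $X = X_p$ and $\Theta' = \Theta_p$, this contradicts the upper bound from the previous paragraph provided $\eta < \epsilon_0(s,t)/C$ for an appropriate absolute constant $C$; hence the theorem holds with $\epsilon := \epsilon_0(s,t)/C$. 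The technical heart of the plan is constructing $X_p$ so that it simultaneously inherits the multi-scale $(\de, 2s)$-non-concentration required by Bourgain's discretized theorem and supports the uniform projection upper bound on $\Theta_p$; this typically demands an inductive rescaling procedure to remove concentrated sub-configurations at intermediate scales, and tracking how all $\eta$-losses interact with the quantitative $\epsilon_0(s,t)$ is the main delicate calculation.
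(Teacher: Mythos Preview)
The paper does not contain a proof of this theorem: it is quoted from Orponen--Shmerkin \cite{OrponenShmerkinEpsilonImprove} and only accompanied by historical remarks. The thesis records that the result ``utiliz[es] the discretized statement of Bourgain's projection theorem'' and that both Theorem~\ref{OSEpsilonProj} and Theorem~\ref{OSEpsilonFurst} follow from a single discretized statement, with Orponen's earlier packing-dimension version \cite{OrponenPackingEpsilonImprove} as an ingredient. So there is no in-paper proof to compare against.

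Your sketch is consistent with that high-level description---Bourgain's discretized projection theorem is indeed the engine---but the reduction you outline is not quite how the actual argument runs, and the step you flag as ``the main obstacle'' is where the sketch is genuinely incomplete. The issue is the construction of the set $X_p$. Near-saturation of Cauchy--Schwarz tells you that richnesses $r(p)$ are approximately constant, but it does not by itself hand you a $(\de,2s)$-\emph{regular} set (non-concentration at \emph{all} intermediate scales) with uniformly small projections over a $(\de,t-s)$-set of directions; a priori the structure could live at a single scale, or the direction sets $\Theta_p$ could vary wildly with $p$. The Orponen--Shmerkin argument does not pivot around a single typical point in this way: it instead proves a discretized incidence theorem for $(\de,s)$-sets of tubes and $(\de,2s)$-sets of points, and the multi-scale regularity is fed in as a hypothesis (coming from Frostman's lemma applied to $F$ and to the line set $\mathcal L$) rather than extracted post hoc from saturation. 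The ``inductive rescaling procedure to remove concentrated sub-configurations'' you allude to is essentially the entire content of the proof, and would need to be spelled out to close the gap.
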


We make a few historical remarks and connections before continuing on. Firstly, the above result was first obtained by Orponen for the \textit{packing} dimension of $F$ in 2019 \cite{OrponenPackingEpsilonImprove}, the discretized statement of which was utilized in the proof of Theorem \ref{OSEpsilonFurst}. Secondly, as was noted in \cite[Corollary 1.6]{OrponenShmerkinEpsilonImprove}, we note that for $s > \frac{1}{2}$ Theorem \ref{OSEpsilonFurst} gives an improvement to Wolff's bound (Theorem \ref{WolffsBounds}) for the $s$-Furstenberg set problem. To this end, note that an $\epsilon$-improvement to Wolff's bound for $s = \frac{1}{2}$ follows from the work of Bourgain and Katz--Tao \cite{BourgainSubring,KatzTaoSubringConjecture}, and for $s<\frac{1}{2}$ from work of Shmerkin--Wang \cite{ShmerkinWang}. The last remark we make on this variant of the $(s,t)$-Furstenberg set problem is that an explicit improvement for all $s$ was obtained by Orponen--Shmerkin last year \cite{OrponenShmerkinABC}. Lastly, it is worth noting for context that both Theorem \ref{OSEpsilonProj} and Theorem \ref{OSEpsilonFurst} follow from the same discretized statement \cite[Theorem 1.3]{OrponenPackingEpsilonImprove}, highlighting an explicit connection between Furstenberg sets and the exceptional set estimates.

We conclude this section by discussing some of the connections between Furstenberg sets and radial projections (the focus of Section \ref{sec:RadProj}). One paper that makes this connection especially clear is the 2022 paper of Orponen--Shmerkin--Wang which proved ``Kaufman and Falconer estimates for radial projections'' using (discretized) Furstenberg set results. The Kaufman-type estimate makes use of Orponen and Shmerkin's $\epsilon$-improvement (Theorem \ref{OSEpsilonFurst}) and implies sum-product phenomena (see e.g. \cite[Corollary 1.17]{OSW}). Such sum-product estimates were further studied utilizing Orponen, Shmerkin, and Wang's radial projection estimates in \cite{OrponenShmerkinABC}, which in turn led to the sharp exceptional set and Furstenberg set estimates in the plane (Theorems \ref{OSRW} and \ref{FURSTENBERG}) within the following year. 

The Falconer-type estimate makes use of another $(s,t)$-Furstenberg set estimate in the plane due to Fu and Ren in 2021 \cite{FuRen}. In particular, they proved the sharp bound of $s+1$ in Theorem \ref{FURSTENBERG} when $s+t \geq 2$:

\begin{theorem}[\cite{FuRen}]
Let $0 < s \leq 1$, $1\leq t\leq 2$, and let $F\subset \R^2$ be an $(s,t)$-Furstenberg set. Then, 
\[
    \dim F \geq \min\{2s+t-1,s+1\}.
\]
\end{theorem}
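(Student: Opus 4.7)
The plan is to establish the two bounds $\dim F \geq 2s + t - 1$ and $\dim F \geq s + 1$ by separate arguments, since the minimum switches between them at the threshold $s + t = 2$. Throughout, I work with a $\de$-discretized model of the $(s,t)$-Furstenberg data: $F$ is covered by $\sim \de^{-\dim F}$ many $\de$-balls, $\mathcal L$ is represented by $\sim \de^{-t}$ many $\de$-tubes, each tube carrying $\sim \de^{-s}$ of those balls, with the usual regularity and non-concentration holding across dyadic scales (by passing to Frostman measures of appropriate exponent on $F$ and $\mathcal L$, then pigeonholing to a single dyadic scale and refinement).

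For the bound $\dim F \geq 2s + t - 1$ (binding when $s + t \leq 2$), I would count incidences between the $\de$-balls of $F$ and the $\de$-tubes of $\mathcal L$. The construction gives $\gtrsim \de^{-s - t}$ incidences. Applying the Cauchy--Schwarz argument of Proposition \ref{sec2:prop:STPrelim} at the $\de$-scale, while exploiting that $\mathcal L$ carries a full $t$-dimensional structure rather than just a finite collection (so that the ``two points determine a line'' step is replaced by a $t$-dimensional tube-intersection estimate), yields $\dim F \geq 2s + t - 1$ in this regime. The point is that a continuum two-ends / non-concentration reduction lets the discrete Cauchy--Schwarz translate cleanly into the Hausdorff-dimensional world.

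For the bound $\dim F \geq s + 1$ (binding when $s + t \geq 2$), I would combine Marstrand slicing with a Mattila-type exceptional slicing estimate. Suppose for contradiction $\dim F = d < s + 1$, so $d - 1 < s$. By Marstrand slicing, for $\sigma$-a.e.\ $\theta \in \mathbb{S}^1$ and $\mathcal L^1$-a.e.\ translate $y$, the slice $F \cap \pi_\theta^{-1}(y)$ has dimension $\leq d - 1 < s$. Since $t \geq 2 - s \geq 1$, pigeonholing on the direction map $\mathcal L \to \mathbb{S}^1/\pm$ (whose image has dimension $\leq 1$) produces some $\theta_0$ whose fiber $\mathcal L_{\theta_0}$ of parallel lines has dimension $\geq t - 1$, and the perpendicular offsets of these lines form a $(t - 1)$-dimensional exceptional set for slicing in direction $\theta_0$. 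A dimensional exceptional-slicing estimate forces $t - 1 \leq d - s$, i.e.\ $d \geq s + t - 1$. To upgrade this to the sharp $d \geq s + 1$ when $t < 2$, I would bootstrap using the Orponen--Shmerkin $\epsilon$-improvement (Theorem \ref{OSEpsilonFurst}) together with a discretized form of Bourgain's projection theorem (Theorem \ref{BOURGAIN}) to sharpen the exceptional-slicing bound and iterate the gain across dyadic scales until the slack $2 - t$ is exhausted.

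The hard part will be the $s + 1$ bound in the range $1 \leq t < 2$. When $t = 2$ the line family is essentially all of $\mathcal A(2,1)$ and Marstrand slicing returns $d \geq s + 1$ immediately; but for $t < 2$ the family $\mathcal L$ is $\lambda_{2,1}$-null, so a measure-theoretic a.e.\ slicing statement must be replaced by a strictly dimensional one, and the naive version only recovers $d \geq s + t - 1$. Closing the residual gap of $2 - t$ to reach the sharp $s + 1$ is the technical heart of the argument, and is precisely the place where the modern exceptional-set machinery for orthogonal projections reviewed in Section \ref{sec:ESE} has to be invoked.
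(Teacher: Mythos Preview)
The paper does not give its own proof of this theorem; it cites Fu--Ren and records only that their argument ``employs the high-low method to obtain (discretized) bounds on the number of incidences between $\de$-thickened points and lines.'' Your two-pronged plan is a genuinely different route from that single Fourier-analytic ball--tube incidence estimate, from which both terms in the minimum are read off simultaneously.

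There is a real gap in your $s+1$ branch. First, the pigeonhole step---extracting a direction $\theta_0$ whose fiber $\mathcal L_{\theta_0}$ of parallel lines has dimension $\ge t-1$---is not automatic for Hausdorff dimension (there is no Fubini inequality in that direction) and would already require a careful Frostman/disintegration argument. More seriously, even granting that reduction, you yourself concede that slicing only returns $d \ge s+t-1$, and the proposed upgrade to $s+1$ by ``bootstrapping'' with Theorems~\ref{OSEpsilonFurst} and~\ref{BOURGAIN} is not a mechanism: those results give an $\epsilon$-improvement over the $2s$ threshold for Furstenberg sets, not over the $s+t-1$ slicing bound, and you supply no iteration scheme that converts gains at the $2s$ level into absorption of the residual $2-t$. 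This gap is exactly the content of Fu--Ren---the sharp $s+1$ for $1 \le t < 2$ had not been reached by any Cauchy--Schwarz or slicing argument in the literature, and their proof goes through a high-low $L^2$ estimate on the tube-indicator function (in the spirit of Section~\ref{BGanProofs}) that controls ball--tube incidences directly. Your sketch does not provide a substitute for that step.
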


The sharpness of this bound can be seen by taking the product of an $s$-dimensional Cantor set and an interval. Fu and Ren's proof employs the high-low method to obtain (discretized) bounds on the number of incidences between $\delta$-thickened points and lines (\textit{balls} and \textit{tubes}). Their incidence bound was utilized by Orponen, Shmerkin, and Wang to imply a radial projection estimate in the plane \cite[Theorem 1.7]{OSW}. A subsequent paper of myself, Fu, and Ren from 2024 \cite{BrightFuRen} obtains an analogous estimate for incidences between balls and tubes in $\R^n$ for all $n\geq 2$. In turn, the proof method of Orponen--Shmerkin--Wang can thus be used to generalize \cite[Theorem 1.7]{OSW} to higher dimensions; see Theorems \ref{BGan-LundPhamThu} and \ref{brightfurenradproj}.

\subsection{Dual Furstenberg Sets} \label{sec:DualFurstenberg}

In this section, we focus our attention to \textit{dual} $(s,t)$-Furstenberg sets. In particular, we will discuss the sense in which they are ``dual,'' state an estimate for such sets due to myself, Fu, and Ren \cite{BrightFuRen} (Theorem \ref{BFR-DUALFURST}), and prove a discrete analogue of this result that is morally similar to its continuum counterpart. We will use Theorem \ref{BFR-DUALFURST} in the subsequent chapter. For more on the dual Furstenberg set problem, we refer the reader to \cite{BrightDhar,OrponenShmerkinEpsilonImprove,RenRadProj,DabrowskiOrponenVilla,LiLiuDualFurstenberg}.

\begin{definition}[Dual $(s,t)$-Furstenberg set] Let $n\geq 2$, $0 < s \leq n-1$, and $0<t\leq n$. We say a Borel set $\mathcal L \subset \mathcal A(n,1)$ is a \textit{dual} $(s,t)$\textit{-Furstenberg set} if there exists a Borel set of pins $X\subset \R^n$ such that $\dim X\geq t$ and $\dim \{\ell \in \mathcal L : x\in \ell\} \geq s$ for all $x\in X$.    
\end{definition}

\begin{remark}
    Here, it is clear that the ranges on $s$ and $t$ are natural as $X\subset \R^n$ and a set of lines $\mathcal L_x \subset \mathcal A(n,1)$ through a point $x\in \R^n$ can be identified (up to antipodal points) with where they intersect $$\mathbb{S}^{n-1} + x := \{y \in \R^n : |y-x| = 1\}.$$ The corresponding \textit{dual} $(s,t)$-Furstenberg set problem asks for sharp lowerbounds on $\dim \mathcal L$ for all ranges of $s$, $t$, and $n$.
\end{remark}

Notice that when $n = 2$, the ranges on $s$ and $t$ are the same as that for the $(s,t)$-Furstenberg set problem, and for good reason: in the plane, these problems are \textit{dual} via point-line duality. We discussed point-line duality in Definition \ref{point-lineduality}, but present the definition here again for convenience:

\begin{definition*}[Point-Line Duality] 
    Let $p = (p_1,p_2) \in \R^2$ and $\ell = mx+b$. We define the \textit{dual} of $p$, denoted $\mathbf{D}(p)$, to be the line given by $y = p_1 x - p_2$. Similarly, we define the \textit{dual} of $\ell$, denoted $\mathbf{D}^\ast(\ell)$, to be the point $(m,-b)$.
\end{definition*}

In Section \ref{sec:Incidences}, it was important to note that point-line duality preserved incidences. In particular, given a point $p$ and an affine line $\ell$ in $\R^2$,
\[
p \in \ell \quad \text{ if and only if } \quad \mathbf{D}^\ast(\ell) \in \mathbf{D}(p).
\]
For the purposes of Hausdorff dimension, the maps $\mathbf{D}$ and $\mathbf{D}^\ast$ have nice metric properties too. These properties are carefully recorded in \cite[Sections 6.1 and 6.2]{DabrowskiOrponenVilla}, but the key thing to note is that $\mathbf{D}$ and $\mathbf{D}^\ast$ are locally bi-Lipschitz maps between $\R^2$ and $\mathcal A(2,1)$ (under the standard metrics).\footnote{In fact, for this reason, there are a number of papers that define the Hausdorff dimension of a set of lines $\mathcal L \subset \mathcal A(2,1)$ as being the Hausdorff dimension of $\mathbf{D}^\ast(\mathcal L)$ rather than using the metric on the affine Grassmannian. This can be a bit more approachable to a reader unfamiliar with the affine Grassmannian, and is ultimately equivalent in the plane.} In particular, given an $(s,t)$-Furstenberg set $F\subset \R^2$, it follows that $\mathbf{D}(F)$  is a dual $(s,t)$-Furstenberg set with $\dim F = \dim \mathbf{D}(F)$. In this way, when $n = 2$, the $(s,t)$-Furstenberg set problem is precisely equivalent to its dual analogue, and lowerbounds for one imply lowerbounds for the other. In fact, some of the bounds we have presented thus far have been proven by considering a discretization of the \textit{dual} Furstenberg set problem; see \cite{OrponenShmerkinEpsilonImprove,RenRadProj}.

However, while these problems are equivalent in the plane, they aren't in higher dimensions.\footnote{There is a point-hyperplane duality that holds in all dimensions (and thus explains why point-line duality holds in $\R^2$); see \cite{DabrowskiOrponenVilla}. This duality can be used in the study of Furstenberg sets of hyperplanes, see \cite{BrightDhar, DabrowskiOrponenVilla}. For this reason, I believe that dual Furstenberg sets should perhaps be more aptly referred to as \textit{pinned} Furstenberg sets, but I digress.} As such, when $n\geq 3$, new proof methods need to be employed to obtain bounds for the dual $(s,t)$-Furstenberg set problem. One such bound was obtained by myself, Fu, and Ren, giving a dual analogue of H\'era--Shmerkin--Yavicoli and Lutz--Stull's result \cite{HeraShmerkinYavicoli,LutzStull} in all dimensions.

\begin{theorem}[\cite{BrightFuRen}] \label{BFR-DUALFURST}
    Let $n\geq 2$, $0< s \leq n-1$, and $0 < t \leq n$. Given $\mathcal L\subset \mathcal A(n,1)$ is a dual $(s,t)$-Furstenberg set, it follows that 
    \[
    \dim \mathcal L \geq s + \min\{s,t\}.
    \]
\end{theorem}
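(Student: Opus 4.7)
The plan is to prove Theorem \ref{BFR-DUALFURST} by $\delta$-discretization and a continuum Cauchy--Schwarz argument that mirrors the incidence inequality from Proposition \ref{sec2:prop:STPrelim}, with the roles of pins and tubes reversed. This parallels the strategy used by H\'era--Shmerkin--Yavicoli and Lutz--Stull for the regular $(s,t)$-Furstenberg set problem, and corresponds directly to the discrete calculation of Proposition \ref{discreteFurst}(b).

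First I would fix $\epsilon > 0$ and discretize: for each small $\delta > 0$, standard pigeonholing and Frostman arguments yield a $\delta$-separated $(\delta, t - \epsilon)$-set of pins $X_\delta \subset X$ with $|X_\delta| \gtrsim \delta^{-(t - \epsilon)}$ and, for each $x \in X_\delta$, a $(\delta, s - \epsilon)$-set $\mathcal{T}_x$ of $\delta$-tubes through (the $\delta$-ball at) $x$ arising from lines in $\mathcal{L}$, with $|\mathcal{T}_x| \gtrsim \delta^{-(s - \epsilon)}$. Set $\mathcal{T} = \bigcup_{x \in X_\delta} \mathcal{T}_x$. It suffices to prove $|\mathcal{T}| \gtrsim \delta^{-(s + \min\{s,t\}) + O(\epsilon)}$, since then sending $\delta \to 0$ and $\epsilon \to 0$ gives $\dim \mathcal{L} \geq s + \min\{s,t\}$.

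The heart of the argument is Cauchy--Schwarz in the tube variable. The incidence count $|\mathcal{I}(X_\delta, \mathcal{T})| := \#\{(x, T) : x \in T\}$ satisfies $|\mathcal{I}(X_\delta, \mathcal{T})| \gtrsim \delta^{-(s + t) + O(\epsilon)}$ by the pin hypothesis, while
\[
|\mathcal{I}(X_\delta, \mathcal{T})|^2 \leq |\mathcal{T}| \sum_{T \in \mathcal{T}} |X_\delta \cap T|^2.
\]
As in Proposition \ref{sec2:prop:STPrelim}, expanding the inner sum decomposes it into a diagonal contribution equal to $|\mathcal{I}(X_\delta, \mathcal{T})|$ and an off-diagonal sum $\sum_{x \neq x'} \#\{T \in \mathcal{T} : x, x' \in T\}$. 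Granted an off-diagonal bound of the form $\delta^{-O(\epsilon)} |X_\delta|^2$, one obtains
\[
|\mathcal{T}| \gtrsim \min\{|\mathcal{I}(X_\delta, \mathcal{T})|, \, |\mathcal{I}(X_\delta, \mathcal{T})|^2 / |X_\delta|^2\} = \delta^{-\min\{s+t, \, 2s\} + O(\epsilon)} = \delta^{-(s + \min\{s,t\}) + O(\epsilon)},
\]
which is exactly the needed bound.

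The main obstacle is controlling the off-diagonal term. Unlike the discrete case, where two distinct points lie on at most one line, two $\delta$-balls at distance $\rho \gg \delta$ lie inside $\sim \rho^{-(n-1)}$ many $\delta$-tubes in a maximal $\delta$-separated cover of $\mathcal{A}(n,1)$, so pairs of pins at small separation can contribute disproportionately in dimensions $n \geq 3$. The idea is to dyadically decompose the off-diagonal over scales $\rho \in [\delta, 1]$, using the $(\delta, t - \epsilon)$-set structure of $X_\delta$ to bound the number of pin pairs at each scale, and the cap $\#\{T \in \mathcal{T} : x \in T\} \lesssim \delta^{-(s - \epsilon)}$ (refined via a further pigeonholing step if necessary) to handle the regime where $\rho^{-(n-1)}$ exceeds the per-pin tube count. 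Making this bookkeeping deliver the desired exponent uniformly in $n$ is where the ball--tube incidence estimate of \cite{BrightFuRen}, itself proved by the high--low method of Fu--Ren, becomes the decisive input: it serves as the quantitative continuum substitute for the discrete identity ``two distinct points determine at most one line'' in $\R^n$.
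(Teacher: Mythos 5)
Your framework---$\delta$-discretize into a $(\delta, t-\epsilon)$-set of pins $X_\delta$ with per-pin $(\delta, s-\epsilon)$-sets of tubes $\mathcal{T}_x$, form the incidence count, and run Cauchy--Schwarz---is correct and matches the strategy indicated in the thesis (a continuum version of the double count in Proposition \ref{discDFurstExample}, via the $(\delta,s)$-set machinery of H\'era--Shmerkin--Yavicoli). However, the off-diagonal bookkeeping you sketch does not close. Combining the geometric fact (two $\delta$-balls at separation $\rho$ lie in $\sim \rho^{-(n-1)}$ tubes from a $\delta$-separated cover) with the trivial cap $|\mathcal{T}_x| \lesssim \delta^{-(s-\epsilon)}$ and the $(\delta, t-\epsilon)$-set pair count gives an off-diagonal of order $|X_\delta|\,\delta^{ts/(n-1)-t-s}$ when $s,t < n-1$, and hence only $|\mathcal{T}| \gtrsim \delta^{-(s + ts/(n-1))}$, which is strictly weaker than the required $\delta^{-(s + \min\{s,t\})}$; this shortfall already appears in $\R^2$ for $s < 1$, not only for $n \geq 3$ as you suggest. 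What is missing is the non-concentration of $\mathcal{T}_x$ at \emph{every} intermediate scale: the tubes of $\mathcal{T}_x$ through a second pin at distance $\rho$ have directions confined to a $\sim \delta/\rho$ cap, and the $(\delta, s-\epsilon)$-set structure of the direction set of $\mathcal{T}_x$ yields $|\mathcal{T}_x \cap \mathcal{T}_{x'}| \lesssim \rho^{-(s-\epsilon)}$, which is sharper than $\min\{\rho^{-(n-1)}, \delta^{-(s-\epsilon)}\}$. Substituting this into your dyadic sum, the off-diagonal becomes $\lesssim |X_\delta|\,\delta^{-t}\sum_\rho \rho^{t-s} \lesssim \log(1/\delta)\max\{|X_\delta|^2, |\mathcal{I}(X_\delta,\mathcal{T})|\}$, which is exactly what Cauchy--Schwarz needs to deliver $|\mathcal{T}| \gtrsim \delta^{-(s+\min\{s,t\}) + O(\epsilon)}$.

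Your closing paragraph also misattributes the decisive input. The ball--tube incidence estimate of \cite{BrightFuRen}, proved by the high--low method, is a \emph{separate and stronger} result---it is the higher-dimensional generalization of Fu--Ren's incidence bound that underlies Theorems \ref{BGan-LundPhamThu} and \ref{brightfurenradproj}---and is not the tool used for the elementary bound $s + \min\{s,t\}$. Theorem \ref{BFR-DUALFURST} follows from the Cauchy--Schwarz argument above once the scale-by-scale $(\delta,s)$-set non-concentration of $\mathcal{T}_x$ replaces the trivial cap; no high--low Fourier analysis enters.
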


\begin{remark}
    The above bound can be obtained as a special case of \cite[Corollary 2.10]{RenRadProj} and a Hausdorff content argument based on \cite{HeraShmerkinYavicoli}. We include the above bound for use in proving the continuum version of Beck's theorem and the Erd\H{o}s--Beck for lines in Chapter \ref{ch:problems}.
\end{remark}

It was natural to conjecture Theorem \ref{BFR-DUALFURST} based on the work of H\'era--Shmerkin--Yavicoli and Lutz--Stull, and it similarly follows via a Cauchy--Schwarz argument. To illustrate this point, note the following discrete analogue of Theorem \ref{BFR-DUALFURST}.

\begin{proposition}\label{discDFurstExample}
    Let $\mathcal L \subset \mathcal A(n,1)$ be finite, and suppose there exists a finite set of points $X \subset \R^n$ such that $|X|\geq t$ and \[
    |\mathcal L_x := \{\ell \in \mathcal L : x\in \ell\}| \geq s
    \]
    for all $x\in X.$ Then, 
    \[
    |\mathcal L| \gtrsim \min\{s^2,st\}.
    \]
\end{proposition}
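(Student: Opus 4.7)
The plan is to mirror the incidence-counting argument used for Proposition \ref{discreteFurst}.(b), but with the roles of points and lines interchanged --- this is precisely the ``dual'' nature of the statement. The key input will be the first Cauchy--Schwarz bound from Proposition \ref{sec2:prop:STPrelim}, which uses that two distinct points determine at most one line; this is the geometric fact that should be doing the work on the point side, since $X$ is now the set we already have cardinality information about.

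First, I would lower bound the incidences between $X$ and $\mathcal L$. By hypothesis each $x \in X$ lies on at least $s$ lines of $\mathcal L$, so summing over $x \in X$ gives
\[
|X|\,s \;\leq\; |\mathcal I(X,\mathcal L)|.
\]
Since $|X| \geq t$, this already encodes both parameters. Next I would apply Proposition \ref{sec2:prop:STPrelim} in the form $|\mathcal I(X,\mathcal L)| \leq |X|\,|\mathcal L|^{1/2} + |\mathcal L|$ (the bound that arises from grouping by line and applying Cauchy--Schwarz together with the two-points-determine-a-line fact). Combining the two inequalities yields
\[
|X|\,s \;\leq\; |X|\,|\mathcal L|^{1/2} + |\mathcal L|.
\]

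Finally I would split into the two cases according to which term on the right dominates. If the first term dominates, then $s \lesssim |\mathcal L|^{1/2}$, giving $|\mathcal L| \gtrsim s^2$. If the second term dominates, then $|X|\,s \lesssim |\mathcal L|$, and using $|X| \geq t$ one obtains $|\mathcal L| \gtrsim st$. Taking the minimum of these two lower bounds produces $|\mathcal L| \gtrsim \min\{s^2, st\}$, as desired.

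There is no real obstacle here: once one recognizes that the ``dual'' Cauchy--Schwarz bound is obtained by swapping which side of $\mathcal I(P,\mathcal L) = \sum_{\ell}|P\cap\ell| = \sum_p |\{\ell : p\in\ell\}|$ one decomposes along, the argument is essentially the same proof with $X$ and $\mathcal L$ switched. The only subtlety worth flagging is that one must use the version of Proposition \ref{sec2:prop:STPrelim} whose ``error'' term is $|\mathcal L|$ (rather than $|X|$), because it is $|\mathcal L|$ that we want on the right-hand side of the final inequality; choosing the other form of the incidence bound would give the trivial $|X| \gtrsim \min\{s^2, st\}$ instead.
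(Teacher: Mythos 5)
Your proof is correct. The paper takes a slightly different route: instead of lower-bounding the incidence count $|\mathcal I(X,\mathcal L)|$ and invoking Proposition \ref{sec2:prop:STPrelim} as a black box, it reduces without loss of generality to $|\mathcal L_x| = s$ for every $x$ and then double-counts the triples $J(X,\mathcal L) = \{(x,x',\ell) \in X^2\times\mathcal L : \ell \in \mathcal L_x\cap\mathcal L_{x'}\}$, bounding $|J|$ above by $|X|s + |X|^2$ (two distinct points determine at most one line) and below by $|\mathcal L|^{-1}(|X|s)^2$ (Cauchy--Schwarz). Algebraically the two arguments encode the same information---squaring your inequality $|X|s - |\mathcal L| \le |X||\mathcal L|^{1/2}$ and rearranging recovers, up to constants, the paper's $|\mathcal L|^{-1}|X|^2 s^2 \lesssim \max\{s|X|,|X|^2\}$. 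The paper's packaging is deliberate: the surrounding discussion says the triples argument is written in notation mirroring \cite{BrightFuRen} so that a reader who goes on to the continuum $(\delta,s)$-set version of the dual Furstenberg estimate will already recognize the skeleton. Your version is shorter, cleanly names which half of Proposition \ref{sec2:prop:STPrelim} carries the geometry, and avoids the reduction to $|\mathcal L_x|=s$, which the paper needs because its upper bound on $|J|$ would otherwise fail, whereas your incidence lower bound only requires $|\mathcal L_x|\ge s$. One small inaccuracy in your closing aside: applying the other form of Proposition \ref{sec2:prop:STPrelim} would not yield $|X|\gtrsim\min\{s^2,st\}$; the inequality $|X|s \le |\mathcal L||X|^{1/2} + |X|$ instead gives the weaker $|\mathcal L|\gtrsim st^{1/2}$ (statement (a) of Proposition \ref{discDFurstTotalProp}), with the case where the error term $|X|$ dominates collapsing to $s\lesssim 1$ rather than producing a useful bound.
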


\begin{proof}
    Firstly, by removing lines as necessary, we may assume without of generality that $|\mathcal L_x| = s$ for all $x\in X$.
    We prove the above lowerbound by obtaining the same bound for the set $\bigcup_{x\in X} \mathcal L_x.$
    By abuse of notation, we call this set $\mathcal L$ as well.
    Consider the set of triples 
    \[
    J(X,\mathcal L) := \{(x,x',\ell) \in X^2 \times \mathcal L : \ell \in \mathcal L_x \cap \mathcal L_{x'}\}.
    \]
    By definition, we have 
    \[
    |J(X,\mathcal L)| = \sum_{x\in X} \sum_{x,\ell} \mathbf{1}_\ell(x) \mathbf{1}_{\ell}(x') \leq \sum_{x\in X} \sum_{x'\in X} |\mathcal L_x\cap \mathcal L_{x'}|.
    \]
    Notice that if $x\neq x'$, there is at most one line through $x,x'$. Thus,
    \begin{align*}
    \sum_{x\in X} \sum_{x'\in X} |\mathcal L_x\cap \mathcal L_{x'}| &= \sum_{x \in X} |\mathcal L_x| + \sum_{x\neq x'} |\mathcal L_x \cap \mathcal L_{x'}| \\
    &\leq \sum_{x \in X} |\mathcal L_x| + \sum_{x\neq x'} 1 \\
    &\leq |X|s + |X|^2 \\
    &\lesssim \max\{s|X|,|X|^2\}.
    \end{align*}
    On the other hand, 
    \begin{align*}
        |J(X,\mathcal L)| &= \sum_{\ell \in \mathcal L} |\{(x,x') \in X^2 : \ell \in \mathcal L_x \cap \mathcal L_{x'}\} \\
        &= \sum_{\ell \in \mathcal L} |\{x \in X : \ell \in \mathcal L_x\}|^2 \\
        &\geq |\mathcal L|^{-1} \left(\sum_{\ell \in \mathcal L} |\{x \in X : \ell \in \mathcal L_x\}|\right)^2 \\
        &= |\mathcal L|^{-1} \left(\sum_{x\in X} |\mathcal L_x|\right)^2 \geq |\mathcal L|^{-1} (|X|s)^2.
    \end{align*}
    Hence, in total we have 
    \[
    |\mathcal L|^{-1} |X|^2 s^2 \lesssim \max\{s|X|,|X|^2\}.\]
    Rearranging this bound with $|X|\geq t$ proves the desired result.
\end{proof}

As it turns out, Proposition \ref{discDFurstExample} is a dual analogue of Proposition \ref{discreteFurst}.(b). In fact, following the proof methodology of Proposition \ref{discreteFurst}, one obtains:

\begin{proposition} \label{discDFurstTotalProp}
    Let $s,t>0$ and $n\geq 2$. Let $\mathcal L\subset \mathcal A(n,1)$ be a finite set such that there exists a finite set of points $X\subset \R^n$ with $|X|\geq t$ and \[
    |\mathcal L_x:= \{\ell \in \mathcal L : x\in \ell\}|\geq s
    \]
    for all $x\in X$. Then, 
    \begin{itemize}
        \item[a)] (Cauchy--Schwarz I) $|\mathcal L|\gtrsim st^{1/2}$ for $s\geq 2$, 
        \item[b)] (Cauchy--Schwarz II) $|\mathcal L| \gtrsim \min\{s^2,st\}$ for $s \geq 2$, and
        \item[c)] (Szemer\'edi--Trotter) $|\mathcal L| \gtrsim \min\{st, s^{3/2}t^{1/2}\}$ for $s\geq 2$.  
    \end{itemize}
\end{proposition}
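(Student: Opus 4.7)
The plan is to mirror the proof of Proposition \ref{discreteFurst} but with the roles of points and lines exchanged: instead of fixing a rich set of lines and bounding the points, we fix the rich set of points $X$ and use incidence estimates to extract a lower bound on $|\mathcal L|$. The key identity is the dual of \eqref{FurstSetUp}: since every $x \in X$ lies on at least $s$ lines of $\mathcal L$, we have
\[
|X|s \leq \sum_{x \in X} |\mathcal L_x| = |\mathcal I(X,\mathcal L)|,
\]
and combining this with $|X|\geq t$ gives $|\mathcal I(X,\mathcal L)|\geq ts$. This will be the sole quantitative input from the hypotheses; the three bounds (a)--(c) then come from plugging in the three incidence upper bounds we have at our disposal.

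For (a), I will apply the second half of Proposition \ref{sec2:prop:STPrelim}, $|\mathcal I(X,\mathcal L)| \leq |\mathcal L||X|^{1/2} + |X|$, obtaining $|X|s \leq |\mathcal L||X|^{1/2} + |X|$. Rearranging yields $|\mathcal L| \geq |X|^{1/2}(s-1)$, and since $s \geq 2$ implies $s - 1 \geq s/2$, we conclude $|\mathcal L| \gtrsim |X|^{1/2}s \gtrsim t^{1/2}s$. For (b), I will apply the \emph{first} half of Proposition \ref{sec2:prop:STPrelim}, $|\mathcal I(X,\mathcal L)| \leq |X||\mathcal L|^{1/2} + |\mathcal L|$, so that $|X|s \leq |X||\mathcal L|^{1/2} + |\mathcal L|$. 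One of the two terms on the right must be at least half the left side; in the first case we get $|\mathcal L| \gtrsim s^2$, and in the second case $|\mathcal L| \gtrsim |X|s \geq ts$, so $|\mathcal L| \gtrsim \min\{s^2, st\}$ in either case.

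For (c), I will run the same scheme with Szemerédi--Trotter (Theorem \ref{SZEMEREDITROTTER}) in place of Cauchy--Schwarz, giving
\[
|X|s \leq 4\bigl(|X|^{2/3}|\mathcal L|^{2/3} + |X| + |\mathcal L|\bigr).
\]
As in the proof of Proposition \ref{discreteFurst}.(c), I first dispatch the regime where $s$ is bounded: if $s \leq C$ for some absolute constant, then $st$ and $s^{3/2}t^{1/2}$ both have the same order of magnitude as $t^{1/2}$ times a bounded quantity, and the desired bound follows already from part (a) via $|\mathcal L| \gtrsim st^{1/2}$. For $s$ sufficiently large (say $s \geq 13$), the middle term $|X|$ cannot dominate the left side $|X|s$. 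Case analysis on the remaining two terms then gives either $|X|s \lesssim |X|^{2/3}|\mathcal L|^{2/3}$, hence $|\mathcal L| \gtrsim |X|^{1/2}s^{3/2} \geq t^{1/2}s^{3/2}$, or $|X|s \lesssim |\mathcal L|$, hence $|\mathcal L| \gtrsim ts$. Taking the minimum yields (c).

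There is no real obstacle to this argument beyond bookkeeping; the content is already embedded in Proposition \ref{discreteFurst} and Proposition \ref{discDFurstExample}. The only mildly delicate point is the small-$s$ reduction in (c), which must be invoked so that the trivial additive term $|X|$ on the right of Szemerédi--Trotter can be absorbed, exactly as in the original proof of Proposition \ref{discreteFurst}.(c). Everything else is algebraic rearrangement of the incidence inequalities against the starting estimate $|\mathcal I(X,\mathcal L)|\geq ts$.
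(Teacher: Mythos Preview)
Your proposal is correct and follows exactly the approach the paper indicates (the paper gives no explicit proof, only the remark that one ``follows the proof methodology of Proposition~\ref{discreteFurst}''). One small wording slip: in the small-$s$ reduction for (c), $st$ is not of order $t^{1/2}$ times a bounded quantity, but your conclusion is still fine since for bounded $s$ one has $st^{1/2}\sim s^{3/2}t^{1/2}\geq \min\{st,s^{3/2}t^{1/2}\}$, so part (a) does give the needed bound.
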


Still, we highlight the proof of Proposition \ref{discDFurstExample} for a few reasons. Firstly, the double-counting argument involving triples $|J(X, \mathcal L)|$ is common and useful in this area of incidence geometry. Secondly, we presented this argument with notation similar to that of \cite{BrightFuRen} so that the interested reader (who may perhaps be unfamiliar with $(\de,s)$-sets) may find the latter, more intricate proof, easier to follow. Lastly, while Theorem \ref{discreteFurst} is morally the same as Theorem \ref{discDFurstTotalProp}, they are only logically equivalent when $n = 2$ by point-hyperplane duality. For the discrete problems, one can deduce one statement from the other by a generic projection argument without loss in cardinality. However, for the continuum problems, a generic projection can drastically drop the Hausdorff dimension of your set as you are projecting onto a smaller dimensional space!

\newpage 
\section{Radial Projections} \label{sec:RadProj}

It is a natural and interesting question to ask whether or not Heuristics \ref{hr:Marstrand} and \ref{hr:exceptional} (that large objects typically cast relatively large shadows) apply to other families of projection maps---especially nonlinear projections. One such family that served as a driving force in this field are \textit{radial projections}.

\begin{notation}
    Given $x\in \R^n$, let $\pi_x:\R^n \setminus \{x\}\to \mathbb{S}^{n-1}$ be defined by
\[
\pi_x(y) := \frac{y-x}{|y-x|}.
\]
\end{notation}

\noindent As one might expect, the nonlinear setting can pose problems that didn't arise for orthogonal projections. Regardless, it is reasonable to think that point-line incidences may still be fruitful for radial projections as the fibers of $\pi_x$ are lines through $x$. We will see this echoed throughout the theory of radial projections which we turn to now. 

\subsection{The Discrete Setting}

The theory of \textit{discrete} radial projections is of highly important for Beck's theorem---both historically and for the purposes of this thesis. As such, we postpone any discussion of the continuum setting until Section \ref{ss:radprojctm}. We will use the following notation throughout this section.

\begin{notation}
    Given $Y\subset \R^n$, let 
    \[
    \mathcal L(Y) = \{\ell : |Y\cap \ell|\geq 2\} \subset \mathcal A(n,1). 
    \]
    Additionally, given $x\in \R^n$, let
    \[
    \mathcal L_x(Y) = \pi_{x}^{-1}(\pi_x(Y\setminus \{x\})) \subset \mathcal A(n,1)
    \]
    be the set of lines passing through $x$ containing a point in $Y\setminus \{x\}.$
    Lastly, given $\mathcal L \subset \mathcal A(n,1)$, we let $\tilde{\mathcal L}$ denote the embedding of $\mathcal L$ into $\R^n$ i.e. 
    \[
    \tilde{\mathcal L} := \{x \in \R^n : x\in \ell \text{ for some } \ell \in \mathcal L\},
    \]
    and similarly define and denote $\tilde{\mathcal L}(Y)$ and $\tilde{\mathcal L}_x(Y).$
\end{notation}

We begin with a discrete Marstrand-type statement for radial projections.

\begin{proposition}
    Given a finite set $Y \subset \R^n$ with $|Y| \geq 2$, $|\pi_x(Y\setminus \{x\})| < |Y|$ only if $x\in \tilde{\mathcal L}(Y)$. Hence, there are infinitely many $x\in \R^n$ such that 
    \[
    |\pi_x(Y\setminus \{x\})| = |Y|.
    \]
\end{proposition}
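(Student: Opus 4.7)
The plan is to prove the contrapositive of the first claim: if $x \notin \tilde{\mathcal L}(Y)$, then $|\pi_x(Y \setminus \{x\})| = |Y|$. The key observation is the geometric characterization that $\pi_x(y) = \pi_x(y')$ for distinct $y, y' \in \R^n \setminus \{x\}$ if and only if $x$, $y$, and $y'$ are collinear, since the fibers of $\pi_x$ are exactly the rays emanating from $x$.

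First I would handle the case $x \notin Y$, which is the main case. Then $Y \setminus \{x\} = Y$, so $|\pi_x(Y)| \leq |Y|$, with equality failing only if there exist distinct $y, y' \in Y$ with $\pi_x(y) = \pi_x(y')$. By the observation above, this places $x$ on the line through $y$ and $y'$, and this line contains at least two points of $Y$, hence lies in $\mathcal L(Y)$. Therefore $x \in \tilde{\mathcal L}(Y)$, proving the claim in this case.

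Next I would address the remaining case $x \in Y$, which requires a small separate argument since then $|Y \setminus \{x\}| = |Y| - 1 < |Y|$ automatically, regardless of any collinearity. Since $|Y| \geq 2$, there exists some $y \in Y$ with $y \neq x$; the affine line through $x$ and $y$ contains two distinct points of $Y$ and therefore belongs to $\mathcal L(Y)$, placing $x \in \tilde{\mathcal L}(Y)$. Combining both cases establishes the contrapositive.

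For the ``hence'' statement, note that $\mathcal L(Y)$ is finite because $Y$ is finite (there are at most $\binom{|Y|}{2}$ lines determined by pairs of points of $Y$), so $\tilde{\mathcal L}(Y)$ is a finite union of affine lines in $\R^n$. Assuming $n \geq 2$ (implicit in the radial projection setup), this union is a proper subset of $\R^n$ with empty interior, so its complement is infinite. For every $x \in \R^n \setminus \tilde{\mathcal L}(Y)$, the contrapositive yields $|\pi_x(Y \setminus \{x\})| = |Y|$, giving infinitely many such points. There is no real obstacle here; the entire argument is essentially a direct unpacking of definitions together with the fact that collinearity of $x$ with two points of $Y$ is equivalent to membership in $\tilde{\mathcal L}(Y)$.
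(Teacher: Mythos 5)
Your argument is correct and follows essentially the same route as the paper: case work on whether $x \in Y$, using the fact that two points sharing a fiber of $\pi_x$ are collinear with $x$. The only additions are (i) explicitly invoking $|Y|\geq 2$ to place $x\in\tilde{\mathcal L}(Y)$ in the $x\in Y$ case, and (ii) spelling out that $\tilde{\mathcal L}(Y)$ is a finite union of affine lines so its complement in $\R^n$ (for $n\geq 2$) is infinite — both of which the paper leaves implicit.
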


\begin{proof}
    We do case work on whether or not $x\in Y.$
    If $x\in Y$, notice that $|\pi_x(Y\setminus\{x\})|< |Y|$ is always true, and by definition $x\in \tilde{\mathcal L}(Y).$ Otherwise, if $x\notin Y$ and $|\pi_x(Y\setminus \{x\})| < |Y|$, there are at least two distinct points $y_1,y_2 \in Y\setminus \{x\}$ such that $\pi_x(y_1) = \pi_x(y_2)$. Therefore, $x$ is contained in the line that contains $y_1$ and $y_2$, and thus $x\in \tilde{\mathcal L}(Y)$. In either case, if $|\pi_x(Y\setminus \{x\})| < |Y|$, then $x\in \tilde{\mathcal{L}}(Y)$. Hence, for all $x\in \R^n \setminus \tilde{\mathcal L}(Y)$, $|\pi_x(Y \setminus \{x\})| = |Y|$.
\end{proof}

\begin{remark}
    It's worth noting that the first statement would be an if and only if statement if we instead considered $$\tilde{\pi}_x: \R^n \setminus \{x\}\to \mathcal A(n,1),$$ mapping $y\in \R^n \setminus \{x\}$ to the unique line passing through $x$ and $y$. In the study of radial projections where we are interested in the size of $\pi_x(Y\setminus \{x\})$ up to constants, this alternative definition doesn't make a difference. Indeed,
    \[
    |\pi_x(Y\setminus \{x\})| \sim |\tilde{\pi}_x(Y\setminus \{x\})| \quad \text{ and } \quad \dim \pi_x(Y\setminus \{x\}) = \dim \tilde{\pi}_x(Y\setminus \{x\})
    \]
    for all $x\in \R^n$, see \cite[Lemma 20]{BrightMarshall}.
\end{remark}

Based on what happened for the study of exceptional set estimates for orthogonal projections, one might be inclined to define 
\[
E_s(Y):= \{x\in \R^n : |\pi_x(Y \setminus \{x\})|<s\},
\]
for $0 < s \leq |Y|$ and try to immediately obtain bounds on $|E_s(Y)|$. However, there is one key difference between the discrete Marstrand-type statement for orthogonal projections and that for radial projections. Namely, for orthogonal projections, $E_s(X)$ is a subset of the direction set $S(X)$ which is \textit{always} finite; for radial projections, $E_s(Y) \subset \tilde{\mathcal L}(Y)$, which contains infinitely many points in $\R^n$. This begs the question of whether or not exceptional sets for radial projections are finite in the first place.

The following proposition gives examples of $Y\subset \R^n$ such that $E_s(Y)$ is infinite for both small and large values of $s$.

\begin{proposition} \label{ESE BAD FOR RAD PROJ}
    If $Y$ is collinear, there are infinitely many $x\in \R^n$ such that $|\pi_x(Y\setminus \{x\})| = 1$.
    Moreover, if $Y$ is noncollinear, there are infinitely many $x\in \R^n$ such that 
    \[
    |\pi_x(Y \setminus \{x\})| = |Y|-1.
    \]
\end{proposition}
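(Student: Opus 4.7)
The first claim is almost immediate. If $Y$ is collinear it is contained in an affine line $\ell \subset \R^n$, and I would take any $x \in \ell \setminus Y$ lying strictly beyond the two extreme points of $Y$ along $\ell$. For such $x$, every point of $Y \setminus \{x\}$ lies on the same open ray from $x$, so the unit vectors $(y-x)/|y-x|$ all coincide and $|\pi_x(Y \setminus \{x\})| = 1$. The set of such $x$ is an unbounded subray of $\ell$, hence infinite.

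For the second claim, the plan is to place $x$ on a carefully chosen line of $\mathcal L(Y)$. The key ingredient is the \emph{Sylvester--Gallai theorem}: any finite noncollinear $Y \subset \R^n$ contains two points $y_1, y_2$ whose spanning line $\ell$ satisfies $\ell \cap Y = \{y_1, y_2\}$. The classical proof, which picks the pair (line in $\mathcal L(Y)$, point of $Y$ off that line) minimizing perpendicular distance and derives a contradiction if the line contains three points of $Y$, goes through without modification in $\R^n$. I fix such $\ell$, $y_1$, $y_2$.

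Next I would choose $x \in \ell$ subject to three conditions: $x \notin Y$; $x$ lies outside the closed segment joining $y_1$ and $y_2$; and $x$ avoids every other line in $\mathcal L(Y)$. Since $\mathcal L(Y)$ is finite and each line distinct from $\ell$ meets $\ell$ in at most one point, the excluded locus on $\ell$ is the union of a bounded segment and finitely many points. The admissible $x$ therefore form a cofinite subset of a ray of $\ell$, giving infinitely many choices.

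To verify $|\pi_x(Y \setminus \{x\})| = |Y| - 1$ for such $x$: since $y_1, y_2$ lie on the same ray of $\ell$ from $x$, we get $\pi_x(y_1) = \pi_x(y_2)$. For $y_i \in Y \setminus \{y_1, y_2\}$, the equality $\pi_x(y_i) = \pi_x(y_1)$ would force $y_i \in \ell$, contradicting $\ell \cap Y = \{y_1, y_2\}$; and $\pi_x(y_i) = \pi_x(y_j)$ for distinct $y_i, y_j \in Y \setminus \{y_1, y_2\}$ would put $x$ on a line of $\mathcal L(Y) \setminus \{\ell\}$, contradicting the choice of $x$. Thus $y \mapsto \pi_x(y)$ collapses exactly the pair $\{y_1, y_2\}$ and is injective otherwise, yielding the claim. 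The main obstacle is the reliance on Sylvester--Gallai: without a line meeting $Y$ in exactly two points, any line of $\mathcal L(Y)$ through $x$ would collapse at least three points of $Y$ to a single direction, leaving at most $|Y| - 2$ distinct directions and so never matching the target of $|Y| - 1$.
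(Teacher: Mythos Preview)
Your proof is correct and follows essentially the same approach as the paper: both handle the collinear case by picking $x$ on the common line beyond the extreme points of $Y$, and both handle the noncollinear case by invoking Sylvester--Gallai to find a line $\ell$ meeting $Y$ in exactly two points, then choosing $x$ on $\ell$ outside the segment $[y_1,y_2]$ and outside the finitely many intersection points of $\ell$ with the other lines determined by $Y$. The only cosmetic difference is that the paper excludes $\ell \cap \tilde{\mathcal L}(Y\setminus\{y_1,y_2\})$ whereas you exclude $\ell \cap \bigcup(\mathcal L(Y)\setminus\{\ell\})$; these finite sets differ only by points you have already ruled out (namely $y_1,y_2$), so the arguments are equivalent.
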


\begin{proof}
    The first statement is rather immediate by taking $x$ to be contained on the line containing $Y$ (outside of a bounded interval). To prove the second statement, we note the Sylvester--Gallai theorem.\footnote{See \cite[Exercise 7.9]{GuthPolynomialMethods} for an outline of how to prove the Sylvester--Gallai theorem using Euler's formula following the methodology in \cite{melchior}.}

    \begin{lemma}[Sylvester--Gallai] \label{sylvestergallai}
        If $Y\subset \R^n$ is finite and noncollinear, then there exists an affine line that contains exactly two points of $Y.$
    \end{lemma}

    Using the Sylvester--Gallai theorem, suppose $Y$ is noncollinear and let $\ell \in \mathcal A(n,1)$ denote a line that contains exactly two distinct points $y_1,y_2\in Y$. Then, notice that there are only finitely many points in $$A := \ell \cap \tilde{\mathcal L}(Y\setminus  \{y_1,y_2\}),$$
    i.e. points $x\in \ell$ such that $|\pi_x(Y\setminus \ell)| < |Y\setminus \ell| = |Y|-2$. Hence, for all $x\in \ell \setminus A$ and not contained in the line segment with endpoints $y_1,y_2$, it follows that $|\pi_x(Y)| = |Y|-1.$
\end{proof}

Given Proposition \ref{ESE BAD FOR RAD PROJ}, searching for bounds on $E_s(Y)$ is a moot point if $Y$ is contained on a line, and even \textit{if} $Y$ is noncollinear, there are \textit{some} values of $s$ such that $E_s(Y)$ is infinite. However, depending on how noncollinear $Y$ is, we can show that $E_s(Y)$ is finite for a sharp range of $s$.

\begin{notation}
    Let $Y \subset \R^n$ be a finite noncollinear set. Then, let $c(Y)$ be the maximal number such that
    \[
    |Y\setminus \ell | \geq c(Y) |Y| \quad \quad \forall \ell \in \mathcal A(n,1).
    \]
    Note, $c(Y) \in [\frac{1}{|Y|}, \frac{|Y|-2}{|Y|}]$ by the Sylvester--Gallai theorem.
\end{notation}

We can now show that $E_s(Y)$ is finite depending on the value of $c(Y).$

\begin{proposition} \label{prop:ESEfinite}
    Let $Y\subset \R^n$ be finite and noncollinear. Then, $E_s(Y)$ is finite for all $0< s \leq c(Y)|Y|$. In fact, for all such $s$, $$E_s(Y) \subset P_2(\mathcal L(Y)),$$
    the set of 2-rich points for the lines $\mathcal L(Y)$ (see Definition \ref{richpointsandlines}).
\end{proposition}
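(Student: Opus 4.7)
The plan is to prove the stronger inclusion $E_s(Y) \subset P_2(\mathcal L(Y))$; finiteness of $E_s(Y)$ will then be automatic, since $\mathcal L(Y)$ is a finite family of lines (at most $\binom{|Y|}{2}$ of them) and two distinct lines in $\R^n$ meet in at most one point, so $P_2$ of any finite family of lines is finite. Accordingly, I would fix $x \in E_s(Y)$, set $m := |\pi_x(Y \setminus \{x\})| < s \leq c(Y)|Y|$, and let $\ell_1, \ldots, \ell_m$ denote the distinct lines through $x$ arising as the fibers of $\pi_x|_{Y \setminus \{x\}}$, writing $a_i := |Y \cap \ell_i|$. The goal then becomes producing two indices $i$ with $a_i \geq 2$, so that two lines of $\mathcal L(Y)$ pass through $x$.

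The case $x \in Y$ is immediate and does not even use the bound on $s$. Here every $\ell_i$ contains $x$ together with at least one other point of $Y$, so $\ell_i \in \mathcal L(Y)$ for each $i$; it only remains to rule out $m \leq 1$. But $m \leq 1$ would force $Y \setminus \{x\}$ to lie on a single line through $x$, hence $Y$ collinear, contradicting the hypothesis on $Y$.

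The substantive case is $x \notin Y$, where I expect the nonconcentration hypothesis $s \leq c(Y)|Y|$ to do the real work. I would argue by contradiction: suppose at most one $a_i$ (say $a_1$) satisfies $a_i \geq 2$. If in fact every $a_i = 1$, then $m = \sum a_i = |Y|$, which contradicts $m < c(Y)|Y| \leq |Y| - 2$, the last inequality coming from the Sylvester--Gallai bound $c(Y) \leq (|Y|-2)/|Y|$ (Lemma \ref{sylvestergallai}). Otherwise $a_1 \geq 2$ while $a_2 = \cdots = a_m = 1$, so $|Y \setminus \ell_1| = m - 1 < c(Y)|Y|$, directly contradicting the defining property of $c(Y)$. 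Either way two of the $\ell_i$ lie in $\mathcal L(Y)$, yielding $x \in P_2(\mathcal L(Y))$.

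The main thing to watch for, in my view, is not any technical step but the right framing: one must recognize that the real content is the containment $E_s(Y) \subset P_2(\mathcal L(Y))$, and then carefully separate the two failure modes in the case $x \notin Y$—a single fiber carrying almost all of $Y$ (ruled out by the definition of $c(Y)$) versus every fiber being a singleton (ruled out by the Sylvester--Gallai upper bound on $c(Y)$). No tools beyond these are required.
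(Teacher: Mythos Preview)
Your proof is correct and follows essentially the same approach as the paper: separate the cases $x\in Y$ and $x\notin Y$, and in the latter use the pigeonhole/counting observation that the $m<s\le c(Y)|Y|$ fibers through $x$ cannot cover $Y$ unless at least two of them are $2$-rich in $Y$. The only minor redundancy is your appeal to Sylvester--Gallai in the ``all $a_i=1$'' sub-case---there the contradiction $m=|Y|$ versus $m<c(Y)|Y|$ follows already from $c(Y)<1$, which is immediate for noncollinear $Y$.
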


We first prove a lemma towards Proposition \ref{prop:ESEfinite}.

\begin{lemma}\label{lem:ESEfinite1}
    Let $Y\subset \R^n$ be finite and noncollinear. Then, for all $x\in \R^n$,
    \[
    |\pi_x(Y \setminus \{x\})/\pm| \geq 2.
    \]
    In particular, if $Y$ is noncollinear, $Y \subset P_2(\mathcal L(Y))$.
\end{lemma}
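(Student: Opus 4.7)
The plan is to reinterpret $\pi_x(Y\setminus\{x\})/\pm$ as the set of (undirected) lines through $x$ that meet $Y\setminus\{x\}$: two points $y_1,y_2\in Y\setminus\{x\}$ satisfy $\pi_x(y_1)=\pm\pi_x(y_2)$ precisely when $x,y_1,y_2$ are collinear, so a class in $\pi_x(Y\setminus\{x\})/\pm$ corresponds to a line through $x$ containing some point of $Y\setminus\{x\}$. Under this identification, the claim $|\pi_x(Y\setminus\{x\})/\pm|\geq 2$ becomes the assertion that there are always at least two distinct such lines.

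First I would verify that the statement is nonvacuous. Since $Y$ is noncollinear we have $|Y|\geq 3$, so $|Y\setminus\{x\}|\geq 2$ regardless of whether $x\in Y$ or not, which is enough for two projection classes to potentially exist. Then I would argue by contradiction: suppose all of $Y\setminus\{x\}$ lies on a single line $\ell$ through $x$. By the reformulation above, $\ell$ passes through $x$ by construction, so $\{x\}\subset\ell$; combined with $Y\setminus\{x\}\subset\ell$ this yields $Y\subset\ell$, contradicting the noncollinearity of $Y$. Hence at least two such lines exist, proving the first assertion. The geometric reformulation is what makes the two cases $x\in Y$ and $x\notin Y$ collapse into a single argument, which is really the only subtlety.

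For the second claim, I would specialize the first assertion to $x\in Y$. This yields two distinct lines $\ell_1,\ell_2$ through $x$, each containing at least one point of $Y\setminus\{x\}$; together with $x$ itself, each $\ell_i$ therefore contains at least two points of $Y$, placing $\ell_i\in\mathcal L(Y)$. Since both $\ell_1$ and $\ell_2$ pass through $x$, we conclude $x\in P_2(\mathcal L(Y))$. I do not anticipate a genuine obstacle here, as the lemma is essentially a projection-theoretic restatement of the definition of noncollinearity; the only care needed is in bookkeeping the antipodal identification on $\mathbb{S}^{n-1}$, which is precisely what makes the correspondence between $\pi_x$-classes and lines through $x$ work cleanly.
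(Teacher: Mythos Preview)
The proposal is correct and follows essentially the same approach as the paper: both argue by contradiction that if all of $Y\setminus\{x\}$ projected to a single direction class, then $Y$ would lie on a single line through $x$, and both deduce the second claim by specializing to $x\in Y$ and observing that the two resulting lines each contain at least two points of $Y$. Your explicit identification of $\pi_x(Y\setminus\{x\})/\pm$ with lines through $x$ is exactly the mechanism the paper uses, just stated up front rather than invoked mid-proof.
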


\begin{proof}
Suppose for the sake of contradiction there exists an $x\in \R^n$ such that $|\pi_x(Y\setminus \{x\})/\pm| = 1$. Note that $|\pi_x(Y\setminus \{x\})/\pm|$ cannot be zero, as $|Y| \geq 3$ by noncollinearity. It follows that 
\[
Y\subset \pi_x^{-1}(\pi_x(Y\setminus \{x\})).
\]
In particular, note that the right hand side is a line as $|\pi_x(Y\setminus \{x\})/\pm| = 1$, which contradicts the noncollinearity of $Y.$

Hence, given $y\in Y$, $|\pi_x(Y\setminus \{y\})/\pm| \geq 2$, meaning there are at least two distinct lines $\ell_1,\ell_2$ through $y$ containing at least one point of $Y \setminus \{y\}$. Notice that $\ell_1,\ell_2 \in \mathcal L(Y)$ by definition. Therefore, $y\in P_2(\mathcal L(Y)).$    
\end{proof}

Notice that Lemma \ref{lem:ESEfinite1} implies that
\[
(E_s(Y) \cap Y) \subset P_2(\mathcal L(Y)),
\]
so we won't need to worry about such points when proving Proposition \ref{prop:ESEfinite}.

\begin{proof}[Proof of Proposition \ref{prop:ESEfinite}]
    By Lemma \ref{lem:ESEfinite1}, we may restrict our attention to $x\in E_s(Y) \setminus Y$. For ease of notation, let
    \[
    \mathcal L_x = \mathcal L_x(Y):= \pi_x^{-1}(\pi_x(Y)),
    \]
    notably with $|\mathcal L_x| < s \leq c(Y)|Y|$. Also note that given $Y$ is noncollinear, by Lemma \ref{lem:ESEfinite1}, $|\mathcal L_x| \geq 2$. We claim that at least two of these lines are at least 2-rich of points in $Y$. Given $\mathcal L_x$ covers $Y$ with strictly fewer than $|Y|$-many lines, it is clear that one of these fibers, say $\ell_1$, must be at least 2-rich. 
    
    We claim $\mathcal L_x \setminus \ell_1$ contains at least one other 2-rich line, $\ell_2$. Note that by definition of $c(Y)$, $|Y\setminus \ell_1| \geq c(Y)|Y|$, and by definition of $E_s(Y)$, $$|\mathcal L_x\setminus \ell_1| <s-1 \leq c(Y)|Y|-1,$$ In particular, $\mathcal L_x\setminus \ell_1$ covers $Y\setminus \ell_1$, and thus the existence of such an $\ell_2$ follows. Notably, $\ell_1,\ell_2 \in \mathcal L(Y)$ and $x\in \ell_1\cap \ell_2$. Thus, $x\in P_2(\mathcal L(Y)).$ 
\end{proof}

To see that the range of $s$ is sharp, consider the following two examples.

\begin{example}
    Let $Y\subset \R^2$ contain $(1-c(Y))|Y|$-many points on the $x$-axis, more specifically $[-1,1]\times \{0\}$ The $x$-axis will then be our line $\ell^\ast$. Then, there are infinitely many points $x\in \ell^\ast$ such that \begin{equation}\label{radprojex1}
    |\pi_x(Y\setminus \{x\})| \geq c(Y)|Y|.
    \end{equation}
    Firstly, note for all $x\in (\R \setminus [-1,1])\times \{0\}$, $|\pi_x(Y\cap \ell^\ast)| = 1$. Therefore, if $x$ fails \eqref{radprojex1}, then $|\pi_x(Y\setminus \ell^\ast)| \leq c(Y)|Y| -1$, i.e. $x\in P_2(\mathcal L(Y\setminus \ell^\ast))$, of which there are only finitely-many points.
\end{example}

\begin{example}
    Let $Y\subset \R^2$ contain $(1-c(Y))|Y|$-many points in $[-1,1]\times \{0\}$ and $c(Y)|Y|$-many points in $\{0\}\times [-1,1]$. Then, every point $x\in (\R \setminus [-1,1]) \times \{0\}$ satisfies $|\pi_x(Y)| = c(Y)|Y| + 1$.
\end{example}

We hope that by the above discussion it is clear that the study of discrete radial projections is trickier than that for orthogonal projections. This is partly due to the question of if $E_s(Y)$ is even finite to begin with, but even if we let $X\subset E_s(Y)$ be finite, bounding $|X|$ nontrivially (in terms of $s$ and $|Y|$) is tricky for other geometric considerations. In particular, given how we approached the discrete exceptional set estimates for orthogonal projections (Proposition \ref{discreteESE}), one might first be inclined to consider the set of lines 
\[
\mathcal L = \bigcup_{x\in X} \mathcal L_x(Y)
\]
and then bound $|\mathcal I(Y,\mathcal L)|$ above and below. While we can certainly bound $|\mathcal I(Y,\mathcal L)|$ from above by the Szemer\'edi--Trotter theorem (noting that $|\mathcal L|< |X|s$), bounding it from below is quite trickier as there may exist lines $\ell$ that lie in $\mathcal L_x \cap \mathcal L_{x'}$ for numerous distinct pairs $x,x'\in X$. However, assuming that $X$ is significantly noncollinear this should rarely happen. To this end, we will say that $X$ satisfies the $C$-\textit{nonconcentration condition} if for there are at most $C$-many points of $X$ on any line, and note the following Proposition.

\begin{proposition}\label{discreteradprojESE}
    Let $X, Y\subset \R^n$ be finite, $0 < s \leq |Y|$, and $C>0$. 
    Then,
    \begin{itemize}
        \item[a)] (Product Set) If $X$ is noncollinear, then $X\not\subset E_{\frac{1}{2}|Y|^{1/2}}(Y)$.
        \item[b)] (Cauchy--Schwarz) If $X\subset E_s(Y)$ satisfies the $C$-nonconcentration condition, then for $0 < s\leq (2C)^{-1}|Y|$,
        \[
        |X| \lesssim_C s.
        \]
        \item[c)] (Szemer\'edi--Trotter) If $X\subset E_s(Y)$ satisfies the $C$-nonconcentration condition, then for $0 < s\leq (2C)^{-1}|Y|$,
        \[
        |X| \lesssim_C \max\{s^2|Y|^{-1},1\}.
        \]
    \end{itemize}
\end{proposition}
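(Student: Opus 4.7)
The plan is to adapt the incidence-theoretic strategy from Proposition \ref{discreteESE} to the radial-projection setting, with the $C$-nonconcentration condition on $X$ playing the role that parallelism of fibers played there.

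For (a), I would use a two-pin product set argument. Suppose for contradiction $X \subset E_{|Y|^{1/2}/2}(Y)$ and pick two distinct pins $x_1, x_2 \in X$, letting $\ell_{12}$ denote the line through them. For each $y \in Y \setminus \ell_{12}$, the lines $\overline{x_1 y}$ and $\overline{x_2 y}$ belong to $\mathcal{L}_{x_1}(Y)$ and $\mathcal{L}_{x_2}(Y)$ respectively and are distinct (else $y$ would lie on $\ell_{12}$), so the map $y \mapsto (\overline{x_1 y}, \overline{x_2 y})$ is injective. This yields $|Y \setminus \ell_{12}| < (|Y|^{1/2}/2)^2 = |Y|/4$, hence $|Y \cap \ell_{12}| > 3|Y|/4$. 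Noncollinearity of $X$ supplies an $x_3 \in X \setminus \ell_{12}$; repeating the argument with the pair $(x_1, x_3)$ produces a distinct line $\ell_{13}$ with $|Y \cap \ell_{13}| > 3|Y|/4$, and since $\ell_{12}$ and $\ell_{13}$ meet in at most one point, inclusion-exclusion yields $|Y| > 3|Y|/2 - 1$, a contradiction whenever $|Y| \geq 2$.

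For (b) and (c), set $\mathcal{L} := \bigcup_{x \in X} \mathcal{L}_x(Y)$, so $|\mathcal{L}| \leq |X|s$, and count incidences between $Y$ and $\mathcal{L}$. The upper bound comes from Proposition \ref{sec2:prop:STPrelim} for (b) and the Szemer\'edi--Trotter theorem (Theorem \ref{SZEMEREDITROTTER}) for (c). The crucial step---where the nonconcentration hypothesis enters---is the matching lower bound: for each fixed $y \in Y$, the map $x \mapsto \overline{xy}$ sends $X \setminus \{y\}$ into $\{\ell \in \mathcal{L} : y \in \ell\}$ with fibers of the form $X \cap \ell \setminus \{y\}$, each of size at most $C$. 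This yields $|\{\ell \in \mathcal{L} : y \in \ell\}| \geq (|X|-1)/C$, and summing over $y \in Y$ gives $|\mathcal{I}(Y, \mathcal{L})| \gtrsim_C |X||Y|$ (assuming $|X| \geq 2$, else the target bounds are trivial).

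Combining with the Cauchy--Schwarz upper bound gives $|X||Y| \lesssim_C |Y|(|X|s)^{1/2} + |X|s$, and the constraint $s \leq (2C)^{-1}|Y|$ allows one to absorb the second term into the left-hand side up to the $\lesssim_C$ constant, yielding $|X| \lesssim_C s$ as in (b). For (c), Szemer\'edi--Trotter gives $|X||Y| \lesssim_C |Y|^{2/3}(|X|s)^{2/3} + |Y| + |X|s$, where the third term is ruled out by the same constraint, the second forces $|X| \lesssim_C 1$, and the first yields $|X| \lesssim_C s^2/|Y|$. The main obstacle is the nonlinearity: unlike Proposition \ref{discreteESE}, where fibers of distinct orthogonal projections are parallel and automatically disjoint (so the incidence count is exactly $|X|\cdot |E_s(X)|$), the lines $\overline{xy}$ through different pins $x$ can coincide, and without $C$-nonconcentration the incidence count could collapse entirely. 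This is the source of the additional hypothesis in our statement and the key structural difference from the orthogonal-projection analogue.
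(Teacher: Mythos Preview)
Your proposal is correct and follows essentially the same approach as the paper: the incidence setup $\mathcal{L} = \bigcup_{x\in X} \mathcal{L}_x(Y)$ with the lower bound $|\mathcal{I}(Y,\mathcal{L})| \gtrsim_C |X||Y|$ coming from $C$-nonconcentration, then Proposition \ref{sec2:prop:STPrelim} and Szemer\'edi--Trotter for the upper bounds, matches the paper exactly. The only cosmetic difference is in (a): the paper uses the two-pin product argument once to get $|Y\setminus\ell_{12}| < |Y|/4$ and then uses the third pin $x_3 \notin \ell_{12}$ directly to bound $|Y\cap\ell_{12}| < \tfrac{1}{2}|Y|^{1/2}$ (since $\pi_{x_3}$ is injective on $\ell_{12}$), whereas you apply the two-pin argument twice to get two concentrated lines and reach a contradiction via inclusion--exclusion; both are valid.
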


\begin{proof}
Throughout the proof, let $$\mathcal L_x = \mathcal L_x(Y) := \pi_{x}^{-1}(\pi_x(Y\setminus \{x\})).$$ 

To see Statement a), suppose for the sake of contradiction that $X\subset  E_{\frac{1}{2}|Y|^{1/2}}(Y)$ and let $x_1,x_2,x_3 \in X$ be noncollinear. Then, if $\ell$ is the line containing $x_1,x_2$, it follows that 
\[
Y \setminus \ell \subset (\tilde{\mathcal L}_{x_1} \cap \tilde{\mathcal L}_{x_2}).
\]
In particular, $|Y\setminus \ell| <|Y|/4$. Given $x_3 \notin \ell$ by assumption, we have
\[
Y\cap \ell \subset (\tilde{\mathcal L}_{x_3} \cap \ell) \implies |Y\cap \ell| < \frac{1}{2}|Y|^{1/2}.
\]
Hence, it follows that 
\[
|Y| = |Y\setminus \ell| + |Y\cap \ell| < |Y|
\]
which is a contradiction, proving statement (a).

We now set up the incidence arguments that give statements (b) and (c), again letting $|X| = t$. Notice by definition that for all $x\in X\subset E_s(Y)$, it takes at most $s$-many lines through $x$ to cover $Y$. Hence, let 
\[
\mathcal L = \bigcup_{x\in X} \mathcal L_x,
\]
and note that $|\mathcal L| < |X| s.$
Consider the set of incidences between $Y$ and $\mathcal L$. Given that there are at most $C$-many points of $X$ on any line $\ell \in \mathcal A(n,1)$, and $\mathcal L_x$ covers $Y\setminus \{x\}$ for all $x\in \R^n,$ it follows that 
\[
C^{-1} |X||Y| \leq C^{-1} \sum_{x\in X} |\mathcal I(Y, \mathcal L_x)| \leq \sum_{\ell \in \mathcal L} |\mathcal I(Y,\ell)| = |\mathcal I(Y,\mathcal L)|,
\]
where here the factor of $C^{-1}$ is using that for all $\ell \in \mathcal L$ there are at most $C$-many $x\in X$ with $\ell \in \mathcal L_x$. From here, Proposition \ref{discreteradprojESE}.(b) follows immediately from Proposition \ref{sec2:prop:STPrelim} and (c) follows from Szemer\'edi--Trotter.
\end{proof}

\noindent Theorem \ref{discreteradprojESE} also implies \textit{pinned} results. Recalling Notation \ref{SimNotation}, we have:

\begin{proposition}\label{discretepinnedradproj}
    Let $X,Y\subset \R^n$ be finite sets and $C>0$. Then, 
    \begin{itemize}
        \item[i)] (Product Set) If $X$ is noncollinear, $\max_{x\in X} |\pi_x(Y\setminus \{x\})| \gtrsim |Y|^{1/2}$.
        \item[ii)] (Cauchy--Schwarz) If $X$ satisfies the $C$-nonconcentration condition,\footnote{We are able to weaken this nonconcentration condition to obtain the same bound from a Beck-type theorem. See Corollary \ref{corollaryBeckRadProj}.}
        \[
        \max_{x\in X} |\pi_x(Y\setminus \{x\})| \gtrsim_C \min\{|X|, |Y|\}.
        \]
        \item[iii)] (Szemer\'edi--Trotter) If $X$ satisfies the $C$-nonconcentration condition, 
        \[
        \max_{x\in X} |\pi_x(Y\setminus \{x\})| \gtrsim_C \min\{|X|^{1/2}|Y|^{1/2}, |Y|\}.
        \]
    \end{itemize}
\end{proposition}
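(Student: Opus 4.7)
The plan is to deduce each of the three pinned statements from the corresponding exceptional set estimate in Proposition \ref{discreteradprojESE} via a straightforward contrapositive. Setting $s^\ast := \max_{x\in X}|\pi_x(Y\setminus\{x\})|$, the quantity we wish to lower bound, the key observation is that for every $s > s^\ast$ we have $X \subset E_s(Y)$ directly from the definition of the exceptional set (in the discrete setting one may simply take $s = s^\ast + 1$). Each part of the conclusion then falls out by applying the matching bound on $|E_s(Y)|$ to this containment and rearranging in $s^\ast$ instead of $|X|$.

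For (i), assume toward contradiction that $s^\ast < \frac{1}{2}|Y|^{1/2}$, so that $X \subset E_{\frac{1}{2}|Y|^{1/2}}(Y)$; this contradicts Proposition \ref{discreteradprojESE}(a), which forbids a noncollinear $X$ from sitting inside $E_{\frac{1}{2}|Y|^{1/2}}(Y)$. For (ii), split on whether $s^\ast \le (2C)^{-1}|Y|$: inside that range Proposition \ref{discreteradprojESE}(b) yields $|X|\lesssim_C s^\ast$, i.e.\ $s^\ast\gtrsim_C |X|$; outside that range $s^\ast\gtrsim_C |Y|$ holds trivially, and taking the minimum gives (ii). Part (iii) proceeds identically with Proposition \ref{discreteradprojESE}(c): in the regime $s^\ast \le (2C)^{-1}|Y|$ we obtain $|X|\lesssim_C \max\{(s^\ast)^2|Y|^{-1},\,1\}$, and when the first term dominates rearranging produces $s^\ast\gtrsim_C |X|^{1/2}|Y|^{1/2}$, while when the constant $1$ dominates the bound $|X|\lesssim_C 1$ forces $|X|^{1/2}|Y|^{1/2}\lesssim_C |Y|^{1/2}$, at which point part (i) recovers the required lower bound on $s^\ast$. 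Outside the range $s^\ast \le (2C)^{-1}|Y|$ we again use $s^\ast \gtrsim_C |Y|$ so that the $|Y|$ in the minimum is what is being attained.

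The only real subtlety is the bookkeeping in part (iii) when the harmless $1$ in $\max\{(s^\ast)^2|Y|^{-1},\,1\}$ dominates: there one must argue separately that the desired bound $|X|^{1/2}|Y|^{1/2}$ is already delivered by part (i) or absorbed into the $\gtrsim_C$ constant. This is not a deep obstacle, since the $C$-nonconcentration hypothesis automatically produces a noncollinear subset of $X$ once $|X|>C$ (thereby legitimizing the use of (i)), while the residual range $|X|\le C$ is controlled purely by the constant depending on $C$. Apart from this small case analysis, every step of the argument is simply the contrapositive of an already-established exceptional set bound.
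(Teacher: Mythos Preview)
Your proposal is correct and takes essentially the same approach as the paper: both arguments deduce the pinned statements from Proposition~\ref{discreteradprojESE} by contrapositive, observing that if the maximum radial projection is small then $X\subset E_s(Y)$ for a suitable $s$ and the exceptional set bounds force a contradiction. The paper phrases this by fixing $s$ as the target value and contradicting $X\subset E_{s-\epsilon}(Y)$, while you set $s^\ast=\max_{x\in X}|\pi_x(Y\setminus\{x\})|$ and bound $|X|$ in terms of $s^\ast$; these are equivalent, and your handling of the edge case where the constant $1$ dominates in (iii) (falling back on (i) when $|X|>C$ gives noncollinearity, absorbing $|X|\le C$ into the constant) matches the paper's reduction to Statement~(i) when $|X|$ is small.
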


\begin{proof}
We claim that Statement i) holds with implicit constant 1/2. To see this, note that $X$ has at least 3 noncollinear points, say $x_1,x_2,x_3$, and suppose for the sake of contradiction that 
\[
\max_{x\in X} |\pi_x(Y\setminus \{x\})| < \frac{|Y|^{1/2}}{2}.
\]
Then, let $\ell = \ell(x_1,x_2)$ denote the line through $x_1,x_2$. It follows from the above bound that $Y$ has strictly less than $|Y|/4$-many points off of $\ell$, as 
\[
Y \subset \ell \cup (\tilde{\mathcal L}_{x_i} \setminus \ell) \quad \text{for }i=1,2.
\]
However, note that this would be an immensely bad bound if $Y$ is contained in $\ell$ itself. However, given $x_3 \notin \ell$, it follows that 
\[
(Y\cap \ell) \subset \tilde{\mathcal L}_{x_3},
\]
and thus 
\[
|Y| \leq |Y\cap \ell| + \frac{|Y|}{4} < |Y|
\]
which is a contradiction.

We claim that Statements ii) and iii) hold with, say, implicit constants $(2C)^{-1}$ and $(12C)^{-3/2}$ and we prove the latter here. Firstly, note that if $X$ is sufficiently small (e.g. $|X|\leq 12 C$), the claim follows from Statement i), so we suppose $|X|> 12C$. Then, let
\[
s:= (12C)^{-3/2} \min\{|X|^{1/2}|Y|^{1/2},|Y|\}
\]
and suppose for the sake of contradiction that for all $\epsilon >0$
\[
X\subset (E_{s-\epsilon}(Y) \cap X).
\]
Thus, by Proposition \ref{discreteradprojESE} (which we can apply as $s-\epsilon \leq C^{-1}|Y|/2$), 
\begin{align*}
|X| &\leq \#(E_{s-\epsilon}(Y)\cap X)  \\
&\leq \max\{(12C)^3 (s-\epsilon)^2|Y|^{-1}, 12C\} \\
&< |X|
\end{align*}
which gives a contradiction for $\epsilon$ sufficiently small. Here, we use the implicit constants that we dropped in the proof of Proposition \ref{discreteradprojESE}.
\end{proof}

To conclude our discussion of discrete radial projections, we briefly note a few historical results. Firstly, notice that if $X = Y$ is noncollinear and $X$ satisfies the $C$-nonconcentration condition, then $|S(Y)|\gtrsim_C |Y|$. This follows by Proposition \ref{discretepinnedradproj}.(ii) by noting $\pi_y(Y\setminus \{y\}) \subset S(Y)$ for all $y\in Y.$ A stronger result of Ungar from 1982 \cite{Ungar82} gives the following.

\begin{proposition}[\cite{Ungar82}] \label{UngarResult}
    If $Y$ is noncollinear, then 
    \[
    |S(Y)| \geq |Y|-1,
    \]
    where $S(Y)$ is the direction set of $Y.$
\end{proposition}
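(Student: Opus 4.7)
The plan is to reduce to the plane via a generic projection and then invoke Ungar's rotating-line argument, which exploits the structure of ``allowable sequences'' of permutations induced by projecting $Y$ onto rotating axes. First, for a generic $V \in \mathcal G(n,2)$, the orthogonal projection $P_V: \R^n \to V$ is injective on the finite set $Y$ and induces a bijection between $S(Y)/\pm$ and $S(P_V(Y))/\pm$: the set of $V$ for which this fails is a finite union of proper subvarieties of $\mathcal G(n,2)$, coming from pairs and triples of points in $Y$ whose connecting lines become parallel or coincident under the projection, in direct analogy with the reduction used in Corollary \ref{genericprojCorollary0}. After this reduction, it suffices to prove $|S(Y)/\pm| \geq (n-1)/2$ (so that the directed count $|S(Y)| = 2|S(Y)/\pm| \geq n-1$) for noncollinear $Y \subset \R^2$ with $n := |Y| \geq 3$.

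For $\theta \in [0,\pi)$ avoiding $S(Y)/\pm$, the orthogonal projection of $Y$ onto the line in direction $\theta$ produces $n$ distinct real numbers, defining a linear ordering $\sigma_\theta$ on $Y$. As $\theta$ rotates continuously, $\sigma_\theta$ is piecewise constant and jumps precisely at the angles $\theta_j \in S(Y)/\pm$; at each such $\theta_j$, the ordering reverses within each maximal parallel class of points of $Y$ collinear in direction $\theta_j$. Over the full half-rotation $\theta \in [0,\pi)$, every pair of points in $Y$ transposes exactly once, contributing a total of $\binom{n}{2}$ pair-swaps distributed among the $m := |S(Y)/\pm|$ breakpoints.

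The combinatorial heart of Ungar's argument is to bound the number of pair-swaps at a single breakpoint by $\lfloor n/2 \rfloor$ once the reversal is properly decomposed into disjoint adjacent transpositions of the ambient permutation. Summing this bound yields $m \cdot \lfloor n/2 \rfloor \geq \binom{n}{2}$ and hence $m \geq n-1$; substituting into $|S(Y)| = 2m$ gives the stronger bound $|S(Y)| \geq 2(n-1)$, which certainly exceeds $|Y|-1$.

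The principal obstacle is precisely the per-breakpoint bound. If a single line in direction $\theta_j$ contains $a$ points of $Y$, the naive pair-swap count at $\theta_j$ is $\binom{a}{2}$, which can greatly exceed $\lfloor n/2 \rfloor$ when $a$ is large. Ungar's resolution reinterprets the order reversal at each breakpoint in terms of adjacent transpositions in the ambient permutation: the long block reversal corresponding to many collinear points is realized by a cascade of overlapping adjacent swaps distributed across many successive breakpoints in the allowable sequence, and by a careful amortized counting argument (leveraging that $Y$ is noncollinear so that no single line carries all of $Y$, and thus any large parallel class at one angle is ``counterbalanced'' by points off that line appearing in many other breakpoints), one shows the average effective swap count per breakpoint is at most $\lfloor n/2 \rfloor$. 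Making this amortization precise is the technical crux of \cite{Ungar82} and would be the main hurdle to reproduce in the thesis.
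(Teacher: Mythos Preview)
The thesis does not prove this proposition; it is stated with attribution to \cite{Ungar82} and used only as historical context alongside Beck's theorem. There is no argument in the paper to compare against.

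Your framework is the right one and is indeed Ungar's: the reduction to $\R^2$ via a generic $V\in\mathcal G(n,2)$ works as you say, and the rotating-line picture gives an allowable sequence $\sigma_0,\sigma_1,\ldots,\sigma_m=\sigma_0^{R}$ with $m=|S(Y)/\pm|$, each move reversing disjoint consecutive blocks, and $\binom{n}{2}$ total pair-swaps over the half-rotation since every pair swaps exactly once.

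But the step you single out as the crux is a genuine gap, and your account of Ungar's fix is not correct. The per-breakpoint bound of $\lfloor n/2\rfloor$ pair-swaps is simply false --- a single block of length $n-1$ contributes $\binom{n-1}{2}$ swaps at one move --- and Ungar does \emph{not} recover it by ``distributing a long block reversal across many successive breakpoints'' or by amortizing pair-swaps. A block reversal at one direction is one move, full stop; nothing is distributed. Ungar's actual argument discards the inversion count entirely. He tracks what happens at the median boundary between positions $\lfloor n/2\rfloor$ and $\lfloor n/2\rfloor+1$: at each move at most one reversed block can straddle this boundary, and by analyzing how elements cross between the two halves --- crucially using the allowable-sequence constraint that each pair swaps \emph{exactly} once, which forbids a block from re-reversing --- he deduces $m\ge 2\lfloor n/2\rfloor$ directly. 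The pair-swap counting route you outline does not close, and the mechanism you propose for closing it is not the one that works.
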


A year later in 1983, Beck \cite{Beck83} obtained Proposition \ref{discretepinnedradproj}.(ii) when $X = Y$ as a consequence of his titular theorem (Theorem \ref{thm:introBeck}). In fact, Beck's theorem gives the conclusion of the Proposition under a significantly weaker condition than $C$-nonconcentration, see Corollary \ref{corollaryBeckRadProj}. As a corollary, Beck's theorem implies Ungar's result up to implicit constants, partially resolving a conjecture of Dirac \cite{DiracConj} and Motzkin \cite{MotzkinConj}.

\subsection{The Continuum Setting} \label{ss:radprojctm}

The continuum theory of radial projections has seen rapid developments over the past decade, which is where the majority of our survey takes place. However, it is worth briefly noting that radial projections have been classically studied, having first been explored in the plane by Marstrand in the same 1954 paper \cite{Marstrand54} proving his eponymous projection theorem (Theorem \ref{MARSTRAND}).

In this paper, Marstrand obtains \textit{visibility} results for radial projections. Given a Borel set $Y \subset \R^n$ and $x\in \R^n$, we say $Y$ is \textit{invisible} from $x$ if $$\mathcal H^{n-1}(\pi_x(Y\setminus \{x\})) = 0.$$ Notice that if $\mathcal H^{n-1}(Y) = 0$, which for instance holds if $\dim Y< n-1$ and fails if $\dim Y>n-1$, it follows that $Y$ is invisible from \textit{every} $x\in \R^n$. Hence, if one seeks to show that $Y$ is rarely invisible, it is reasonable to assume $\dim Y>n-1$, or (as a special case) $0< \mathcal H^{s}(Y)< \infty$ for some $s> n-1$ as Marstrand originally did in the plane \cite{Marstrand54}.

\begin{remark}
One can study the visibility of $Y$ when $0 < \mathcal H^{n-1}(Y) <\infty$, the results of which depend on the rectifiability of $Y$, though we do not discuss this topic further here. See \cite{OrponenRadProjSmoothness,OrponenSahlsten} and references therein for more.
\end{remark}

Marstrand showed that all Borel sets $Y \subset \R^2$ with positive and finite $\mathcal H^s$-measure (for some $s>1$) are visible from $\mathcal L^2$-almost every $x\in \R^2$ and visible from $\mathcal H^s$-almost every $x\in Y$. Nearly 60 years later, this result was unified and generalized by Mattila and Orponen \cite{MattilaOrponenRadProjVisibility,OrponenRadProjVisibility} whose work collectively shows the following:

\begin{theorem}[\cite{MattilaOrponenRadProjVisibility,OrponenRadProjVisibility}] \label{mattila-orponen-visibility}
    For $Y\subset \R^n$ Borel with $\dim Y>n-1$,
\begin{equation} \label{MattilaOrponenSharp}
\dim \{x\in \R^n : Y \text{ is invisible from } x\} \leq 2(n-1) - \dim Y.
\end{equation}
Moreover, \eqref{MattilaOrponenSharp} is sharp in the following sense: for all $s>n-1$, there exists Borel sets $E,Y \subset \R^{n}$ with $\dim Y = s$ and $\dim E = 2(n-1)-s$ such that $\mathcal H^{n-1}(\pi_x(Y\setminus \{x\}))>0$ for all $x\in \R^n \setminus E$.
\end{theorem}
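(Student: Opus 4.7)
My plan is to prove the upper bound via a potential-theoretic Frostman/energy argument, and to establish sharpness through a Cantor-style incidence construction adapted from Marstrand's classical projection theorem.

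For the upper bound, fix Borel $Y \subset \R^n$ with $\dim Y = s > n-1$, and set
\[
E_0 := \{x \in \R^n : \mathcal{H}^{n-1}(\pi_x(Y \setminus \{x\})) = 0\}.
\]
Toward a contradiction, assume $\dim E_0 > 2(n-1) - s$, so by Frostman's lemma we may choose compactly supported Borel measures $\mu$ on $Y$ and $\nu$ on $E_0$ (after separating the negligible overlap $E_0 \cap Y$) with $\mu(B(y,r)) \lesssim r^{s_0}$ and $\nu(B(x,r)) \lesssim r^u$, where $s_0 < s$ and $u > 2(n-1) - s_0$, so that $s_0 + u > 2(n-1)$. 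The goal is to show
\[
J := \int I_{n-1}\bigl((\pi_x)_\ast \mu\bigr)\, d\nu(x) < \infty,
\]
since finiteness of the $(n-1)$-energy of a measure on the $(n-1)$-dimensional sphere $\mathbb{S}^{n-1}$ (the critical Parseval threshold) forces $(\pi_x)_\ast \mu$ to have an $L^2$ density with respect to surface measure for $\nu$-a.e. $x$, whence $\pi_x(Y \setminus \{x\})$ contains the positive-measure set $\{d(\pi_x)_\ast \mu / d\sigma > 0\}$, contradicting invisibility from $E_0$.

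The analytic heart is the identity $|\pi_x(y) - \pi_x(z)| = 2 \sin(\tfrac{1}{2} \angle yxz)$ combined with the triangle-area formula
\[
\sin \angle yxz = \frac{|y-z| \, d(x, \ell(y,z))}{|x-y|\,|x-z|},
\]
where $\ell(y,z)$ is the affine line through $y$ and $z$. Expanding $J$ via Fubini produces a triple integral whose singularities concentrate on the incidence locus $\{(x,y,z) : x \in \ell(y,z)\}$. The main obstacle is that the naive Frostman tube-mass bound $\nu(\mathrm{tube}_r(\ell)) \lesssim r^{u-1}$ is by itself insufficient to absorb the $d(x,\ell)^{-(n-1)}$ singularity---a direct dyadic tube sum converges only when $u > n$, whereas we only have $u \leq n-1$. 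The remedy is a two-parameter multi-scale decomposition interleaving the $|y-z|$ scale with the $d(x, \ell(y,z))$ scale: one partitions the pairs $(y,z)$ by $|y-z| \sim 2^{-j}$ and the tubular shell around $\ell(y,z)$ by $d(x, \ell) \sim 2^{-k}$, uses $\mu(B(y,2^{-j})) \lesssim 2^{-j s_0}$ together with the tube bound on $\nu$, and balances the resulting double geometric series via the hypothesis $s_0 + u > 2(n-1)$. Equivalently, for fixed $x$ one recognizes the inner double integral as an orthogonal-projection energy along the sphere centered at $x$ and invokes a Kaufman-type spherical projection bound uniformly in $x$.

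For sharpness, given $s > n-1$, one builds $(Y,E)$ by a Cantor-style inductive construction in the spirit of Marstrand's classical sharpness examples: $Y$ is an $s$-dimensional self-similar set arranged as a line-bundle over a lower-dimensional base, and $E$ is a calibrated Cantor set of dimension exactly $2(n-1)-s$ for which the radial projections from $E$ concentrate on $\mathcal{H}^{n-1}$-null subsets of $\mathbb{S}^{n-1}$, while radial projections from $\R^n \setminus E$ are absolutely continuous (in fact contain an open set of directions by the slab structure of $Y$). The exponent $2(n-1)-s$ arises as the natural parameter count for pairs (point, line through point) in $\R^n \times \mathcal G(n,1)$ constrained by $\dim Y = s$. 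The delicate step is synchronizing the generation scales of $Y$ and $E$ so that at each Cantor level the direction families through $E$ cover $Y$ exactly without excess; this blueprint follows \cite{MattilaOrponenRadProjVisibility,OrponenRadProjVisibility}.
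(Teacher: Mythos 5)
The paper does not prove Theorem \ref{mattila-orponen-visibility}; it states the result as a citation to \cite{MattilaOrponenRadProjVisibility,OrponenRadProjVisibility}, so there is no in-thesis argument to compare against. That said, your blind reconstruction correctly identifies the standard high-level framework — Frostman measures $\mu$ on $Y$ and $\nu$ on $E_0$, the quantity $J=\int I_{n-1}((\pi_x)_*\mu)\,d\nu(x)$, the threshold fact that finite $(n-1)$-energy on an $(n-1)$-manifold forces an $L^2$ density, and the geometric identity $|\pi_x(y)-\pi_x(z)|\sim|y-z|\,d(x,\ell(y,z))/(|x-y|\,|x-z|)$.

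The gap is in the claimed ``remedy.'' You correctly observe that the naive tube bound gives a dyadic $k$-sum $\sum_k 2^{k(n-1)}\cdot 2^{-k(u-1)}=\sum_k 2^{k(n-u)}$, which converges only when $u>n$. But the proposed two-parameter decomposition does not repair this. Partitioning by $|y-z|\sim 2^{-j}$ and $d(x,\ell)\sim 2^{-k}$ and inserting $\mu(B(y,2^{-j}))\lesssim 2^{-js_0}$ alongside the tube bound gives
\[
J\;\lesssim\;\sum_{j,k} 2^{j(n-1)}2^{k(n-1)}\cdot 2^{-js_0}\cdot 2^{-k(u-1)}
\;=\;\Bigl(\sum_j 2^{j(n-1-s_0)}\Bigr)\Bigl(\sum_k 2^{k(n-u)}\Bigr),
\]
a \emph{product} of two independent geometric series. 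The $j$-sum converges because $s_0>n-1$, but the $k$-sum is exactly the same divergent series as before — the extra $j$-parameter has not affected it at all. The hypothesis $s_0+u>2(n-1)$ cannot ``balance'' two independent sums: each factor must converge on its own, and the second does not when $u\leq n-1$. To reach the correct threshold you would need either a genuine geometric coupling between the $j$ and $k$ scales (a constraint of the form $k\lesssim f(j)$ coming from the position of $x$ relative to the segment $[y,z]$, together with tracking the $|x-y||x-z|$ factors rather than discarding them), or a different organization of the integral — for instance exploiting the radial fibration $y=x+r\theta$ to view $(\pi_x)_*\mu$ as a genuine codimension-one projection and run an $L^2$/Fourier argument in those coordinates. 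Your alternative one-liner about ``a Kaufman-type spherical projection bound uniformly in $x$'' gestures at this, but it is both unproved and suspicious as stated: uniformity in $x$ would yield a conclusion independent of $\nu$, which is strictly too strong. Finally, the sharpness paragraph is only a sketch of intent — the exponent count is correct, but the construction is not specified enough to verify that the exceptional set has dimension exactly $2(n-1)-s$ rather than merely at most that.
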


It may be insightful to the reader (who may perhaps be more familiar with orthogonal projections) to note that this exceptional set estimate for visibility immediately implies the following Marstrand-type corollary: if $\dim Y>n-1$, then $\mathcal H^{n-1}(\pi_x(Y\setminus \{x\}))>0$ for $\mathcal L^n$-almost every $x\in\R^n$. In general, \textit{visibility problems} seek to understand how often the image of a projection map has positive measure. Such problems are classically studied in geometric measure theory; see \cite[Section 6]{Mattila04Survey} for a survey. 

For the remainder of this thesis, we will discuss Hausdorff dimensional results for radial projections. In particular, given $X,Y \subset \R^n$ Borel, when does there exist an $x\in X$ such that $\dim \pi_x(Y\setminus \{x\})$ is ``large''? In 2018, Orponen \cite{OrponenRadProjSmoothness} considered this problem, obtaining one of the first dimensional lowerbounds for radial projections which states:

\begin{theorem}[Orponen's Radial Projection Theorem \cite{OrponenRadProjSmoothness}] \label{orponenradprojtheorem}
    Given Borel sets $X,Y \subset \R^2$, if $\dim X >0$ and $X$ is not collinear, then there exists an $x\in X$ such that
    \[
    \dim \pi_x(Y \setminus \{x\}) \geq \frac{\dim Y}{2}.
    \]
\end{theorem}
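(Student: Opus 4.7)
The plan is to run a potential-theoretic argument modeled on standard proofs of Marstrand--Kaufman--Falconer-type projection theorems, now adapted to the nonlinear projection $\pi_x$. Fix any $s$ with $0 < s < (\dim Y)/2$; it suffices to produce a single $x \in X$ with $\dim \pi_x(Y\setminus\{x\}) \geq s$ and then let $s \nearrow (\dim Y)/2$. Pick $t \in (2s, \dim Y)$ and use Frostman's lemma to fix a compactly supported probability measure $\nu$ on $Y$ with $\nu(B(y,r)) \lesssim r^t$, along with some probability measure $\mu$ on $X$ (to be chosen). Consider the averaged Riesz $s$-energy
\begin{equation*}
\mathcal{E} \;:=\; \int_X I_s\bigl((\pi_x)_\#\nu\bigr)\, d\mu(x) \;=\; \int\!\!\int\!\!\int |\pi_x(y)-\pi_x(y')|^{-s}\, d\mu(x)\, d\nu(y)\, d\nu(y').
\end{equation*}
If $\mathcal{E}<\infty$ then $I_s((\pi_x)_\#\nu)<\infty$ for $\mu$-a.e.\ $x\in X$, and standard energy/dimension duality yields $\dim \pi_x(Y\setminus\{x\}) \geq s$ for such $x$.

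To bound $\mathcal{E}$, I would use the elementary geometric identity
\begin{equation*}
|\pi_x(y)-\pi_x(y')| \;\sim\; \frac{|y-y'|\, d(x,\ell_{y,y'})}{|x-y|\,|x-y'|},
\end{equation*}
where $\ell_{y,y'}$ is the affine line through $y,y'$. After restricting to a bounded region (justified by countable stability of Hausdorff dimension), Fubini reduces the estimate to
\begin{equation*}
\mathcal{E} \;\lesssim\; I_s(\nu)\cdot \sup_{\ell\in\mathcal{A}(2,1)} \int_X d(x,\ell)^{-s}\, d\mu(x).
\end{equation*}
The first factor is finite because $s<t\leq\dim Y$ and $\nu$ is Frostman; the second is finite provided $\mu$ obeys a uniform affine-line nonconcentration $\mu(\ell(r)) \lesssim r^u$ for some $u>s$ and all affine lines $\ell$.

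The hard part will be producing such a $\mu$, since a Frostman measure on a set of arbitrarily small positive dimension can still concentrate near a line at small scales, so $\dim X>0$ alone does not deliver the required transversality. This is where the noncollinearity hypothesis must enter, via a dichotomy on $Y$. Case 1: some line $\ell_0$ satisfies $\dim(Y\cap\ell_0) \geq (\dim Y)/2$. Noncollinearity of $X$ gives some $x_1 \in X\setminus\ell_0$, and because $\pi_{x_1}$ restricted to $\ell_0$ is locally bi-Lipschitz, it preserves Hausdorff dimension on bounded pieces and hence (by countable stability) overall, yielding $\dim \pi_{x_1}(Y\setminus\{x_1\}) \geq \dim(Y\cap\ell_0) \geq (\dim Y)/2$ directly. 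Case 2: $\dim(Y\cap\ell)<(\dim Y)/2$ for every affine line $\ell$. This global spread of $Y$ lets one refine $\nu$ to satisfy $\nu(\ell(r))\lesssim r^w$ uniformly in $\ell$ for a favorable exponent $w$; the extra transversality on the $\nu$-side symmetrizes the estimate above---swapping which integral picks up the distance-to-line weight---so that any Frostman $\mu$ on $X$ suffices. Balancing the exponents so that the final energy converges for $s$ arbitrarily close to $(\dim Y)/2$ is the technical heart of the argument.
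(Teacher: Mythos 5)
Your Case~1 is sound: if some line $\ell_0$ has $\dim(Y\cap\ell_0)\geq\tfrac{1}{2}\dim Y$, then noncollinearity of $X$ supplies $x_1\in X\setminus\ell_0$, and $\pi_{x_1}$ restricted to $\ell_0$ is locally bi-Lipschitz, so it preserves Hausdorff dimension on $Y\cap\ell_0$ (this is exactly the paper's Lemma~\ref{LemRadProjLocBiLips}). The energy setup, the geometric identity via the triangle-area formula, and the resulting ``symmetrization'' are also legitimate. However, Case~2 has a genuine gap. The hypothesis ``$\dim(Y\cap\ell)<\tfrac{1}{2}\dim Y$ for every affine line $\ell$'' is a statement about the Hausdorff dimension of $1$-dimensional \emph{slices}, and it does \emph{not} supply any uniform power-law bound $\nu(\ell(r))\lesssim r^{w}$ on the mass of $\nu$ near \emph{tubes}. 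Concretely, one can build $Y=\bigcup_k Y_k$ with $Y_k$ inside a $2^{-k^2}$-tube around a line $\ell_k$, $\nu(Y_k)\sim 1/k^2$, and each $Y_k$ chosen so that every single line meets $Y_k$ in a set of dimension $0$. Then $\dim(Y\cap\ell)=0$ for every line $\ell$, yet for every $w>0$ the ratio $\nu(\ell_k(2^{-k^2}))/(2^{-k^2})^{w}\to\infty$. So ``refine $\nu$ to satisfy $\nu(\ell(r))\lesssim r^{w}$ uniformly in $\ell$'' does not follow from Case~2's hypothesis, and the dichotomy does not close. Even waiving this, you would further need $w>s$ with $s$ arbitrarily close to $\tfrac{1}{2}\dim Y$, a quantitative requirement the proposal does not verify.

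More broadly, the single-pass energy bound you set up, whether the transversality is carried by $\mu$ or by $\nu$, caps the achievable exponent at the relevant Frostman/nonconcentration exponent; when $\dim X$ is tiny, a $\mu$-side bound gives only $s<\dim X$, and the $\nu$-side bound is exactly what Case~2 fails to furnish. This is why Orponen's argument in \cite{OrponenRadProjSmoothness}, which the paper cites without reproducing, does not rely on a global Frostman-type nonconcentration for $\nu$; it instead establishes $L^{p}(\mathbb{S}^{1})$ smoothing of the pushforward measures $(\pi_{x})_{\#}\nu$ via a multiscale decomposition, effectively extracting the needed transversality scale by scale. Your plan correctly identifies the geometric ingredients and the right dichotomy on $Y$, but the step you defer as ``the technical heart'' is precisely where the actual proof lives, and the specific mechanism you propose for it is unavailable.
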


Note that the noncollinearity condition on $X$ is necessary. Indeed, if $X,Y$ are contained in a common line $\ell$ then $\dim \pi_x(Y \setminus \{x\}) = 0$ for all $x\in X$, as was the case in the discrete setting. Speaking of, notice that Orponen's radial projection theorem gives the continuum analogue of Proposition \ref{discretepinnedradproj}.(i).

A year later in 2019, Liu utilized Orponen's work to obtain an exceptional set estimate for radial projections in $\R^n$. In particular, he showed:

\begin{theorem}[\cite{Liu2020}] \label{liusbound}
Given a Borel set $Y\subset \R^n$ with $\dim Y \in (n-2,n-1]$,
\[
\dim\{x \in \R^n : \dim \pi_x(Y\setminus \{x\}) < \dim Y\} \leq 2(n-1) - \dim Y.
\]
\end{theorem}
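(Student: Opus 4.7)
The plan is to establish Theorem \ref{liusbound} via the classical potential-theoretic method combined with Orponen's planar radial projection theorem (Theorem \ref{orponenradprojtheorem}), arguing by contradiction with Frostman measures and energy integrals.

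Setup: Let $E := \{x \in \R^n : \dim \pi_x(Y \setminus \{x\}) < \dim Y\}$ and suppose for contradiction that $\dim E > 2(n-1) - \dim Y$. Choose exponents with $n - 2 < t < \dim Y$ (possible since $\dim Y > n-2$), $2(n-1) - \dim Y < \sigma \leq \dim E$, and $\sigma + t > 2(n-1)$. By Frostman's lemma, take nontrivial compactly supported measures $\mu$ on $Y$ and $\nu$ on $E$ obeying $\mu(B(y,r)) \lesssim r^t$ and $\nu(B(x,r)) \lesssim r^\sigma$. The goal is to show $\dim \pi_x\mu \geq u$ for $\nu$-a.e. $x$ and some $u < t$ arbitrarily close to $t$; taking $t \nearrow \dim Y$ along a sequence would then contradict $\supp \nu \subset E$.

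Energy computation: The main step is to bound the triple integral
\[
\int_E I_u(\pi_x \mu) \, d\nu(x) = \iiint \frac{d\mu(y)\, d\mu(z)\, d\nu(x)}{|\pi_x(y) - \pi_x(z)|^u}.
\]
Using the geometric identity $|\pi_x(y) - \pi_x(z)| \sim \sin \angle(yxz)$ and writing the $x$-integral in polar coordinates based at $z$, the inner integral transforms into an orthogonal projection energy of the pushforward of $\nu$ under a radial map, which can be controlled using Falconer's projection theorem (Theorem \ref{FALCONER}) together with the Frostman decay of $\nu$. The condition $\sigma + t > 2(n-1)$ is \emph{precisely} what ensures finiteness via a Falconer-type exponent count, and the condition $t > n-2$ ensures that generic projections of $\mu$ onto $1$-dimensional subspaces have positive-dimensional support, so the orthogonal projection step is nontrivial.

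Role of Orponen's 2D theorem and main obstacle: When $\dim Y$ is close to $n - 1$, the bare orthogonal projection bound may fall short of reaching the full target dimension, so one localizes to generic 2-planes: on each such slice, the restriction of $\mu$ has positive dimension (using $\dim Y > n - 2$ and Marstrand's slicing formula), and Orponen's planar theorem applies provided the corresponding slice of $\supp \nu$ is non-collinear, which is guaranteed by its Frostman decay $\sigma > 0$. The main obstacle is the careful bookkeeping in this slicing and Fubini procedure: one must verify that Frostman exponents propagate correctly through the decomposition into 2-plane slices, and that the gain obtained from Orponen's planar estimate exactly matches the dimension lost in slicing. This is precisely where the hypothesis $\dim Y > n-2$ becomes indispensable, as it is what guarantees non-trivial planar slices of $Y$ onto which Theorem \ref{orponenradprojtheorem} can be applied.
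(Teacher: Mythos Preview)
The thesis does not give its own proof of Theorem \ref{liusbound}; it is quoted from Liu \cite{Liu2020} as part of the literature survey in Section \ref{ss:radprojctm}, so there is no in-paper argument to compare against. That said, let me comment on the outline itself, since the thesis does record the relevant ingredients.

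Your energy-integral and Frostman setup is the right framework, but the key input is miscalibrated. The planar statement you invoke, Theorem \ref{orponenradprojtheorem}, only yields $\dim \pi_x(Y)\ge \tfrac{1}{2}\dim Y$. Even granting a clean Marstrand slicing to generic $2$-planes, a typical $2$-plane slice of $Y$ has dimension $\dim Y-(n-2)\in(0,1]$, and applying the planar halving bound gives at most $\tfrac{1}{2}(\dim Y-(n-2))$ on each slice; reassembling over the $(n-2)$-parameter family of slices recovers at best $\tfrac{1}{2}(\dim Y+n-2)<\dim Y$. So this route cannot reach the target exponent $\dim Y$, and the claimed ``exact matching'' of the gain from Theorem \ref{orponenradprojtheorem} against the slicing loss does not hold.

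What Liu actually uses from \cite{OrponenRadProjSmoothness} is not the dimension-halving statement but the quantitative $L^2$-smoothness of radial projections for measures with finite $s$-energy, $s>n-1$. The mechanism is a swapping observation (the thesis alludes to it when discussing Theorem \ref{BGan-Liu}): if $\pi_x(Y)$ is small for $\nu$-many $x\in E$, then by the collinearity symmetry $\pi_y(E)$ is correspondingly degenerate for $\mu$-many $y\in Y$, and one applies Orponen's smoothness estimate on the side whose dimension exceeds $n-1$. Your outline neither contains this swap nor invokes the smoothness result, and replacing the latter by Theorem \ref{orponenradprojtheorem} is the structural gap.
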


We make a number of remarks about Theorem \ref{liusbound} before moving on. Firstly, as Liu noted, this bound is sharp when $\dim Y = n-1$. 

\begin{example} \label{exLiu}
Let $Y$ to be a $(n-1)$-plane $V \subset \R^n$. Then, for all $x\in V$, $\pi_x(Y\setminus \{x\}) \simeq \mathbb{S}^{n-2}$, and thus $\dim \pi_x(Y\setminus \{x\}) = n-2< \dim Y.$ Furthermore, for all $x\notin V$, $\dim \pi_x(Y) = n-1=\dim Y$.\footnote{There are a number of ways to see this, but one nice way is to use that for all $x\in \R^n \setminus V$, $\pi_x:V\to \mathbb{S}^{n-1}$ is a locally bi-Lipschitz map and preserves dimension; see Lemma \ref{LemRadProjLocBiLips}.} Hence, we have 
\[
\{x \in \R^n : \dim \pi_x(Y\setminus \{x\})< \dim Y\} = V,
\]
which is $(n-1)$-dimensional, proving the sharpness of Liu's exceptional set estimate when $\dim Y = n-1$.
\end{example}

Though the above example is sharp at the endpoint, Liu did not believe that the above bound is sharp for all ranges of $\dim Y$. He did, however, conjecture that a Borel set $Y \subset \R^n$ satisfies
\begin{equation}\label{liusconjecture}
\dim \{x \in \R^n : \dim \pi_x(Y) < \dim Y\} \leq \lceil \dim Y\rceil.
\end{equation}
The sharpness of this bound when $\dim Y = k$ for some integer $1\leq k \leq n-1$ can be seen by taking $Y$ to be a $k$-plane, similar to Example \ref{exLiu}. It's also worth noting that if $\dim Y\in (n-1,n]$, it is trivially true that $\dim \pi_x(Y\setminus \{x\}) < \dim Y$. If $\dim Y\in (n-1,n]$,
one can instead apply Orponen and Mattila--Orponen's visibility bound (Theorem \ref{mattila-orponen-visibility}) to get that\footnote{This is an immediate consequence of Theorem \ref{mattila-orponen-visibility} as given $\mathcal H^{n-1}(\dim \pi_x(Y\setminus \{x\}))>0$, it follows that $\dim \pi_x(Y\setminus \{x\}) = n-1.$ That said, while the sharpness of the exceptional set bound for \textit{visibility} is known, the author is currently unaware of if the sharpness of the exceptional set bound for \textit{dimensionality} is known for $\dim Y>n-1$.}
\[
\dim\{x\in \R^n : \dim \pi_x(Y\setminus \{x\}) < n-1\} \leq 2(n-1) -\dim Y.
\]
We discuss Liu's conjecture \eqref{liusconjecture}, and the resolution thereof, momentarily. Before doing so, we discuss the work of Lund--Pham--Vu \cite{LundPhamThu} from 2022, which studies radial projections over finite fields $\mathbb{F}_q^n$ (for prime powers $q$).\footnote{As a historical remark, in \cite{Liu2020}, Liu acknowledges Orponen for reminding him to put the above conjecture into the literature. For this reason, what we henceforth refer to as Liu's conjecture is sometimes attributed as being due to Liu \textit{and} Orponen, see e.g. \cite{LundPhamThu}.}

Given $x,y\in \mathbb{F}_q^n$ with $x\neq y$, let $\pi_x(y)$ denote the unique affine line that contains $x$ and $y$ (see Definition \ref{FpAffineLines} for the definition of affine lines over finite fields). Using this notation, we can now state some of their results. 

\begin{theorem}[\cite{LundPhamThu}] \label{lundphamthuTheorem}
    Let $Y\subset \mathbb{F}_q^n$. If $|Y|\geq 6q^{n-1}$ and $s \leq \frac{1}{4}q^{n-1}$, then 
    \begin{equation} \label{lund-pham-thubound}
\#\{x\in \mathbb{F}_q^n : |\pi_x(Y\setminus \{x\})| < s\} \leq 12 q^{n-1}s |Y|^{-1}.
\end{equation}
    If $|Y|\leq \frac{1}{100}q^{n-1}$, then 
    \begin{equation} \label{lund-pham-thubound2}
\#\{x\in \mathbb{F}_q^n : |\pi_x(Y\setminus \{x\})| < |Y|/10\} < 8q^{n-1}.
\end{equation}
\end{theorem}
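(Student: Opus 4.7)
Both statements will follow from a double-counting argument combined with a Vinh-type spectral incidence bound for points and affine lines in $\mathbb{F}_q^n$. In each case let $X$ denote the exceptional set, and for each $x \in X$ let
\[
\mathcal{L}_x := \pi_x^{-1}(\pi_x(Y \setminus \{x\})) \subset \mathcal{A}(\mathbb{F}_q^n, 1)
\]
denote the set of affine lines through $x$ meeting $Y \setminus \{x\}$, so $|\mathcal{L}_x| < s$ in the first case and $|\mathcal{L}_x| < |Y|/10$ in the second. Since $\mathcal{L}_x \subset \mathcal{L}_Y := \{\ell \in \mathcal{A}(\mathbb{F}_q^n, 1) : \ell \cap Y \neq \emptyset\}$ for every $x \in X$, the quantity
\[
T := \sum_{x \in X} |\mathcal{L}_x|
\]
equals the incidence count $|I(X, \mathcal{L}_Y)|$ up to a correction for lines $\ell$ with $Y \cap \ell = \{x\}$, and is bounded above by $|X|s$ (respectively $|X||Y|/10$).

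The first step is to estimate $|\mathcal{L}_Y|$ from below. A Cauchy-Schwarz calculation using $\sum_\ell |Y \cap \ell| = |Y|(q^n - 1)/(q-1)$ and $\sum_\ell |Y \cap \ell|^2 \leq |Y|(q^n - 1)/(q-1) + |Y|(|Y| - 1)$ yields
\[
|\mathcal{L}_Y| \geq \frac{|Y| q^{2(n-1)}}{|Y| + q^{n-1}}.
\]
Under the first hypothesis ($|Y| \geq 6 q^{n-1}$), this is of order $q^{2(n-1)}$, comparable to the total number of affine lines in $\mathbb{F}_q^n$; under the second ($|Y| \leq q^{n-1}/100$), it is of order $|Y| q^{n-1}$.

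The main tool is then the Vinh-type point-line incidence bound over $\mathbb{F}_q^n$, a consequence of the expander mixing lemma applied to the bi-regular point-line incidence graph: there is an absolute constant $C > 0$ such that for all $P \subset \mathbb{F}_q^n$ and $L \subset \mathcal{A}(\mathbb{F}_q^n, 1)$,
\[
\left| |I(P, L)| - \frac{|P||L|}{q^{n-1}} \right| \leq C \sqrt{q^{n-1} |P| |L|}.
\]
Applied with $P = X$ and $L = \mathcal{L}_Y$, combined with the upper bound $T < |X| s$ and rearranged, this gives
\[
|X| \left( \frac{|\mathcal{L}_Y|}{q^{n-1}} - s \right)^2 \leq C^2 q^{n-1} |\mathcal{L}_Y|,
\]
and substituting the above estimates on $|\mathcal{L}_Y|$ in each of the two regimes is intended to deliver the claimed bounds.

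The main obstacle is the sharp $|Y|^{-1}$ dependence in the first statement: the inequality above, applied naively, only gives $|X| \lesssim q^{n-1}$ in case 1, with no $|Y|$ dependence. To extract the extra factor of $|Y|^{-1}$, the argument must be refined by restricting the incidence count to the ``heavy'' subfamily $\mathcal{L}_{\geq r} := \{\ell : |Y \cap \ell| \geq r\}$ with $r \asymp |Y|/s$; the first-moment estimate $|\mathcal{L}_{\geq r}| \leq |Y| q^{n-1}/r$ gives $|\mathcal{L}_{\geq r}| \lesssim s q^{n-1}/|Y|$, while a pigeonhole argument shows each $x \in X$ lies on $\gtrsim |Y|/q$ lines of $\mathcal{L}_{\geq r/2}$, and feeding these two estimates back through the Vinh bound is what should yield the constants $12$ and $8$. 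Carefully tracking these constants through the spectral inequality and the moment bookkeeping is the technical heart of the argument.
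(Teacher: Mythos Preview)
Your plan for the second inequality \eqref{lund-pham-thubound2} is essentially workable: once you restrict attention to $X \setminus Y$ (to avoid the degeneracy that if $x \in Y$ then \emph{every} line through $x$ lies in $\mathcal{L}_Y$), the Vinh-type lower bound on $I(X \setminus Y, \mathcal{L}_Y)$ together with your estimate $|\mathcal{L}_Y| \gtrsim |Y| q^{n-1}$ gives the right order of magnitude.

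The genuine gap is in the first inequality \eqref{lund-pham-thubound}. Your arithmetic slips: with $r \asymp |Y|/s$, the first-moment bound $|\mathcal{L}_{\geq r}| \leq |Y| q^{n-1}/r$ gives $|\mathcal{L}_{\geq r}| \lesssim s q^{n-1}$, \emph{not} $s q^{n-1}/|Y|$ as you write. Correcting this and combining with your pigeonhole lower bound (each $x \in X$ lies on $\gtrsim |Y|/q$ heavy lines) and Vinh's inequality yields
\[
|X| \cdot \frac{|Y|}{q} \;\lesssim\; \frac{|X| \cdot s q^{n-1}}{q^{n-1}} + q^{n-1}\sqrt{|X| s} \;=\; |X|\, s + q^{n-1}\sqrt{|X| s},
\]
and since $|Y|/q$ is typically far smaller than $s$ (for $n = 2$ one has $|Y|/q \geq 6$ while $s$ may be as large as $q/4$), the term $|X|s$ already dominates the left side and nothing is gained. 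The heavy-line dichotomy does not recover the missing factor of $|Y|^{-1}$.

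The paper's argument (Section~\ref{BGanRadProjProof}) takes a genuinely different route: an $L^2$ argument via the high--low method. One sets $f = \sum_{y \in E} \sum_{\ell \in \mathcal{L}_y} \mathbf{1}_\ell$, bounds $\sum_{x \in Y} |f(x)|^2 \gtrsim |Y||E|^2$ from below, and controls it from above by Plancherel and a high/low frequency split. The high part reduces to $p^{n-1} \sum_\ell n_\ell^2$ with $n_\ell = |\{y \in E : \ell \in \mathcal{L}_y\}|$, and the decisive step is the second-moment identity
\[
\sum_\ell n_\ell^2 \;=\; \sum_{y,y' \in E} |\mathcal{L}_y \cap \mathcal{L}_{y'}| \;\leq\; |E|\,s + |E|^2,
\]
using that two distinct points determine at most one line. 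This collision count of pairs is exactly what buys the $|Y|^{-1}$; it cannot be replaced by a first-moment incidence count or a rich-line pigeonhole. If you want a combinatorial rather than Fourier proof, the analogue is to double-count triples $(y, y', \ell)$ with $y, y' \in E$ and $\ell \in \mathcal{L}_y \cap \mathcal{L}_{y'}$, not point--line incidences through Vinh's bound.
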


\begin{remark}
Notice that when $s = \frac{1}{4}q^{n-1}$, \eqref{lund-pham-thubound} is akin to Mattila--Orponen and Orponen's visibility result (Theorem \ref{mattila-orponen-visibility}). Furthermore, when $|Y|\geq q^{n-2}$, \eqref{lund-pham-thubound2} is the finite field analogue of \eqref{liusconjecture} for $\dim Y \in (n-2,n-1].$ This not only bolstered Liu's conjecture, but led to an analogous conjecture over $\mathbb{F}_q^n$, see \cite[Conjecture 1.6]{LundPhamThu} and Remark \ref{lundphamthuremark}.
\end{remark}

Though Theorem \ref{lundphamthuTheorem} is over finite fields, \eqref{lund-pham-thubound} immediately motivates a conjecture of theirs (\cite[Conjecture 1.2]{LundPhamThu}) in the continuum, namely: if $Y\subset\R^n$ is Borel with $\dim Y >n-1$ and $s\leq n-1$, then
\begin{equation} \label{lundphamthuRnConjecture}
\dim \{x\in \R^n : \dim \pi_x(Y\setminus \{x\}) < s\} \leq n-1 + s- \dim Y.
\end{equation}
Within the year, both \eqref{liusconjecture} and \eqref{lundphamthuRnConjecture} were proved by three groups of authors: Orponen and Shmerkin (in $\R^2$), myself and Gan, and Orponen--Shmerkin--Wang (as a corollary of their main results) \cite{BrightGan, OrponenShmerkin22RadProj,OSW}:

\begin{theorem}[\cite{BrightGan,OrponenShmerkin22RadProj,OSW}] \label{BGan-LundPhamThu}
    Given a Borel set $Y\subset \R^n$, if $\dim Y \in (k,k+1]$ for some integer $1\leq k \leq n-1$, and $0 < s \leq k$, then 
    \[
    \dim \{x\in \R^n : \dim \pi_x(Y\setminus \{x\}) < s\} \leq \max\{k + s - \dim Y,0\}.
    \]
\end{theorem}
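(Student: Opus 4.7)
The plan is to argue by contradiction, adapting the Orponen--Shmerkin--Wang planar approach to higher dimensions using the higher-dimensional dual Furstenberg set estimate (Theorem \ref{BFR-DUALFURST}). Let $u := \max\{k + s - \dim Y, 0\}$, and suppose for contradiction that the exceptional set
\[
E := \{x \in \R^n : \dim \pi_x(Y \setminus \{x\}) < s\}
\]
has $\dim E > u$. The aim is to extract a Borel subset of $E$ of dimension slightly larger than $u$ and derive a geometric contradiction between two incidence counts for a carefully chosen tube family.

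First I would discretize at a small scale $\de > 0$. Using Frostman measures on $E$ and $Y$ with exponents just below $\dim E$ and $\dim Y$, pick $\de$-separated sets $E_\de, Y_\de$ of cardinalities $\sim \de^{-\dim E + O(\epsilon)}$ and $\sim \de^{-\dim Y + O(\epsilon)}$, and form the $\de$-tube family $\mathcal T$ of tubes passing within $\de$ of a point of $E_\de$ and a point of $Y_\de$. From the $E$-side, the exceptional assumption forces the $\de$-covering number of $\pi_x(Y_\de \setminus \{x\})$ to be $\lesssim \de^{-s + O(\epsilon)}$ for each $x \in E_\de$, so the number of tubes of $\mathcal T$ through any fixed $x$ is of the same order, yielding the upper bound $|\mathcal T| \lesssim \de^{-\dim E - s + O(\epsilon)}$. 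From the $Y$-side, a discretized version of Theorem \ref{BFR-DUALFURST} gives the following: if most $y \in Y_\de$ have $\gtrsim \de^{-s^\ast + O(\epsilon)}$ tubes of $\mathcal T$ through them for some exponent $s^\ast$, then $|\mathcal T| \gtrsim \de^{-s^\ast - \min\{s^\ast, \dim Y\} + O(\epsilon)}$. Comparing the two estimates and letting $\epsilon \to 0$ produces the key inequality $s^\ast + \min\{s^\ast, \dim Y\} \leq \dim E + s$, which one wants to make inconsistent with $\dim E > u$ by choosing $s^\ast$ as large as possible.

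The main obstacle is establishing the fiberwise lower bound $\dim \pi_y(E \setminus \{y\}) \gtrsim s^\ast$ uniformly over a large subset of $Y$ for an $s^\ast$ big enough to force the required contradiction, without circular use of the very theorem being proved. My plan is to resolve this by induction on $k$: the base case $k = 1$ is the planar Orponen--Shmerkin--Wang radial projection estimate \cite[Theorem 1.7]{OSW}, which in turn rests on the Fu--Ren planar incidence bound. For the inductive step, I would exploit the hypothesis $\dim Y \in (k, k+1]$ by slicing $Y$ with a generic affine $(n-k+1)$-plane through a typical pin $y$, so that the slice has dimension $\dim Y - (k-1) \in (1,2]$, and then reduce the needed radial-projection lower bound for $\pi_y$ to the inductive hypothesis applied to the slice. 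Plugging the resulting $s^\ast$ into the two-sided estimate on $|\mathcal T|$ then forces $\dim E \leq u$, contradicting the initial assumption and completing the induction.
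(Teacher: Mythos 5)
Your overall framework — discretize, form the tube family $\mathcal T$ connecting $E_\de$ and $Y_\de$, and bound $|\mathcal T|$ from above via the exceptional-set hypothesis and from below via a dual Furstenberg estimate — is in the right spirit, and you correctly identify the key obstacle: a fiberwise lower bound on $\dim \pi_y(E\setminus\{y\})$ over most $y\in Y$, obtained non-circularly. However, the proposed resolution via slicing-plus-induction does not work, and there are additional gaps beyond it.

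The slicing step is the central problem. What you need is control on $\pi_y(E\setminus\{y\})$, the radial projection of the \emph{exceptional set} $E$ from a pin $y\in Y$. Slicing $Y$ by an affine plane $W$ through $y$ and invoking the inductive hypothesis on $Y\cap W$ would, at best, give information about $\pi_y\bigl((Y\cap W)\setminus\{y\}\bigr)\subset\mathbb{S}^{n-k}$, which says nothing about $\pi_y(E\setminus\{y\})\subset\mathbb{S}^{n-1}$: you have sliced the wrong set. The induction is also ill-formed: after slicing, $\dim(Y\cap W)\in(1,2]$ \emph{regardless} of the starting $k$, so you never pass from step $k-1$ to step $k$; and since the slice lives in an $(n-k+1)$-dimensional ambient plane rather than $\R^2$, the "base case" is not the planar OSW statement. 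You also never specify the $s^\ast$ you would obtain, so there is no way to verify that plugging it in forces $\dim E\leq u$.

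Even granting a usable fiber bound, the estimate you invoke is too weak. Theorem \ref{BFR-DUALFURST} is the Cauchy--Schwarz-type bound $\dim\mathcal L\geq s+\min\{s,t\}$; a single application of it in your two-sided count forces at best $\dim E\lesssim s$, which is strictly worse than the target $\max\{k+s-\dim Y,0\}$ (recall $\dim Y>k$, so $k+s-\dim Y<s$). The $k=n-1$ case in \cite{OSW} instead rests on the sharper Fu--Ren ball--tube incidence estimate (the $\min\{2s+t-1,s+1\}$ Furstenberg regime) together with the bootstrapping scheme of \cite{OrponenShmerkin22RadProj}, and \cite{BrightGan} uses a Fourier-analytic high-low argument; neither follows from the double-counting estimate alone. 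There is also a standard subtlety you gloss over: $\dim\pi_x(Y\setminus\{x\})<s$ only controls Hausdorff content at a subsequence of scales that may depend on $x$, so extracting a $\de$-covering bound $\lesssim\de^{-s+O(\epsilon)}$ at a \emph{single} scale, uniformly over $x\in E_\de$, requires a pigeonholing argument you have not supplied.

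Finally, the actual route in the cited works is structurally different from your slicing reduction. Both \cite{BrightGan} and \cite{OSW} first settle the $k=n-1$ case directly (high-low, respectively potential-theoretic with ball--tube incidences), and then reduce general $k$ to $k=n-1$ by \emph{orthogonally projecting} $Y$ onto a generic $(k+1)$-plane, not by slicing. The projection preserves $\dim Y$ by Marstrand and only shrinks radial projections (see Proposition \ref{integralgeometricproof}), whereas slicing drops $\dim Y$ and discards the portion of $E$ off the slicing plane, which is why slicing cannot play the same role.
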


\begin{theorem}[\cite{BrightGan,OrponenShmerkin22RadProj,OSW}] \label{BGan-Liu}
    Given a Borel set $Y\subset \R^n$, if $\dim Y \in (k-1,k]$ for some integer $1\leq k \leq n-1$, and $0 < s \leq k$, then 
    \[
    \dim \{x\in \R^n : \dim \pi_x(Y\setminus \{x\}) < \dim Y\} \leq k.
    \]
\end{theorem}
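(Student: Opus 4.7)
The plan is to argue by contradiction. Suppose the exceptional set
\[
E := \{x \in \R^n : \dim \pi_x(Y\setminus \{x\}) < \dim Y\}
\]
satisfies $\dim E > k$. Writing $E = \bigcup_m E_m$ where $E_m := \{x : \dim \pi_x(Y\setminus\{x\}) < \dim Y - 1/m\}$, countable stability of Hausdorff dimension (Lemma \ref{countablestability}) lets me fix $m$ with $\dim E_m > k$ and set $s := \dim Y - 1/m$, so $s < \dim Y \leq k$. The whole goal reduces to contradicting the existence of a set $E_m$ of dimension $> k$ from every point of which one can cover $Y$ by a family of lines of dimension $\leq s$.

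My next step is to $\delta$-discretize. Applying Frostman's lemma to $E_m$ and $Y$, I would select probability measures $\mu$ and $\nu$ of Frostman exponents arbitrarily close to $\dim E_m > k$ and $\dim Y$, then pigeonhole in the usual way to extract, at a small scale $\delta$, $\delta$-separated sets $X_\delta \subset E_m$ and $Y_\delta \subset Y$ with $|X_\delta| \gtrsim \delta^{-k-\eta}$ for some $\eta > 0$ and $|Y_\delta| \gtrsim \delta^{-\dim Y + \varepsilon}$, together with, for each $x \in X_\delta$, a family $\mathcal T_x$ of $\delta$-tubes through $x$ of cardinality $\lesssim \delta^{-s - \varepsilon}$ whose union captures a $\mu$-positive fraction of $Y_\delta$. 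From below the total incidence count between $Y_\delta$ and $\bigcup_x \mathcal T_x$ is then $\gtrsim |X_\delta| \cdot |Y_\delta|$, while from above each tube carries $\lesssim \delta^{-1}$ points of $Y_\delta$.

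The key step is then to show that the family $\bigcup_x \mathcal T_x$ is too small to support the required number of incidences. Because each $x \in X_\delta$ lies on every tube of $\mathcal T_x$, the pair $(X_\delta, \bigcup_x \mathcal T_x)$ is exactly a $\delta$-discretized dual Furstenberg configuration with pin set of dimension $> k$ and pin-richness $\geq \delta^{-s-\varepsilon}$; a naive dual Furstenberg bound in the spirit of Theorem \ref{BFR-DUALFURST} gives $\# \bigcup_x \mathcal T_x \gtrsim \delta^{-s - \min\{s,k\} - \varepsilon}$, but this only yields $\dim E \leq \lceil \dim Y \rceil + 1$ when combined with the incidence bound above. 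To close the missing unit and land at the sharp $k$, I would replace Theorem \ref{BFR-DUALFURST} by Ren's $\epsilon$-improvement to Kaufman's projection theorem in $\R^n$ (Theorem \ref{RenTheoremKaufman}), applied at an intermediate scale $\rho = \delta^\alpha$ to the orthogonal projections of $X_\delta$ onto hyperplanes transverse to generic tube directions. The $\epsilon$ gained from Theorem \ref{RenTheoremKaufman} is exactly what is needed to beat the trivial count once one optimizes in $\alpha$, yielding a contradiction with $\eta > 0$.

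The main obstacle is the multiscale bookkeeping in the last step: Theorem \ref{RenTheoremKaufman} demands that the pin set $X_\delta$ be nonconcentrated not just on any single hyperplane but at every scale between $\delta$ and $\rho$, and one has to branch on whether most of $Y$ (or most of its Frostman mass) concentrates on some $k$-dimensional affine plane $V$. In the concentrated branch one must prove $\dim E \leq k$ directly from the geometry (every exceptional $x$ must essentially lie in an $\mathcal O(1)$-neighborhood of $V$, since radial projection from $x \notin V$ is locally bi-Lipschitz on $V$ and therefore preserves $\dim(Y\cap V) = \dim Y$). In the nonconcentrated branch, the Ren-type $\epsilon$-improvement applies and closes the estimate. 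Executing this dichotomy cleanly, while handling the $\varepsilon$-losses from discretization so that the $\epsilon$-gain from Theorem \ref{RenTheoremKaufman} is not swallowed, is the delicate heart of the proof.
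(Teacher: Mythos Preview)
Your plan diverges substantially from the paper's route. The paper does not prove Theorem~\ref{BGan-Liu} directly; it deduces it as a \emph{corollary} of Theorem~\ref{BGan-LundPhamThu} via Liu's ``swapping trick'' \cite{Liu2020}: one interchanges the roles of the exceptional set $E$ and the target $Y$, applies the Falconer-type bound (Theorem~\ref{BGan-LundPhamThu}) with $E$ now playing the role of the large set, and uses a Frostman energy symmetry to close the argument. This requires \emph{no} $\epsilon$-improvement input whatsoever---Theorem~\ref{BGan-LundPhamThu} itself comes from an elementary incidence bound (the high-low method), and the swap is a short potential-theoretic manipulation.

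Your approach is considerably heavier, and the crucial nonconcentrated branch has a real gap. Your concentrated branch is fine and is essentially Lemma~\ref{LemRadProjLocBiLips}. But in the nonconcentrated branch you invoke Theorem~\ref{RenTheoremKaufman} ``at an intermediate scale $\rho = \delta^\alpha$, applied to orthogonal projections of $X_\delta$ onto hyperplanes transverse to generic tube directions,'' without specifying which quantity plays the role of $s$ in that theorem or how an $\epsilon$-gain for \emph{orthogonal} projections of the pin set $X_\delta$ feeds back into control on the \emph{radial} projections $\pi_x(Y)$. A single-scale $\epsilon$-improvement does not by itself close the unit gap you identified (getting from $k+1$ down to $k$); what actually closes it is an iterated multiscale bootstrapping scheme on the dual Furstenberg side, and that is precisely the engine driving Ren's radial projection theorem (Theorem~\ref{renradprojTheorem}). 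In effect your plan, if made to work, would re-derive a result strictly stronger than Theorem~\ref{BGan-Liu}. The swapping trick sidesteps all of this machinery.
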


We make a few remarks about \cite{BrightGan} and \cite{OrponenShmerkin22RadProj} before moving on. Firstly, the statements of Theorems \ref{BGan-LundPhamThu} and \ref{BGan-Liu} \textit{in} \cite{BrightGan,OrponenShmerkin22RadProj} are, in some sense, cosmetically weaker. In particular, 1) the bounds are instead obtained for 
\[
\{x \in \R^n \setminus Y : \dim \pi_x(Y) < s\} \quad \quad \text{and} \quad \quad \{x \in \R^n \setminus Y : \dim \pi_x(Y) < \dim Y\}
\]
respectfully,\footnote{That said, I believe the proof methodology of Orponen--Shmerkin and myself and Gan should adapt immediately to the larger sets considered in Theorems \ref{BGan-LundPhamThu} and \ref{BGan-Liu}.} and 2) Theorem \ref{BGan-LundPhamThu} is proven for $s<k$, though the end point case with $s = k$ follows immediately (as is noted in \cite[Corollary 1.8]{OSW}):

\begin{lemma} \label{reducetos<k}
    Let $Y \subset \R^n$ be Borel, with $\dim Y \in (k,k+1]$ for some integer $1\leq k \leq n-1$. Suppose for all $0<s<k$,
    \[
    \dim \{x\in \R^n : \dim \pi_x(Y\setminus \{x\}) < s\} \leq \max\{k + s - \dim Y,0\}.
    \]
    Then, the same bound holds with $s = k$.
\end{lemma}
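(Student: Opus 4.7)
The plan is to reduce the $s=k$ endpoint to the hypothesized strict-inequality range by an elementary application of countable stability (Lemma \ref{countablestability}). Set $E_s(Y) := \{x \in \R^n : \dim \pi_x(Y \setminus \{x\}) < s\}$, so the hypothesis reads $\dim E_s(Y) \leq \max\{k + s - \dim Y, 0\}$ for every $0 < s < k$, and the goal is $\dim E_k(Y) \leq \max\{2k - \dim Y, 0\}$.

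The key step is the set-theoretic identity
\[
E_k(Y) \;=\; \bigcup_{j=1}^{\infty} E_{k - 1/j}(Y).
\]
The inclusion $\supseteq$ is immediate because $k - 1/j < k$. For $\subseteq$, if $x \in E_k(Y)$ then $\dim \pi_x(Y\setminus\{x\}) < k$; choosing $\epsilon > 0$ with $\dim \pi_x(Y\setminus\{x\}) \leq k - \epsilon$ and then any integer $j > 1/\epsilon$ places $x$ in $E_{k-1/j}(Y)$. (This is the same device used in the proof of Proposition \ref{MarstrandFromNontrivialESE}.)

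Applying countable stability to this countable union, together with the hypothesized estimate at $s = k - 1/j < k$, yields
\[
\dim E_k(Y) \;=\; \sup_{j\geq 1} \dim E_{k - 1/j}(Y) \;\leq\; \sup_{j \geq 1} \max\!\bigl\{2k - 1/j - \dim Y,\, 0\bigr\} \;=\; \max\{2k - \dim Y,\, 0\},
\]
which is precisely the desired bound. There is no real obstacle here: Borelness of $Y$ (together with the local Lipschitz nature of $\pi_x$ away from $x$) ensures the sets $E_s(Y)$ have well-defined Hausdorff dimension, and the rest of the argument is a direct appeal to countable stability. The point of recording this lemma is simply to absorb the harmless endpoint $s = k$ into the cleaner statements of Theorems \ref{BGan-LundPhamThu} and \ref{BGan-Liu}.
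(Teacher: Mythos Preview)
Your proof is correct and follows exactly the same approach as the paper's: write $E_k(Y)$ as the countable union $\bigcup_{j\geq 1} E_{k-1/j}(Y)$ and invoke countable stability of Hausdorff dimension (Lemma~\ref{countablestability}). The paper's version is terser, but the argument is identical.
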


\begin{proof}
    The proof of this lemma follows by noting
    \[
    \{x \in \R^n : \dim \pi_x(Y\setminus \{x\}) < k\} = \bigcup_{j=1}^{\infty} \{x \in \R^n : \dim \pi_x(Y\setminus \{x\}) < k - 1/j\},
    \]
    and applying countable stability of Hausdorff dimension (Lemma \ref{countablestability}).
\end{proof}

Secondly, besides that \cite{BrightGan} holds in all dimensions where as \cite{OrponenShmerkin22RadProj} holds in the plane, the main way the methodology of myself and Gan differs from that of Orponen--Shmerkin is in the proof of Theorem \ref{BGan-LundPhamThu}. To obtain Theorem \ref{BGan-LundPhamThu}, Gan and I apply the high-low method to obtain Theorem \ref{BGan-LundPhamThu} for $\dim Y>n-1$. Note that this is analogous to the condition that $|Y|\geq 6q^{n-1}$ in Theorem \ref{lundphamthuTheorem}. Similar to Section \ref{BGanProofs}, we present a high-low method proof of Theorem \ref{lundphamthuTheorem} adapted to finite fields in Section \ref{BGanRadProjProof}. To obtain Theorem \ref{BGan-LundPhamThu} for $\dim Y \in (k-1,k]$, Gan and I apply a (discretized) Marstrand projection theorem, projecting $Y$ onto $k$-plane $V$ such that $\dim P_V(Y) = \dim Y\in (k-1,k]$. This allows us to apply the $\dim Y>n-1$ exceptional set estimate directly to $P_V(Y) \subset V\simeq \R^k$. The statements and proofs that make this heuristic argument precise can be found in \cite[Propositions 17 and 19]{BrightGan}. In the proof of Theorem \ref{BGan-Liu}, the methodologies of 
\cite{OrponenShmerkin22RadProj} and \cite{BrightGan} converge (the latter being motivated by the former). In particular, we deduce Theorem \ref{BGan-Liu} as a \textit{corollary} of Theorem \ref{BGan-LundPhamThu} by employing a ``swapping trick'' based on Liu's 2019 paper \cite{Liu2020}.

\begin{remark} \label{lundphamthuremark}
Given Theorem \ref{lundphamthuTheorem}, it is thus reasonable to expect that a Marstrand-type argument should imply a finite field analogue of Theorem \ref{BGan-LundPhamThu} for all ranges of $|Y|$. This was done in a subsequent paper of myself, Lund, and Pham in 2023 \cite{BrightLundPham}. This paper also proves that a Marstrand-type argument recovers the finite field analogue of Liu's conjecture from Theorem \ref{lundphamthuTheorem} for all ranges of $|Y|$ (thus resolving \cite[Conjecture 1.2]{LundPhamThu}).
\end{remark}

Lastly, it is worth noting that around the same time as my paper with Gan, the work of Orponen and Shmerkin \cite{OrponenShmerkin22RadProj} was subsumed into a paper of Orponen, Shmerkin, and Wang \cite{OSW} which we discuss now. To be more specific, Orponen and Shmerkin's planar argument for Theorem \ref{BGan-LundPhamThu} utilized Fu and Ren's bound for incidences between balls and tubes. This argument became the basis of \cite[Theorem 1.7]{OSW} which states:

\begin{theorem}[\cite{OSW}] \label{oswThm1.2}
    Given $X,Y\subset \R^2$ Borel with $X\neq \emptyset$ and $\dim Y>1$, 
    \[
    \sup_{x\in X} \dim \pi_x(Y\setminus \{x\}) \geq \min\{\dim X + \dim Y - 1, 1\}.
    \]
\end{theorem}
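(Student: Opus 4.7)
The plan is to argue by contradiction using a $\delta$-discretized incidence bound between $\delta$-balls and $\delta$-tubes in $\R^2$. Fix $s < s' < \min\{\dim X + \dim Y - 1, 1\}$ and suppose $\dim \pi_x(Y \setminus \{x\}) < s$ for every $x \in X$. The goal is to show this is incompatible with the Frostman-type mass distribution on $X$ and $Y$ via the Fu--Ren incidence bound for balls and tubes cited in the commentary preceding this theorem.

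First, I would fix Frostman measures $\mu_X$ on $X$ and $\mu_Y$ on $Y$ of exponents slightly below $\dim X$ and $\dim Y$. After standard pruning, restriction to a positive-measure subset, and pigeonholing over dyadic scales, one can pass to a single scale $\delta > 0$ and extract a $(\delta, \dim X)$-set $P_X \subset X$ and a $(\delta, \dim Y)$-set $P_Y \subset Y$ (i.e.\ $\delta$-separated sets with the expected non-concentration). The hypothesis $\dim \pi_x(Y \setminus \{x\}) < s$ then translates, uniformly for $x \in P_X$, into the existence of a family $\mathcal{T}_x$ of at most $\delta^{-s}$ many $\delta$-tubes through $x$ covering $P_Y$ (outside a negligible ball around $x$).

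Second, the core is to count incidences $|\mathcal{I}(P_Y, \mathcal{T})|$ for $\mathcal{T} := \bigcup_{x \in P_X} \mathcal{T}_x$ in two ways. On the lower side, since each family $\mathcal{T}_x$ covers $P_Y$ and the points of $P_X$ are distinct, one obtains
\[
|\mathcal{I}(P_Y, \mathcal{T})| \gtrsim |P_X|\cdot|P_Y| \gtrsim \delta^{-\dim X - \dim Y + o(1)}.
\]
On the upper side, I would invoke the Fu--Ren incidence theorem for $\delta$-balls and $\delta$-tubes in $\R^2$ from \cite{FuRen} (the $\delta$-discretized counterpart of the planar $(s,t)$-Furstenberg estimate $\dim F \geq \min\{2s+t-1, s+1\}$ in the regime $s+t \geq 2$), which bounds $|\mathcal{I}(P_Y, \mathcal{T})|$ from above in terms of $|\mathcal{T}| \lesssim \delta^{-\dim X - s + o(1)}$, the Frostman exponent of $P_Y$, and the tube non-concentration exponent $s$. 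Matching the two bounds and letting the Frostman exponents tend to $\dim X, \dim Y$ forces $s \geq \dim X + \dim Y - 1$, contradicting our choice of $s$. The cap at $1$ is handled by truncating $\dim Y$ to a slightly smaller value before running the argument, so that $s$ stays inside the range where Fu--Ren applies.

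The main obstacle is verifying that the pooled family $\mathcal{T}$ genuinely satisfies a Furstenberg-type non-concentration condition: the tubes could a priori concentrate in thin slabs, particularly when $X$ and $Y$ share a common approximate line. This is addressed by a delicate refinement argument, iteratively pigeonholing each $\mathcal{T}_x$ so that the surviving family is uniformly structured as a $(\delta,\dim X, s)$-set of tubes, and by separately handling the degenerate case where significant portions of $\mu_X$ and $\mu_Y$ concentrate near a common line (in which case one invokes Kaufman's projection theorem, Theorem~\ref{KAUFMAN}, to find a good pin off that line). The passage from the qualitative Hausdorff hypothesis on each $\pi_x(Y\setminus\{x\})$ to a uniform $\delta$-discretized cover valid for a large $\mu_X$-mass set of pins is also technical, requiring an argument in the spirit of Mattila-type slicing lemmas.
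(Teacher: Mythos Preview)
Your strategy follows the direct route taken in \cite{OSW} itself: discretize, cover $Y$ by $\lesssim\delta^{-s}$ tubes through each $x$, and contrast a lower incidence count with the Fu--Ren ball--tube bound. The thesis, however, does not reproduce that argument. Instead it records the short \emph{formal equivalence} between Theorem~\ref{oswThm1.2} (more generally Theorem~\ref{brightfurenradproj}) and the exceptional set estimate Theorem~\ref{BGan-LundPhamThu}: assuming the latter, if $\sup_{x\in X}\dim\pi_x(Y\setminus\{x\})<s-\epsilon$ with $s=\min\{\dim X+\dim Y-1,1\}$, then $X$ is contained in the exceptional set, whose dimension is $<\dim X$ by Theorem~\ref{BGan-LundPhamThu}---a contradiction. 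The converse direction is equally soft (Lemma~\ref{reducetos<k} plus the trivial bound Lemma~\ref{radprojtrivialbound}). All of the incidence work is thereby packaged inside Theorem~\ref{BGan-LundPhamThu}, and the deduction of Theorem~\ref{oswThm1.2} itself involves no discretization at all. What you outline is closer to how one proves Theorem~\ref{BGan-LundPhamThu} in the first place.

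Regarding the sketch itself: the claimed lower bound $|\mathcal I(P_Y,\mathcal T)|\gtrsim |P_X|\,|P_Y|$ is not valid once $\mathcal T=\bigcup_x\mathcal T_x$ is taken as a \emph{set} of tubes. A single tube can lie in $\mathcal T_x$ for many $x$ (whenever those $x$'s are nearly collinear), so the factor $|P_X|$ is unjustified. You correctly flag this as the main obstacle but do not resolve it; in \cite{OSW} this is precisely where the Frostman non-concentration of $X$ is exploited, and the actual argument is substantially more delicate than ``iterative pigeonholing'' suggests. Your proposed fallback to Kaufman's theorem in the degenerate case is also suspect: you need a good pin \emph{inside} $X$, and Kaufman gives information about directions in $\mathbb S^1$, not about points of $X$ off an approximate line. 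The paper's equivalence route sidesteps all of this by treating Theorem~\ref{BGan-LundPhamThu} as a black box.
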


As we discussed in Section \ref{sec:Furstenberg}, Fu and Ren's incidence estimate was generalized to higher dimensions in a paper of myself, Fu, and Ren \cite{BrightFuRen}. In fact, we were motivated to write this paper to obtain a higher dimensional version of Theorem \ref{oswThm1.2}, which gives:

\begin{theorem}[\cite{BrightFuRen}] \label{brightfurenradproj}
    Given $X,Y\subset \R^n$ Borel with $X\neq \emptyset$ and $\dim Y \in (k,k+1]$ for some integer $1\leq k \leq n-1$,
    \[
    \sup_{x\in X} \dim \pi_x(Y\setminus \{x\}) \geq \min\{\dim X + \dim Y - k, k\}.
    \]
\end{theorem}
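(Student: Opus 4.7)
The plan is to mimic the proof of Theorem \ref{oswThm1.2} (the $n=2$ case due to Orponen--Shmerkin--Wang), substituting the higher-dimensional ball-tube incidence estimate from \cite{BrightFuRen} for the planar Fu--Ren estimate used there, and invoking the dual Furstenberg bound of Theorem \ref{BFR-DUALFURST} at the appropriate step. The argument is by contradiction. Fix $\epsilon>0$ and suppose every $x\in X$ satisfies $\dim\pi_x(Y\setminus\{x\})\le s$ for some $s<\min\{\dim X+\dim Y-k,k\}-\epsilon$. Using countable stability of Hausdorff dimension (Lemma \ref{countablestability}) and Frostman's lemma, we may restrict to compact subsets $X',Y'$ carrying Frostman measures $\mu_X,\mu_Y$ of exponents just below $\dim X$ and $\dim Y$ respectively.

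The next step is to $\delta$-discretize. At scale $\delta>0$, write $X_\delta$ for a $(\delta,\dim X)$-set of $\delta$-balls capturing most of $\mu_X$, and $Y_\delta$ analogously for $\mu_Y$. The assumption on radial projections, combined with pigeonholing in $\delta$, provides for each $x\in X_\delta$ a family $\mathcal{T}_x$ of $\lesssim\delta^{-s-O(\epsilon)}$ tubes of radius $\delta$ through (a point in) $x$ that together cover a definite fraction of $\mu_Y$. Setting $\mathcal{T}=\bigcup_{x\in X_\delta}\mathcal{T}_x$, the family $\mathcal{T}$ is a $\delta$-discretized dual Furstenberg configuration in which every $x\in X_\delta$ sees $\geq\delta^{-s-O(\epsilon)}$ tubes of $\mathcal{T}$. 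Applying the $\delta$-discretized incarnation of Theorem \ref{BFR-DUALFURST} then yields the lower bound
\[
|\mathcal{T}|\gtrsim\delta^{-(s+\min\{s,\dim X\})+O(\epsilon)}.
\]

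For the matching upper bound, I would use the Bright--Fu--Ren higher-dimensional ball-tube incidence estimate to count incidences $I(Y_\delta,\mathcal{T})$. Since $\mathcal{T}_x$ covers $Y_\delta$ for every $x\in X_\delta$, the Frostman measures yield the lower bound $I(Y_\delta,\mathcal{T})\gtrsim |X_\delta|\cdot|Y_\delta|\approx\delta^{-\dim X-\dim Y+O(\epsilon)}$. The ball-tube bound from \cite{BrightFuRen}, on the other hand, constrains how many such incidences are possible in terms of $|\mathcal{T}|$ and the non-concentration exponent of $Y_\delta$; here the hypothesis $\dim Y\in(k,k+1]$ enters crucially, since in a generic $k$-plane slice $Y_\delta$ still has mass at exponent $\dim Y-k$, which is precisely the regime handled by the higher-dimensional ball-tube estimate (and which is why $\dim Y>k$ rather than $\dim Y>k-1$ is the right hypothesis). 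Combining the two bounds on $|\mathcal{T}|$ forces
\[
s+\min\{s,\dim X\}\ \ge\ (\dim X+\dim Y-k)+\min\{\dim X+\dim Y-k,\dim X\}-O(\epsilon),
\]
after simplification, contradicting $s<\min\{\dim X+\dim Y-k,k\}-\epsilon$ upon sending $\epsilon\to 0$.

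The main obstacle is the middle step: translating the dimensional hypothesis $\dim Y\in(k,k+1]$ into a usable $(\delta,\dim Y,C)$-set structure on $Y_\delta$ that plugs cleanly into the Bright--Fu--Ren ball-tube estimate, and correctly balancing the two cases $\dim X+\dim Y-k\le k$ and $\dim X+\dim Y-k>k$ so that both cases of the minimum are recovered simultaneously from one application of Theorem \ref{BFR-DUALFURST}. A secondary technicality is managing the set of ``bad'' $x\in X$ at each scale for which the radial projection fails to be $\leq s$-dimensional in the discretized sense; this is handled by a standard dyadic pigeonholing and a countable-stability argument, at the cost of the harmless $O(\epsilon)$ losses tracked throughout.
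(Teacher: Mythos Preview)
The paper's proof is entirely different and far simpler: it shows that Theorem \ref{brightfurenradproj} is \emph{formally equivalent} to the exceptional set estimate Theorem \ref{BGan-LundPhamThu}. To deduce Theorem \ref{brightfurenradproj} from Theorem \ref{BGan-LundPhamThu}, the paper handles $\dim X=0$ via the trivial bound Lemma \ref{radprojtrivialbound}, and for $\dim X>0$ sets $s=\min\{\dim X+\dim Y-k,k\}$, applies Theorem \ref{BGan-LundPhamThu} at level $s-\epsilon$ to see the exceptional set has dimension $<\dim X$, and concludes some $x\in X$ avoids it. No discretization, no Frostman measures, no incidence bounds, no dual Furstenberg input.

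Your proposal has a genuine logical gap. From the dual Furstenberg estimate you obtain a \emph{lower} bound $|\mathcal T|\gtrsim\delta^{-(s+\min\{s,\dim X\})}$. You then say you will ``combine the two bounds on $|\mathcal T|$'', but the ball--tube incidence estimate used as you describe (upper-bounding $I(Y_\delta,\mathcal T)$ and lower-bounding it by covering) also yields a \emph{lower} bound on $|\mathcal T|$, not an upper bound. Two lower bounds on the same quantity cannot be combined into a contradiction, so the displayed inequality you write is not justified by the argument sketched; indeed it is unclear which direction you intend it to go or where the right-hand side comes from. The only readily available upper bound, $|\mathcal T|\le\sum_x|\mathcal T_x|\lesssim\delta^{-\dim X-s}$, is trivially compatible with the dual Furstenberg lower bound and carries no information about $\dim Y$.

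More broadly, the role you assign to Theorem \ref{BFR-DUALFURST} is inverted relative to how it functions in this thesis: there it is used to pass \emph{from} radial projection bounds \emph{to} Beck-type line-counting (see the proof of Theorem \ref{thm:ch4OSWBeck}), not to establish the radial projection bound itself. The substantive work underlying Theorem \ref{brightfurenradproj} sits inside Theorem \ref{BGan-LundPhamThu}, whose proof in \cite{BrightFuRen} does use the higher-dimensional ball--tube incidence estimate, but via the Orponen--Shmerkin--Wang bootstrapping framework rather than a single comparison of tube counts. Once Theorem \ref{BGan-LundPhamThu} is available, Theorem \ref{brightfurenradproj} follows in a few lines as the paper shows.
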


Furthermore, as was shown in \cite{BrightFuRen}, Theorems \ref{BGan-LundPhamThu} and \ref{brightfurenradproj} are formally equivalent. We note that one direction of this equivalence was proven in the plane in \cite[Corollary 1.8]{OSW}, and the argument directly carries over to $\R^n$.

\begin{proof}[Proof that Theorems \ref{BGan-LundPhamThu} and \ref{brightfurenradproj} are equivalent]
    We first suppose Theorem \ref{brightfurenradproj} holds and prove Theorem \ref{BGan-LundPhamThu}. Let $Y\subset \R^n$ with $\dim Y \in (k,k+1]$ for some $1\leq k \leq n-1$. By Lemma \ref{reducetos<k}, we may reduce to the case with $s< k$. In particular, given $s< k$, let 
    \[
    X = \{x \in \R^n : \dim \pi_x(Y\setminus \{x\}) < s\},
    \]
    and suppose for the sake of contradiction that 
    \[
    \dim X > \max\{k + s - \dim Y, 0\}.
    \]
    Then, it's clear that $X\neq \emptyset$ and $\dim X>0$. Hence, applying Theorem \ref{brightfurenradproj},
    \[
    \sup_{x\in X} \dim \pi_x(Y\setminus \{x\}) \geq \min\{\dim X + \dim Y - k, k\} > s.
    \]
    This directly contradicts the definition of $X$.

    Now we suppose that Theorem \ref{BGan-LundPhamThu} holds, and we prove Theorem \ref{brightfurenradproj}. Let $Y \subset \R^n$ have $\dim Y\in (k,k+1]$. Firstly, we note the following trivial bound, which will handle the case of Theorem \ref{brightfurenradproj} when $\dim X = 0.$

    \begin{lemma} \label{radprojtrivialbound}
        Let $Y\subset \R^n$ be Borel. Then, for all $x\in \R^n$, 
        \[
        \dim \pi_x(Y\setminus \{x\}) \geq \dim Y-1.
        \]
    \end{lemma}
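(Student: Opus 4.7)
The plan is to use polar coordinates centered at $x$. The key observation is that, away from $x$, the map $\Phi(y) = (\pi_x(y), |y-x|)$ is locally bi-Lipschitz from $\R^n \setminus \{x\}$ onto $\mathbb{S}^{n-1} \times (0,\infty)$, so it preserves Hausdorff dimension on annular shells. This reduces the lemma to the classical product inequality for Hausdorff dimension.

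First, I would observe that we may assume $\dim Y > 1$ (otherwise the conclusion is trivial, since Hausdorff dimensions are nonnegative) and that $\dim(Y\setminus\{x\}) = \dim Y$, because removing a single point does not affect Hausdorff dimension. Then I would decompose $Y\setminus\{x\}$ into the countable family of annular pieces $Y_k := Y \cap A_k$, where $A_k := \{y\in \R^n : 2^{-k-1} < |y-x| \leq 2^{-k}\}$ for $k\in \mathbb{Z}$. Restricted to each $A_k$, the map $\Phi_k(y) := (\pi_x(y),\,|y-x|)$ is bi-Lipschitz onto its image in $\mathbb{S}^{n-1} \times I_k$, with $I_k := (2^{-k-1}, 2^{-k}]$; the bi-Lipschitz constants depend on $k$ but that is harmless since each shell will be handled separately. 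Because bi-Lipschitz maps preserve Hausdorff dimension, $\dim Y_k = \dim \Phi_k(Y_k) \leq \dim(\pi_x(Y_k) \times I_k)$.

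At this point I would invoke the standard product inequality $\dim(E \times F) \leq \dim E + \overline{\dim}_B F$, where $\overline{\dim}_B$ denotes upper box-counting dimension. Since $I_k$ is an interval, $\overline{\dim}_B I_k = 1$, giving $\dim Y_k \leq \dim \pi_x(Y_k) + 1 \leq \dim \pi_x(Y\setminus\{x\}) + 1$. Finally, countable stability of Hausdorff dimension (Lemma \ref{countablestability}) yields $\dim(Y\setminus\{x\}) = \sup_k \dim Y_k \leq \dim \pi_x(Y\setminus\{x\}) + 1$, which rearranges to $\dim \pi_x(Y\setminus\{x\}) \geq \dim Y - 1$ as desired.

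There is essentially no hard step here: the bi-Lipschitz character of polar coordinates on each dyadic shell is immediate from direct computation, and the product dimension inequality is a classical fact. The only point requiring mild care is the annular decomposition, which is needed because polar coordinates are not globally bi-Lipschitz on all of $\R^n \setminus \{x\}$ (the Lipschitz constant blows up as $y \to x$ and as $|y-x|\to\infty$). Splitting into dyadic shells and then summing via countable stability sidesteps this issue cleanly.
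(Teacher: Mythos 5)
Your proof is correct. Note that the paper itself omits a proof of this lemma, remarking only that it ``requires slightly more Hausdorff dimensional machinery (e.g.\ packing dimension or Frostman measures)'' and pointing to \cite[Proposition 2.6]{BrightMarshall} and \cite{BrightFuRen}; so a direct comparison is not available. Your argument realizes exactly the kind of approach the paper alludes to: the product inequality $\dim(E\times F)\leq \dim E + \overline{\dim}_B F$ is the upper-box-counting cousin of the packing-dimension statement, and your dyadic annular decomposition correctly isolates the only technical wrinkle (polar coordinates $\Phi(y)=(\pi_x(y),|y-x|)$ are bi-Lipschitz only on regions where $|y-x|$ is bounded above and below away from zero), after which countable stability closes the argument. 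One small cosmetic point: you do not actually need the reduction to $\dim Y>1$ at the outset, since the shell-by-shell bound $\dim Y_k\leq \dim \pi_x(Y\setminus\{x\})+1$ and countable stability give the conclusion regardless; the reduction costs nothing but is not load-bearing. An alternative route, closer to the ``Frostman measures'' hint, would take a Frostman measure $\mu$ on a bounded piece of $Y$ with $\mu(B(y,r))\lesssim r^s$ for $s<\dim Y$, push it forward under $\pi_x$, and verify via a cone-covering estimate that the image measure satisfies a Frostman condition with exponent $s-1$; this is morally equivalent but requires more bookkeeping than your product-inequality argument.
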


    This bound is trivial, and essentially boils down to $Y$ being contained in the (1-dimensional) fibers of $\pi_x(Y\setminus \{x\})$ for all $x\in \R^n.$ We omit the proof here, as though the proof is short, it requires slightly more Hausdorff dimensional machinery (e.g. packing dimension or Frostman measures) than we wish to go into here. For a proof, see e.g. \cite[Proposition 2.6]{BrightMarshall} or \cite{BrightFuRen}. In either case, notice that Lemma \ref{radprojtrivialbound} gives 
    \[
    \sup_{x\in X} \dim \pi_x(Y\setminus \{x\}) \geq \dim Y-1\geq \dim Y - k
    \]
    implying Theorem \ref{brightfurenradproj} for $\dim X = 0$. Hence, assume $\dim X>0$, let 
    \[
    s = \min\{\dim X + \dim Y - k, k\},
    \]
    and let $\epsilon>0$ be sufficiently small such that $s- \epsilon >0$ and $\dim X - \epsilon>0$. Notably, such an $\epsilon$ exists since $\dim Y >k$ and $\dim X>0$. Thus, we can apply Theorem \ref{BGan-LundPhamThu} to obtain 
    \[
    \dim \{x\in \R^n : \dim \pi_x(Y\setminus \{x\}) < s-\epsilon\} \leq \max\{k + s- \epsilon - \dim Y, 0\} < \dim X.
    \]
    Thus, for all $\epsilon$ sufficiently small, there exists an $x\in X$ such that 
    \[
    \dim \pi_x(Y\setminus \{x\}) \geq s-\epsilon := \min\{\dim X + \dim Y - k,k\}-\epsilon.
    \]
    Since $\epsilon$ can be made arbitrarily small, this proves the desired claim.
\end{proof}

We briefly discuss how Orponen, Shmerkin, and Wang obtained Theorem \ref{BGan-LundPhamThu} in higher dimensions. Like the paper of myself and Gan \cite{BrightGan}, they first prove the case where $\dim Y >n-1$ via a potential-theoretic and slicing argument. To deal with $\dim Y \in (k,k+1]$ for arbitrary $1\leq k\leq n-1$, they apply an integralgeometric argument that is (conceptually) similar to the Marstrand-type argument in \cite{BrightGan}. I find the argument beautiful and concise, and it boils down to set containments and Lipschitz maps. As such, for the purposes of discussion we include the proof here with commentary.

\begin{proposition}[Proposition 4.3 in \cite{OSW}] \label{integralgeometricproof}
    Theorem \ref{oswThm1.2} follows from the special case of the same statement with $k = n-1.$
\end{proposition}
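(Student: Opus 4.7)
The plan is to descend from the full statement with $\dim Y \in (k, k+1]$ in $\R^n$ to the special case $k = n-1$ applied in the \emph{lower} ambient dimension $k+1$, by orthogonally projecting both $X$ and $Y$ onto a generic $(k+1)$-dimensional subspace. First I would invoke Marstrand's projection theorem (Theorem \ref{MARSTRAND}) simultaneously to $X$ and $Y$ and fix a $V \in \mathcal G(n, k+1)$ such that
\[
\dim P_V(X) = \min\{\dim X, k+1\} \quad \text{and} \quad \dim P_V(Y) = \dim Y,
\]
the latter since $\dim Y \leq k+1$. Because $\dim P_V(Y) \in (k, k+1]$ and $P_V(X) \neq \emptyset$, the pair $P_V(X), P_V(Y) \subset V \simeq \R^{k+1}$ satisfies the hypotheses of the special case of Theorem \ref{brightfurenradproj} in ambient dimension $k+1$ (where $k$ coincides with $\text{ambient} - 1$), yielding
\[
\sup_{x' \in P_V(X)} \dim \pi_{x'}(P_V(Y) \setminus \{x'\}) \geq \min\{\dim P_V(X) + \dim P_V(Y) - k,\, k\}.
\]

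Next I would transfer this lower bound back to $\R^n$ via the intertwining identity
\[
\pi_{P_V(x)}(P_V(y)) \;=\; \frac{P_V(y-x)}{|P_V(y-x)|} \;=\; \frac{P_V(\pi_x(y))}{|P_V(\pi_x(y))|},
\]
valid whenever $P_V(y) \neq P_V(x)$. This exhibits the ``spherical projection'' $\sigma_V \colon \theta \mapsto P_V(\theta)/|P_V(\theta)|$, defined on $\mathbb{S}^{n-1} \setminus V^\perp$, with
\[
\pi_{P_V(x)}(P_V(Y) \setminus \{P_V(x)\}) \;\subseteq\; \sigma_V\bigl(\pi_x(Y \setminus \{x\}) \setminus V^\perp\bigr).
\]
Since $\sigma_V$ is locally Lipschitz on its domain, it cannot increase Hausdorff dimension, so $\dim \pi_{P_V(x)}(P_V(Y) \setminus \{P_V(x)\}) \leq \dim \pi_x(Y \setminus \{x\})$ for every $x \in X$. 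Taking the supremum over $x \in X$ and using the surjectivity of $x \mapsto P_V(x)$ onto $P_V(X)$, I obtain
\[
\sup_{x \in X} \dim \pi_x(Y \setminus \{x\}) \;\geq\; \sup_{x' \in P_V(X)} \dim \pi_{x'}(P_V(Y) \setminus \{x'\}).
\]

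Combining the two displayed inequalities and substituting the Marstrand values gives $\sup_{x \in X} \dim \pi_x(Y \setminus \{x\}) \geq \min\{\min\{\dim X, k+1\} + \dim Y - k,\, k\}$. A short case split on whether $\dim X \leq k+1$, using $\dim Y > k$ so that $\dim Y + 1 > k$, shows this equals the desired $\min\{\dim X + \dim Y - k,\, k\}$, completing the reduction. The main technical obstacle I anticipate is that $\sigma_V$ is only \emph{locally} Lipschitz, with Lipschitz constant blowing up near $V^\perp \cap \mathbb{S}^{n-1}$. This is handled cleanly by decomposing
\[
\pi_x(Y \setminus \{x\}) = \bigcup_{j \geq 1} \bigl(\pi_x(Y \setminus \{x\}) \cap \{\theta : |P_V(\theta)| \geq 1/j\}\bigr),
\]
observing that the restriction of $\sigma_V$ to each piece is $O(j)$-Lipschitz, and invoking countable stability of Hausdorff dimension (Lemma \ref{countablestability}) so that no dimension is lost in passing through $\sigma_V$.
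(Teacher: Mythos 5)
Your proposal is correct and follows essentially the same route as the paper's proof: fix a generic $V\in\mathcal G(n,k+1)$ via Marstrand, use the intertwining $\pi_{P_V(x)}\circ P_V = \sigma_V\circ\pi_x$ (the paper writes $\sigma_V = \pi_0\circ P_V$) to bound $\dim\pi_{P_V(x)}(P_V(Y)\setminus\{P_V(x)\})\leq\dim\pi_x(Y\setminus\{x\})$, and apply the special case in $V\simeq\R^{k+1}$. You organize the argument directly rather than by contradiction, and your explicit countable-decomposition treatment of the local Lipschitz blowup of $\sigma_V$ near $V^\perp\cap\mathbb S^{n-1}$ is a slightly more careful rendering of a step the paper passes over quickly.
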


\begin{proof}
Suppose that Theorem \ref{oswThm1.2} holds in the special case that $X,Y\subset \R^n$ with $X\neq \emptyset$ and $\dim Y >n-1$ and all $n\geq 2.$ 

Fix such an $n\geq 2$ and suppose there exists Borel sets $X,Y\subset \R^n$ with $X\neq \emptyset$ and $\dim Y \in (k,k+1]$ (for some $1\leq k \leq n-2$) such that 
\[
\min\{\dim X+ \dim Y - k, k\}> \sup_{x\in X} \dim \pi_x(Y\setminus \{x\}) . 
\]
The key idea for this argument is to orthogonally project $Y$ onto a $(k+1)$-dimensional subspace $V\in \mathcal G(n,k+1)$ to reach a contradiction to the special case with $n = k+1$. By Marstrand's projection theorem (Theorem \ref{MARSTRAND}), for $\gamma_{n,k+1}$-almost every $V\in \mathcal G(n,k+1)$, $\dim P_V(Y) = \dim Y$ and $\dim P_V(X) = \min\{\dim X,k+1\}$. We claim that for all $x\in \R^n$ and all $V\in \mathcal G(n,k+1)$,
\[
\dim \pi_{P_V(x)}(P_V(Y)\setminus \{P_V(x)\}) \leq \dim \pi_x(Y\setminus \{x\}).
\]
To see why this completes the proof, notice that $P_V(Y) \subset V \simeq \R^{k+1}$ and $\dim P_V(Y) = \dim Y >k$. Hence, we can apply the special case to $P_V(X),P_V(Y) \subset V$. Doing so, we obtain
\begin{align*}
\min\{\dim X+ \dim Y - k, k\} &> \sup_{x\in X} \dim \pi_x(Y\setminus \{x\}) \\
&\geq \sup_{x\in X} \dim \pi_{P_V(x)}(P_V(Y)\setminus \{P_V(x)\}) \\
&\geq \min\{\dim P_V(X) + \dim P_V(Y) - k, k\}\\
&= \min\{\min\{\dim X,k+1\} + \dim Y - k,k\}.
\end{align*}
Using that $\dim Y>k$, notice that by case work on whether or not $\dim X\geq k+1$, we obtain a contradiction. 

Hence, we have reduced the Proposition to the claim 
\[
\dim \pi_{P_V(x)}(P_V(Y)\setminus \{P_V(x)\}) \leq \dim \pi_x(Y\setminus \{x\}).
\]
To prove this, we show that 
\begin{equation} \label{setcontainments}
\pi_{P_V(x)}(P_V(Y)\setminus \{P_V(x)\}) \subset \pi_0(P_V(\pi_x(Y\setminus \{x\})))
\end{equation}
where here $\pi_0$ is radial projection onto the origin. To see this, let $\theta \in \pi_{P_V(x)}(P_V(Y)\setminus \{P_V(x)\})$. Then, by the linearity $P_V$, we have 
\begin{align*}
\theta &= \frac{P_V(y) - P_V(x)}{|P_V(y)-P_V(x)|} \\
&= P_V\left(\frac{y-x}{|P_V(y-x)|}\right) \\
&= \frac{1}{|P_V((y-x)/|y-x|)|} \cdot P_V\left(\frac{y-x}{|y-x|}\right) = \pi_0(P_V(\pi_x(y))).
\end{align*}

Though this may seem like an arbitrary string of manipulations, the reader may find it instructive to draw a picture here. Notice that 
\[
\pi_x(Y\setminus \{x\}) \subset \mathbb{S}^{n-1} \quad \quad \text{and}\quad \quad V\cap \mathbb{S}^{n-1}\simeq \mathbb{S}^k.
\]
Thus, one can heuristically about $\pi_0\circ P_V$ as ``orthogonal projection'' projecting $\mathbb{S}^{n-1}$ onto the $k$-dimensional circle $V\cap \mathbb{S}^{n-1}$. In fact, this heuristic can be made precise by applying a projective transformation that maps $\mathbb{S}^{n-1}$ to $\R^{n-1}$ and $V\cap \mathbb{S}^{n-1}$ to a $k$-dimensional subspace in $\mathcal{G}(n-1,k)$. In this way, the integralgeometric argument of Orponen--Shmerkin--Wang can be throught of as conceptually similar to the (Marstrand-type) orthogonal projection argument Gan and I use in \cite{BrightGan}.

In either case, using \eqref{setcontainments} and that radial and orthogonal projections are Lipschitz maps, it follows that 
\[
\dim \pi_{P_V(x)} (P_V(Y)\setminus \{P_V(x)\}) \leq \dim \pi_0(P_V(\pi_x(Y\setminus\{x\}))) \leq \dim \pi_x(Y\setminus \{x\})
\]
for all $x\in \R^n$ and all $V\in \mathcal G(n,k+1)$.
This completes the proof.
\end{proof}

Theorem \ref{oswThm1.2} is one of \textit{two} key planar radial projection results in the paper of Orponen--Shmerkin--Wang. Their other main result utilizes Orponen and Shmerkin's $\epsilon$-improvement \cite{OrponenShmerkinEpsilonImprove} to the $(s,t)$-Furstenberg set problem (Theorem \ref{OSEpsilonFurst}), obtaining a continuum analogue to Proposition \ref{discretepinnedradproj}.(ii).

\begin{theorem}[\cite{OSW}] \label{oswThm1.1}
    Let $X,Y\subset \R^2$ be Borel with $X$ not contained in any line. Then,
    \[
    \sup_{x\in X} \dim \pi_x(Y\setminus \{x\}) \geq \min\{\dim X, \dim Y, 1\}.
    \]
\end{theorem}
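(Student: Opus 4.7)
My plan is to argue by contradiction. Suppose there exists $s$ with $s < \min\{\dim X, \dim Y, 1\}$ such that $\dim \pi_x(Y \setminus \{x\}) \leq s$ for every $x \in X$. Using the noncollinearity hypothesis on $X$ together with countable stability (Lemma \ref{countablestability}), I would first restrict to a compact, non-collinear subset $X_0 \subset X$ with $\dim X_0 \in (s, \dim X)$, and fix Frostman measures $\mu$ on $X_0$ and $\nu$ on $Y$ of dimensions just below $\dim X_0$ and $\dim Y$, respectively. I would then pass to $\delta$-discretizations at some small scale $\delta>0$, working with $(\delta,\cdot)$-set representatives of $X_0$ and $Y$.

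Next, the hypothesis provides, for each $x \in X_0$, a covering of (the $\delta$-neighborhood of) most of the $\nu$-mass of $Y$ by at most $\delta^{-s}$ many $\delta$-tubes through $x$. Aggregating, form the tube family $\mathbb{T} := \bigcup_{x\in X_0}\mathbb{T}_x$. Applying planar point-line duality (Definition \ref{point-lineduality}), $\mathbb{T}$ becomes a $\delta$-separated planar set $\mathbb{T}^*$, and each $x \in X_0$ becomes a line $\ell_x^*$ satisfying $\#(\mathbb{T}^* \cap \ell_x^*) \lesssim \delta^{-s}$. At scale $\delta$, this makes $\mathbb{T}^*$ a dual $(s,\dim X_0)$-Furstenberg set.

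The main step is to apply the Orponen--Shmerkin $\epsilon$-improvement (Theorem \ref{OSEpsilonFurst}): since $s < \dim X_0$, there is $\epsilon := \epsilon(s,\dim X_0) > 0$ yielding $\#\mathbb{T}^* \gtrsim \delta^{-2s-\epsilon}$, hence $\#\mathbb{T} \gtrsim \delta^{-2s-\epsilon}$. I would then produce an independent upper bound on $\#\mathbb{T}$ using that every tube in $\mathbb{T}$ passes through $Y$. Sorting $\mathbb{T}$ by direction and combining Marstrand's projection theorem (Theorem \ref{MARSTRAND}) with a Fubini-type argument on the direction set (a subset of $\mathbb{S}^1$) produces an upper bound roughly of the form $\#\mathbb{T} \lesssim \delta^{-1 - \min\{\dim Y,1\}-\epsilon'}$. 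Choosing $s$ close enough to $\min\{\dim X, \dim Y, 1\}$ and $\delta$ small enough should make the Furstenberg $\epsilon$ dominate the discretization losses, producing a contradiction.

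The main obstacle will be closing this last gap. The Furstenberg $\epsilon$ is only implicit, and the direction set of $\mathbb{T}$ may itself have dimension strictly less than $1$ (so the naive Fubini bound is loose), requiring a secondary Furstenberg-type estimate or an iterated refinement. I expect that resolving this requires either (i) a bootstrap on the $\epsilon$-improvement applied to successively refined subfamilies of $\mathbb{T}$, or (ii) exchanging the roles of $X$ and $Y$ via an auxiliary application of Bourgain's projection theorem (Theorem \ref{BOURGAIN}) in dualized form. Careful $(\delta,s)$-set pigeonholing in the spirit of Orponen--Shmerkin's discretized framework will be essential to ensure uniformity of $\epsilon$ across the parameter range $s \in (0,\min\{\dim X,\dim Y,1\})$.
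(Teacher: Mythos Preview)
The paper does not give its own proof of this theorem; it cites \cite{OSW} and records only that the argument uses the $\epsilon$-improvement to the Furstenberg set problem (Theorem~\ref{OSEpsilonFurst}) together with the bootstrapping scheme of \cite{OrponenShmerkin22RadProj}. So the comparison is against what the paper says OSW does.

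Your proposal has a genuine direction error in the Furstenberg step. From $\dim\pi_x(Y\setminus\{x\})\le s$ you correctly get, for each $x\in X_0$, a family $\mathbb{T}_x$ of \emph{at most} $\delta^{-s}$ tubes through $x$ covering $Y$; after dualizing this reads $\#(\mathbb{T}^*\cap\ell_x^*)\lesssim\delta^{-s}$, an \emph{upper} bound on the richness of each line. But Theorem~\ref{OSEpsilonFurst} (and Furstenberg lower bounds generally) require each line to carry \emph{at least} a $\delta^{-s}$-sized piece of the set to conclude a lower bound on its cardinality. With the inequality pointing the wrong way, the claimed $\#\mathbb{T}^*\gtrsim\delta^{-2s-\epsilon}$ does not follow --- nothing prevents each $\mathbb{T}_x$ from being much smaller than $\delta^{-s}$ (e.g.\ if $Y$ concentrates near a single line through $x$). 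You therefore have no lower bound to play against the Marstrand-type upper bound, which you yourself flag as unestablished.

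The OSW argument does hinge on the $\epsilon$-improvement, but not via a single dualization followed by a counting contradiction. It is a bootstrap at the discretized (thin-tubes) level: if $\sup_{x\in X}\dim\pi_x(Y\setminus\{x\})\le\sigma$ for some $\sigma$ below the target, one extracts a Furstenberg configuration whose richness inequality is oriented correctly, applies Theorem~\ref{OSEpsilonFurst} to gain an $\epsilon$, and feeds this back to raise $\sigma$; iterating pushes $\sigma$ up to $\min\{\dim X,\dim Y,1\}$. Your closing paragraph rightly senses that a bootstrap (and possibly a swap of the roles of $X$ and $Y$) is needed, but the configuration you set up does not launch it --- the first thing to fix is to arrange the tube families so that the Furstenberg richness points the right way.
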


Both Theorems \ref{oswThm1.2} and \ref{oswThm1.1} make use of a bootstrapping scheme introduced by Orponen and Shmerkin \cite{OrponenShmerkin22RadProj}. We briefly discuss the bootstrapping technique in the context of Beck's theorem in Section \ref{sec:OSWRen}. While we cannot give full justice to the technicalities of the method here, in \cite{BrightBushlingMarshallOrtiz}, Bushling, Marshall, Ortiz, and I wrote a study guide (mentored by Josh Zahl) on \cite{OSW} at the University of Pennsylvania Study Guide Writing Workshop 2023.


Orponen, Shmerkin, and Wang also explored a higher dimensional version of Theorem \ref{oswThm1.1}. In particular, for Borel sets $X,Y\subset \R^n$ with $X$ not contained on a $k$-plane, $\dim X >k-1$, and $\dim Y>k-1/k - \eta$ for a sufficiently small $\eta = \eta(n,k,\dim X)>0$,
\begin{equation} \label{oswThm4.2.ii}
\sup_{x\in X} \dim \pi_x(Y\setminus \{x\}) \geq \min\{\dim X, \dim Y,k\}.
\end{equation}
In the proof of this statement, they are able to apply an integralgeometric argument as in Proposition \ref{integralgeometricproof} to reduce to the case $k= n-1$. However, it's within the potential-theoretic proof of the case of $k=n-1$ that the $\eta$ parameter is needed (due to an application of \cite[Proposition 6.8]{ShmerkinWang}). Orponen, Shmerkin, and Wang in fact conjectured that \eqref{oswThm4.2.ii} should hold for $\dim Y >k-1$. This conjecture was obtained by Ren \cite{RenRadProj} a year later:

\begin{theorem}[\cite{RenRadProj}] \label{renradprojTheorem}
    Let $X,Y\subset \R^n$ be Borel sets with $X$ not contained in a $k$-plane. Then, 
    \[
    \sup_{x\in X} \dim \pi_x(Y\setminus \{x\}) \geq \min\{\dim X, \dim Y, k\}.
    \]
\end{theorem}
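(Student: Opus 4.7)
The approach I would take follows the bootstrap strategy that Orponen--Shmerkin--Wang developed for the planar case (Theorem \ref{oswThm1.1}), combined with two higher-dimensional ingredients that became available in subsequent work: an integralgeometric reduction in the spirit of Proposition \ref{integralgeometricproof}, and Ren's $\epsilon$-improvement to Kaufman's projection theorem for hyperplanes (Theorem \ref{RenTheoremKaufman}).

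First I would reduce the general statement to the hyperplane case $k=n-1$. Assume the theorem is known whenever the plane parameter equals one less than the ambient dimension, and take Borel sets $X,Y\subset\R^n$ with $X$ not contained in any $k$-plane. By Marstrand's projection theorem, for $\gamma_{n,k+1}$-almost every $V\in\mathcal G(n,k+1)$ both $\dim P_V(Y)=\dim Y$ and $\dim P_V(X)=\min\{\dim X,k+1\}$ hold, and for generic $V$ the set $P_V(X)\subset V\simeq\R^{k+1}$ is not contained in any $k$-plane of $V$. The key set containment from Proposition \ref{integralgeometricproof}, namely
\[
\pi_{P_V(x)}\bigl(P_V(Y)\setminus\{P_V(x)\}\bigr)\subset \pi_0\bigl(P_V(\pi_x(Y\setminus\{x\}))\bigr),
\]
together with the fact that $\pi_0\circ P_V$ is locally Lipschitz, allows a radial projection bound inside $V$ to be transferred to a bound in $\R^n$. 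This reduces matters to proving: if $X\subset\R^m$ is Borel and not contained in any hyperplane, and $Y\subset\R^m$ is Borel, then $\sup_{x\in X}\dim\pi_x(Y\setminus\{x\})\geq\min\{\dim X,\dim Y,m-1\}$.

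For the hyperplane case, I would argue by contradiction through a $\delta$-discretized bootstrap. Suppose there is a uniform $s<\min\{\dim X,\dim Y,m-1\}$ with $\dim\pi_x(Y\setminus\{x\})\leq s$ for every $x\in X$. Fixing Frostman measures on $X$ and $Y$ and passing to scale $\delta$, the hypothesis translates into the statement that for each $x\in X_\delta$ the $\delta$-tubes through $x$ that cover $Y_\delta$ form a $\delta$-separated family of controlled size, i.e.\ a dual $(s',t')$-Furstenberg-type configuration whose relevant parameters can be estimated via Theorem \ref{BFR-DUALFURST} and the Fu--Ren--style incidence bounds underlying Theorem \ref{brightfurenradproj}. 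Given this structured configuration, I would select a typical tube direction $\theta$ and study the orthogonal projection $P_{\theta^\perp}$; because the $\delta$-tubes through each $x\in X_\delta$ are flattened by $P_{\theta^\perp}$ onto a set of controlled dimension, the set of $\theta$ for which $P_{\theta^\perp}(X)$ is exceptionally small cannot be too large, and Theorem \ref{RenTheoremKaufman} yields an unconditional gain of size $\epsilon=\epsilon\bigl(s,\min\{\dim X,\dim Y,m-1\}\bigr)>0$ over Kaufman. Feeding this gain back into the discretized radial projection hypothesis produces a strictly better lower bound on $\sup_{x\in X}\dim\pi_x(Y\setminus\{x\})$ than was assumed; iterating finitely many times contradicts $s<\min\{\dim X,\dim Y,m-1\}$.

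The main obstacle, as with all such bootstrap arguments, is setting up the $\delta$-discretization so that the Furstenberg and dual-Furstenberg hypotheses feeding into Theorem \ref{RenTheoremKaufman} are genuinely satisfied at \emph{every} scale arising in the iteration, rather than only at a typical scale, and so that the $\epsilon$-gain at each stage is quantitatively larger than the pigeonholing losses incurred when refining measures and passing to subsets. Propagating the gain through the bootstrap requires uniform control on Frostman constants and on the implicit constants in the dual-Furstenberg bounds across scales; this is precisely what separates Ren's argument from the earlier version of Orponen--Shmerkin--Wang, where the absence of a sharp hyperplane $\epsilon$-improvement forced the auxiliary defect $\eta>0$ in the admissible range of $\dim Y$.
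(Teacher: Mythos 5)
The thesis does not give its own proof of Theorem \ref{renradprojTheorem}; it is quoted from Ren \cite{RenRadProj}, and what the thesis supplies is a reduction of the statement to its essential case together with a one-paragraph account of Ren's strategy: the key ingredient is Ren's generalization to all dimensions of Orponen--Shmerkin's $\epsilon$-improvement to the \emph{dual} $(s,t)$-Furstenberg set problem (cited as \cite[Theorem 1.8]{RenRadProj}), after which the Orponen--Shmerkin bootstrapping machinery used in \cite{OSW} carries over directly. Your sketch has the right overall shape, but two points diverge from that account.

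On the reduction: you propose projecting onto a generic $(k+1)$-plane and invoking a hyperplane version of the theorem, in the spirit of Proposition \ref{integralgeometricproof}. The thesis instead reduces to the range $\dim X,\dim Y\leq k$ by observing that $\dim X,\dim Y>k$ and $\dim X>k\geq\dim Y$ follow from Theorems \ref{BGan-LundPhamThu} and \ref{BGan-Liu}, while $\dim X\leq k<\dim Y$ follows from estimate \eqref{oswThm4.2.ii}; Ren then treats $\dim X,\dim Y\leq k$ directly. Proposition \ref{integralgeometricproof} leans on the hypothesis $\dim Y>k$ so that the projected $Y$ still satisfies the hyperplane-case hypothesis, and in the residual range your projection step would additionally require the (plausible but unverified in your sketch) claim that $P_V(X)$ escapes every $k$-plane of $V$ for generic $V$. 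The reduction may be salvageable, but it is not what the thesis does, and if you use it you need to record that verification.

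On the bootstrap engine: you feed in Theorem \ref{RenTheoremKaufman} (the orthogonal Kaufman $\epsilon$-gain) together with Theorem \ref{BFR-DUALFURST}. This misses the point in a concrete way. Theorem \ref{BFR-DUALFURST} \emph{is} the elementary Cauchy--Schwarz bound $\dim\mathcal L\geq s+\min\{s,t\}$; what closes the bootstrap is precisely an $\epsilon$-improvement \emph{over} that double-counting estimate for pinned (dual Furstenberg) configurations, which Theorem \ref{BFR-DUALFURST} by itself cannot provide. The Kaufman-type improvement of Theorem \ref{RenTheoremKaufman} is a close relative of the right ingredient — both improve on Cauchy--Schwarz-level incidence bounds, and in the plane the orthogonal-projection and Furstenberg $\epsilon$-improvements flow from a single discretized statement — but for the radial argument the object being iterated is a fractal family of lines through pins, and the thesis (and Ren) names the dual-Furstenberg improvement, not the exceptional-set estimate for $P_{\theta^\perp}$, as the input that the bootstrap machinery consumes.
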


We make a few remarks on Ren's radial projection theorem to round out the section. Firstly, one may note that Theorem \ref{renradprojTheorem} is slightly different from \cite[Theorem 1.1]{RenRadProj} which supposes $\dim X,\dim Y\leq k$. However, this is a natural reduction for a number of reasons. Notice that if $\dim X,\dim Y>k$, this bound follows by Theorem \ref{BGan-LundPhamThu}. Furthermore, if $\dim X>k$ and $\dim Y\leq k$, this bound follows by Theorem \ref{BGan-Liu}. Hence, it's reasonable to assume that $\dim X\leq k$. Additionally, if $\dim X \leq k$, and $\dim Y>k$, the result follows by \eqref{oswThm4.2.ii}. As such, the only case left to consider for Theorem \ref{renradprojTheorem} was $\dim X,\dim Y \leq k$. This shows that one only needs consider the case of $\dim X,\dim Y\leq k$. That said, the full generality of Theorem \ref{renradprojTheorem} can be gleaned from Ren's proof methodology. In particular, to see that Ren's argument adapts for $\dim X,\dim Y>k$, simply note that his proof is Frostman-theoretic and lower the Frostman exponents throughout the proof. 

All of that said, we state Theorem \ref{renradprojTheorem} in this way for the purposes of proving Beck-type theorems in the subsequent chapter. The last remark we make is that the key ingredient towards proving Theorem \ref{renradprojTheorem} was a generalization of Orponen and Shmerkin's $\epsilon$-improvement on a Furstenberg set problem in the plane \cite{OrponenShmerkinEpsilonImprove} to higher dimensions \cite[Theorem 1.8]{RenRadProj}. More precisely, Ren obtains an $\epsilon$-improvement to the \textit{dual} $(s,t)$-Furstenberg set problem in all dimensions, see Section \ref{sec:DualFurstenberg}. After obtaining this $\epsilon$-improvement, the bootstrapping machinery of Orponen and Shmerkin \cite{OrponenShmerkinABC} utilized in \cite{OSW} immediately carries over to higher dimensions to obtain the desired result.

Lastly, as a corollary to Theorem \ref{renradprojTheorem}, one obtains a continuum analogue to Ungar's direction set bound (Proposition \ref{UngarResult}).\footnote{In the plane, this statement was conjectured by Orponen \cite[Conjecture 1.9]{OrponenRadProjSmoothness} and the corollary was noted in \cite[Corollary 1.4]{OSW}.}

\begin{corollary}
If $Y\subset \R^n$ is not contained in a $k$-plane, then 
\[
\dim S(Y) \geq \min\{\dim Y,k\}.
\]
\end{corollary}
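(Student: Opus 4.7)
The plan is to obtain this essentially immediately as a corollary of Ren's radial projection theorem (Theorem \ref{renradprojTheorem}), by taking $X = Y$ and observing that the images of the radial projections from points in $Y$ all sit inside $S(Y)$.

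More precisely, first I would verify the set containment
\[
\pi_x(Y\setminus \{x\}) \subset S(Y) \quad \text{for every } x\in Y.
\]
This is immediate from definitions: for $y\in Y\setminus \{x\}$ with $x\in Y$, both $x,y$ lie in $Y$ with $x\neq y$, so $\pi_x(y) = (y-x)/|y-x|\in S(Y)$. Thus
\[
\dim S(Y) \geq \sup_{x\in Y}\dim \pi_x(Y\setminus \{x\}).
\]

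Next I would apply Theorem \ref{renradprojTheorem} with both sets taken equal to $Y$. The hypothesis of Theorem \ref{renradprojTheorem} requires that the ``pin set'' (the first argument) not be contained in any $k$-plane, which is exactly our assumption on $Y$. The theorem then yields
\[
\sup_{x\in Y}\dim \pi_x(Y\setminus\{x\}) \geq \min\{\dim Y,\dim Y, k\} = \min\{\dim Y, k\}.
\]
Chaining the two inequalities gives the claim.

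I do not expect any real obstacle here; the entire content of the corollary is the passage from Ren's pinned radial projection bound to the (unpinned) direction set bound via the trivial inclusion above. The only mild thing worth noting is why we are entitled to take $X = Y$: Ren's theorem is stated for arbitrary Borel sets $X$ not contained in a $k$-plane, so applying it with $X = Y$ is legitimate precisely because the hypothesis of the corollary (that $Y$ is not contained in a $k$-plane) matches the hypothesis of the theorem. No further work is needed.
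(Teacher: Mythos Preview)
Your proposal is correct and takes essentially the same approach as the paper: apply Theorem \ref{renradprojTheorem} with $X=Y$ and use the inclusion $\pi_y(Y\setminus\{y\})\subset S(Y)$ for $y\in Y$. The paper phrases the application of the supremum bound via an $\epsilon$-argument, but this is just a cosmetic difference from your direct chaining of inequalities.
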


\begin{proof}
    Applying Theorem \ref{renradprojTheorem}, for all $\epsilon>0$ there exists a $y\in Y$ such that 
    \[
    \dim \pi_y(Y\setminus \{y\}) \geq \min\{\dim Y, k\}-\epsilon.
    \]
    Thus, the statement follows from noting that for all $y\in Y$, 
    \[
    S(Y) \supset \pi_y(Y\setminus \{y\}). \qedhere
    \]
\end{proof}

\subsection{The High-Low Method: Lund--Pham--Vu's Bound} \label{BGanRadProjProof}

In this section, we provide a proof of Lund, Pham, and Vu's exceptional set estimate for radial projections (up to a worse implicit constant) via the high-low method. We state the result here for sake of exposition. Again, given $x\in \mathbb{F}_p^n$, we let $\pi_x: \mathbb{F}_p^n \setminus \{x\}\to \mathcal A(\mathbb{F}_p^n, 1)$ be such that $\pi_x(y)$ is the unique affine line through both $x,y$.
\vspace{-.2cm}
\begin{theorem}[Falconer-type Radial Projection Theorem over $\mathbb{F}_p^n$] \label{highlowLPT}
    Let $p$ be prime and let $Y\subset \mathbb{F}_p^n$ be such that $|Y| > 8n p^{n-1}$. Furthermore, let 
    \[
    E_s(Y) := \{y \in \mathbb{F}_p^n : |\pi_y(Y \setminus \{y\})| < s\}.
    \]
    Then, for all $0 < s \leq \min\{|Y|, \frac{1}{\sqrt{2}} p^{n-1}\}$,
    \[
    |E_s(Y)| \leq 8n p^{n-1} s |Y|^{-1}.
    \]
\end{theorem}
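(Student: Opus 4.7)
The plan is to adapt the high-low argument from the proof of Theorem \ref{DISCFalconer} to the radial-projection setting. For each $y \in E_s(Y)$, let $\mathcal{L}_y = \pi_y^{-1}(\pi_y(Y \setminus \{y\}))$, so $|\mathcal{L}_y| < s$ and the lines of $\mathcal{L}_y$ cover $Y \setminus \{y\}$. With $f_y = \sum_{\ell \in \mathcal{L}_y} \mathbf{1}_\ell$ and $f = \sum_{y \in E_s(Y)} f_y$, the covering property gives $f(z) \geq |E_s(Y)|$ for every $z \in Y$ (each $f_y(z) \geq 1$, including the case $z = y$ where $f_y(y) = |\mathcal{L}_y| \geq 1$), so $\sum_{z \in Y} f(z) \geq |Y|\,|E_s(Y)|$. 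The strategy is then to upper bound $\sum_{z \in Y} f(z) = \langle f, \mathbf{1}_Y \rangle$ using Plancherel in inner-product form and a low/high frequency split.

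For the low frequency $\xi = 0$, one has $\hat f(0) = p\sum_y |\mathcal{L}_y| \leq ps|E_s(Y)|$, so the contribution $p^{-n}\hat f(0)|Y|$ is at most $s|E_s(Y)||Y|/p^{n-1}$; the hypothesis $s \leq \tfrac{1}{\sqrt{2}}p^{n-1}$ then lets us absorb $1/\sqrt{2}$ of the lower bound into the left side. For the high frequencies $\xi \neq 0$, Cauchy--Schwarz in $\xi$ together with $\|\widehat{\mathbf{1}_Y}\|_2^2 = p^n|Y|$ (Plancherel) reduces the problem to controlling $\sum_{\xi \neq 0}|\hat f(\xi)|^2$.

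The main obstacle is the high-frequency estimate. In the orthogonal-projection case the supports $\supp(\hat f_\theta) \subset L(\theta,0)$ are disjoint away from $0$; here Proposition \ref{FourierOfPlane} gives $\supp(\widehat{\mathbf{1}_\ell}) \subset \theta_\ell^\perp$, so $\hat f_y$ is supported on a \emph{union} of hyperplanes and the Fourier contributions of different $y$'s overlap substantially. My plan is to regroup by line direction: for $\xi \in \theta^\perp$ the phase $e^{2\pi i y \cdot \xi/p}$ is constant along cosets $y + \langle \theta \rangle$, so the portion of $\hat f(\xi)$ coming from lines in a fixed direction $\theta$ equals $p$ times the Fourier transform on $\theta^\perp \simeq \mathbb{F}_p^{n-1}$ of the multiplicity function $m(\ell) = |\{y \in E_s(Y) : \ell \in \mathcal{L}_y\}|$. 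Applying Plancherel on each hyperplane $\theta^\perp$ and summing over the $\sim p^{n-2}$ directions in $\xi^\perp$ (losing a Cauchy--Schwarz factor of $p^{n-2}$), one should obtain
\[
\sum_{\xi \neq 0} |\hat f(\xi)|^2 \lesssim p^{2n-1} \sum_\ell m(\ell)^2,
\]
and the double-counting bound $\sum_\ell m(\ell)^2 \leq |E_s(Y)|s + |E_s(Y)|^2$ (two distinct points of $E_s(Y)$ share at most one line) closes the argument. Combining the low- and high-frequency estimates yields $|Y|\,|E_s(Y)| \lesssim p^{n-1}(s + |E_s(Y)|)$, and the hypothesis $|Y| > 8np^{n-1}$ forces the $s$-term to dominate on the right, giving the stated bound.
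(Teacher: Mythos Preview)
Your approach is essentially the same as the paper's: build $f = \sum_{y\in E} \sum_{\ell\in\mathcal L_y}\mathbf 1_\ell$, split into low and high frequencies, regroup the high part by line direction, lose an $np^{n-2}$ factor via Cauchy--Schwarz over the directions through a fixed $\xi\neq 0$, Plancherel back, and close with the double-counting bound $\sum_\ell m(\ell)^2\le |E|s+|E|^2$. The one genuine difference is that you bound the bilinear quantity $\langle f,\mathbf 1_Y\rangle$ and then Cauchy--Schwarz in $\xi$ against $\|\widehat{\mathbf 1_Y}\|_2$, whereas the paper bounds $\sum_{x\in Y}|f(x)|^2$ directly (writing $f=f_{\mathrm{high}}+f_{\mathrm{low}}$ with $f_{\mathrm{low}}$ constant and $\widehat{f_{\mathrm{high}}}(0)=0$, then $\sum_{x\in Y}|f_{\mathrm{high}}|^2\le p^{-n}\sum_{\xi\neq 0}|\hat f(\xi)|^2$). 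Both routes land on the same high-frequency quantity $\sum_{\xi\neq 0}|\hat f(\xi)|^2$ and the same endgame.

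One small caveat on constants: your bilinear route leaves a factor $(1-1/\sqrt 2)^2\approx 0.086$ on the left after squaring, versus the paper's $1/2$ from the $L^2$ route; tracking this through, your argument needs roughly $|Y|\gtrsim 23n\,p^{n-1}$ rather than $8n\,p^{n-1}$ to absorb the $|E|^2$ term. So the claim that the hypothesis $|Y|>8np^{n-1}$ ``forces the $s$-term to dominate'' is not quite right with your numerics---you get the same bound with a slightly worse constant, which is fine for the qualitative result but does not literally recover the stated $8n$.
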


\vspace{-.3cm}

\begin{remark}
    Note that the $8n$ coefficients are worse than those in the combinatorial proof due to Lund--Pham--Vu \cite{LundPhamThu} (see Theorem \ref{lundphamthuTheorem}).
\end{remark}

\vspace{-.2cm}

The reason for including this proof is two-fold. Firstly, the proof echoes that of Falconer's projection theorem over $\mathbb{F}_p$ (see Section \ref{BGanProofs}). It may perhaps then be unsurprising that the methodology follows that of myself and Gan over $\R^n$ \cite{BrightGan}. That said, secondly, the finite field analogue of \cite{BrightGan} has yet to be recorded in the literature, and hence we include it here.

\vspace{-.2cm}
\begin{proof}[Proof of Theorem \ref{highlowLPT}]
    For ease of notation, we let $E := E_s(Y)$. Given $x\in \mathbb{F}_p^n$, let $\mathcal L_x = \pi_x(Y) \subset\mathcal A(\mathbb{F}_p^n, 1)$. Then, we let 
    \[
    f_y = \sum_{\ell \in \mathcal L_y} \mathbf{1}_\ell \quad \text{ and } \quad f = \sum_{y\in E} f_y. 
    \]
    \vspace{-.3cm}

    We now again consider the $L^2$-norm of $f\mathbf{1}_Y$. Notice that for $x\in \mathbb{F}_p^n$, 
    \[
    f(x) := |\{y \in E : \text{ there exists an } \ell \in \mathcal L_y \text{ with } x\in \ell\}|.
    \]
    Hence, notice that if $x\in Y \setminus E$, it follows that $f(x) = |E|$ as for each $y\in E$ there exists a unique $\ell \in \mathcal L_y$ containing both $x,y$. Relatedly, if $x\in Y\cap E$, $f(x) = |E|-1$. Thus, we have    
    \vspace{-.1cm}
    \begin{equation} \label{highlowRP0}
    |Y| |E|^2 \lesssim \sum_{x \in Y} |f(x)|^2.
    \vspace{-.1cm}
    \end{equation}
At this point, we need be mildly more careful about our analysis of the high and low terms. In particular, we notice that
\[
f = \left(\sum_{y\in E} \sum_{\ell \in \mathcal L_y} \mathbf{1}_\ell - \frac{1}{p^{n-1}}\right) + \left(\sum_{y\in E} \sum_{\ell \in \mathcal L_y} \frac{1}{p^{n-1}}\right) := f_{\text{high}} + f_{\text{low}}.
\]
We do this so that $\widehat{f}_{\text{high}} \subset \mathbb{F}_p^n \setminus 0$ and $\widehat{f}_{\text{low}} \subset \{0\}$. Hence by \eqref{highlowRP0}, we have 
\begin{equation} \label{highlowRP1}
|Y||E|^2 \lesssim \sum_{x\in Y} |f_{\text{high}}(x)|^2 + |f_{\text{low}}(x)|^2.
\end{equation}
We first see the low term does not dominate by the calculation:
\[
\sum_{x\in Y} |\sum_{y\in E} \sum_{\ell \in \mathcal L_y} \frac{1}{p^{n-1}}|^2 < |Y||E|^2s^2p^{-2(n-1)}.
\]
Given $s\leq \frac{1}{\sqrt{2}}p^{n-1}$, it follows that the low term is half the left hand side of \eqref{highlowRP0}. Hence, we only need bound the high term. By Plancherel,
\begin{align*}
    \sum_{x\in Y} |f_{\text{high}}(x)|^2 &:= \sum_{x\in Y} |\sum_{y\in E} \sum_{\ell \in \mathcal L_y} \mathbf{1}_\ell(x) - \frac{1}{p^{n-1}}|^2 \\
    &\leq \sum_{x\in \mathbb{F}_p^n} |\sum_{y\in E} \sum_{\ell \in \mathcal L_y} \mathbf{1}_\ell(x) - \frac{1}{p^{n-1}}|^2\\
    &= p^{-n} \sum_{\xi \in \mathbb{F}_p^n}|\sum_{y\in E} \sum_{\ell \in \mathcal L_y} \widehat{\mathbf{1}_\ell}(\xi) - \frac{1}{p^{n-1}}\widehat{\mathbf{1}_{\mathbb{F}_p^n}}(\xi)|^2. \numberthis\label{highlowRP2}
\end{align*} 
Notice that $\widehat{1_{\mathbb{F}_p^n}}= p^n\mathbf{1}_{\{0\}}$ via an identical proof to Proposition \ref{FourierOfPlane}. In particular, since $\widehat{\mathbf{1}_\ell}\xi) = p e^{2\pi i b\cdot \xi/p} \mathbf{1}_{\ell^\perp}$, it follows that $0\notin \supp \widehat{f}_{\text{high}}$ as claimed. Hence it is at this point that we'd like to use orthogonality to pull out one of the sums. However, we are now in the predicament that there can exist many $y,y'\in E$ such that $\mathcal L_y \cap \mathcal L_{y'} \neq \emptyset$. To deal with this, we introduce the following notation, grouping lines in terms of their directions $\Theta$:
\[
\mathcal L = \bigcup_{y\in E} \mathcal L_y := \bigcup_{\theta \in \Theta} \mathcal L_\theta
\]
where 
\begin{align*}
&\Theta = \{\theta \in \mathcal G(\mathbb{F}_p^n, 1) : L(\theta, y) \in \mathcal L \text { for some } y\in E\}\\ &\text{ and }\,\,
\mathcal L_\theta = \{\ell \in \mathcal L : \ell = L(\theta, b) \text{ for some } b \in \mathbb{F}_p^n\}.
\end{align*}
Lastly, let 
\[
n_\ell = |\{y \in E : \ell \in \mathcal L_y\}|.
\]
Using this notation in \eqref{highlowRP2}, we have
\begin{align*}
    \sum_{x\in \mathbb{F}_p^n} |f_{\text{high}}(x)|^2 &= p^{-n} \sum_{\xi \in \mathbb{F}_p^n} |\sum_{\theta \in \Theta} \sum_{\ell \in \mathcal L_\theta} n_\ell (\widehat{\mathbf{1}_\ell}(\xi) - p\mathbf{1}_{\{0\}}(\xi))|^2.
\end{align*}
For each $\xi \neq 0$, there exist $\leq \binom{n-1}{n-2}\leq np^{n-2}$-many $\theta^\perp \in \mathcal G(n,n-1)$ containing $L(\xi, 0)$. Hence,
\begin{align*}
\sum_{x\in \mathbb{F}_p^n} |f_{\text{high}}(x)|^2&\leq np^{n-2} \sum_{\theta \in \Theta} p^{-n} \sum_{\xi \in \mathbb{F}_p^n}|\sum_{\ell \in \mathcal L_\theta} n_\ell (\widehat{\mathbf{1}_\ell}(\xi) - p\mathbf{1}_{\{0\}}(\xi))|^2.
\intertext{By Plancherel, and using that the lines in $\mathcal L_\theta$ are pairwise disjoint, we have}
&\leq p^{n-2} \sum_{\theta \in \Theta} \sum_{x\in \mathbb{F}_p^n}|\sum_{\ell \in \mathcal L_\theta} n_\ell (\mathbf{1}_\ell(x) - \frac{1}{p^{n-1}})|^2 \\
&\leq np^{n-2} \sum_{\theta \in \Theta} \sum_{x\in \mathbb{F}_p^n}\left(|\sum_{\ell \in \mathcal L_\theta} n_\ell \mathbf{1}_\ell(x)|^2  +|\sum_{\ell \in \mathcal L_\theta} n_\ell p^{-(n-1)}|^2\right) \\
&\leq np^{n-2} \sum_{\ell \in \mathcal L} n_\ell^2 \cdot \sum_{x\in \mathbb{F}_p^n} |\mathbf{1}_\ell(x)|^2  + n \sum_{\theta \in \Theta} |\mathcal L_\theta|\sum_{\ell \in \mathcal L_\theta} n_\ell^2\\
&\leq 2np^{n-1} \sum_{\ell \in \mathcal L} n_\ell^2. \numberthis \label{highlowRP3}
\end{align*}
The last line follows by noting $|\mathcal L_\theta| \leq p^{n-1}$. We now bound the last sum, which follows by a double-counting argument:
\begin{align*}
    \sum_{\ell \in \mathcal L} n_\ell^2 = \sum_{\ell \in \mathcal L} |\{y,y'\in E : \ell \in \mathcal L_y \cap \mathcal L_{y'}\} 
    &= \sum_{y,y' \in E} |\{\ell \in \mathcal L : \ell \in \mathcal L_y \cap \mathcal L_{y'}\} \\
    &\leq \sum_{y\in E} |\mathcal L_y| + \sum_{y\in E}\sum_{y' \neq y} |\mathcal L_y \cap \mathcal L_{y'}| \\
    &\leq |E|s + |E|^2.
\end{align*}
The last inequality follows as $|\mathcal L_y| < s$ for all $y\in E$, and given $y\neq y'$, $|\mathcal L_y \cap \mathcal L_{y'}| \leq 1$. Plugging this into \eqref{highlowRP0}-\eqref{highlowRP3}, we have
\[
\frac{1}{2}|Y||E|^2 \leq 2np^{n-1} (|E|s + |E|^2).
\]
If the second term dominates, then $|Y| \leq 8n p^{n-1}$ which contradicts the assumption that $|Y| >8n p^{n-1}$. So, the first term must dominate and thus 
\[
|E| \leq 8np^{n-1} s |Y|^{-1}. \qedhere
\]
\end{proof}

\chapter{Problems in Projection Theory} \label{ch:problems}
As we have seen throughout the entirety of Chapter \ref{ch:Topics}, projection theory goes hand in hand with studying how points interact with lines. Beck-type and Falconer-type distance problems, which will be the focus of this chapter, are no different. Beck-type problems and Falconer-type problems will be covered in Sections \ref{SEC-BECKTYPE} and \ref{SEC:FALCONERTYPE} respectively.

For Beck-type problems, we begin in Section \ref{sec:discretebecklines} with proving discrete Beck's theorem as a consequence of the Szemer\'edi--Trotter theorem. We will then discuss how Beck originally obtained his result \cite{Beck83} without the strength of the Szemer\'edi--Trotter theorem, using a bootstrapping scheme in its place. This bootstrapping scheme is precisely what motivated Orponen, Shmerkin, and Wang's work on radial projections \cite{OSW} that implied the continuum version of Beck's theorem in $\R^2$, and which was later generalized by Ren to $\R^n$ \cite{RenRadProj}. Both of these continuum results are discussed in Section \ref{sec:OSWRen}, and are derived from the radial projection results in Section \ref{ss:radprojctm}. Furthermore, we present a continuum analogue of the discrete Erd\H{o}s--Beck theorem due to myself and Marshall \cite{BrightMarshall}, which uses radial projections and a dual Furstenberg set estimate due to myself, Fu, and Ren \cite{BrightFuRen} (Theorem \ref{BFR-DUALFURST}).

For Falconer-type distance problems, we begin in Section \ref{erdosprob} with the Erd\H{o}s distance problem. We will particularly focus on the work of Guth and Katz \cite{GuthKatz} which utilized improvements to the Szemer\'edi--Trotter theorem in $\R^3$. Then, in Section \ref{falconerprob}, we discuss the continuum analogue of the Erd\H{o}s distance problem known as the Falconer distance problem. This problem has made use of, and motivated progress on, a number of tools in harmonic analysis including projection theory. As such, we discuss the ways in which the topics covered in Chapter \ref{ch:Topics} were utilized to this end, especially Orponen's and Ren's radial projection theorems (Theorems \ref{orponenradprojtheorem} and \ref{renradprojTheorem}). Lastly, in Section \ref{sec:BMS}, we present and prove bounds for the Falconer-type theorem for dot products due to Marshall, Senger, and I \cite{BrightMarshallSenger}. The proof uses standard estimates on orthogonal and radial projections, and using more modern results yields mild improvements. As such, we present the proof both as a way to conclude our exposition of projection theory covered in this thesis, and as a way to encourage readers to go read more in this area.

\section{Beck-type Theorems} \label{SEC-BECKTYPE}

Recall that for $X\subset \R^n$, we let 
\[
\mathcal L(X) := \{\ell : |X\cap \ell| \geq 2\} \subset \mathcal A(n,1).
\]

\subsection{Discrete Beck-type Theorems} \label{sec:discretebecklines}

Let us first restate discrete Beck's theorem from the introduction (Theorem \ref{thm:introBeck}) more quantitatively for illustrative purposes. 

\begin{theorem}[Discrete Beck's theorem \cite{Beck83}] \label{thm:ch4Beck}
There exists a constant $c_0\geq 1$ such that for all $C\geq c_0$ the following holds. For all $X\subset \R^n,$ either
\begin{itemize}
    \item[1)] there exists a line $\ell$ with $|X\cap \ell| \geq |X|/C$, or 
    \item[2)] $|\mathcal L(X)| \geq \frac{1}{2C^2}|X|^2$.
\end{itemize}
\end{theorem}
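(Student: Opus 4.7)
The plan is to derive Theorem \ref{thm:ch4Beck} as a corollary of the Szemer\'edi--Trotter theorem, using the dyadic decomposition method foreshadowed at the end of Section \ref{sec:Incidences} and in the preface of the chapter. I will argue the contrapositive: assuming every line satisfies $|X \cap \ell| < |X|/C$, I will produce the lowerbound on $|\mathcal L(X)|$.

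The starting identity is that every pair of distinct points of $X$ determines a unique line in $\mathcal L(X)$, so
\[
\binom{|X|}{2} \;=\; \sum_{\ell \in \mathcal L(X)} \binom{|X\cap \ell|}{2}.
\]
I will dyadically decompose $\mathcal L(X)$ by richness: for each dyadic integer $r$ with $2\leq r\leq |X|/C$, let
\[
\mathcal L_r := \{\ell \in \mathcal L(X) : r \leq |X\cap \ell| < 2r\},
\]
so that the identity above becomes (up to constants) $\sum_{r} |\mathcal L_r|\, r^2 \sim |X|^2$. The plan is then to split the sum at a threshold $M$ (an absolute constant to be chosen) into the contribution from ``low-rich'' lines ($r \leq M$) and ``high-rich'' lines ($r > M$), and to show that if $C$ is large enough, the high-rich contribution cannot absorb the full $|X|^2$ total.

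For the high-rich contribution I will apply Proposition \ref{STEquivalencies}.(c) (i.e., the $r$-rich lines form of Szemer\'edi--Trotter in Corollary \ref{genericprojCorollary0}), which yields $|\mathcal L_r| \lesssim |X|^2/r^3 + |X|/r$. Multiplying by $r^2$ and summing geometrically over dyadic $r \in (M, |X|/C]$ gives a bound of the shape
\[
\sum_{r > M} |\mathcal L_r|\, r^2 \;\lesssim\; \frac{|X|^2}{M} + \frac{|X|^2}{C}.
\]
Both terms can be made smaller than, say, $|X|^2/8$ by taking $M$ a sufficiently large absolute constant and $C \geq c_0$ for an absolute constant $c_0$. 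For the low-rich contribution, the trivial pointwise bound $r \leq M$ gives
\[
\sum_{r \leq M} |\mathcal L_r|\, r^2 \;\leq\; M^2\, |\mathcal L(X)|.
\]
Combining these two bounds with the identity $\sum_r |\mathcal L_r| r^2 \gtrsim |X|^2$ yields $|\mathcal L(X)| \gtrsim |X|^2$ with an absolute constant, from which $|\mathcal L(X)| \geq |X|^2/(2C^2)$ follows upon choosing $c_0$ large enough that $1/(2C^2)$ is below this absolute constant.

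The main obstacle I anticipate is bookkeeping the constants cleanly: the Szemer\'edi--Trotter rich-points bound has an additive $|X|/r$ term that, while harmless after geometric summation, needs the auxiliary upperbound $r \leq |X|/C$ (which comes from the assumption that (1) fails) to be controlled by $|X|^2/C$. A secondary subtlety is that the dyadic decomposition must start at $r = 2$, since lines containing exactly $2$ points of $X$ still belong to $\mathcal L(X)$ and carry the bulk of the pair count when $X$ is generic; these are absorbed into the low-rich bucket and handled by the trivial bound. No further ingredients beyond Proposition \ref{STEquivalencies} and Corollary \ref{genericprojCorollary0} are needed.
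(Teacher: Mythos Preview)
Your proposal is correct and follows essentially the same route as the paper: both use the pair-counting identity $\binom{|X|}{2} = \sum_\ell \binom{|X\cap\ell|}{2}$, dyadically decompose $\mathcal L(X)$ by richness, apply the rich-lines form of Szemer\'edi--Trotter (Proposition \ref{STEquivalencies}.(c)), and sum geometrically to show the middle-richness contribution is negligible. The only cosmetic difference is that the paper uses a single constant $C$ as both the low and high cutoff, whereas you separate them into an absolute $M$ and the statement's $C$; your version in fact yields $|\mathcal L(X)|\gtrsim |X|^2$ with an absolute constant, which trivially implies the stated $\tfrac{1}{2C^2}|X|^2$ bound for $C\geq c_0$.
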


We make a few remarks before we prove discrete Beck's theorem. Firstly, note that an explicit choice of $c_0$ can be found within \cite{PayneWood1/37Beck} which makes use of visibility graphs. Secondly, as in the proof of Corollary \ref{genericprojCorollary0}, note that Beck's theorem in $\R^n$ reduces to the statement in $\R^2$ by orthogonally projecting onto a generic hyperplane. Lastly, as an immediate consequence of discrete Beck's theorem, one obtains the following discrete radial projection estimate.

\begin{corollary} \label{corollaryBeckRadProj}
There exists a constant $0 < c < 1$ such that the following holds.
For all $X\subset \R^n$ finite such that $|X\setminus \ell| \geq c|X|$ for all $\ell \in \mathcal A(n,1)$, 
\[
\max_{x\in X} |\pi_x(X\setminus \{x\})| \gtrsim |X|.
\]
\end{corollary}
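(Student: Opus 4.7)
The plan is to deduce Corollary \ref{corollaryBeckRadProj} directly from Theorem \ref{thm:ch4Beck}, by choosing $c$ large enough to preclude the first alternative of Beck's theorem and then converting the resulting lowerbound on $|\mathcal{L}(X)|$ into one on the maximal radial projection via an averaging argument.

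First, I would fix $C \geq c_0$ (for concreteness, $C = 2c_0$) and set $c := 1 - 1/C$. Any finite $X \subset \R^n$ satisfying $|X \setminus \ell| \geq c|X|$ for every $\ell \in \mathcal{A}(n,1)$ automatically has $|X \cap \ell| \leq (1-c)|X| = |X|/C$ for every such $\ell$, which rules out alternative (1) of Theorem \ref{thm:ch4Beck}. Alternative (2) must then hold, giving $|\mathcal{L}(X)| \geq \frac{1}{2C^2}|X|^2 \gtrsim |X|^2$.

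Second, I would relate $|\mathcal{L}(X)|$ to the $\pi_x$ through a double-counting step. For each $x \in X$, the set $\mathcal{L}_x(X)$ of lines through $x$ meeting $X \setminus \{x\}$ satisfies $|\pi_x(X \setminus \{x\})| \geq |\mathcal{L}_x(X)|$, since distinct lines through $x$ determine non-parallel directions (so each such line contributes at least one point of $\pi_x(X \setminus \{x\})$). Exchanging the order of summation gives
\[
\sum_{x \in X} |\pi_x(X \setminus \{x\})| \geq \sum_{x \in X} |\mathcal{L}_x(X)| = \sum_{\ell \in \mathcal{L}(X)} |X \cap \ell| \geq 2|\mathcal{L}(X)|,
\]
since every $\ell \in \mathcal{L}(X)$ meets $X$ in at least two points. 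Pigeonholing over $x$ and inserting $|\mathcal{L}(X)| \gtrsim |X|^2$ from the first step yields $\max_{x \in X}|\pi_x(X \setminus \{x\})| \geq 2|\mathcal{L}(X)|/|X| \gtrsim |X|$.

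I do not expect a genuine obstacle here: the heavy lifting is packaged inside Theorem \ref{thm:ch4Beck}, and the remainder consists of the incidence identity above, the pigeonhole principle, and the elementary fact that distinct lines through a common point produce distinct radial directions. The constant $c$ produced by this argument is controlled entirely by the constant $c_0$ from Beck's theorem.
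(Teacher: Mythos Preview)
Your proof is correct and follows essentially the same strategy as the paper: invoke Beck's theorem to obtain $|\mathcal{L}(X)| \gtrsim |X|^2$, then convert this into a lower bound on the maximal radial projection. The only difference is in the bookkeeping of the second step: you use a clean averaging argument via the incidence identity $\sum_{x\in X}|\mathcal{L}_x(X)| = \sum_{\ell\in\mathcal{L}(X)}|X\cap\ell|$ and pigeonhole, whereas the paper defines a ``good'' set $G = \{x\in X : |\pi_x(X\setminus\{x\})| \geq K|X|/2\}$ and shows $|G| \gtrsim |X|$ by bounding $|\mathcal{L}(X)|$ from above by $|G||X| + |X\setminus G|\cdot K|X|/2$. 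The paper's decomposition buys a little more (it shows that a positive proportion of pins are good, not just one), which is used in a remark afterward; your averaging is more direct but only yields the maximum. Both arguments are standard and the difference is cosmetic.
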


\begin{proof}
Let $0< c< 1$ be large enough such that $c^{-1}\geq c_0$ and such that condition 1) of Theorem \ref{thm:ch4Beck} fails for all $X$ such that $|X\setminus \ell|\geq c|X|$. Then there exists a constant $K = K(c)>0$ such that $|\mathcal L(X)|\geq K |X|^2.$ Thus, let
\[
G := \{x\in X : |\pi_x(X\setminus \{x\})| \geq K|X|/2\}.
\]
Furthermore, for each $x\in \R^n$ (and thus in particular each $x\in G$), $$|\pi_x(X\setminus \{x\})| \leq |X|.$$
Notice then that each $x \in G$ determines at most $|X|$-many lines (through $x$) in $\mathcal L(X)$. Thus, 
\[
K|X|^2 \leq |\mathcal L(X)| < |G| |X| + |X\setminus G | \cdot \frac{K|X|}{2} \leq |G||X| + \frac{K|X|^2}{2}.
\]
Rearranging the above inequality and dividing by $|X|$ gives that 
\[
\frac{K|X|}{2} < |G|,
\]
which implies $G \neq \emptyset$ and thus the corollary holds with constant $K/2.$
\end{proof}

We make a few remarks before moving on. Firstly, the proof of Corollary \ref{corollaryBeckRadProj} in fact shows that $|G| \sim |X|$, and in particular the set of ``bad'' pins $X\setminus G$,\footnote{Or perhaps more accurately, \textit{exceptional} pins, see \cite[Corollary 1.2]{OSW}.} must be significantly smaller than $G$ relative to the size of $X$. We will see this in the continuum setting as a consequence of Orponen--Shmerkin--Wang's and Ren's radial projection results; see the proof of Theorem \ref{thm:ch4OSWBeck}. Secondly, after presenting the proof of Beck's theorem (Theorem \ref{thm:ch4Beck}), we will present a generalized statement counting lines between two sets $X,Y$. This will generalize Corollary \ref{corollaryBeckRadProj} in a natural way, see Corollary \ref{CorollaryBeckRadProj2Var}. We now prove discrete Beck's theorem using Szemer\'edi--Trotter.

\begin{proof}[Proof of Theorem \ref{thm:ch4Beck} using Theorem \ref{SZEMEREDITROTTER}]
Naively, one would want to say that every 2 points in $X$ determine a line in $\mathcal L(X)$, but the issue is that numerous pairs of points in $X$ can determine the same line in $\mathcal L(X)$. However, if there exists a line in $\mathcal L(X)$ that is determined by \textit{many} (i.e. $\sim |X|^2$) pairs of points in $X$, then this gives Statement (1) in Theorem \ref{thm:ch4Beck}. Relatedly, if \textit{most} lines in $\mathcal L(X)$ are determined by $\leq C$-many points in $X$ for an absolute constant $C>0$ independent of $X$, then one can conclude Statement (2) in Theorem \ref{thm:ch4Beck}. Thus, the overall proof idea is to show this dichotomy by bounding the number of lines that are of middling richness in $X$. 

Let $C>0$ be a large constant to be determined later, and let
\[
L_j = \{\ell \in \mathcal L(X) : 2^j \leq |X\cap \ell| \leq 2^{j+1}\} \quad \text{ and } \quad 
\mathcal L_M = \bigcup_{j = \log C}^{\log(|X|/C)} L_j.
\]
Notice that $L_j \subset \mathcal L_{2^j}(X)$, the set of $2^j$-rich points in $X$. Hence, we have 
\begin{equation}\label{richlinesbeckLemma}
|L_j| \lesssim \frac{|X|^2}{2^{3j}} + \frac{|X|}{2^j}.
\end{equation}
We use this to bound the number of pairs of points in $X$ that lie on a line in $L_j$. In particular, notice that for all $\ell \in L_j$, there are $\sim 2^{2j}$-many pairs of points in $X\times X$ that determine $\ell$. Therefore,  
\[
    |\{(x,x') \in X^2 : x\neq x', \exists \ell \in \mathcal L_M \text{ with } x,x' \in \ell\}| \leq \sum_{j=\log C}^{\log(|X|/C)} 2^{2j} |L_j|
\]
Applying \eqref{richlinesbeckLemma} and evaluating the geometric series, we thus have
\[
\sum_{j=\log C}^{\log(|X|/C)} 2^{2j} |L_j| \lesssim \sum_{j = \log C}^{\log(|X|/C)} 2^{-j}|X|^2 + 2^j |X| \lesssim \frac{|X|^2}{C}.
\]
Choosing $C$ sufficiently large, we thus obtain 
\[
|\{(x,x') \in X^2 : x\neq x', \exists \ell \in \mathcal L_M \text{ with } x,x' \in \ell\}| \leq \frac{|X|^2}{100}.
\]
Hence, either there exists a line $\ell \in L_j$ for some $j >\log (|X|/C)$, i.e.
\[
|X\cap \ell| \geq |X|/C,
\]
which is (1) in Theorem \ref{thm:ch4Beck}, or there are $\sim |X|^2$-many lines $\ell \in \mathcal L(X)$ with $|X\cap \ell| < C$. Thus, it follows that 
\[
|\mathcal L(X)| \gtrsim |X|^2 /C^2,
\]
concluding the proof of the dichotomy in discrete Beck's theorem.
\end{proof}

In fact, we are able to generalize this to lines that contain pairs of distinct points from two (possibly different) sets $X,Y$. To do so, we will need to be more careful about our dyadic decomposition in the proof of the single set case as we want to allow for the case where $|X|$ may be significantly larger than $|Y|$ (or vice versa).

\begin{notation}
Given $X,Y \subset\R^n$, let $\mathcal L(X,Y)\subset \mathcal A(n,1)$ be the set of lines containing pairs of distinct points $(x,y)\in X\times Y$, i.e.
\[
\mathcal L(X,Y) := \{\ell \in \mathcal A(n,1) : \exists (x,y) \in X\times Y \text{ such that } x\neq y \text{ and } x,y\in \ell\}.
\]
\end{notation}

\begin{theorem}\label{thm:ch4Beckbivariable}
There exists a constant $c_0\geq 1$ such that for all $C\geq c_0$ the following holds. For all $X,Y\subset \R^n,$ either
\begin{itemize}
    \item[1)] there exists a line $\ell$ with 
    \[
    |X\cap \ell| \geq |X|/C \quad \text{ and } \quad |Y \cap \ell | \geq  |Y|/C, \text{ or }
    \]
    \item[2)] $|\mathcal L(X,Y)|\geq \frac{1}{2C^2} |X||Y|.$
\end{itemize}
\end{theorem}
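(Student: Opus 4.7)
The plan is to mimic the proof of the single-set theorem (Theorem \ref{thm:ch4Beck}) just given, replacing the one-dimensional dyadic decomposition on $|X\cap\ell|$ by a two-dimensional one tracking both $|X\cap\ell|$ and $|Y\cap\ell|$. Since $|X\cap Y|\leq\min(|X|,|Y|)$ is negligible on both sides of the desired inequality, I would first reduce to the case $X\cap Y=\emptyset$, in which each pair $(x,y)\in X\times Y$ determines a unique line $\ell\in\mathcal L(X,Y)$, so
\[
|X||Y| \;=\; \sum_{\ell\in\mathcal L(X,Y)} |X\cap\ell|\cdot|Y\cap\ell|.
\]
For integers $j,k\geq 0$ set $L_{j,k} := \{\ell\in\mathcal L(X,Y) : 2^j\leq |X\cap\ell|<2^{j+1},\ 2^k\leq |Y\cap\ell|<2^{k+1}\}$, which together partition $\mathcal L(X,Y)$. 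Call $L_{j,k}$ \emph{small} if $2^j<C$ and $2^k<C$; each line in a small $L_{j,k}$ carries at most $4C^2$ pairs from $X\times Y$. Under the hypothesis that (1) fails, $L_{j,k}=\emptyset$ on the ``hot corner'' where $2^j\geq |X|/C$ and $2^k\geq |Y|/C$ hold simultaneously.

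The goal is then to show that the pair count on non-small $L_{j,k}$'s is at most $|X||Y|/2$; together with the identity above, this forces the small lines to account for at least $|X||Y|/2$ pairs, whence $|\mathcal L(X,Y)|\geq |X||Y|/(8C^2)$, which is (2) after adjusting the constant $c_0$. For the non-small estimate, Proposition \ref{STEquivalencies}.(c) applied separately to $X$ and to $Y$ gives
\[
|L_{j,k}| \;\leq\; \min\!\bigl(|\mathcal L_{2^j}(X)|,\ |\mathcal L_{2^k}(Y)|\bigr) \;\lesssim\; \min\!\Bigl(\tfrac{|X|^2}{2^{3j}}+\tfrac{|X|}{2^j},\ \tfrac{|Y|^2}{2^{3k}}+\tfrac{|Y|}{2^k}\Bigr),
\]
and I would partition the non-small indices into a \emph{core} $\{j,k\geq\log C\}$ and two \emph{slabs} $\{j<\log C,\ k\geq\log C\}$ and $\{j\geq\log C,\ k<\log C\}$. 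In the core, after excising the hot corner, using the geometric mean of the two bounds together with the standard summation $\sum_{j\geq\log C}2^{-j/2}\lesssim C^{-1/2}$ produces a total contribution of $O(|X||Y|/\sqrt{C})$, in direct analogy with the $|X|^2/C$ middle bound in the proof of Theorem \ref{thm:ch4Beck}.

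The main obstacle is the two asymmetric slabs, where one dyadic parameter is small and the other can a priori range all the way up to $\log|X|$ or $\log|Y|$: the failure of (1) by itself rules out only the hot corner, not an entire row or column. For instance, in the slab $\{j<\log C,\ k\geq\log C\}$, lines have $|X\cap\ell|<2C$ while $|Y\cap\ell|$ can be as large as $|Y|$, and the univariate Szemer\'edi--Trotter bound on $Y$-rich lines alone leaves a stray $\log|Y|$ factor. To close this, I would combine that bound with the elementary pigeonhole fact that for each $x\in X$ the number of lines through $x$ containing at least $C$ points of $Y$ is at most $|Y|/C$ (via the fibers of $\pi_x$ on $Y\setminus\{x\}$), which should trade the $\log|Y|$ for a factor of $C^{-1}$; the symmetric argument handles the other slab. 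Summing the three regions then yields the desired non-small bound $O(|X||Y|/\sqrt{C})$, which is below $|X||Y|/2$ once $c_0$ (and hence $C$) is taken sufficiently large, completing the dichotomy.
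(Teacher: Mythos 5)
Your decomposition directly by $(|X\cap\ell|,|Y\cap\ell|)$, rather than by $(\min,\max)$ as the paper does, is the more transparent choice, and your instinct that the two asymmetric slabs are the real obstacle is exactly right: the failure of (1) only removes the hot corner, not entire rows or columns of the dyadic grid. The pigeonhole patch does not close this, however. Knowing that each $x\in X$ lies on at most $|Y|/C$ lines $\ell$ with $|Y\cap\ell|\geq C$ gives no leverage on the quantity you actually need, namely $\sum_{\ell\ni x,\ |Y\cap\ell|\geq C}|Y\cap\ell|$: since the lines through $x$ partition $Y\setminus\{x\}$, that sum is at most $|Y|$ whether or not you know the count of such lines, and summing over $x\in X$ simply reproduces the trivial bound $|X||Y|$ with no $C^{-1}$ saving.

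In fact the slab really can carry a constant fraction of $|X||Y|$ pairs, and the statement as written appears to be false. Let $X=\{(0,1),(0,-1)\}$ and let $Y$ be the $N$ intersection points of $\sqrt N$ generic lines through $(0,1)$ with $\sqrt N$ generic lines through $(0,-1)$, a projective $\sqrt N\times\sqrt N$ grid. Every line containing a point of $X$ and a distinct point of $Y$ is one of those $2\sqrt N$ pencil lines, so $|\mathcal L(X,Y)|=2\sqrt N$, which is smaller than $\tfrac{1}{2C^2}|X||Y|=N/C^2$ once $N>4C^4$; and the largest value of $|Y\cap\ell|$ over lines meeting $X$ is $\sqrt N$, which is $<N/C$ once $N>C^2$, so (1) fails as well. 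When $|X|\ll|Y|$ the simultaneous-concentration hypothesis in (1) is simply too strong to be forced. The paper's own proof is not immune to this: lines with small $\min\{|X\cap\ell|,|Y\cap\ell|\}$ but large $\max$ are excluded from its middling family $\mathcal L_M$, yet are certainly not of ``multiplicity at most $C$ for both $X$ and $Y$'' as the final dichotomy asserts, so their pairs are never accounted for. A repaired statement would either replace $|X||Y|$ in (2) by $\min\{|X|,|Y|\}^2$, or weaken (1) to a one-sided concentration of $X$ alone; Corollary \ref{CorollaryBeckRadProj2Var}, which is the application the paper draws from this theorem, already imposes exactly such a one-sided nonconcentration hypothesis and so rules out the example above.
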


\begin{proof}
    For ease of notation, let $m = \min\{|X|,|Y|\}$ and $M = \max\{|X|,|Y|\}$. Furthermore, Given $C>0$ to be determined, let
    \begin{align*}
        L_j &:= \{\ell \in \mathcal L(X,Y) : 2^{j} \leq \min\{|X\cap \ell|, |Y\cap \ell\} < 2^{j+1}\}, \\
        L_{j,k} &:= \{\ell \in L_j : 2^{k} \leq \max\{|X\cap \ell|, |Y\cap \ell\} < 2^{k+1}\}, \text{ and } \\
        \mathcal L_M &:= \bigcup_{j = \log C}^{\log(M/C)} L_j = \bigcup_{j = \log C}^{\log(M/C)} \bigcup_{k = j}^{\log(m/C)} L_{j,k}.
    \end{align*}
    Now, our goal is to find $c_0$, and thus $C$, sufficiently large such that the pairs of distinct points $(x,y) \in X\times Y$ that determine a line in $\mathcal L_M$ is $\leq \frac{|X||Y|}{100}$. Notice that for each line in $\ell \in L_{j,k}$, by definition, there are $\sim 2^{j+k}$-many pairs of points in $X\times Y$ that determine $\ell$. Thus, we have 
    \begin{equation} \label{countingpairsowo}
    |\{(x,y) \in X\times Y : x\neq y, \exists \ell \in \mathcal L_M \text{ with } x,y \in \ell\}| \lesssim \sum_{j,k} 2^{j+k}|L_{j,k}|.
    \end{equation}
    We now bound the size of $|L_{j,k}|$ for all $j,k$. Notice that for all $j,k$, 
    \[
    L_{j,k} \subset (\mathcal L_{2^j}(X) \cap \mathcal L_{2^j}(Y))
    \]
    by definition of $L_j$. While we can bound the right hand side in terms of rich points of $X,Y$, this set inclusion is insufficient for proving the desired claim. To obtain better bounds, one needs notice that for every line $\ell \in \mathcal A(n,1)$,
    \[
    \{\min\{|X\cap \ell|, |Y\cap \ell|\}, \max\{|X\cap \ell|, |Y\cap \ell|\}\} = \{|X\cap \ell|, |Y\cap \ell|\}.
    \]
    Hence, it follows that 
    \[
    L_{j,k} \subset (\mathcal L_{2^j}(X) \cap \mathcal L_{2^k}(Y)) \cup (\mathcal L_{2^k}(X) \cap \mathcal L_{2^j}(Y)).
    \]
    By symmetry, it suffices to bound the cardinality of one of the terms in the above union. To do so, notice that 
    \[
    |L_{j,k}| \leq \min\{|\mathcal L_{2^j}(X)|, |\mathcal L_{2^k}(Y)|\} \leq |\mathcal L_{2^j}(X)|^{1/2}|\mathcal L_{2^k}(Y)|^{1/2}.
    \]
    Hence, it follows that for all $j,k$,
    \[
    |L_{j,k}| \lesssim \frac{|X||Y|}{2^{\frac{3}{2}(j+k)}} + \frac{|X|^{1/2}|Y|^{1/2}}{2^{\frac{1}{2}(j+k)}}.
    \]
    Plugging this into \eqref{countingpairsowo} and evaluating the geometric series, we see 
    \[
    |\{(x,y) \in X\times Y : x\neq y, \exists \ell \in \mathcal L_M \text{ with } x,y \in \ell\}| \lesssim \frac{Mm}{C}.
    \]
    Hence, choosing $C$ sufficiently large we obtain 
    \[
    |\{(x,y) \in X\times Y : x\neq y, \exists \ell \in \mathcal L_M \text{ with } x,y \in \ell\}| \leq \frac{Mm}{100} = \frac{|X||Y|}{100}.
    \]
    The proof of the desired claim follows precisely as before. Either there exists a line $\ell \in L_{j,k}$ for some $k > \log(M/C)$, and thus 
    \[
    \min\{|X\cap \ell|, |Y\cap \ell|\} \geq C^{-1}M = C^{-1}\max\{|X|,|Y|\},
    \]
    or there exists $\sim \left(\frac{C-1}{C}\right) |X||Y|$ pairs of points in $$X\times Y \setminus \{(x,x) : x\in X\cap Y\}$$ that lie on lines with multiplicity at most $C$ for \textit{both} $X$ and $Y.$ Thus, choosing $c_0$ sufficiently large we have $|\mathcal L(X,Y)|\geq \frac{1}{2C^2} |X||Y|$.
\end{proof}

We now prove the discrete radial projection result that follows as a corollary, providing a full discrete analogue to Orponen, Shmerkin, and Wang's bound \cite{OSW} (Theorem \ref{oswThm1.1}).

\begin{corollary}\label{CorollaryBeckRadProj2Var}
There exists a constant $0 < c< 1$ such that the following holds. Given $X \subset \R^n$ finite, suppose that $|X\setminus \ell| \geq c|X|$ for all $\ell \in \mathcal A(n,1).$ Then, for all $Y\subset \R^n$ finite, 
\[
\max_{x\in X} |\pi_x(Y\setminus \{x\})| \gtrsim \min\{|X|, |Y|\}.
\]
\end{corollary}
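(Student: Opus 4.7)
The plan is to mirror the approach of Corollary~\ref{corollaryBeckRadProj}, replacing the single-set Beck's theorem with its two-set analogue Theorem~\ref{thm:ch4Beckbivariable}. First, I would take $C = c_0$ from Theorem~\ref{thm:ch4Beckbivariable} and set $c := 1 - 1/C$. Under the hypothesis $|X\setminus \ell| \geq c|X|$ for all $\ell \in \mathcal{A}(n,1)$, we have $|X\cap \ell| < |X|/C$ for every line $\ell$, so the first alternative in Theorem~\ref{thm:ch4Beckbivariable} fails (regardless of what $Y$ does). Consequently, the second alternative yields
\[
|\mathcal{L}(X,Y)| \geq \tfrac{1}{2C^2}\,|X||Y| =: K|X||Y|.
\]

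Next, I would establish the upper bound
\[
|\mathcal{L}(X,Y)| \leq \sum_{x \in X} |\pi_x(Y \setminus \{x\})|.
\]
This is the key geometric step: each line $\ell \in \mathcal{L}(X,Y)$ contains some pair $(x,y) \in X \times Y$ with $x\neq y$, so in particular $\ell$ passes through a point $x \in X$ and meets $Y\setminus\{x\}$ at the point $y$. Hence $\ell \in \mathcal{L}_x(Y) = \pi_x^{-1}(\pi_x(Y\setminus\{x\}))$, showing $\mathcal{L}(X,Y) \subseteq \bigcup_{x\in X} \mathcal{L}_x(Y)$ and giving the inequality after noting that $|\mathcal{L}_x(Y)| = |\pi_x(Y\setminus\{x\})|$.

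The proof then concludes with a pigeonhole argument. Define $G := \{x \in X : |\pi_x(Y \setminus \{x\})| \geq K|Y|/2\}$. Combining the two inequalities above with the trivial bound $|\pi_x(Y\setminus\{x\})| \leq |Y|$ yields
\[
K|X||Y| \leq |G|\cdot|Y| + (|X|-|G|)\cdot \tfrac{K|Y|}{2} \leq |G|\cdot|Y| + \tfrac{K|X||Y|}{2},
\]
so $|G| \geq K|X|/2 > 0$. In particular, there exists $x \in X$ with
\[
|\pi_x(Y\setminus\{x\})| \geq K|Y|/2 \geq (K/2)\min\{|X|,|Y|\},
\]
which is the desired conclusion with implicit constant $K/2 = 1/(4C^2)$.

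The argument is essentially immediate once Theorem~\ref{thm:ch4Beckbivariable} is in hand; no nontrivial obstacle arises. The only mildly subtle step is the counting inequality $|\mathcal{L}(X,Y)| \leq \sum_{x\in X} |\pi_x(Y\setminus\{x\})|$, which uses the asymmetric role of $X$ as the pin set --- lines in $\mathcal{L}(X,Y)$ need not be $2$-rich in $X$ (as they were in $\mathcal{L}(X)$ for Corollary~\ref{corollaryBeckRadProj}), but each still passes through at least one $x \in X$ with a $\pi_x$-partner in $Y\setminus\{x\}$, and that is all the covering argument requires.
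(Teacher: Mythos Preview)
Your proof is correct and follows essentially the same approach as the paper: invoke Theorem~\ref{thm:ch4Beckbivariable} to force alternative (2), then pigeonhole over $X$ using the covering $\mathcal L(X,Y)\subseteq\bigcup_{x\in X}\mathcal L_x(Y)$ and the trivial bound $|\pi_x(Y\setminus\{x\})|\le |Y|$. In fact your version is slightly cleaner: the paper splits into the cases $|X|\ge|Y|$ and $|X|\le|Y|-1$, choosing the good-set threshold as $K|Y|/2$ or $K|X|/2$ accordingly, whereas you observe that the single threshold $K|Y|/2$ already suffices since $|Y|\ge\min\{|X|,|Y|\}$.
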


\begin{remark}
Based on Proposition \ref{discretepinnedradproj}.(c), it is reasonable to expect that given there does not exist a line with a large proportion of both $X$ and $Y$, 
\[
\max_{x\in X} |\pi_x(Y\setminus \{x\})| \gtrsim \min\{|X|^{1/2}|Y|^{1/2}, |Y|\}.
\]
We do not pursue this further here.
\end{remark}

\begin{proof}[Proof of Corollary \ref{CorollaryBeckRadProj2Var} using Theorem \ref{thm:ch4Beckbivariable}]
Let $c_0$ be the implicit constant in Theorem \ref{thm:ch4Beckbivariable} Statement (1). Choose $c$ sufficiently large\footnote{Notably, dependent on $c_0$ and independent of $|X|$.} such that for all lines $\ell \in \mathcal A(n,1)$, 
\[
|X\cap \ell| < c_0 |X|.
\]
Thus, Statement (1) of Theorem \ref{CorollaryBeckRadProj2Var} always fails, and thus for all $Y\subset \R^2$ finite it follows that
\[
|\mathcal L(X,Y)|\geq K|X||Y|
\]
for some $K = K(c)>0$.

We now proceed via case work. If $|X| \geq |Y|$, let 
\[
G := \{x \in X : |\pi_x(Y\setminus \{x\})| \geq K|Y|/2\}.
\]
Notice that for all $x\in \R^n$ (and thus all $x\in G$), $|\pi_x(Y\setminus \{x\})| \leq |Y|$.
\[
K|X||Y| \leq |\mathcal L(X,Y)| < |G||Y| + |X \setminus G| \cdot \frac{K|Y|}{2} \leq |G||Y| + \frac{K|X||Y|}{2}.
\]
Rearranging the above inequality and dividing by $|Y|$ gives that $G \neq \emptyset$, and in particular there exists an $x\in X$ with 
\[
|\pi_x(Y\setminus \{x\})| \gtrsim |X| \geq \min\{|X|,|Y|\}.
\]

Now suppose $|X| \leq |Y|-1$. If this is the case, then, let 
\[
G := \{x \in X : |\pi_x(Y\setminus \{x\})| \geq K|X|/2\}.
\]
Notice that for all $x\in \R^n$ (and thus all $x\in G$), $|\pi_x(Y\setminus \{x\})| \leq |Y|$, so
\[
K|X||Y| < |G||Y| + |X \setminus G| \cdot \frac{K|X|}{2} \leq |G||Y| + \frac{K|X|^2}{2} \leq |G||Y| + \frac{K|X||Y|}{2}.
\]
Rearranging the above inequality gives that $G \neq \emptyset$, and in particular there exists an $x\in X$ with the desired property:
\[
|\pi_x(Y \setminus \{x\})| \gtrsim |X| = \min\{|X|,|Y|\}. \qedhere
\]
\end{proof}

Interestingly, up to worse implicit constants, one can also obtain Beck's theorem from their discrete radial projection estimates (Corollary \ref{CorollaryBeckRadProj2Var}) and discrete dual Furstenberg set estimates (Proposition \ref{discDFurstTotalProp}).\footnote{Of course, such a proof of Beck's theorem would be circular reasoning. For this reason, it would be interesting to have an incidence theoretic approach to obtaining Corollary \ref{CorollaryBeckRadProj2Var} directly, especially one using the Szemer\'edi--Trotter theorem.} This is highlighted well by the following proof of the Erd\H{o}s--Beck theorem, which was conjectured by Erd\H{o}s and proven by Beck in 1982 \cite{Beck83}.

\begin{theorem}[Discrete Erd\H{o}s--Beck theorem \cite{Beck83}] \label{thm:ch4erdosBeck}
    Let $X\subset \R^n$ be a finite set and $0 \leq t \leq |X|-2$
    be such that 
    \[ 
    \max_{\ell \in \mathcal L(X)} |X\cap  \ell| =|X|-t.
    \]
    Then, $|\mathcal L(X)|\gtrsim |X|t.$
\end{theorem}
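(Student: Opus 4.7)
The plan is to reduce to the planar case via generic projection (Corollary \ref{genericprojCorollary0}), fix a line $\ell^{*}$ realizing the maximum in the hypothesis, and then dichotomize on how ``saturated'' $\ell^{*}$ is. Write $X_{0} := X \cap \ell^{*}$ and $X_{1} := X \setminus \ell^{*}$, so $|X_{0}| = |X|-t$ and $|X_{1}| = t$, and let $c_{0}$ denote the constant from discrete Beck's theorem (Theorem \ref{thm:ch4Beck}).

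The first regime is when $|X|-t < |X|/c_{0}$. Here no line contains $|X|/c_{0}$ points of $X$, so alternative (2) of Beck's theorem applies and gives $|\mathcal{L}(X)| \gtrsim |X|^{2} \geq t|X|$, closing the case.

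The second regime is when $|X|-t \geq |X|/c_{0}$. My plan is to apply the discrete dual Furstenberg estimate (Proposition \ref{discDFurstTotalProp}(c)) to the sub-family $\mathcal{L}_{1} \subset \mathcal{L}(X)$ of lines meeting both $X_{0}$ and $X_{1}$. The key incidence input is that for each $y \in X_{1}$ the map $X_{0} \ni x \mapsto \text{line through } x, y$ is injective: a collision would force two distinct points of $X_{0} \subset \ell^{*}$ and $y$ to be collinear, hence $y \in \ell^{*}$, contradicting $y \in X_{1}$. Thus each $y \in X_{1}$ lies on exactly $|X_{0}| = |X|-t$ distinct lines of $\mathcal{L}_{1}$, and Proposition \ref{discDFurstTotalProp}(c) (applied with pin-count $t$ and per-pin richness $|X|-t$) yields
\[
|\mathcal{L}(X)| \;\geq\; |\mathcal{L}_{1}| \;\gtrsim\; \min\bigl\{\,t(|X|-t),\; t^{1/2}(|X|-t)^{3/2}\,\bigr\}.
\]
I would then finish by elementary arithmetic. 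For $t \leq |X|/2$, the first term is dominant and is $\geq t|X|/2$. For $|X|/2 < t \leq (1-1/c_{0})|X|$, the second term is dominant, but now both $t$ and $|X|-t$ are of order $|X|$, so the bound reduces to $\gtrsim |X|^{2} \gtrsim t|X|$ with implicit constant depending only on $c_{0}$. Combined with the first regime, this closes the theorem.

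The main obstacle I anticipate is conceptual rather than computational: in the intermediate range $|X|/2 < t \leq (1-1/c_{0})|X|$ neither $X$ nor $X_{1}$ is automatically non-concentrated enough to feed into the pinned radial projection estimate Corollary \ref{CorollaryBeckRadProj2Var} at a single point, so one genuinely has to work at the level of the aggregate dual-Furstenberg/Szemer\'edi--Trotter count rather than locating one good pin. This is precisely the interplay advertised in the paragraph preceding the theorem: the radial projection from each $y \in X_{1}$ supplies the $|X|-t$ directions into $X_{0}$, and the dual Furstenberg bound is what converts these pointwise counts into a global lower bound on $|\mathcal{L}(X)|$.
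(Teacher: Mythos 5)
Your proof is correct and uses the same core idea as the paper: fix $\ell^*$ realizing the maximum, observe that $\mathcal L(X)$ is a discrete dual $(|X|-t,\,t)$-Furstenberg set with pin set $X_1=X\setminus\ell^*$ (each $y\in X_1$ determines $|X_0|=|X|-t$ distinct lines to points of $X_0\subset\ell^*$), and apply Proposition~\ref{discDFurstTotalProp}. What you add, and should keep, is the dichotomy between the regime $|X|-t<|X|/c_0$, handled directly by alternative~(2) of Beck's theorem, and the complementary regime, handled by the dual Furstenberg count plus the arithmetic you carry out. This addition is not merely cosmetic: the paper's own proof finishes with the chain $|\mathcal L(X)|\gtrsim (|X|-t)t = |X|t-t^2\gtrsim|X|t$, and both steps in that chain implicitly require $t$ to be bounded away from $|X|$ by a constant factor --- Proposition~\ref{discDFurstTotalProp}(c) yields $(|X|-t)t$ as the minimum only when $t\leq|X|-t$, and $|X|t-t^2\gtrsim|X|t$ fails outright when $t$ is close to $|X|-2$. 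Your Beck's theorem fallback in the first regime is exactly what closes both of these, so your write-up is actually more careful than the one appearing in the text. One small inefficiency: the planar reduction via Corollary~\ref{genericprojCorollary0} announced at the start is never invoked and is unnecessary, since Proposition~\ref{discDFurstTotalProp} is already stated for all $n\geq 2$.
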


\begin{proof}
Let $\ell \in \mathcal L(X)$ be such that $|X\cap \ell| = |X|-t.$
We then claim that $\mathcal L(X)$ is a discrete $(|X|-t,t)$-dual Furstenberg set. In particular, notice $\mathcal L(X) \supset \bigcup_{x\in X\setminus \ell} \mathcal L_x$, where 
\[
\mathcal L_x = \pi_{x}^{-1}(\pi_x(X\setminus \{x\})) \subset \mathcal A(n,1).
\]
Furthermore, we have $|X\setminus \ell| = t$ and for all $x\in X\setminus \ell$,
\[
|\mathcal L_x| \geq |\pi_x(X\cap \ell)| = |X|-t.
\]
Thus, applying Proposition \ref{discDFurstTotalProp}, we see 
\[
|\mathcal L(X)| \gtrsim (|X|-t)t = |X|t - t^2 \gtrsim |X|t. \qedhere
\]
\end{proof}

We conclude this section by discussing how Beck proved his result in \cite{Beck83}, as at the time he did not have access to Szemer\'edi--Trotter. In fact, as we noted earlier, both Beck's theorem and the Szemer\'edi--Trotter theorem were published in the same volume of the same journal in 1983! Beck's original argument made use of a bootstrapping technique. In particular, though he did not have access to the Szemer\'edi--Trotter theorem for rich lines, Beck was able to independently obtain that 
\[
|\mathcal L_r(X)| \lesssim \frac{|X|^2}{r^{2 + \epsilon}} + \frac{|X|}{r}
\]
for $\epsilon = \frac{1}{20}$ \cite[Theorem 1.5]{Beck83} using the ``coordinate method'' of himself and Spencer \cite{BeckSpencer}. Using this $\epsilon$-improvement to the Cauchy--Schwarz estimate for $r$-rich lines in place of \eqref{richlinesbeckLemma}, one can obtain Beck's theorem. For a proof of this approach for an arbitrary $\epsilon>0$, see \cite[Appendix A]{BrightBushlingMarshallOrtiz}. It is Beck's original ``$\epsilon$-improvement'' argument that motivated the proof of the continuum version of Beck's theorem which we turn to now.

\subsection{A Continuum Beck's Theorem}\label{sec:OSWRen}

Let us first restate the continuum version of Beck's theorem.

\begin{theorem}[Continuum Beck's theorem \cite{OSW}] \label{thm:ch4OSWBeck}
    For $X\subset \R^2$ Borel, either 
    \begin{itemize}
        \item[1)] there exists a line $\ell$ such that $\dim (X\setminus \ell) < \dim X$, or
        \item[2)] $\dim \mathcal L(X) \geq \min \{2\dim X, 2\}$.
    \end{itemize}
\end{theorem}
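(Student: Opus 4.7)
The plan is to prove the contrapositive: assume case 1 fails, so that $\dim(X\setminus\ell)\geq\dim X$ for every $\ell\in\mathcal A(2,1)$, and derive case 2. I may assume $\dim X>0$ (otherwise the conclusion is vacuous) and set $s:=\min(\dim X,1)$, so the target inequality becomes $\dim\mathcal L(X)\geq 2s=\min(2\dim X,2)$. The key elementary identification is that for each $x\in\R^2$, the map $\theta\mapsto x+\R\theta$ is a locally bi-Lipschitz bijection of $\pi_x(X\setminus\{x\})/{\pm}$ onto $\mathcal L_x(X)\subset\mathcal L(X)$, so any lower bound on $\dim\pi_x(X\setminus\{x\})$ yields the same lower bound on $\dim\mathcal L_x(X)$. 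The two external inputs will be Theorem~\ref{oswThm1.1} (the planar Orponen--Shmerkin--Wang radial projection estimate) and Theorem~\ref{BFR-DUALFURST} (the dual Furstenberg estimate of Bright--Fu--Ren); the whole argument is a two-step combination of these.

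First, I would upgrade Theorem~\ref{oswThm1.1} from an existential statement to a statement that $X$ contains a full-dimensional subset of good pivots. Define the bad-pivot set $E:=\{x\in X:\dim\pi_x(X\setminus\{x\})<s\}$ and its complement $G:=X\setminus E$, and argue $\dim G\geq\dim X$ by the following dichotomy. If $E$ is contained in some single line $\ell_0$, then $X\setminus\ell_0\subset G$ and the non-concentration hypothesis forces $\dim G\geq\dim(X\setminus\ell_0)\geq\dim X$. Otherwise $E$ is not contained in any line, so Theorem~\ref{oswThm1.1} applied with source $E$ and target $X$ yields some $x_0\in E$ with $\dim\pi_{x_0}(X\setminus\{x_0\})\geq\min(\dim E,\dim X,1)$; were $\dim E\geq s$, this lower bound would be $\geq s$, contradicting $x_0\in E$. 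Hence $\dim E<s\leq\dim X$, and countable stability of Hausdorff dimension (Lemma~\ref{countablestability}) gives $\dim G=\dim X$. In both cases $\dim G\geq\dim X$, so the pair $(\mathcal L(X),G)$ witnesses $\mathcal L(X)$ as a dual $(s,\dim X)$-Furstenberg set in the sense of Section~\ref{sec:DualFurstenberg}, and Theorem~\ref{BFR-DUALFURST} gives
\[
\dim\mathcal L(X)\geq s+\min(s,\dim X)=2s=\min(2\dim X,2),
\]
completing the proof of case 2.

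The main obstacle is the upgrade step. The failure of case 1 is exactly what kills the only degenerate scenario obstructing the iteration, namely that the bad pivots $E$ themselves concentrate on a single line; without this non-concentration input, Theorem~\ref{oswThm1.1} supplies only one good pivot, which is insufficient to build a large dual Furstenberg configuration, so the whole proof hinges on having enough non-concentration to drive the dichotomy. A secondary technical issue is measurability of $E$ and $G$---the map $x\mapsto\dim\pi_x(X\setminus\{x\})$ is not automatically Borel---but this can be circumvented by working with Frostman measures of exponent $s-\varepsilon$ on Borel approximations of $E$, which is standard in this area. Everything else reduces to a black-box appeal to the dual Furstenberg bound, illustrating the principle highlighted throughout Chapter~\ref{ch:Topics} that radial projections and Furstenberg estimates encode two sides of the same point--line incidence phenomena that underlie the discrete Beck theorem.
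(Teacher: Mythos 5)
Your proposal follows the same two-step structure as the paper's proof: first show the set of good pivots $G$ has full dimension (using Theorem~\ref{oswThm1.1} and the failure of case~1), then view $\mathcal L(X)$ as a dual Furstenberg set and invoke Theorem~\ref{BFR-DUALFURST}. The dichotomy you run (either $E$ lies on a line, or it does not) is exactly the paper's dichotomy, and your handling of the ``$E$ on a line'' branch is clean and correct.

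There is, however, a genuine gap in the other branch. You fix $s=\min(\dim X,1)$ and define $E:=\{x\in X:\dim\pi_x(X\setminus\{x\})<s\}$, then assert that if $\dim E\geq s$, Theorem~\ref{oswThm1.1} ``yields some $x_0\in E$'' with $\dim\pi_{x_0}(X\setminus\{x_0\})\geq\min(\dim E,\dim X,1)$, giving a contradiction. But Theorem~\ref{oswThm1.1} only asserts $\sup_{x\in E}\dim\pi_x(X\setminus\{x\})\geq\min(\dim E,\dim X,1)$. When $\dim E\geq s$, the right side equals $s$, and a supremum $\geq s$ is perfectly compatible with every $x\in E$ having $\dim\pi_x(X\setminus\{x\})<s$ (the sup need not be attained). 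So no contradiction follows, and you cannot conclude $\dim E<s$. The repair is precisely what the paper does: fix $\sigma<\min(\dim X,1)$ strictly, define $G_\sigma:=\{x\in X:\dim\pi_x(X\setminus\{x\})>\sigma\}$, and observe that if $X\setminus G_\sigma$ is not contained in a line, the supremum $\geq\min(\dim X,1)>\sigma$ forces a single $x_0\in X\setminus G_\sigma$ with $\dim\pi_{x_0}(X\setminus\{x_0\})>\sigma$, an honest contradiction because $>\sigma$ is strict. This yields $\dim G_\sigma=\dim X$ for each $\sigma$, hence $\mathcal L(X)$ is a dual $(\sigma,\sigma)$-Furstenberg set, and $\dim\mathcal L(X)\geq 2\sigma$; sending $\sigma\nearrow\min(\dim X,1)$ finishes. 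Everything else in your argument---including the measurability remark and the bi-Lipschitz identification $\pi_x(X\setminus\{x\})\leftrightarrow\mathcal L_x(X)$---is sound.
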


The above statement follows as a corollary of Theorem \ref{oswThm1.1} from the same paper, but it's first and foremost worth mentioning that Beck's 1983 argument in fact inspired Orponen, Shmerkin, and Wang's approach to studying radial projections. They describe this nicely in the following quote:

\begin{quote}
    ``The starting point of this work was the observation that while Beck's Theorem can be deduced from the Szemer\'edi--Trotter incidence bound, in fact it doesn't require the full strength of Szemer\'edi--Trotter-- any ``$\epsilon$-improvement'' over the elementary double counting bound is enough.''\,\cite{OSW}
\end{quote}

In a similar vein, Orponen and Shmerkin's $\epsilon$-improvement to the $(s,t)$-Furstenberg set problem (Theorem \ref{OSEpsilonFurst}) was the key ingredient towards Orponen, Shmerkin, and Wang's radial projection result (Theorem \ref{oswThm1.1}). We restate Theorem \ref{oswThm1.1} here for the sake of convenience.

\begin{theorem}[\cite{OSW}, Theorem \ref{oswThm1.1}] \label{ch4OSWBoundRestated}
    Let $X,Y\subset \R^2$ be Borel with $X$ not contained in any line. Then,
    \begin{equation}\label{OSWExceptionalineq}
    \sup_{x\in X} \dim \pi_x(Y\setminus \{x\}) \geq \min\{\dim X, \dim Y, 1\}.
    \end{equation}
\end{theorem}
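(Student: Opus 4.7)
The plan is to argue by contradiction, reducing the statement to a $\de$-discretized incidence problem and then applying the Orponen--Shmerkin $\epsilon$-improvement for the $(s,t)$-Furstenberg set problem (Theorem \ref{OSEpsilonFurst}) in a bootstrapping scheme reminiscent of Beck's original approach in \cite{Beck83}. Suppose, for contradiction, that there exist $\eta>0$ and $s<\min\{\dim X,\dim Y,1\}$ with $\dim \pi_x(Y\setminus \{x\}) \leq s-\eta$ for every $x$ in a Borel set $X'\subset X$ of full dimension. I would choose Frostman measures $\mu_X$ on $X'$ and $\mu_Y$ on $Y$ with exponents arbitrarily close to $\dim X$ and $\dim Y$ respectively, and pass to the $\de$-discretized setting, replacing $X'$ and $Y$ by $(\de,\dim X)$- and $(\de,\dim Y)$-sets.

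In this discretized setting, for each $x$ in the discretized $X'$, the preimage $\pi_x^{-1}(\pi_x(Y\setminus \{x\}))$ supports a $(\de,s-\eta)$-set of $\de$-tubes through $x$ whose union captures a definite fraction of the $\mu_Y$-mass of $Y$. Taking a union over $x$, one assembles a family $\mathcal{T}$ of $\de$-tubes satisfying two complementary structural properties: each $x$ in the discretized $X'$ is incident to roughly $\de^{-(s-\eta)}$ tubes of $\mathcal{T}$, and every tube in $\mathcal{T}$ meets $Y$ nontrivially. By point-line duality in $\R^2$ (Definition \ref{point-lineduality}), which is locally bi-Lipschitz, the parameterization of $\mathcal{T}$ in the dual plane carries the structure of a discretized $(s-\eta,\dim X)$-Furstenberg configuration.

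The key step is then to apply the $\epsilon$-improvement. Because $s-\eta<\dim X$, the dual/discretized form of Theorem \ref{OSEpsilonFurst} produces a lower bound $|\mathcal{T}|\gtrsim \de^{-2(s-\eta)-\epsilon}$ for some $\epsilon=\epsilon(s-\eta,\dim X)>0$. On the other hand, the naive count --- one tube per pair (pin, direction through the pin) --- yields $|\mathcal{T}|\lesssim \de^{-\dim X-(s-\eta)}$, matching the expected scaling. The heart of the argument is then a bootstrap: combining the improved lower bound on $|\mathcal{T}|$ with a refined covering estimate for the portion of $Y$ not captured by $\mathcal{T}$, one obtains a strictly better upper bound on the $\de$-covering number of $\pi_x(Y\setminus \{x\})$ for a positive $\mu_X$-mass of $x$, pushing the effective radial-projection dimension above $s-\eta+\epsilon'$. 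Iterating this bootstrap drives the effective dimension above any prescribed $s<\min\{\dim X,\dim Y,1\}$, contradicting the contrapositive assumption and thus proving the theorem.

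The main obstacle is exactly the closure of this bootstrapping scheme: a single application of Theorem \ref{OSEpsilonFurst} only nudges the radial-projection dimension up by $\epsilon(s-\eta,\dim X)$, and the challenge is to iterate the improvement without $\epsilon$ degrading to zero as $s-\eta\nearrow \min\{\dim X,\dim Y,1\}$. Executing this properly requires a robust quantitative formulation --- one must track the Frostman exponents, the discretization scale $\de$, and a pigeonholed refinement of $X'$, $Y$, and $\mathcal{T}$ at each iteration, so that the $(\de,s)$-set hypotheses needed to reapply Theorem \ref{OSEpsilonFurst} are preserved throughout. Additional care is needed at the boundary of the parameter range (e.g.\ when $X$ is only marginally noncontained in a line, or when $\dim X$ is very close to $\dim Y$ or $1$), where refinements may shrink pin sets below the threshold required for the $\epsilon$-improvement to apply; the noncollinearity of $X$ is what prevents such shrinkage from destroying the dual Furstenberg structure at the final step.
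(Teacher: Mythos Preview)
The paper does not actually prove this theorem; it is quoted from \cite{OSW} and used as a black box in Section~\ref{sec:OSWRen} to derive the continuum Beck theorem. The only indication the paper gives of the proof is the remark that \cite{OSW} combines Orponen--Shmerkin's $\epsilon$-improvement (Theorem~\ref{OSEpsilonFurst}) with a bootstrapping scheme inspired by Beck's original 1983 argument. At that level of description your sketch is consistent with the paper: you correctly name both ingredients.

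That said, the bootstrap you describe is not the one \cite{OSW} runs, and as written it has a genuine gap. You fix a single counterexample $(X',Y)$ with $\dim\pi_x(Y)\le s-\eta$ for all $x\in X'$, dualize the tube family $\mathcal T$ to a discretized $(s-\eta,\dim X)$-Furstenberg set, and invoke Theorem~\ref{OSEpsilonFurst} to get $|\mathcal T|\gtrsim\de^{-2(s-\eta)-\epsilon}$. You then assert this ``pushes the effective radial-projection dimension above $s-\eta+\epsilon'$'' and iterate. But a lower bound on the \emph{total} number of tubes says nothing about the number of tubes through any individual pin: having many tubes globally is perfectly compatible with each $\pi_x(Y)$ remaining small. (Indeed, your own ``naive count'' $|\mathcal T|\lesssim \de^{-\dim X-(s-\eta)}$ is consistent with the Furstenberg lower bound whenever $\dim X\ge s-\eta+\epsilon$, which it always is.) So no contradiction arises inside the fixed counterexample, and there is nothing to iterate.

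The actual \cite{OSW} bootstrap is on a \emph{global} threshold: one proves that if the inequality \eqref{OSWExceptionalineq} holds with $\sigma$ in place of the right-hand side for \emph{every} admissible pair $(X,Y)$, then it also holds with $\sigma+\epsilon(\sigma)$, via a quantitative ``thin tubes'' formulation. Each step takes as input the level-$\sigma$ conclusion applied to a (possibly different) near-counterexample at level $\sigma+\epsilon$, and the Furstenberg improvement enters in comparing the tube structure forced by the level-$\sigma$ hypothesis against what failure at level $\sigma+\epsilon$ would require. Your final paragraph correctly anticipates the real difficulty (keeping $\epsilon$ from degenerating near the endpoint), but the mechanism connecting the Furstenberg gain back to a radial-projection gain is absent from your sketch.
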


Using the above radial projection estimate and dual Furstenberg set estimates, one can obtain their continuum version of Beck's theorem.

\begin{proof}[Proof of Corollary \ref{thm:ch4OSWBeck}]
    Let $0 \leq \sigma < \min\{\dim X, 1\}$ and 
    \[
    G := \{x \in X : \dim \pi_x(X\setminus \{x\}) >\sigma\}.
    \]
    We claim $\dim G = \dim X$. If this is the case, then it follows that for each $x\in G$, there exists a set of lines $\mathcal L_x \subset \mathcal L(X)$ containing $x$ and such that $\dim\mathcal L_x \geq \sigma$. Note that here we use that we can identify $\pi_x(X\setminus \{x\})$ with a set of lines through the point $x$ containing another point in $X\setminus \{x\}$. The fact that $\dim \mathcal L_x = \dim \pi_x(X\setminus \{x\})$ follows rather directly from this identification, see \cite[Lemma 20]{BrightMarshall}. Thus, if $\dim G = \dim X$, it follows that $\mathcal L(X)$ is a $(\sigma, \sigma)$-dual Furstenberg set as 
    \[
    \mathcal L(X) \supset \bigcup_{x\in G} \mathcal L_x.
    \]
    The claim follows by applying Theorem \ref{BFR-DUALFURST} and sending $\sigma \nearrow \min\{\dim X, 1\}$.

    It remains to prove $\dim G = \dim X$. Suppose for the sake of contradiction $\dim X > \dim G$. Notice that this implies that $\dim (X\setminus G) = \dim X$. Then, we claim that $X\setminus G$ must be contained in a line. If this weren't the case, we could apply Theorem \ref{ch4OSWBoundRestated} and see
    \[
    \sup_{x\in X\setminus G} \dim\pi_x(X\setminus \{x\}) \geq \min\{\dim X, \dim (X\setminus G), 1\} >\sigma.
    \]
    This contradicts the definition of $X\setminus G$. However, if $G$ is contained in the line $\ell$, then it follows that 
    \[
    \dim X = \dim (X\setminus G) \geq \dim (X\setminus \ell) = \dim X
    \]
    where the last equality follows by our hypotheses on $X.$ This gives a contradiction to the assumption that $\dim X > \dim G$, concluding the proof.
\end{proof}

\begin{remark}
    We use the notation $G$ here to echo the discussion of discrete Beck-type theorems, see Corollaries \ref{corollaryBeckRadProj} and \ref{CorollaryBeckRadProj2Var}.
\end{remark}

We make a few remarks before moving on. Firstly, note that the dual Furstenberg set estimate due to myself, Fu, and Ren \cite{BrightFuRen} came out \textit{after} the paper of Orponen--Shmerkin--Wang. However, as we noted in Section \ref{sec:DualFurstenberg}, in the plane the dual $(s,t)$-Furstenberg set problem is precisely dual to the $(s,t)$-Furstenberg set problem. Thus, Orponen, Shmerkin, and Wang were able to appeal to the double counting estimates due to H\'era--Shmerkin--Yavicoli and Lutz--Stull \cite{HeraShmerkinYavicoli,LutzStull} (see Theorem \ref{ElementaryFurstenberg}) to conclude their proof of the continuum version of Beck's theorem in the plane.

Secondly, as a consequence of Ren's radial projection result \cite{RenRadProj} (Theorem \ref{renradprojTheorem}), one obtains the a continuum version of Beck's theorem in $\R^n$.

\begin{theorem}[Corollary 1.5 in \cite{RenRadProj}] \label{thm:ch4RenBeck}
    For $X\subset \R^n$ Borel, either 
    \begin{itemize}
        \item[1)] there exists a $k$-plane $H \in \mathcal A(n,k)$ such that $\dim (X\setminus H) < \dim X$, or
        \item[2)] $\dim \mathcal L(X) \geq \min \{2\dim X, 2k\}$.
    \end{itemize}
\end{theorem}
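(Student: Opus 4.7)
The plan is to prove Theorem \ref{thm:ch4RenBeck} by directly adapting the two-step proof of the planar continuum Beck theorem (Theorem \ref{thm:ch4OSWBeck}), swapping in the higher-dimensional radial projection theorem of Ren (Theorem \ref{renradprojTheorem}) in place of the planar Orponen--Shmerkin--Wang bound, and using the dual Furstenberg set estimate in $\R^n$ from \cite{BrightFuRen} (Theorem \ref{BFR-DUALFURST}) at the end. Assume condition (1) fails, i.e.\ $\dim(X\setminus H) = \dim X$ for every affine $k$-plane $H \in \mathcal{A}(n,k)$; we aim to prove (2).

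Fix any $\sigma$ with $0 \leq \sigma < \min\{\dim X, k\}$ and define the set of ``good pins''
\[
G := \{x \in X : \dim \pi_x(X\setminus \{x\}) > \sigma\}.
\]
The first and crucial step is to show $\dim G = \dim X$. The argument is by contradiction: if $\dim G < \dim X$, then $\dim(X\setminus G) = \dim X$, and the assumed failure of (1) forces $X\setminus G$ not to be contained in any $k$-plane (otherwise if $X\setminus G \subset H$ then $X\setminus H \subset G$, giving $\dim(X\setminus H) \leq \dim G < \dim X$). Applying Theorem \ref{renradprojTheorem} with the pin set $X\setminus G$ and target $X$ produces some $x \in X\setminus G$ with
\[
\dim \pi_x(X\setminus \{x\}) \geq \min\{\dim(X\setminus G), \dim X, k\} = \min\{\dim X, k\} > \sigma,
\]
which directly contradicts the definition of $X\setminus G$.

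With $\dim G = \dim X$ in hand, for each $x \in G$ identify $\pi_x(X\setminus\{x\})$ with the set of lines $\mathcal{L}_x \subset \mathcal{L}(X)$ through $x$ meeting $X\setminus\{x\}$, so that $\dim \mathcal{L}_x \geq \sigma$ by the identification used in the planar proof. Then
\[
\mathcal{L}(X) \supset \bigcup_{x \in G} \mathcal{L}_x
\]
is a dual $(\sigma, \dim X)$-Furstenberg set, so by Theorem \ref{BFR-DUALFURST},
\[
\dim \mathcal{L}(X) \geq \sigma + \min\{\sigma, \dim X\} = 2\sigma.
\]
Letting $\sigma \nearrow \min\{\dim X, k\}$ yields $\dim \mathcal{L}(X) \geq \min\{2\dim X, 2k\}$, as desired.

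The main obstacle is essentially packaged away into Theorem \ref{renradprojTheorem}: everything in the above argument is elementary once one has a radial projection lower bound of $\min\{\dim X, k\}$ for sets not concentrated on a $k$-plane, and the dual Furstenberg bound $\dim \mathcal{L} \geq s + \min\{s,t\}$ in all dimensions. Both of these are genuinely higher-dimensional statements that fail to follow from the planar case by a generic projection argument (as emphasized in the discussion after Proposition \ref{discDFurstTotalProp} and in the integralgeometric proof in Proposition \ref{integralgeometricproof}), so the real content is inherited from \cite{RenRadProj} and \cite{BrightFuRen}. The only mild subtlety in the write-up itself is the bookkeeping in the contradiction step: one must use the hypothesis on $X$ in the form $X\setminus H$ having full dimension in $X$ (rather than $X$ not being contained in a $k$-plane) to conclude that $X\setminus G$ cannot lie in any $k$-plane.
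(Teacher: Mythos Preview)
Your proposal is correct and follows essentially the same approach as the paper: the paper omits the proof, stating it ``follows nearly verbatim'' from the planar case (Theorem \ref{thm:ch4OSWBeck}), which is precisely the adaptation you carry out by substituting Ren's radial projection theorem (Theorem \ref{renradprojTheorem}) for the Orponen--Shmerkin--Wang bound and using the dual Furstenberg estimate (Theorem \ref{BFR-DUALFURST}) in $\R^n$. Your handling of the contradiction step---showing $X\setminus G$ cannot lie in a $k$-plane via the failure of condition (1)---is cleaner than the paper's written planar version and is exactly the right bookkeeping.
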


We omit the proof of the above theorem as the proof follows nearly verbatim to that of Theorem \ref{thm:ch4Beck}. It's also worth noting that Ren's paper also came out before the dual Furstenberg set estimate due to himself, Fu, and I. To this end, note that one can obtain continuum Beck-type theorems from the more quantitative ``thin tubes'' estimates Orponen--Shmerkin--Wang and Ren obtain that imply the dimensional bounds in Theorems \ref{oswThm1.1} and \ref{renradprojTheorem}; see \cite[Corollary 2.22]{OSW} and \cite[Theorem 1.13]{RenRadProj}. 

Lastly, in the same vein as Corollary \ref{CorollaryBeckRadProj2Var}, one can in fact apply Theorem \ref{ch4OSWBoundRestated} to studying $\dim \mathcal L(X,Y)$. In particular, one can show the following as a consequence of Theorem \ref{renradprojTheorem}.

\begin{theorem}
    For $X,Y\subset \R^n$ Borel, either 
    \begin{itemize}
        \item[1)] there exists a $k$-plane $H \in \mathcal A(n,k)$ such that $\dim (X\setminus H) < \dim X$, or
        \item[2)] $\dim \mathcal L(X, Y) \geq \min \{2\dim X, 2\dim Y, 2k\}$.
    \end{itemize}
\end{theorem}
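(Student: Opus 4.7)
The plan is to follow the same template that proved Theorem \ref{thm:ch4OSWBeck}, with Ren's higher-dimensional radial projection estimate (Theorem \ref{renradprojTheorem}) replacing Orponen--Shmerkin--Wang's planar estimate, and the dual Furstenberg set estimate (Theorem \ref{BFR-DUALFURST}) playing the same structural role as before. Suppose condition (1) fails, so that $\dim(X \setminus H) = \dim X$ for every $k$-plane $H \in \mathcal A(n,k)$. Fix $0 \leq \sigma < \min\{\dim X, \dim Y, k\}$ (if this minimum is $0$, condition (2) is vacuous) and set
\[
G := \{x \in X : \dim \pi_x(Y \setminus \{x\}) > \sigma\}.
\]
The heart of the argument is to show that $\dim G = \dim X$.

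Assume for contradiction that $\dim G < \dim X$, so that $\dim(X \setminus G) = \dim X$ by the countable stability of Hausdorff dimension. I would split into two cases. If $X \setminus G$ is not contained in any $k$-plane, then Ren's radial projection theorem applied to the pair $(X \setminus G, Y)$ gives
\[
\sup_{x \in X \setminus G} \dim \pi_x(Y \setminus \{x\}) \geq \min\{\dim(X\setminus G), \dim Y, k\} = \min\{\dim X, \dim Y, k\} > \sigma,
\]
contradicting the defining property of $X \setminus G$. Otherwise $X \setminus G \subseteq H$ for some $k$-plane $H$, in which case $X \setminus H \subseteq G$, and hence
\[
\dim X = \dim(X \setminus H) \leq \dim G < \dim X,
\]
contradicting the failure of condition (1). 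Either way we reach a contradiction, so $\dim G = \dim X$.

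To finish, for each $x \in G$ the set of lines through $x$ that meet $Y \setminus \{x\}$ can be identified with $\pi_x(Y \setminus \{x\})$ (up to antipodal points, by the locally bi-Lipschitz correspondence from \cite[Lemma 20]{BrightMarshall}), and these lines all lie in $\mathcal L(X,Y)$ since $x \in X$. Thus $\mathcal L(X,Y)$ contains a pinned family witnessing it as a dual $(\sigma, \dim X)$-Furstenberg set, and Theorem \ref{BFR-DUALFURST} gives
\[
\dim \mathcal L(X,Y) \geq \sigma + \min\{\sigma, \dim X\} = 2\sigma.
\]
Letting $\sigma \nearrow \min\{\dim X, \dim Y, k\}$ yields condition (2).

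The only real obstacle is bookkeeping: one must verify that the hypothesis on $X$ alone (that no $k$-plane captures most of its dimension) suffices to control the geometry of $X \setminus G$, even though $G$ was defined in terms of radial projections of $Y$. The dichotomy above is exactly what bridges this gap, exploiting that Ren's theorem needs only $X \setminus G$ (not $Y$) to be non-$k$-planar, while the alternative feeds directly into the hypothesis on $X$. Once that clean separation is in place, the final invocation of the dual Furstenberg estimate is immediate.
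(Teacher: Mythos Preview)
Your proof is correct and follows precisely the template the paper indicates: it mirrors the proof of Theorem \ref{thm:ch4OSWBeck} verbatim, replacing $\mathcal L_x(X)$ by $\mathcal L_x(Y)$ and invoking Ren's theorem (Theorem \ref{renradprojTheorem}) in place of the planar estimate. Your handling of the dichotomy for $X\setminus G$ is in fact cleaner than the paper's own write-up of the single-set case, which contains a minor typo (``$G$'' where ``$X\setminus G$'' is meant).
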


\noindent We leave the proof as an exercise as it follows in nearly the same manner as the proof of Theorem \ref{thm:ch4OSWBeck} with $\mathcal L_x(Y)$ in place of $\mathcal L_x(X)$.

\subsection{A Continuum Erd\H{o}s--Beck Theorem} \label{sec:BMarshall}

Shortly after studying the paper of Orponen, Shmerkin, and Wang at the University of Pennsylvania Study Guide Workshop 2023, Marshall and I learned of the Erd\H{o}s--Beck theorem (Theorem \ref{thm:ch4erdosBeck}). We conjectured that a continuum version of the Erd\H{o}s--Beck theorem should hold, and in 2024 we obtained the following:

\begin{theorem}[A Continuum Erd\H{o}s--Beck theorem \cite{BrightMarshall}] \label{BM-ERDOSBECK}
    Let $X\subset \R^n$ be Borel. If there exists a $k$-plane $H \in \mathcal A(n,k)$ such that $\dim (X\setminus H) < \dim X$, we let $0 < t \leq \dim X$ be such that $\dim (X\setminus P) \geq t$ for all $P \in\mathcal A(n,k)$. Then,
    \[
    \dim \mathcal L(X) \geq \dim X + t.
    \]
\end{theorem}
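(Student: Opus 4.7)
My plan is to realize $\mathcal{L}(X)$ as a dual $(s,t)$-Furstenberg set with $s = \dim X$ and then invoke Theorem \ref{BFR-DUALFURST}. The strategy mirrors the proof of the continuum version of Beck's theorem (Theorem \ref{thm:ch4OSWBeck}), but exploits the concentration hypothesis directly rather than through Orponen--Shmerkin--Wang/Ren: the hypothesis $\dim(X\setminus H)<\dim X$ forces the bulk of $X$ to sit on $H$, which guarantees that each point outside $H$ sees a full-dimensional copy of $X$ under radial projection; the complementary hypothesis $\dim(X\setminus P)\geq t$ for every $P\in\mathcal{A}(n,k)$, applied to $P=H$, supplies a pin set of dimension at least $t$.

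The first step would be to use countable stability (Lemma \ref{countablestability}) on $X = (X\cap H)\cup (X\setminus H)$, together with $\dim(X\setminus H)<\dim X$, to deduce $\dim(X\cap H)=\dim X$. Next, I would fix any pin $x\in X\setminus H$. Since $x$ lies off the affine $k$-plane $H$, the radial projection $\pi_x$ restricted to $H$ is locally bi-Lipschitz (Lemma \ref{LemRadProjLocBiLips}); after a standard countable reduction to bounded pieces of $H$, this yields
\[
\dim \pi_x(X\cap H) \;=\; \dim(X\cap H) \;=\; \dim X.
\]
Using the identification between radial images and families of lines through $x$ (Lemma~20 of \cite{BrightMarshall}), the pinned line set $\{\ell\in\mathcal{L}(X) : x\in\ell\}$ contains a subfamily of $\mathcal{A}(n,1)$ of Hausdorff dimension at least $\dim X$, uniformly in $x\in X\setminus H$.

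Combining these observations and varying $x$ over $X\setminus H$, which has dimension at least $t$ by hypothesis, I conclude that $\mathcal{L}(X)$ is a dual $(\dim X, t)$-Furstenberg set. Theorem \ref{BFR-DUALFURST} and the assumption $t\leq \dim X$ then give
\[
\dim \mathcal{L}(X) \;\geq\; \dim X + \min\{\dim X, t\} \;=\; \dim X + t,
\]
as required. The main technical obstacle I anticipate is the Borel/measurability bookkeeping needed to honestly apply Theorem \ref{BFR-DUALFURST}: one must exhibit a Borel subset of $X\setminus H$ of dimension at least $t$ on which the fiber map $x\mapsto \{\ell\in\mathcal{L}(X) : x\in \ell\}$ is measurable, and verify that the pointwise lower bound $\dim \geq \dim X$ on these fibers persists after such a restriction. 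All of the genuine geometric depth is sealed inside Theorem \ref{BFR-DUALFURST} itself; once one accepts that estimate, the reduction above is essentially a continuum transcription of the discrete Erd\H{o}s--Beck argument via discrete dual Furstenberg sets (Proposition \ref{discDFurstTotalProp}).
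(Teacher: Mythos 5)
Your argument reproduces the paper's proof essentially verbatim: you deduce $\dim(X\cap H)=\dim X$ from the concentration hypothesis, use Lemma \ref{LemRadProjLocBiLips} (Lemma 25 in \cite{BrightMarshall}) to get $\dim\pi_x(X\cap H)=\dim X$ for each $x\in X\setminus H$, identify radial images with pinned line families, observe $\dim(X\setminus H)\geq t$ by the hypothesis applied to $P=H$, and conclude that $\mathcal{L}(X)$ is a dual $(\dim X,t)$-Furstenberg set to which Theorem \ref{BFR-DUALFURST} applies. Same decomposition, same key lemma, same Furstenberg-set reduction; the proposal is correct and takes the paper's route.
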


\begin{remark} \label{remarkonDimX<k}
    Note that here, we need not include the $2k$ term in the lowerbound in $\mathcal L(X)$. Indeed, if $\dim X>k$, there does not exist a $k$-plane $H$ such that $\dim (X\setminus H) < \dim X$. Thus, under the hypotheses of Theorem \ref{BM-ERDOSBECK}, for all $0 < t \leq \dim X$
    \[
    \dim X + t \leq 2k.
    \]
\end{remark}

\begin{proof}[Proof of Theorem \ref{BM-ERDOSBECK}]
    If there exists a $k$-plane $H$ such that $\dim (X\setminus H) < \dim X$, it therefore follows that 
    \[
    \dim (X\cap H) = \dim X.
    \]
    We proceed as we did in the proof of the Erd\H{o}s--Beck theorem in the discrete setting. In particular, given we have $\dim (X\setminus P) \geq t$ for all $P \in \mathcal A(n,k)$, it follows that $\dim (X\setminus H) \geq t$. Furthermore, for all $x\in X\setminus H$, $\pi_x(X\cap H)$ is relatively large, as can be seen in the following lemma.

\begin{lemma}[Lemma 25 in \cite{BrightMarshall}] \label{biLipschitzfromkplanes} \label{LemRadProjLocBiLips}
    Let $X\subset \R^n$ and assume there exists a $k$-plane $H \in \mathcal A(n,k)$ so that $\dim (X\cap H) = \dim X$. Then, for all $x\in \R^n \setminus H$, 
    \[
    \dim \pi_x(X) = \dim X.
    \]
\end{lemma}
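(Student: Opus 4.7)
The plan is to show that the restricted radial projection $\pi_x|_H : H \setminus \{x\} \to \mathbb{S}^{n-1}$ is locally bi-Lipschitz whenever $x \notin H$, and then invoke the fact that locally bi-Lipschitz maps preserve Hausdorff dimension together with countable stability (Lemma \ref{countablestability}). Granting this, the lemma follows quickly: the upper bound $\dim \pi_x(X) \leq \dim X$ is immediate from the local Lipschitz property of $\pi_x$ on $\R^n \setminus \{x\}$, and for the lower bound we use that by hypothesis $\dim (X \cap H) = \dim X$, so
\[
\dim \pi_x(X) \geq \dim \pi_x(X \cap H) = \dim (X \cap H) = \dim X.
\]
(If $x \in X$, remove the single point $\{x\}$, which does not affect Hausdorff dimension.)

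The key geometric step is to verify that $\pi_x|_H$ is injective and locally bi-Lipschitz. Injectivity is a consequence of $x \notin H$: if two distinct $y_1, y_2 \in H$ satisfied $\pi_x(y_1) = \pi_x(y_2)$, they would be collinear with $x$, forcing the line through $y_1, y_2$ to pass through $x$; but then this line lies in $H$ (being determined by two points of $H$), so $x \in H$, a contradiction. For the bi-Lipschitz estimates, I would decompose $H \setminus \{x\} = \bigcup_j K_j$ into countably many bounded pieces with $d(x, K_j) \geq d(x,H) > 0$. On each $K_j$, the upper Lipschitz constant of $\pi_x|_{K_j}$ is bounded by roughly $2/d(x,H)$ by a direct computation from the definition of $\pi_x$. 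For the lower Lipschitz bound, observe that $\pi_x(H)$ is contained in the intersection of $\mathbb{S}^{n-1}$ with the $(k+1)$-dimensional linear subspace spanned by $H - x$, and $\pi_x|_H$ is a smooth map onto an open subset of this ``sub-$k$-sphere'' whose derivative is nondegenerate everywhere (precisely because $x \notin H$); uniform nondegeneracy on the bounded piece $K_j$ then yields the needed bi-Lipschitz lower bound with constants depending on $d(x,H)$ and $\mathrm{diam}(K_j)$.

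The main obstacle, such as it is, is purely bookkeeping: making the lower Lipschitz estimate quantitative on each $K_j$. The content is elementary, but one must be careful that the Jacobian of $\pi_x|_H$ does not degenerate -- and this is exactly where the containment $x \notin H$ is essential, since in the excluded case $x \in H$ the map $\pi_x$ collapses $H$ onto a $(k-1)$-dimensional sub-sphere, losing one dimension. Once the bi-Lipschitz property holds on each $K_j$, we apply it to $X \cap H \cap K_j$, giving $\dim \pi_x(X \cap H \cap K_j) = \dim(X \cap H \cap K_j)$, and take the supremum over $j$ via Lemma \ref{countablestability} to conclude $\dim \pi_x(X \cap H) = \dim (X \cap H) = \dim X$. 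Combined with the trivial upper bound, this finishes the proof.
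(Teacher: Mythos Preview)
Your proposal is correct and follows exactly the approach the paper outlines: the paper's proof is the single sentence ``The proof follows by noting that $\pi_x: H \to \mathbb{S}^{n-1}$ is locally biLipschitz for all $x\in \R^n \setminus H$, and thus $\pi_x$ preserves the Hausdorff dimension of $X\cap H$ when $x\in \R^n \setminus H$,'' and you have supplied precisely the details behind that sentence (injectivity, decomposition into bounded pieces, nondegeneracy of the Jacobian, and countable stability).
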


The proof follows by noting that $\pi_x: H \to \mathbb{S}^{n-1}$ is locally biLipschitz for all $x\in \R^n \setminus H$, and thus $\pi_x$ preserves the Hausdorff dimension of $X\cap H$ when $x\in \R^n \setminus H.$

Thus, by Lemma \ref{biLipschitzfromkplanes}, it follows that for all $x\in X\setminus H$, 
\[
\dim \pi_x(X\cap H) = \dim X.
\]
Thus, $\mathcal L(X)$ is a dual $(\dim X, t)$-Furstenberg set as 
\[
\mathcal L(X) \supset \bigcup_{x\in X\setminus H} \mathcal L_x.
\]
Applying Theorem \ref{BFR-DUALFURST} and noting that $t\leq \dim X$ concludes the proof.
\end{proof}

\begin{remark}
    Note that in the proof of the continuum Erd\H{o}s--Beck theorem it was important that we had the double counting bound for dual $(s,t)$-Furstenberg sets (Theorem \ref{BFR-DUALFURST}) for all ranges of $s,t$---in particular, for $t<s$. For comparison, in the proof of continuum Beck's theorem, we only needed the double counting estimate for dual $(\sigma, \sigma)$-Furstenberg sets. In this way, I find the continuum Erd\H{o}s--Beck theorem to be quite intrinsically motivating for the study of dual $(s,t)$-Furstenberg sets.
\end{remark}

We conclude this section by discussing examples and a conjecture from \cite[Section 1.1]{BrightMarshall}. To do so, we first define the \textit{trapping number} of a set.

\begin{definition}[Trapping Number]
    For $X\subset \R^n$ Borel with $\dim X>0$, we define the \textit{trapping number} of $X$ to be 
    \[
    T(X) := \begin{cases}
        \min\{k >0 : \exists H \in \mathcal A(n,k) \text{ with } \dim (X\setminus H) < \dim X\} \\
        1 \text{ if no such $k>0$ exists}
    \end{cases}.
    \]
\end{definition}

One can think of the trapping number of a set $X$ as a way to recognize if (a large proportion) of $X$ is contained in some lower dimensional $k$-plane. Using this notation, one can restate the continuum version of Beck's theorem in a way that drops the hypothesis on $X.$

\begin{theorem}[cf. Theorem \ref{thm:ch4RenBeck}]
    For $X\subset \R^n$ Borel, 
    \[
    \dim \mathcal L(X) \geq \min\{2\dim X, 2(T(X) - 1)\}.
    \]
\end{theorem}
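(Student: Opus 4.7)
The plan is to derive this restatement as an immediate corollary of Ren's continuum Beck's theorem (Theorem \ref{thm:ch4RenBeck}), treating the parameter $k$ in that dichotomy as something we are free to choose, and using the trapping number $T(X)$ to select the optimal $k$.

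First, I would dispense with the degenerate case $T(X) = 1$. In this case $2(T(X)-1) = 0$, so $\min\{2\dim X, 2(T(X)-1)\} = 0$ and the inequality is trivial since Hausdorff dimension is non-negative. Note this case covers both possibilities in the definition: when the set $\{k > 0 : \exists H \in \mathcal A(n,k) \text{ with } \dim(X\setminus H) < \dim X\}$ is empty, and when its minimum is $1$.

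The main case is $T(X) \geq 2$. Here I would set $k = T(X) - 1 \geq 1$ and apply Theorem \ref{thm:ch4RenBeck} with this value of $k$. By the very definition of $T(X)$ as the \emph{minimum} $k > 0$ admitting a trapping $k$-plane, the strict inequality $T(X) - 1 < T(X)$ forces: for every $H \in \mathcal A(n, T(X)-1)$ we have $\dim(X\setminus H) \geq \dim X$, which together with $X \setminus H \subseteq X$ gives $\dim(X \setminus H) = \dim X$. Hence option (1) of Theorem \ref{thm:ch4RenBeck} fails for this choice of $k$, so option (2) must hold, yielding
\[
\dim \mathcal L(X) \geq \min\{2\dim X, 2k\} = \min\{2\dim X, 2(T(X)-1)\}.
\]

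I do not foresee any real obstacle here: all of the heavy lifting (the radial-projection and dual Furstenberg set machinery) is already encoded in Theorem \ref{thm:ch4RenBeck}. The only conceptual point worth flagging is the consistency noted in Remark \ref{remarkonDimX<k}: whenever a trapping $k$-plane exists one must have $\dim X \leq k$, so in particular $\dim X \leq T(X)$, which explains why the $2\dim X$ term in the minimum is never vacuous relative to the $2(T(X)-1)$ term. The reformulation is therefore essentially a notational cleanup of the earlier dichotomy.
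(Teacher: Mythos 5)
Your proposal is correct and is essentially the argument the paper has in mind: the paper's entire comment on this restatement is that it is ``technically proven when $T(X)\geq 2$'' (by applying Theorem~\ref{thm:ch4RenBeck} at the level $k = T(X)-1$, where trapping necessarily fails by minimality of $T(X)$) and ``trivial when $T(X)=1$,'' which is exactly the case split you carry out.
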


Note that this is technically proven when $T(X) \geq 2$, though the result is trivial when $T(X) = 1$ so we include it for the sake of a clean statement. Comparing this to Theorem \ref{BM-ERDOSBECK}, we obtain the following:

\begin{theorem}[cf. Theorem \ref{BM-ERDOSBECK}] \label{BM-EB-ForConj}
    Let $X\subset \R^n$ be Borel. If there exists a $k$-plane $H \in \mathcal A(n,k)$ such that $\dim (X\setminus H) < \dim X$, we let $0 < t \leq \dim X$ be such that $\dim (X\setminus P) \geq t$ for all $P \in\mathcal A(n,k)$. Then,
    \[
    \dim \mathcal L(X) \geq \max\{\dim X + t, \min\{2\dim X, 2(T(X)-1)\}\}.
    \]
\end{theorem}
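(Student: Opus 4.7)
The plan is to establish the two lower bounds appearing inside the $\max$ independently, and then take their maximum. First, I would observe that the bound $\dim \mathcal L(X) \geq \dim X + t$ is already the content of Theorem \ref{BM-ERDOSBECK}: the hypotheses on $k$, $H$, and $t$ in the statement here are precisely those of the continuum Erd\H{o}s--Beck theorem, so this term requires no further work beyond citing that result.

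For the second bound, $\dim \mathcal L(X) \geq \min\{2 \dim X,\, 2(T(X) - 1)\}$, the plan is to apply Theorem \ref{thm:ch4RenBeck} to $X$ at level $k' = T(X) - 1$. If $T(X) = 1$, the target bound reads $\min\{2 \dim X, 0\} = 0$, which holds trivially since Hausdorff dimension is non-negative, so there is nothing to prove. If instead $T(X) \geq 2$, then by minimality in the definition of the trapping number, no $(T(X) - 1)$-plane $P \in \mathcal A(n, T(X) - 1)$ satisfies $\dim(X \setminus P) < \dim X$. Consequently, option (1) of Theorem \ref{thm:ch4RenBeck} (applied with $k = T(X) - 1$) must fail, so option (2) must hold, yielding exactly $\dim \mathcal L(X) \geq \min\{2 \dim X,\, 2(T(X) - 1)\}$.

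Since $\dim \mathcal L(X)$ dominates each of the two quantities inside the $\max$, it dominates their maximum, which is the conclusion of the theorem. The argument is essentially a bookkeeping exercise reconciling the hypotheses of Theorems \ref{BM-ERDOSBECK} and \ref{thm:ch4RenBeck} through the trapping-number language; the only point meriting care is the two branches of the definition of $T(X)$, both of which degenerate to the trivial bound when $T(X) = 1$. I do not foresee a serious obstacle: the substantive work has already been done in establishing the continuum Beck and Erd\H{o}s--Beck theorems (via the dual Furstenberg set estimate Theorem \ref{BFR-DUALFURST} and Ren's radial projection theorem Theorem \ref{renradprojTheorem} respectively), and this statement simply packages them together cleanly.
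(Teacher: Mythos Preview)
Your proposal is correct and matches the paper's approach exactly: the paper simply says ``Comparing this to Theorem \ref{BM-ERDOSBECK}, we obtain the following,'' having just restated Theorem \ref{thm:ch4RenBeck} in trapping-number form (noting, as you do, that the $T(X)=1$ case is trivial). You have correctly unpacked the details that the paper leaves implicit.
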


We include the above statement for two reasons. Firstly, we are able to construct examples of sets $X$ such that each term in Theorem \ref{BM-EB-ForConj} dominates. Note that each of our examples satisfy $\dim X < k$, see Remark \ref{remarkonDimX<k}.

\begin{example}[Example 9 in \cite{BrightMarshall}]
    Let $H_1,H_2 \in \mathcal A(n,k)$ be distinct, and suppose $X = X_1 \cup X_2\subset \R^n$ is Borel where $X_i \subset H_i$ for $i = 1,2$. Furthermore, suppose we have
    \[
    k-1 < \dim X_1 < \dim X_2 = \dim X \leq k.
    \]
    Given that 
    \[
    \dim (X\setminus H_2) = \dim X_1 < \dim X,
    \]
    it follows that $T(X) \leq k$. As $k-1 < \dim X_2$, it follows that  
    \[
    \dim (X_2 \setminus P) = \dim (X_2) = \dim X \quad \forall P \in \mathcal A(n,k-1).
    \]
    Thus, $T(X) \geq k$. In particular, it follows that 
    \[
    \min\{2\dim X, 2(T(X) - 1)\} \leq 2k-2.
    \]
    Furthermore, since 
    \[
    \inf_{P\in \mathcal A(n,k)} \dim (X\setminus P) = \dim (X\setminus H_2) = \dim X_1,
    \]
    it follows that we may take $t = \dim X_1$ in the statement of Theorem \ref{BM-EB-ForConj}. Finally, notice that 
    \[
    \dim X + t = \dim X+ \dim X_1 > 2k-2,
    \]
    and so the first term in Theorem \ref{BM-EB-ForConj} dominates.
\end{example}

\begin{example}
    Let $0 < \beta \leq 1$ and let $X\subset \mathbb{S}^k\subset \R^n$ for some $1\leq k \leq n-1$ satisfy $\dim X = k-1 + \beta$. Notice that for all $k$, $\mathbb{S}^k$ is contained in a $(k+1)$-plane (if $k = n-1$, we trivially mean $\mathbb{S}^{n-1} \subset \R^n$). Thus, $T(X) \leq k+1$. Furthermore, for each $k$-plane $P \in \mathcal A(n,k)$, notice that $\dim (\mathbb{S}^k \cap P) \leq k-1$ as each $k$-plane intersecting $\mathbb{S}^k$ in more than a point is homeomorphic to $\mathbb{S}^{k-1}$. Therefore, it follows that $T(X) \geq k+1$, and so 
    \[
    \min\{2\dim X, 2(T(X)-1)\} = 2\dim X.
    \]
    Hence, by Theorem \ref{BM-EB-ForConj},
    \[
    \dim \mathcal L(X) \geq \max\{\dim X+ t, 2\dim X\} = 2\dim X = 2(k-1 + \beta).
    \]
\end{example}

\noindent Notice that when $\beta = 1$, this last example gives an equality as $\dim \mathcal A(k,1) = 2(k-1).$ In general, however, Marshall and I do not expect our bound to be sharp. Instead, we conjecture the following:

\begin{conjecture}[\cite{BrightMarshall}]\label{BM-EBConjecture}
    Given $X\subset \R^n$ Borel, let $T(X) = T$. Suppose that $H \in \mathcal A(n,T)$ is the unique hyperplane such that $\dim (X\setminus H) < \dim X$, and let $X_1 = X\cap H$ and $X_2 = X\setminus H.$ Then,
    \[
    \dim \mathcal L(X) = \max\{\dim(X_1 \times X_2), \dim(X_2 \times X_2), \min\{\dim (X_1 \times X_1), 2(T-1)\}\}.
    \]
\end{conjecture}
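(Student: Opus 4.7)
The plan is to split the proof into an upper bound (straightforward) and a lower bound (split into three cases, matching the three terms in the conjectured maximum). I would note at the outset that the three terms correspond exactly to the three types of point-pairs that can determine a line in $\mathcal L(X)$: two points of $X_1$, one from each, and two points of $X_2$.

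For the upper bound, I would begin with the exhaustive decomposition $\mathcal L(X) = \mathcal L(X_1, X_1) \cup \mathcal L(X_1, X_2) \cup \mathcal L(X_2, X_2)$ and use countable stability (Lemma \ref{countablestability}) to reduce to the three pieces. For each index pair $(i,j)$, the pair-to-line map $\phi_{ij}\colon (X_i \times X_j)\setminus \Delta \to \mathcal A(n,1)$ is locally Lipschitz away from the diagonal $\Delta$, so exhausting the domain by compact sets bounded away from $\Delta$ yields $\dim \mathcal L(X_i, X_j) \leq \dim(X_i \times X_j)$. Since $X_1 \subset H$, lines in $\mathcal L(X_1, X_1)$ are contained in $H \simeq \R^T$, so additionally $\dim \mathcal L(X_1, X_1) \leq \dim \mathcal A(T,1) = 2(T-1)$. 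Combining these yields the upper half of the conjecture.

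For the lower bound on $\dim \mathcal L(X_1, X_2)$, the key observation is that $X_1$ and $X_2$ are disjoint ($X_2 = X \setminus H$ while $X_1 \subset H$), so $\phi_{12}$ has no diagonal issue and is locally bi-Lipschitz. Its fibers over $\ell$ are $(X_1 \cap \ell) \times (X_2 \cap \ell)$, and any line $\ell$ not contained in $H$ meets $H$ in a single point, so at least one factor of each fiber collapses to a point; a standard fibering argument then delivers $\dim \mathcal L(X_1, X_2) = \dim(X_1 \times X_2)$. For $\dim \mathcal L(X_2, X_2)$, I would analyze the trapping structure of $X_2$ directly: since $X_2 \cap H = \emptyset$, any trapping plane of $X_2$ must differ from $H$, and an inductive application of Theorem \ref{BM-ERDOSBECK} inside the minimal such plane, combined with the Lipschitz/bi-Lipschitz fibering argument on $X_2 \times X_2$, should recover $\dim(X_2 \times X_2)$.

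The main obstacle is the last lower bound, $\dim \mathcal L(X_1, X_1) \geq \min\{\dim(X_1 \times X_1), 2(T-1)\}$. Applied directly inside $H \simeq \R^T$, the continuum Erd\H{o}s--Beck theorem (Theorem \ref{BM-ERDOSBECK}) yields only $\dim X_1 + t_1$ where $t_1$ measures the deviation of $X_1$ from any lower-dimensional plane in $H$, and Theorem \ref{thm:ch4RenBeck} yields $\min\{2\dim X_1, 2(T-1)\}$. Both of these are at most $\min\{\dim(X_1 \times X_1), 2(T-1)\}$ but can be strictly smaller when $X_1$ has genuine product structure so that $\dim(X_1 \times X_1) > 2\dim X_1$. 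Closing this gap appears to require a \emph{product-sensitive} dual Furstenberg estimate, i.e. a bound on dual $(s,t)$-Furstenberg sets in which $s$ and $t$ are replaced by the product dimension of the pin set rather than its individual factor dimensions. I would attempt to derive such a bound by running the radial projection argument of Ren (Theorem \ref{renradprojTheorem}) with Frostman measures supported on $X_1 \times X_1$ directly, rather than factor-by-factor, and hope that the resulting refined estimate is tight enough to match the conjecture.
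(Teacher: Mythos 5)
First, note that the statement you set out to prove is labelled a \emph{conjecture} in the paper and has no proof there; the paper only motivates it via the decomposition $\mathcal L(X) = \mathcal L(X_1) \cup \mathcal L(X_1, X_2) \cup \mathcal L(X_2)$ and then explicitly states that the needed product-set equalities $\dim \mathcal L(X_1, X_2) = \dim(X_1 \times X_2)$ and $\dim \mathcal L(X_i) = \dim(X_i \times X_i)$ are ``quite challenging (if true).'' Your upper bound is sound and matches the paper's remarks: the pair-to-line map is locally Lipschitz away from the diagonal, giving $\dim \mathcal L(X_i, X_j) \leq \dim(X_i \times X_j)$, and $\mathcal L(X_1) \subset \mathcal A(T,1)$ gives the $2(T-1)$ cap. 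But this direction was never the difficulty.

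The difficulty lies entirely in the lower bounds, and the central gap in your proposal is the claim that $\phi_{12}$ is ``locally bi-Lipschitz'' and that ``a standard fibering argument'' yields $\dim \mathcal L(X_1, X_2) = \dim(X_1 \times X_2)$. This fails: the fiber of $\phi_{12}$ over a line $\ell$ is $(X_1 \cap \ell) \times (X_2 \cap \ell)$, and while the first factor collapses to a single point (since $\ell \not\subset H$ and $X_1 \subset H$), the second factor $X_2 \cap \ell$ can contain many points and even have positive Hausdorff dimension if $X_2$ has Furstenberg-type concentration along lines through points of $X_1$. So $\phi_{12}$ is not injective, and there is no elementary coarea-type inequality for Hausdorff dimension that recovers $\dim(X_1 \times X_2)$ from the image modulo fiber dimension --- controlling those fibers is precisely a (dual) Furstenberg-type problem, and that is the open content. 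The same obstruction undermines your treatment of $\mathcal L(X_2)$: Theorem \ref{BM-ERDOSBECK} and Theorem \ref{thm:ch4RenBeck} give lower bounds of the form $\dim X_2 + t$ and $\min\{2\dim X_2, 2k\}$, and since $\dim(X_2 \times X_2) \geq 2\dim X_2$ can be a strict inequality, these known bounds may fall short of the conjectured target; the ``fibering argument'' you appeal to would hit the same wall as $\phi_{12}$. You correctly isolate the $\mathcal L(X_1)$ lower bound as hard, but you should recognize that all three lower bounds are open for the same underlying reason, and that inventing a new ``product-sensitive'' Furstenberg estimate --- as your final paragraph suggests --- is exactly what the paper's authors identify as the missing ingredient, not a side step.
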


The motivation for the above conjecture can be seen in noting 
\[
\mathcal L(X) := \mathcal L(X_1) \cup \mathcal L(X_1,X_2) \cup \mathcal L(X_2).
\]
In particular, firstly notice that one always has 
\[
\dim \mathcal L(X_1) \leq 2(T-1)
\]
as $X_1 \subset H \in \mathcal A(n,T).$ So, if one had (for $i = 1,2$)
\begin{equation}\label{productsetlines}
\dim \mathcal L(X_1,X_2) = \dim(X_1 \times X_2) \quad \text{ and } \quad \dim \mathcal L(X_i) = \dim (X_i \times X_i),
\end{equation}
the conjecture would follow. Furthermore, note that Conjecture \ref{BM-EBConjecture} implies Theorem \ref{BM-EB-ForConj} as 
\[
\dim (X_1 \times X_2) \geq \dim X_1 + \dim X_2 = \dim X + \inf_{H \in \mathcal A(n,T)} \dim (X\setminus H)
\]
and 
\[
\dim(X_1 \times X_1) \geq 2\dim X_1 = 2\dim X.
\]
Here, we use that for all $A,B\subset \R^n$, 
\[
\dim (A\times B) \geq \dim A + \dim B,
\]
see \cite[Theorem 8.10]{Mattila95}. However, it is worth noting that the above lowerbound for product sets cannot be replaced by an equality (see e.g. \cite[Example 14]{BrightMarshall}). For this reason, we expect the equalities in \eqref{productsetlines} to be quite challenging (if true), though of course (as a consequence) quite interesting.

\section{Falconer-type Theorems} \label{SEC:FALCONERTYPE}

Thus far, we have presented most problems from the perspective of projection theory as this has been my primary area of interest and research. However, as is the case for all areas of mathematics, projection theory is not studied in a vacuum. Projection theory both \textit{inspires} and has \textit{been inspired} by work in various parts of harmonic analysis, and in this subsection we will highlight this through our discussion of Falconer-type problems. The goal of this is two-fold: to encourage the reader interested in projection theory to explore more topics in harmonic analysis, and to encourage the reader interested in these aforementioned topics to explore more problems in projection theory.

\subsection{The Erd\H{o}s Distance Problem} \label{erdosprob}

Let us first restate Question \ref{falconertypequestion}. Recall that for $X\subset \R^n$, we let 
\[
\Delta(X) := \{|x-y| : x,y\in X\} \subset \R,
\]
be the distance set of $X$.

\begin{question}\label{ch4:falconertypequestion}
    How few distinct distances can $N$ points in $\R^n$ determine? In other words, given $X\subset \R^n$ finite,  how can we lowerbound $\Delta(X)$ in terms of the size of $|X|$?
\end{question}

Erd\H{o}s studied Question \ref{ch4:falconertypequestion} in 1946 \cite{Erdos1946}. In particular, given $f(N)$ is the minimum number of distances determined by $N$ points in $\R^2$, Erd\H{o}s showed 
\begin{equation}\label{erdosbounds}
(N-3/4)^{1/2} -1/2 \leq f(N) \lesssim \frac{N}{\sqrt{\log N}}.
\end{equation}
The example that Erd\H{o}s constructed to obtain the upperbound in \eqref{erdosbounds} is a square lattice. To see this, if $X = \mathbb{Z}^n \cap [0,p]^n$ then, 
\begin{equation}\label{ErdosSharp}
|\Delta(X)| \leq |\{x_1^2 + \cdots + x_n^2 : x_i \in [0,p]\}| \leq np^2.
\end{equation}
Thus, letting $N= p^n$, it follows that $f(N) \lesssim_n N^{\frac{2}{n}}$ which implies the sharpness of the upperbound in \eqref{erdosbounds} up to the log term in the denominator. Being more careful in \eqref{ErdosSharp} for $p = \sqrt{N}$ and $n = 2$, one can obtain Erd\H{o}s' $\frac{N}{\sqrt{\log N}}$ bound, see \cite[Theorem 1]{Erdos1946}. It is conjectured that this construction is sharp. In particular, the Erd\H{o}s distance problem conjectures that given $X\subset \R^n$ finite, then for all $\epsilon>0$ there exists a constant $C_\epsilon>0$ such that 
\begin{equation}\label{ErdosConjIneq}
|\Delta(X)| \geq C_\epsilon |X|^{\frac{2}{n}-\epsilon}.
\end{equation}

The Erd\H{o}s distance problem has only been resolved in the plane by Guth and Katz \cite{GuthKatz} by the polynomial method, proving that for all $X\subset \R^2$ finite,
\[
|\Delta(X)| \gtrsim \frac{|X|}{\log |X|}.
\]
To obtain their result, Guth and Katz prove and utilize an improvement to the Szemer\'edi--Trotter theorem in $\R^3$ which we now briefly describe.

\begin{remark}
    It may be surprising that we went from the distance problem in the plane to lines in three dimensions. Relating the distances problem to lines in $\R^3$ was an approach developed by Elekes and Sharir in 2011 \cite{ElekesSharir}. For a discussion of this ``Elekes--Sharir'' framework, see e.g. \cite[Section 2]{GuthKatz}.
\end{remark}

Recall that given $\mathcal L \subset \mathcal A(n,1)$ finite, Szemer\'edi--Trotter theorem gives 
\[
|P_r(\mathcal L)|\lesssim \frac{|\mathcal L|^2}{r^3} + \frac{|\mathcal L|}{r}.
\]
This bound is sharp in all dimensions for \textit{arbitrary} sets of lines. For instance, one can take a sharp example (of points and lines) for the Szemer\'edi--Trotter theorem in the plane and consider the same example in $\R^2 \times \{0\}\subset \R^3$. However, it's reasonable to expect that if your points and lines are reasonable nonconcentrated on planes, one can improve the Szemer\'edi--Trotter theorem. This is what Guth and Katz did in $\R^3$ \cite{GuthKatz}. In particular, they showed that if at most $|\mathcal L|^{1/2}$-many lines of $\mathcal L$ lie in any 2-plane, then for $r \geq 3$
\begin{equation} \label{GK1}
|P_r(\mathcal L)|\lesssim \frac{|\mathcal L|^{3/2}}{r^2} + \frac{|\mathcal L|}{r}.
\end{equation}
Moreover, if at most $|\mathcal L|^{1/2}$-many lines of $\mathcal L$ lie in any 2-plane or any degree 2 algebraic surface, then 
\begin{equation} \label{GK2}
|P_2(\mathcal L)| \lesssim |\mathcal L|^{3/2},
\end{equation}
see \cite[Chapters 12 and 13]{GuthPolynomialMethods}.\footnote{It would be interesting to know what these estimates gives for the study of (discrete) exceptional set and Furstenberg set estimates in $\R^3$, which I hope to look into.} It is an interesting question to ask if we can impose similar algebraic conditions on a family of lines $\mathcal L \subset \mathcal A(n,1)$ that give improvements to the Szemer\'edi--Trotter theorem for $n\geq 4$, though we do not discuss this further here. For more on the Erd\H{o}s distance problem see \cite{GaribaldiSengerIosevichBook},\footnote{Based on the topics covered within this thesis the reader may be interested to know that \cite{GaribaldiSengerIosevichBook} also covers analogues of the Erd\H{o}s distance problem over finite fields. One can also use such techniques to study distance \textit{graphs} over finite fields, see \cite{BrightIosevichEtAl,BrightIosevichEtAlGeneralized,IosevichSimplexInterpolation,IosevichRudnev} and references therein for more on this subject.} and for more references and work in higher dimensions see \cite{SolymosiVu}.



\subsection{The Falconer Distance Problem} \label{falconerprob}

We now turn our attention to the Falconer distance problem from geometric measure theory. Let us first restate the Falconer Distance Conjecture:

\begin{conjecture}[Falconer] \label{ch4FalconerConj}
    Given a compact set $A\subset \R^n$,
    \[
    \dim A >\frac{n}{2} \quad \text{ implies that }\quad \mathcal L^1(\Delta(A)) >0.
    \]
\end{conjecture}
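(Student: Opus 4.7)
The Falconer distance conjecture remains open in every dimension $n \geq 2$, so rather than propose a resolution I will outline the Fourier-analytic framework within which all current partial progress lives and indicate where the projection theory developed throughout this thesis enters the picture.

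The plan would be to prove the stronger pinned statement: find $a \in A$ such that $\mathcal{L}^1(\Delta^a(A)) > 0$. By Frostman's lemma, choose a probability measure $\mu$ supported on $A$ with $\mu(B(x,r)) \lesssim r^s$ for some $s > n/2$. For each $a \in A$ define the pinned distance measure $\nu_a$ by $\int f\,d\nu_a = \int f(|a-y|)\,d\mu(y)$, so $\mathrm{supp}(\nu_a) \subset \Delta^a(A)$. The sufficient analytic condition is the weighted $L^2$ bound
\begin{equation*}
\int \|\nu_a\|_{L^2(\R)}^2 \, d\mu(a) < \infty,
\end{equation*}
which forces $\nu_a \ll \mathcal{L}^1$ for $\mu$-a.e.\ $a$, and hence $\mathcal{L}^1(\Delta^a(A)) > 0$ for some (in fact most) $a \in A$.

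Expanding the inner integral by Plancherel and polar coordinates reduces the task to controlling
\begin{equation*}
\int_0^\infty \int_{\R^n} |\hat{\mu}(\xi)|^2 \, |\hat{\sigma}_r(\xi)|^2 \, \rho(r) \, d\xi \, dr,
\end{equation*}
where $\sigma_r$ is surface measure on the sphere of radius $r$ and $\rho$ is a smooth cutoff. Using the classical decay $|\hat{\sigma}_r(\xi)| \lesssim (r|\xi|)^{-(n-1)/2}$ together with the $s$-energy estimate $\int |\hat{\mu}(\xi)|^2 |\xi|^{s-n}\,d\xi < \infty$ (valid when $\mu$ is $s$-Frostman) recovers only Falconer's original threshold $s > (n+1)/2$, losing half a dimension compared to what is conjectured.

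The hard part --- and the reason the conjecture has been open for forty years --- is recovering that missing half-dimension. The modern approach of Guth--Iosevich--Ou--Wang and Du--Ou--Ren--Zhang replaces the crude spherical-average estimate by a wave-packet/decoupling decomposition of $\hat{\mu}$, exploiting the curvature of $\mathbb{S}^{n-1}$ to gain against the exponent. Projection theory enters decisively at the step where one must show that the pinning argument does not degenerate: one has to rule out concentration of $\mu$ on sphere-like level sets $\{y : |a-y| = r\}$ for most pins $a$, and this geometric nonconcentration is controlled precisely by Orponen's and Ren's radial projection theorems (Theorems \ref{orponenradprojtheorem}, \ref{renradprojTheorem}), which guarantee that $\pi_a$ spreads $\mu$ across $\mathbb{S}^{n-1}$ with maximal dimension for a typical $a$. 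The principal obstacle to closing the gap all the way to $s > n/2$ is that the radial projection bounds still lose an endpoint, and no wave-packet argument to date has been able to absorb this loss; a conceptually new input --- plausibly a sharper radial projection or exceptional set estimate in $\R^n$ --- seems to be required.
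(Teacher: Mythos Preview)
Your recognition that the statement is an open conjecture is correct, and the paper treats it identically: Conjecture \ref{ch4FalconerConj} is never proved in the paper, only discussed as motivation. The paper's surrounding exposition recounts Falconer's original $(n+1)/2$ threshold, the current records of Guth--Iosevich--Ou--Wang and Du--Ou--Ren--Zhang, and emphasizes exactly the same mechanism you highlight --- that Orponen's and Ren's radial projection theorems (Theorems \ref{orponenradprojtheorem} and \ref{renradprojTheorem}) are the projection-theoretic input feeding into the decoupling machinery.

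Your outline of the Mattila integral / pinned $L^2$ framework is accurate and more detailed than what the paper itself presents, so there is nothing to correct. One minor point of emphasis: the paper frames the role of radial projections slightly differently than you do. Rather than ``ruling out concentration on sphere-like level sets,'' the paper stresses that radial projection estimates were used in \cite{DuIosevichOuWangZhang} after a generic orthogonal projection onto a $(\tfrac{n}{2}+1)$-plane, and that Ren's theorem allowed \cite{DuOuRenZhang} to bypass that wasteful projection step. This is a difference in narrative rather than substance, and your description of the obstruction is consistent with theirs.
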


Note that the $\frac{n}{2}$ in the lowerbound is continuum analogue to the $\frac{2}{n}$ in \eqref{ErdosConjIneq}. In fact, modifying the Erd\H{o}s' square lattice example, one can show that if $\dim A \leq \frac{n}{2}$, it is possible for $\mathcal L(\Delta(A)) = 0$. For an explicit construction and an introduction to the Fourier analysis related to Falconer's conjecture, we refer the reader to \cite{IosevichWhatIsFalconer}.

In 1985, Falconer \cite{Falconer85} proved that the conclusion of Conjecture \ref{FalconerConj} holds if $\dim A >\frac{n+1}{2}$ using a potential-theoretic argument. Closing the gap between $\frac{n+1}{2}$ and $\frac{n}{2}$ has garnered much attention in the years since Falconer first introduced the problem. In some sense, the conjecture has served as a measuring stick for how much we can say about the rich relationships between measure theory, Fourier analysis and geometry. Early investigations by Mattila \cite{Mattila1985FalconerProb}, Wolff \cite{WolffFalconerProb}, Bourgain \cite{BourgainSubring}, and Erdo\u gan \cite{ErdoganFalconerProb}, have since been built upon using techniques such as Kolmogorov complexity \cite{KolmogorovEx4}, Fourier decoupling \cite{GuthIosevichOuWang}, and, most notably for the purposes of this thesis, projection theory \cite{OrponenRadProjSmoothness,RenRadProj}. We refer the reader to \cite{DuOuRenZhang} for the most recent work and literature review on this problem.

 As we stated in the introduction, the best lowerbounds towards the Falconer distance conjecture currently give the following:

\begin{theorem}[\cite{GuthIosevichOuWang,DuOuRenZhang}]
Let $n\geq 2$ and let
\[
f(n) = \begin{cases}
    \frac{5}{4}, & n=2 \quad \quad \text{(Guth--Iosevich--Ou--Wang \cite{GuthIosevichOuWang})} \\
    \frac{n}{2} + \frac{1}{4} - \frac{1}{8n + 4}, & n\geq 3 \quad \quad \text{(Du--Ou--Ren--Zhang \cite{DuOuRenZhang})}.
\end{cases}
\]
Then, for all compact sets $A\subset \R^n$ with $\dim A > f(n)$, $\mathcal L^1(\Delta(A))>0$.
\end{theorem}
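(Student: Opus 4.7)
The plan is to prove the result via the potential-theoretic/Mattila framework, reduced to a pinned $L^2$ bound and then attacked via a combination of wave-packet decomposition, decoupling, and the radial projection theorems of Orponen (Theorem \ref{orponenradprojtheorem}) for $n=2$ and Ren (Theorem \ref{renradprojTheorem}) for $n\geq 3$. Concretely, let $s < \dim A$ be close to $\dim A$ and let $\mu$ be a Frostman measure on $A$ with $\mu(B(x,r))\lesssim r^s$. For each $a \in A$, let $\nu_a = (d_a)_\sharp \mu$ where $d_a(y)=|y-a|$ is the pinned distance. It is classical that $\mathcal{L}^1(\Delta^a(A))>0$ follows from $\|\nu_a\|_{L^2}<\infty$, and by a pigeonhole/averaging argument against $\mu$ it suffices to show $\int \|\nu_a\|_{L^2}^2\,d\mu(a) < \infty$. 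A standard calculation expands this integral, after a polar change of variables in Fourier space, into an expression controlled by dyadic pieces of the form
\[
\sum_{R \text{ dyadic}} R^{n-1}\int_R^{2R} \int_{\mathbb{S}^{n-1}} |\widehat{\mu}(r\theta)|^2\,d\sigma(\theta)\,\frac{dr}{r},
\]
weighted by pinning factors $e^{2\pi i a\cdot r\theta}$ that average back against $\mu$.

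The first step is therefore to reduce matters to a good $L^2$ bound for the spherical average $S_\mu(R):=\int_{\mathbb{S}^{n-1}}|\widehat{\mu}(R\theta)|^2\,d\sigma(\theta)$, uniformly in $R$, of the form $S_\mu(R) \lesssim R^{-\alpha(n)}$ for some $\alpha(n)$ large enough relative to $s$. Classical Mattila/Falconer bounds give $\alpha(n)=(n-1)/2$ and recover the threshold $(n+1)/2$. To push past this, the second step is a wave-packet decomposition: at scale $R$, partition the $R$-annulus in Fourier space into $R^{-1/2}$-caps and decompose $\widehat{\mu}|_{\text{ann}(R)}$ into pieces localized in frequency caps and correspondingly dual tubes in physical space. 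The $L^2$ decoupling theorem for the sphere lets one dominate $S_\mu(R)$ by an $\ell^2$-sum of square-function contributions of these wave packets, at the cost of a $R^{\varepsilon}$ loss.

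The crucial third step is where projection theory enters. Each wave packet is essentially localized in a $1\times \cdots \times 1 \times R^{-1/2}$ slab, so the $L^2$ mass of $\mu$ captured by a packet is controlled by how much of $\mu$ concentrates on neighborhoods of $(n-1)$-planes, or --- after refined bookkeeping --- on neighborhoods of lines through points of $A$. This is precisely the setup where radial projection bounds apply: the sharp lower bound on $\sup_{x\in X}\dim \pi_x(Y\setminus\{x\})$ from Theorems \ref{orponenradprojtheorem} and \ref{renradprojTheorem} implies, via Frostman/$(\delta,s)$-set discretization, a quantitative estimate on the $\mu$-mass inside thin tubes emanating from a typical pin $a\in A$. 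Plugging this ``refined Kakeya/tube'' estimate into the decoupling square function yields $S_\mu(R) \lesssim R^{-\alpha(n)}$ with $\alpha(n) = 2s - n + 2(n-s) \cdot \beta(n)$, where $\beta(n)$ is the gain from the radial projection theorem; optimizing in the parameters produces the improvement $\tfrac14$ (for $n=2$) and $\tfrac14 - \tfrac{1}{8n+4}$ (for $n\geq 3$) over Falconer's $(n+1)/2$.

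The main obstacle will be step three: the radial projection bounds as stated give qualitative Hausdorff dimensional information, while decoupling requires quantitative, scale-uniform, $\delta$-discretized versions at scale $R^{-1/2}$. One must therefore either appeal to the discretized statements underlying Theorems \ref{orponenradprojtheorem}, \ref{renradprojTheorem}, and Theorem \ref{RenTheoremKaufman}, or re-prove the key incidence/Furstenberg estimate at a single scale. Handling the pinning (so that the averaging against $d\mu(a)$ recovers the desired $L^2$ bound rather than just $L^1$ or Hausdorff-dimension information), and aligning the exponents so that the $\alpha(n)$ gain genuinely beats the Mattila threshold, is the delicate endgame. Once the spherical average bound $S_\mu(R) \lesssim R^{-\alpha(n)}$ holds with $2s > n - \alpha(n)$, summing the geometric series gives $\int \|\nu_a\|_{L^2}^2\,d\mu(a) < \infty$, hence $\mathcal{L}^1(\Delta^a(A))>0$ for $\mu$-almost every $a\in A$, which in particular yields the unpinned conclusion $\mathcal{L}^1(\Delta(A))>0$.
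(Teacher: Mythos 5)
The paper does not give a proof of this theorem; it is cited as an external result of Guth--Iosevich--Ou--Wang (for $n=2$) and Du--Ou--Ren--Zhang (for $n\geq 3$), and the surrounding text only summarizes the role that Orponen's and Ren's radial projection theorems play in those papers. So there is no internal proof against which your proposal can be checked line by line; what follows is a comparison with the actual published arguments.

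Your broad architecture is right --- Frostman measure, reduction to a pinned $L^2$ quantity, wave packets/decoupling, and an input from radial projections --- but the way the pieces interact in your sketch does not match how the arguments actually close, and a few of the claimed reductions would not deliver the stated thresholds. First, the reduction to a uniform-in-$R$ spherical average bound $S_\mu(R)\lesssim R^{-\alpha(n)}$ is the older Mattila--Wolff--Erdo\u{g}an route; pushing $\alpha(n)$ past $(n-1)/2$ by purely Fourier-analytic means is known to be essentially impossible in general, and that is precisely why Guth--Iosevich--Ou--Wang abandon the spherical-average framework. What they do instead is split $\mu = \mu_1 + \mu_2$, where $\mu_2$ lives on a sparse collection of ``heavy'' $R^{-1/2}$-tubes, and then prove a weighted $L^2$ estimate on $\widehat{\mu_1}$ via decoupling; decoupling never controls the full spherical average of $|\widehat{\mu}|^2$. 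Second, and more importantly, the radial projection theorem enters in the \emph{pin-selection} step, not in the wave-packet bookkeeping: one uses Orponen's theorem (resp.\ Ren's theorem for $n\geq 3$) to find a pin $a\in A$ for which the bad part $\mu_2$ captures little mass as seen from $a$, i.e.\ for which the tube family through $a$ avoids the heavy tubes. Your step three, which has the radial projection estimate directly bounding the $\mu$-mass captured by individual wave packets, conflates the good/bad decomposition with the decoupling square-function bound and would not produce an improvement over the Mattila threshold. Third, the explicit formula $\alpha(n) = 2s - n + 2(n-s)\beta(n)$ has no basis in either paper; the exponents $\tfrac{5}{4}$ and $\tfrac{n}{2}+\tfrac{1}{4}-\tfrac{1}{8n+4}$ arise from an interplay between the decoupling exponent, a refined Strichartz/weighted restriction estimate, and (for $n\geq 3$) the dimension of the plane avoided via Ren's theorem, not from a single tunable $\beta(n)$. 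Finally, in the published proofs the conclusion is established for a single well-chosen pin rather than for $\mu$-a.e.\ pin; the averaging-against-$d\mu(a)$ step you describe is replaced by first fixing the good pin via the radial projection theorem and then running the $L^2$ estimate for that pin. If you wanted to make your proposal rigorous you would need to (i) drop the spherical-average reduction in favor of the good/bad tube decomposition, (ii) cleanly separate the radial-projection pin-selection lemma from the decoupling estimate, and (iii) work with the discretized ($\delta$-discretized, ``thin tubes'') forms of Theorems~\ref{orponenradprojtheorem} and~\ref{renradprojTheorem}, as you correctly flag is necessary.
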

Moreover, both Guth--Iosevich--Ou--Wang and Du--Ou--Ren--Zhang obtain \textit{pinned results}, showing that if $A\subset \R^n$ is compact and $\dim A>f(n)$, there exists an $a\in A$ so that the \textit{pinned distance set} pinned at $a,$
\[
\Delta^a(A) := \{|a-y| : y\in A\},
\]
has positive Lebesgue measure (and thus so too does $\Delta(A)$). 

While there is a lot we could present regarding the Falconer distance problem, I will focus on the discussion on the connections to projection theory. Firstly, we remark that explicit connections between the Furstenberg set problem and the Falconer distance problem were made in the work of Katz--Tao and Bourgain \cite{KatzTaoSubringConjecture, BourgainSubring}---the same work that led to Bourgain's projection theorem (Theorem \ref{BOURGAIN}) that we discussed in Section \ref{sec:ESE}! 

That said, more direct connections have been made between the theory of \textit{radial} projections and the Falconer distance problem, especially utilizing Orponen's radial projection theorem \cite{OrponenRadProjSmoothness}). In 2018, Orponen's result was utilized by Keleti and Shmerkin \cite{KeletiShmerkin} to obtain an exceptional set estimate \textit{for pinned distance sets} in $\R^2$, showing that for $A\subset \R^2$ with $\dim A = s$,
\[
\dim \{y\in \R^2 : \dim \Delta^y(A) < \min\{\frac{2}{3}s, 1\}\}\leq \max\{1,2-s\}.
\]
This directly implies that for $s>1$, there exists a pin $a \in A$ such that 
\[
\dim \Delta^a(A) \geq \min\{\frac{2}{3}s,1\}.
\]
After learning of the work of Keleti--Shmerkin earlier that year, Guth, Iosevich, Ou, and Wang \cite{GuthIosevichOuWang} also made use of Orponen's projection theorem with the theory of decoupling to obtain the bound of $\frac{5}{4}$ in the plane which has yet to be improved upon.

Bounds towards Falconer's distance problem in higher dimensions has also made use of radial projections. In 2021, Du, Iosevich, Ou, Wang, and Zhang \cite{DuIosevichOuWangZhang} applied a generic orthogonal projection onto a $(\frac{n}{2} + 1)$-dimensional subspace with Orponen's radial projection theorem to show that if $n$ is even, 
\[
\dim A > \frac{n}{2} + \frac{1}{4} \implies \mathcal L^1(\Delta(A))>0.\footnote{In fact, again, \cite{DuIosevichOuWangZhang} gives a pinned distance result for the same range of $\dim A.$}
\]

\begin{remark}
As discussed in \cite[Section 5]{DuIosevichOuWangZhang}, in odd dimensions they obtain bounds by projecting onto a generic $\frac{n+1}{2}$-dimensional subspace, but they obtain worse bounds than previous work of Du and Zhang \cite{DuZhang} who showed
\[
\dim A > \frac{n}{2} + \frac{1}{4} + \frac{1}{8n-4} \implies \mathcal L^1(\Delta(A))>0.
\]
\end{remark}

Though a generic orthogonal projection onto a $(\frac{n}{2} + 1)$-dimensional subspace may on the face of it be wasteful, the existence of a $(\frac{n}{2} + 1)$-plane proved tricky to get around for the purposes of applying Orponen's result (at least via the decoupling approach of \cite{GuthIosevichOuWang}\footnote{The aforementioned decoupling approach employs a ``good/bad'' tube decomposition, a technique which Liu made morally similar use of in \cite{Liu2020}.}). See \cite[\textsection Old and  new ideas]{DuOuRenZhang} for further discussion. As such, it is perhaps unsurprising that Ren's 2023 radial projection theorem \cite{RenRadProj} in $\R^n$ (Theorem \ref{renradprojTheorem}) was immediately put to use in the study of the Falconer distance problem in higher dimensions.\footnote{As an interesting historical note, \cite{RenRadProj,DuOuRenZhang,DuOuRenZhangDecoupling} all came out as preprints the same day.} Ren's result allowed him, Du, Ou, and Zhang \cite{DuOuRenZhang} to get around this generic projection argument, obtaining that for $n\geq 3$, if
\[
\dim A > \frac{n}{2} + \frac{1}{4} - \frac{1}{8n + 4},
\]
there exists an $a\in A$ such that $\mathcal L^1(\Delta^a(A))>0$, beating the bound of $\frac{n}{2} + \frac{1}{4}$ for the first time.

\subsection{Pinned Dot Product Set Estimates} \label{sec:BMS}

In 2024, motivated by the progress in projection theory over the past few years, as well as the applications of projection theory towards the Falconer distance problem, Marshall, Senger, and I \cite{BrightMarshall} studied a Falconer-type problem for dot products. 

\begin{notation}
Given $A\subset \R^n$ and $a\in \R^n$, let 
\[
\Pi^a(A) := \{a\cdot y : y\in A\} \subset \R.
\]
\end{notation}

Making use of standard estimates for radial projections and orthogonal projections from Chapter \ref{ch:Topics}, we obtained the following:

\begin{theorem}[Theorem 1.2.(2) in \cite{BrightMarshallSenger}]
    \label{BMS-DotProd}
    Let $A\subset \R^n$ be Borel with $\dim A > \frac{n+1}{2}$ and $n\geq 2$. Then, there exists an $a\in A$ such that 
    \[
    \mathcal L^1(\Pi^a(A)) >0.
    \]
\end{theorem}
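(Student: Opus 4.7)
The plan is to reduce the pinned dot product set to an orthogonal projection of $A$, and then to play a sharp Falconer-type exceptional set estimate (controlling the bad directions) against the trivial lower bound for a radial projection (controlling the supply of available pin-directions). The hypothesis $\dim A > (n+1)/2$ will appear exactly as the balance point between these two sharp inputs.

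First I would make the geometric reduction. For every $a \in \R^n \setminus \{0\}$,
\[
\Pi^a(A) = \{a \cdot y : y \in A\} = |a| \cdot P_{a/|a|}(A),
\]
where $P_\theta : \R^n \to \R$ denotes orthogonal projection onto the line through the origin in direction $\theta$. Since dilation by $|a| > 0$ preserves positivity of $\mathcal L^1$, it suffices to produce some $a \in A \setminus \{0\}$ for which $\mathcal L^1(P_{a/|a|}(A)) > 0$. Removing the origin does not affect $\dim A$, so I may assume $0 \notin A$, and then $\pi_0(A) = \{a/|a| : a \in A\} \subset \mathbb S^{n-1}$ parameterizes all admissible pin-directions coming from $A$ itself.

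Next I would set $B := \{\theta \in \mathbb S^{n-1} : \mathcal L^1(P_\theta(A)) = 0\}$ and combine two ingredients. From Falconer's sharp exceptional set estimate for positive-measure orthogonal projections (the measure-theoretic strengthening of Theorem \ref{FALCONER} originating in \cite{Falconer82}), $\dim A > 1$ forces
\[
\dim B \leq n - \dim A.
\]
From the trivial Hausdorff-dimensional lower bound for radial projections (Lemma \ref{radprojtrivialbound}) applied at the origin,
\[
\dim \pi_0(A) \geq \dim A - 1.
\]
The hypothesis $\dim A > (n+1)/2$ is exactly the threshold at which $\dim A - 1 > n - \dim A$, and hence $\dim \pi_0(A) > \dim B$. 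In particular, $\pi_0(A) \not\subseteq B$: there is some $a \in A \setminus \{0\}$ whose direction $a/|a|$ lies outside $B$, giving $\mathcal L^1(P_{a/|a|}(A)) > 0$, and the first-step reduction then yields $\mathcal L^1(\Pi^a(A)) > 0$.

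The one delicate point is that Theorem \ref{FALCONER} as stated in this thesis controls $\dim P_\theta(A)$ rather than $\mathcal L^1(P_\theta(A))$; I would invoke the measure-theoretic form of Falconer's 1982 estimate, which is obtained by a Frostman energy argument with exponent slightly above $1$ together with a standard $L^2$-Fourier bound on projected measures. This is the main step to handle with care, although it is entirely classical. The fact that the threshold $(n+1)/2$ arises as the equality point of the two sharp estimates also makes transparent the route to Theorem \ref{BMS-DotProd2}: any strengthening of either the exceptional-set input (for instance Theorem \ref{OSRW} in the plane) or the radial projection input (for instance Theorem \ref{renradprojTheorem}) shifts the balance point downward, consistent with the paper's remark that more modern projection theoretic estimates yield sharper thresholds and motivate Conjecture \ref{BMS-DotProdConj}.
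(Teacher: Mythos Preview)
Your proposal is correct and follows essentially the same argument as the paper: the reduction $\Pi^a(A) = |a|\,P_{a/|a|}(A)$ (the paper's Lemma~\ref{DotProdKeyLemma}), Falconer's measure-theoretic exceptional set estimate $\dim E_+(A) \le n - \dim A$ (stated in the paper as Theorem~\ref{FalconerNull}), and the trivial radial projection bound $\dim \pi_0(A\setminus\{0\}) \ge \dim A - 1$ (Lemma~\ref{radprojtrivialbound}) are combined exactly as you describe. You also correctly flag the one point requiring care, namely that the input needed is the positive-measure version of Falconer's estimate rather than the purely dimensional Theorem~\ref{FALCONER}; the paper handles this by citing that version directly.
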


To do so, we first need to make some remarks regarding the geometry of dot products. Firstly, we noted that we use dot products in the definition of orthogonal projections onto lines. In particular, given $\theta \in \mathbb{S}^{n-1}$, recall that we let $P_\theta: \R^n \to \R$ be given by
\[
P_\theta(x) = \theta \cdot x.
\]
This begs the question: given a pin $a\in \ell_\theta$, the line through the origin in direction $\theta$, how can we compare $\Pi^a(A)$ and $P_\theta(A)$?

\begin{lemma}[Key Lemma 2.9 in \cite{BrightMarshallSenger}] \label{DotProdKeyLemma}
Let $A\subset \R^n$ Borel and $\theta \in \mathbb{S}^{n-1}$. For all $a\in\theta$ and all $0 < s \leq 1,$
\[
\mathcal H^s(P_\theta(A)) = |a|^{-s} \mathcal H^s(\Pi^a(A)).
\]
\end{lemma}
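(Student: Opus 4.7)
The plan is to observe that when the pin $a$ lies on the line $\ell_\theta$, the pinned dot product map is essentially just a rescaling of the orthogonal projection $P_\theta$, and then to exploit the well-known scaling behavior of Hausdorff measure on the line.

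First I would write $a = c\theta$ for the unique scalar $c \in \R$ satisfying $|c| = |a|$ (the sign is determined by which of $\pm\theta$ points toward $a$ from the origin). Using bilinearity of the Euclidean dot product, I would then compute
\[
\Pi^a(A) = \{a\cdot y : y \in A\} = \{c(\theta\cdot y) : y\in A\} = c \cdot P_\theta(A),
\]
where $c\cdot P_\theta(A)$ denotes the dilation of the set $P_\theta(A) \subset \R$ by the scalar factor $c$. This is the heart of the argument: on the line spanned by $\theta$, pinning by $a$ is indistinguishable from projecting onto $\theta$ up to a uniform scaling by $|a|$.

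Next I would invoke the standard scaling property of the $s$-dimensional Hausdorff measure, namely that for any Borel $E \subset \R$ and any $\lambda \in \R$,
\[
\mathcal H^s(\lambda E) = |\lambda|^s \mathcal H^s(E).
\]
Applied with $\lambda = c$ and $E = P_\theta(A)$, this gives $\mathcal H^s(\Pi^a(A)) = |c|^s\,\mathcal H^s(P_\theta(A)) = |a|^s\,\mathcal H^s(P_\theta(A))$. Dividing through by $|a|^s$ (noting we may assume $a \neq 0$, else the statement is vacuous or trivial) yields the asserted identity. There is essentially no technical obstacle here; the main conceptual step is simply recognizing the clean identity $\Pi^a = c\cdot P_\theta$ on $\ell_\theta$, which explains why dot-product sets pinned on lines through the origin inherit size bounds directly from the corresponding orthogonal projection estimates. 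This is precisely what makes the lemma a useful bridge from sharp orthogonal/radial projection results to the Falconer-type theorem for dot products (Theorem \ref{BMS-DotProd}).
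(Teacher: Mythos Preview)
Your proof is correct and follows essentially the same approach as the paper: both arguments recognize $\Pi^a(A)$ as a dilation of $P_\theta(A)$ by a scalar of modulus $|a|$, then apply the scaling law $\mathcal H^s(\lambda E)=|\lambda|^s\mathcal H^s(E)$. Your parametrization $a=c\theta$ with $c$ possibly negative handles the sign issue in one stroke, whereas the paper writes $a/|a|\in\{\pm\theta\}$ and proceeds without loss of generality; the substance is identical.
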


\begin{proof}
    This follows by noting 
    \[
    \Pi^a(A) := \{a\cdot y : y\in A\} = |a| \left\{\frac{a}{|a|}\cdot y : y\in A\right\}.
    \]
    Thus, given $a\in \ell_\theta$, it follows that $\frac{a}{|a|} \in \{\theta, - \theta\}$. Suppose without loss of generality that $\frac{a}{|a|} = \theta$. Hence, 
    \[
    \Pi^a(A) := |a|\left\{\theta \cdot y : y \in A, \theta = \frac{a}{|a|} \text{ for some } a\in A\right\} = |a| P_\theta(A).
    \]
    Therefore, given 
    \[
    \mathcal H^s(\lambda X) = |\lambda|^s \mathcal H^s(X)
    \]
    for all $\lambda \in \R$ and $X\subset \R^n$ Borel, the claim follows.
\end{proof}

In particular, it follows that
\[
\mathcal H^1(P_\theta(A)) >0 \quad \iff\quad \mathcal H^1(\Pi^a(A))>0 \,\, \forall a\in \ell_\theta.
\]
In fact, Lemma \ref{DotProdKeyLemma} implies that for all $\theta \in \mathbb{S}^{n-1}$ and all $0 \leq u \leq 1$,
\[
\dim P_\theta(A)\geq u \quad \iff \quad \dim \Pi^a(A) \geq u \,\, \forall a\in \ell_\theta.
\]
We will use this in the proof of Theorem \ref{BMS-DotProd2}. We now prove Theorem \ref{BMS-DotProd}.

\begin{proof}[Proof of Theorem \ref{BMS-DotProd}]
    Firstly, notice that by Lemma \ref{DotProdKeyLemma}, it suffices to show that there exists a $\theta \in \mathbb{S}^{n-1}$ such that $\mathcal L^1(P_\theta(A))>0$ and $$(A \setminus \{0\})\cap \ell_\theta \neq \emptyset.$$
    To do so, we first note the following exceptional set estimate:

    \begin{theorem}[\cite{Falconer82}] \label{FalconerNull}
        Let $A\subset \R^n$ be Borel and 
        \[
        E_+(A) := \{\theta \in \mathcal G(n,1) : \mathcal L^1(P_\theta(A)) = 0\}.
        \]
        If $\dim A >1$, then 
        \[
        \dim E_+(A) \leq n-\dim A.
        \]
    \end{theorem}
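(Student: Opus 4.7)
The plan is to follow Falconer's potential-theoretic strategy from \cite{Falconer82}, arguing by contradiction with Frostman measures on $A$ and on the putative large exceptional set, and reducing matters to an $L^2$ estimate via Plancherel. Assume $\dim E_+(A) > n - \dim A$ and pick $s,t$ with $1 < s < \dim A$ and $n - s < t < \dim E_+(A)$, so in particular $s + t > n$. By Frostman's lemma there exist finite Borel measures $\mu$ on a compact subset of $A$ and $\nu$ on a compact subset of $E_+(A)$, both of positive mass, with $\mu(B(x,r)) \leq C r^s$ and $\nu(B(\theta,r)) \leq C r^t$; the bound on $\mu$ is equivalent to finiteness of the $s$-energy $I_s(\mu) = c_{n,s}\int |\hat\mu(\xi)|^2 |\xi|^{s-n}\,d\xi < \infty$.

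The key reformulation is that by Plancherel on $\R$ and the Fourier-slice identity $\widehat{P_\theta \mu}(r) = \hat\mu(r\theta)$,
\[
\|P_\theta \mu\|_{L^2(\R)}^2 = \int_\R |\hat\mu(r\theta)|^2\,dr.
\]
If the right-hand side is finite, then $P_\theta\mu$ has an $L^2$ density and is absolutely continuous with respect to $\mathcal L^1$, so $\mathcal L^1(P_\theta A) \geq \mathcal L^1(\supp P_\theta\mu) > 0$ and $\theta \notin E_+(A)$. It therefore suffices to prove
\[
J := \int_{\mathcal G(n,1)} \int_\R |\hat\mu(r\theta)|^2\,dr\,d\nu(\theta) < \infty,
\]
which forces $\|P_\theta \mu\|_{L^2} < \infty$ for $\nu$-a.e.\ $\theta$ and contradicts $\nu$ being supported on $E_+(A)$.

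To estimate $J$, I pass to the $\R^n$-Fourier picture using polar coordinates $\xi = r\theta$. With $\bar\nu$ the symmetric lift of $\nu$ to $\mathbb{S}^{n-1}$, let $\Omega$ denote the pushforward of $dr \otimes d\bar\nu$ under $(r,\theta) \mapsto r\theta$. Then $J$ is a constant multiple of $\int_{\R^n} |\hat\mu(\xi)|^2\,d\Omega(\xi)$, and the Frostman $t$-bound on $\nu$ yields the tube estimate $\Omega(B(\xi,\rho)) \lesssim \rho^{t+1}|\xi|^{-t}$ for $0 < \rho \leq |\xi|$. A dyadic Littlewood--Paley decomposition of $\hat\mu$, combined with this tube bound and the $s$-energy of $\mu$, yields $J \lesssim I_s(\mu) < \infty$ precisely under the threshold $s + t > n$, which completes the contradiction.

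The main obstacle is justifying this last inequality rigorously, since $\Omega$ is supported on a $(t+1)$-dimensional cone and is not dominated pointwise by any multiple of $|\xi|^{s-n}\,d\xi$. The cleanest Falconer-style route is to dualize the computation: Fubini together with Plancherel formally gives
\[
J = c_n \int\int F(x-y)\,d\mu(x)\,d\mu(y), \quad F(u) := \int_{\mathbb{S}^{n-1}} \delta(\theta\cdot u)\,d\bar\nu(\theta),
\]
and the tempered distribution $F$ is controlled by smoothing $\nu$ at a scale $\delta$ and using the Frostman slicing of $\bar\nu$ by the great circles $\{\theta : \theta\cdot u = 0\}$. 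This slicing is genuinely distributional when $t < n-1$ and requires a careful $\delta \to 0$ limiting argument; the hypothesis $s + t > n$ is exactly what makes the double integral $\int\int F(x-y)\,d\mu(x)\,d\mu(y)$ converge, in harmony with the sharpness of the exceptional-set exponent $n - \dim A$.
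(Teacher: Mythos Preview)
The paper does not prove this theorem at all: it is quoted as a classical result of Falconer \cite{Falconer82} inside the proof of Theorem~\ref{BMS-DotProd} and used as a black box. (Elsewhere in Section~\ref{sec:ESE} the paper likewise only \emph{states} Falconer's projection theorem, referring the reader to \cite[Theorem 1.(ii)]{Falconer82} and \cite[Theorems 4.1 and 5.1]{Mattila15} for potential-theoretic proofs.) So there is no ``paper's own proof'' to compare against.

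That said, your outline is the standard Falconer argument and is correct in structure. A couple of comments on the execution. First, the cleanest way to push the Littlewood--Paley/tube-bound step through is to use that $\mu$ has compact support, so $|\hat\mu|$ is essentially constant on unit balls; covering the annulus $\{|\xi|\sim R\}$ by unit balls, your estimate $\Omega(B(\xi,1))\lesssim R^{-t}$ gives
\[
\int_{|\xi|\sim R}|\hat\mu|^2\,d\Omega \;\lesssim\; R^{-t}\int_{|\xi|\sim R}|\hat\mu|^2\,d\xi,
\]
and summing over dyadic $R$ yields $J\lesssim I_s(\mu)$ as soon as $-t\le s-n$, i.e.\ $s+t\ge n$. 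This is exactly the computation in Mattila's book, and it avoids the distributional route via $F(u)=\int\delta(\theta\cdot u)\,d\bar\nu(\theta)$ that you flag as delicate. Second, your phrase ``precisely under the threshold $s+t>n$'' slightly overstates things: the term-by-term comparison already works at $s+t=n$, and the strict inequality is only used to guarantee you can choose $s<\dim A$ and $t<\dim E_+(A)$ simultaneously.
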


Secondly, recall Lemma \ref{radprojtrivialbound}: for all $x\in \R^n$,
\[
\dim \pi_x(A\setminus \{x\}) \geq \dim A-1.
\]
Hence, the above estimate holds for $x = 0$, and thus if $\dim A>\frac{n+1}{2}>1$, 
\begin{equation}\label{InFactManyPins}
\dim \pi_0(A\setminus \{0\}) := \{\frac{a}{|a|} : a\in A\setminus \{0\}\} > \dim E_{+}(A).
\end{equation}
Thus, there exists a $\theta \in \pi_0(A\setminus \{0\}) \setminus E_+(A)$ such that $\theta = \frac{a}{|a|}$ for some $a\in A\setminus \{0\}$. Since $\theta \notin E_+(A)$, by Lemma \ref{DotProdKeyLemma} it follows that 
\[
\mathcal L^1(P_\theta(A))>0 \implies \mathcal L^1(\Pi^{a}(A))>0. 
\]
as desired.    
\end{proof}

We conclude the section with a few remarks pertaining to the content of \cite{BrightMarshallSenger}. Firstly, notice that by \eqref{InFactManyPins}, we in fact have many choices of a pin $a\in A$ such that the conclusion of Theorem \ref{BMS-DotProd} holds.

\begin{corollary}
Let $A\subset \R^n$ be Borel with $\dim A > \frac{n+1}{2}$ and $n\geq 2$. Then,
\[
\dim \{a\in A : \mathcal L^1(\Pi^a(A)) >0\} = \dim A.
\]
\end{corollary}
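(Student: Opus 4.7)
The plan is to upgrade the existence result Theorem \ref{BMS-DotProd} into the dimensional statement by showing that the \emph{bad} pins form a set of strictly smaller dimension than $\dim A$. Write $G := \{a \in A : \mathcal{L}^1(\Pi^a(A)) > 0\}$; the goal is $\dim G \geq \dim A$ (the reverse inequality being trivial since $G \subseteq A$). It would suffice to prove $\dim(A \setminus G) < \dim A$, since then countable stability (Lemma \ref{countablestability}) applied to $A = G \cup (A \setminus G)$ forces $\dim G = \dim A$.

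First I would identify $A \setminus G$ precisely. By Lemma \ref{DotProdKeyLemma}, for any $a \in A \setminus \{0\}$ with direction $\theta := a/|a|$, we have $\mathcal{L}^1(\Pi^a(A)) > 0$ if and only if $\mathcal{L}^1(P_\theta(A)) > 0$; equivalently, $\theta \notin E_+(A)$. Since $\Pi^0(A) = \{0\}$ has zero Lebesgue measure, this gives
\[
A \setminus G \subseteq \{0\} \cup \bigl(A \cap \pi_0^{-1}(E_+(A))\bigr),
\]
where $\pi_0^{-1}(E_+(A)) \subset \R^n \setminus \{0\}$ is the cone (union of rays from the origin) over the exceptional set of directions.

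The crux is bounding $\dim \pi_0^{-1}(E_+(A))$. By Theorem \ref{FalconerNull} (which applies because $\dim A > (n+1)/2 > 1$), we have $\dim E_+(A) \leq n - \dim A$. Next, I would note that the polar coordinate map $(\theta, r) \mapsto r\theta$ is bi-Lipschitz on each dyadic annular shell $\mathbb{S}^{n-1} \times [2^k, 2^{k+1}]$, so the intersection of $\pi_0^{-1}(E_+(A))$ with the annulus $\{2^k \leq |y| \leq 2^{k+1}\}$ is bi-Lipschitz equivalent to $E_+(A) \times [2^k, 2^{k+1}]$. The latter has Hausdorff dimension $\dim E_+(A) + 1$ by the standard product dimension identity against an interval of positive length. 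Countable stability across these annuli (Lemma \ref{countablestability}) then yields
\[
\dim \pi_0^{-1}(E_+(A)) \leq \dim E_+(A) + 1 \leq n - \dim A + 1.
\]

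Finally, the hypothesis $\dim A > (n+1)/2$ gives $n - \dim A + 1 < \dim A$, which forces $\dim(A \setminus G) \leq n - \dim A + 1 < \dim A$ and completes the argument. I do not anticipate any major obstacle: the whole proof is essentially a careful dimension-theoretic repackaging of the argument for Theorem \ref{BMS-DotProd}, with the only mildly nontrivial ingredient being the cone dimension identity, which is standard. If anything warrants extra care, it is verifying that the product dimension equality $\dim(E_+(A) \times I) = \dim E_+(A) + 1$ applies here, which it does because intervals have coinciding Hausdorff and upper box dimensions.
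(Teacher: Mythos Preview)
Your proof is correct and arrives at exactly the same numerical comparison as the paper: the bad pins lie in the cone over $E_+(A)$, and $\dim E_+(A)+1 \le n-\dim A+1 < \dim A$ forces the good set to have full dimension. The only difference is packaging. The paper argues by contradiction: it supposes the bad set $A\setminus A_\ast$ has dimension $\dim A$, then applies the trivial radial projection bound (Lemma~\ref{radprojtrivialbound}) to get $\dim E_+(A) \ge \dim(A\setminus A_\ast)-1 = \dim A - 1$, which collides with Falconer's bound $\dim E_+(A)\le n-\dim A$. You instead bound the cone $\pi_0^{-1}(E_+(A))$ directly via the polar-coordinate bi-Lipschitz identification with $E_+(A)\times(0,\infty)$. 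These are two faces of the same fact (fibres of $\pi_0$ are one-dimensional), so the arguments are interchangeable; your version is arguably more self-contained since it avoids citing Lemma~\ref{radprojtrivialbound}, whose proof the paper defers to outside references.
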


\begin{proof}
    For ease of (and by abuse of) notation, we assume $0 \not\in A$.
    
    Firstly, notice that 
    \[
    \{a\in A : \mathcal L^1(\Pi^a(A)) >0\} \subset \pi_0^{-1}(\pi_0(A) \setminus E_+(A)) := A_\ast.
    \]
    Suppose for the sake of contradiction that $\dim A_\ast < \dim A$. Then, it follows that $\dim (A\setminus A_\ast) = \dim A$ and 
    \[
    \pi_0(A\setminus A_\ast) \subset E_{+}(A).
    \]
    Hence, by Lemma \ref{radprojtrivialbound},
    \[
    \dim E_+(A) \geq \dim \pi_0(A\setminus A_\ast) \geq \dim (A\setminus A_\ast) - 1.
    \]
    Therefore, given $\dim A >\frac{n+1}{2}$ we have
    \[
    \dim (A\setminus A_\ast) = \dim A \leq \dim E_+(A) + 1 \leq \frac{n+1}{2}.
    \]
    This contradicts the assumption that $\dim A >\frac{n+1}{2}$, concluding the proof.
\end{proof}

A key ingredient towards proving Theorem \ref{BMS-DotProd} was the use of Falconer's exceptional set estimate (Theorem \ref{FalconerNull}) for $\theta \in \mathbb{S}^{n-1}$ such that $P_\theta(A)$ has null $\mathcal L^1$-measure. Such estimates can be thought of as a \textit{visibility} exceptional set estimate for orthogonal projections. To this end, applying Theorem \ref{FALCONER} in place of Theorem \ref{FalconerNull} gives the following:

\begin{theorem}[Theorem 1.2.(1) in \cite{BrightMarshallSenger}] \label{BMS-DotProd1-Dim}
    Let $n\geq 2$ and $0 < u \leq 1$. Given $A\subset \R^n$ Borel with $\dim A > \frac{n+u}{2}$, there exists an $a\in A$ such that 
    \[
    \dim(\Pi^a(A)) \geq u.
    \]
    Moreover, 
    \[
    \dim \{a \in A :\dim(\Pi^a(A)) \geq u \} = \dim A.
    \]
\end{theorem}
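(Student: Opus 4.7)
The plan is to mirror the argument used for Theorem \ref{BMS-DotProd}, but in place of Falconer's visibility bound (Theorem \ref{FalconerNull}) I would invoke the dimensional exceptional set estimate given by Falconer's projection theorem (Theorem \ref{FALCONER}). The Key Lemma \ref{DotProdKeyLemma} already yields, for all $\theta \in \mathbb{S}^{n-1}$, all $a \in \ell_\theta \setminus \{0\}$, and all $0 \leq u \leq 1$, the equivalence
\[
\dim P_\theta(A) \geq u \quad \iff \quad \dim \Pi^a(A) \geq u,
\]
so everything reduces to finding a direction $\theta \in \pi_0(A \setminus \{0\})$ with $\dim P_\theta(A) \geq u$.

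For the first assertion, I would (as in the proof of Theorem \ref{BMS-DotProd}) assume by abuse of notation that $0 \notin A$. Applying Theorem \ref{FALCONER} with $k=1$ bounds the exceptional set
\[
E_u(A) := \{\theta \in \mathcal{G}(n,1) : \dim P_\theta(A) < u\}
\]
by $\dim E_u(A) \leq \max\{n-1+u-\dim A, 0\}$. Meanwhile, Lemma \ref{radprojtrivialbound} applied with pin $x=0$ gives $\dim \pi_0(A) \geq \dim A - 1$. The hypothesis $\dim A > \frac{n+u}{2}$ is exactly the rearrangement of $\dim A - 1 > n-1+u-\dim A$, so $\dim \pi_0(A) > \dim E_u(A)$. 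Hence there exists $\theta \in \pi_0(A) \setminus E_u(A)$, i.e.\ some $a \in A$ with $a/|a| = \theta$ and $\dim P_\theta(A) \geq u$. By the Key Lemma, $\dim \Pi^a(A) \geq u$.

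For the \textit{moreover} clause, set $A_u := \{a \in A : \dim \Pi^a(A) \geq u\}$. By the Key Lemma again, if $a \in A \setminus A_u$ then $\pi_0(a) \in E_u(A)$, so $\pi_0(A \setminus A_u) \subseteq E_u(A)$. Suppose for contradiction that $\dim A_u < \dim A$; then $\dim(A \setminus A_u) = \dim A$ by countable stability (Lemma \ref{countablestability}), and Lemma \ref{radprojtrivialbound} applied to $A \setminus A_u$ gives
\[
\dim E_u(A) \geq \dim \pi_0(A \setminus A_u) \geq \dim A - 1.
\]
Combined with Falconer's bound $\dim E_u(A) \leq \max\{n-1+u-\dim A, 0\}$, this forces $\dim A \leq \frac{n+u}{2}$, contradicting the hypothesis.

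I do not expect a serious obstacle here: the argument is essentially the parallel of the $u = 1$ proof of Theorem \ref{BMS-DotProd}, and the numerology of Falconer's exceptional set bound dovetails precisely with the threshold $\frac{n+u}{2}$. The only minor subtlety is the handling of the origin, which is dispatched by the standard convention of removing a single point (which affects neither $\dim A$ nor any of the radial or orthogonal projection estimates used). It is worth remarking that, just as in Theorem \ref{BMS-DotProd}, the sharpness of Falconer's projection theorem suggests that the threshold $\frac{n+u}{2}$ obtained here is not expected to be sharp, motivating the analogue of Conjecture \ref{BMS-DotProdConj} for general $u$.
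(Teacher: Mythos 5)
Your argument is correct and follows precisely the route the paper indicates, namely substituting Falconer's dimensional exceptional set estimate (Theorem \ref{FALCONER}) for the visibility estimate (Theorem \ref{FalconerNull}) in the proofs of Theorem \ref{BMS-DotProd} and its corollary, with the threshold $\frac{n+u}{2}$ arising from $\dim A - 1 > (n-1) + u - \dim A$ exactly as you compute. Both the existence clause and the moreover clause are handled correctly, including the removal of the origin and the application of Lemma \ref{radprojtrivialbound} to $A \setminus A_u$; the implicit requirement $u \leq \min\{\dim A, 1\}$ for invoking Theorem \ref{FALCONER} holds automatically since $\dim A > \frac{n+u}{2} \geq 1 + \tfrac{u}{2} > u$.
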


In fact, our proof methodology highlights a general heuristic that better projection estimates imply better dot product results. One way to see this heuristic is through applying better orthogonal projection estimates. For instance, applying the sharp exceptional set estimates of Cholak et al. \cite{CholakEtAl} (Theorem \ref{cholak}) gives a slight improvement to \cite[Theorem 1.2.1)]{BrightMarshallSenger}, and using a restricted projection\footnote{Though we could not discuss it further here, the restricted projections problem is quite an interesting extension of exceptional set estimates for orthogonal projections, see for instance \cite{FasslerOrponen,GanGuoWang,GanGuoGuthHarrisMaldagueWang,PramanikYangZahl,AlgomShmerkin}. It may be worth noting that \cite{GanGuoWang} makes use of the high-low method and decoupling for the cone, and \cite{GanGuoWang} uses decoupling for the moment curve.} result of Zahl \cite{ZahlRestrictedProj} gives \cite[Proposition 1.10]{BrightMarshallSenger}. Furthermore, exceptional set estimates have been studied for various notions of ``size'' besides small Hausdorff dimension and null Lebesgue measure, see e.g. \cite{Wu25}. Such estimates yield more pinned dot product set estimates for these different notions of size, see e.g. \cite[Theorem 1.2.3]{BrightMarshallSenger}.

Secondly, notice that in the proof of Theorem \ref{BMS-DotProd}, we apply the trivial radial projection estimate (Lemma \ref{radprojtrivialbound}). If we instead applied stronger radial projection results, such as that of Ren \cite{RenRadProj}, we should expect stronger pinned dot product set estimates. For instance, notice the following corollary of Ren's radial projection theorem.

\begin{corollary}[Corollary of Theorem \ref{renradprojTheorem}] \label{corollaryRenRadExceptional}
    Let $X,Y\subset \R^n$ be Borel, and suppose that for all $P\in \mathcal A(n,k)$, $\dim (X\setminus P) = \dim X$. Furthermore, let 
    \[
    E_\epsilon := \{x \in X : \dim \pi_x(Y\setminus \{x\}) \leq \min\{\dim X, \dim Y, k\}- \epsilon\}.
    \]
    Then, for all $\epsilon>0$, $\dim X = \dim (X\setminus E_\epsilon)$.
\end{corollary}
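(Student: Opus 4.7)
The plan is to argue by contradiction and apply Ren's radial projection theorem (Theorem \ref{renradprojTheorem}) directly to $E_\epsilon$ in place of $X$. So suppose for the sake of contradiction that there exists $\epsilon > 0$ with $\dim(X \setminus E_\epsilon) < \dim X$. Since $X = E_\epsilon \cup (X \setminus E_\epsilon)$, by (the finite case of) Lemma \ref{countablestability} we must have $\dim E_\epsilon = \dim X$.

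The key step is to verify that $E_\epsilon$ inherits the non-concentration hypothesis imposed on $X$: namely, that $E_\epsilon$ is not contained in any $k$-plane. Indeed, if there existed a $k$-plane $P_0 \in \mathcal A(n,k)$ with $E_\epsilon \subset P_0$, then $X \setminus P_0 \subset X \setminus E_\epsilon$, and hence
\[
\dim(X \setminus P_0) \leq \dim(X \setminus E_\epsilon) < \dim X,
\]
contradicting the standing assumption that $\dim(X \setminus P) = \dim X$ for every $P \in \mathcal A(n,k)$. This is the crux of the argument and the part I expect to be the main (if mild) obstacle, since it requires recognizing that the exceptional set cannot be geometrically concentrated.

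With this established, we may apply Theorem \ref{renradprojTheorem} with $E_\epsilon$ in the role of $X$ and $Y$ unchanged, yielding
\[
\sup_{x \in E_\epsilon} \dim \pi_x(Y \setminus \{x\}) \geq \min\{\dim E_\epsilon, \dim Y, k\} = \min\{\dim X, \dim Y, k\}.
\]
By the definition of supremum, there exists some $x^\ast \in E_\epsilon$ with
\[
\dim \pi_{x^\ast}(Y \setminus \{x^\ast\}) > \min\{\dim X, \dim Y, k\} - \epsilon,
\]
which directly contradicts $x^\ast \in E_\epsilon$. This contradiction forces $\dim(X \setminus E_\epsilon) = \dim X$, since the inequality $\dim(X \setminus E_\epsilon) \leq \dim X$ is trivial.
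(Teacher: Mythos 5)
Your proof is correct and follows essentially the same route as the paper's: argue by contradiction to get $\dim E_\epsilon = \dim X$, show under the standing hypothesis that $E_\epsilon$ cannot be contained in any $k$-plane (the paper packages this as a second case of the contradiction rather than a preliminary verification, but it is the same containment $X\setminus P_0 \subset X\setminus E_\epsilon$ and the same use of the hypothesis), and then apply Theorem \ref{renradprojTheorem} to $E_\epsilon$ to contradict its defining inequality.
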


In the plane, Corollary \ref{corollaryRenRadExceptional} was noted by \cite[Corollary 1.2]{OSW}, and the proof follows nearly identically.

\begin{proof}[Proof of Corollary \ref{corollaryRenRadExceptional} from Theorem \ref{renradprojTheorem}]
    Suppose for the sake of contradiction that there exists an $\epsilon>0$ such that $\dim (X\setminus E_\epsilon) < \dim X$. Hence, $\dim E_\epsilon = \dim X$. Furthermore, it must be the case that $E_\epsilon$ is contained in a $k$-plane $H \in \mathcal A(n,k)$. If this were not the case, then we can apply Theorem \ref{renradprojTheorem} to obtain 
    \[
    \sup_{x\in E_\epsilon} \dim \pi_x(Y\setminus \{x\}) \geq \min\{\dim E_\epsilon, \dim Y, k\} = \min\{\dim X, \dim Y, k\} 
    \]
    which contradicts the definition of $E_\epsilon$. However, if $E_\epsilon$ is contained in a $k$-plane $H\in \mathcal A(n,k)$, then a contradiction arises in noting 
    \[
    \dim X > \dim (X\setminus E_\epsilon) \geq \dim (X\setminus H) = \dim X
    \]
    where the last equality follows by our hypotheses on $X$.
\end{proof}

Using Corollary \ref{corollaryRenRadExceptional}, Marshall, Senger, and I were able to obtain Falconer-type estimate for \textit{translated} pinned dot product sets.

\begin{notation}
    Given $A\subset \R^n$ and $a,x\in \R^n$, let 
\[
\Pi^a_x(A) := \{(a-x)\cdot y : y\in A\} \subset \R.\footnote{Note that we use $x$ in the subscript to align with the $x$ in $\pi_x$.}
\]
\end{notation}

\begin{theorem}[Theorem 1.6.(2) in \cite{BrightMarshall}] \label{BMS-DotProd2}
    Let $n\geq 2$ and let $\lceil \frac{n}{2}\rceil \leq k \leq n-1$ be a parameter to be chosen below. Suppose that $A,X\subset \R^n$ are Borel with $\dim X \geq \dim A = s >\frac{n}{2}$ and $\dim (X\setminus P) = \dim X$ for all $P \in \mathcal A(n,k).$ Then, there exists an $a\in A$ and $x\in X$  such that 
    \[
    \mathcal L^1(\Pi^a_x(A)) >0.
    \]
    Furthermore, there $X'\subset X$ with $\dim X' = \dim X$ such that for all $x\in X'$, 
    \[
    \dim \{a\in A : \mathcal L^1(\Pi^a_x(A)) >0\} = \dim A.
    \]
\end{theorem}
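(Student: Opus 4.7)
My plan is to reduce the translated dot product problem to an orthogonal-projection visibility problem via a translated version of Key Lemma \ref{DotProdKeyLemma}, and then to combine Falconer's exceptional set estimate for visibility (Theorem \ref{FalconerNull}) with Ren's exceptional set estimate for radial projections (Corollary \ref{corollaryRenRadExceptional}). First I would observe that for any $a\neq x$, setting $\theta = \pi_x(a) = (a-x)/|a-x|$,
\[
\Pi^a_x(A) = \{(a-x)\cdot y : y\in A\} = |a-x|\cdot P_\theta(A),
\]
so $\mathcal L^1(\Pi^a_x(A)) > 0$ if and only if $\theta\notin E_+(A)$, where $E_+(A)=\{\theta\in\mathbb{S}^{n-1}:\mathcal L^1(P_\theta(A))=0\}$. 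By Theorem \ref{FalconerNull}, $\dim E_+(A)\leq n-s$.

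Next I would apply Corollary \ref{corollaryRenRadExceptional} with $X$ in the role of $X$ and $A$ in the role of $Y$: the hypothesis $\dim(X\setminus P)=\dim X$ for all $P\in\mathcal{A}(n,k)$ lets me conclude that for every $\epsilon>0$, the set $E_\epsilon=\{x\in X:\dim\pi_x(A\setminus\{x\})\leq\min\{s,k\}-\epsilon\}$ satisfies $\dim(X\setminus E_\epsilon)=\dim X$. Since $s>n/2$ and $k\geq \lceil n/2\rceil$ force $\min\{s,k\}>n-s\geq\dim E_+(A)$, choosing $\epsilon>0$ sufficiently small and setting $X'=X\setminus E_\epsilon$ yields $\dim\pi_x(A\setminus\{x\})>\dim E_+(A)$ for every $x\in X'$. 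Hence $\pi_x(A\setminus\{x\})\setminus E_+(A)$ is nonempty, which produces $a\in A$ with $\pi_x(a)\notin E_+(A)$ and therefore $\mathcal L^1(\Pi^a_x(A))>0$. This proves the existence assertion.

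For the furthermore assertion, the challenge is to upgrade the existence of a single good $a$ into a full-dimensional set $\{a\in A:\pi_x(a)\notin E_+(A)\}$, uniformly in $x\in X'$. Letting $A_{\mathrm{bad}}(x)=\{a\in A:\pi_x(a)\in E_+(A)\}\subset\pi_x^{-1}(E_+(A))$, the naive pullback gives only $\dim A_{\mathrm{bad}}(x)\leq n-s+1$, which is smaller than $s$ only when $s>(n+1)/2$. To close the gap in the remaining range $n/2<s\leq(n+1)/2$, I would invoke the stronger ``thin tubes''/Frostman-content versions of the radial projection theorem that underlie Corollary \ref{corollaryRenRadExceptional}, namely \cite[Corollary 2.22]{OSW} and \cite[Theorem 1.13]{RenRadProj}. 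These assert that for any Frostman measure $\mu$ on $A$ of exponent $s-\delta$, there is a subset $X_\delta'\subset X$ with $\dim X_\delta'=\dim X$ so that for each $x\in X_\delta'$ the pushforward $\pi_{x*}\mu$ has Frostman exponent strictly exceeding $n-s$; this forces $\pi_{x*}\mu(E_+(A))=0$, hence $\mu(A_{\mathrm{bad}}(x))=0$, which in turn gives $\dim(A\setminus A_{\mathrm{bad}}(x))\geq s-\delta$ by Frostman's lemma. Diagonalizing over $\delta_j\searrow 0$ then produces the desired $X'$.

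The principal obstacle I anticipate is making the diagonalization step work while keeping $\dim X' = \dim X$: one needs $\sup_j\dim(X\setminus X_{\delta_j}')<\dim X$ so that the countable stability of Hausdorff dimension gives $\dim\bigl(\bigcap_j X_{\delta_j}'\bigr)=\dim X$. This requires inspecting the thin-tubes proof to extract a uniform dimensional bound on the exceptional pin sets as $\delta\to 0$, rather than merely the qualitative statement of Corollary \ref{corollaryRenRadExceptional}. A secondary subtlety, manageable by standard truncation arguments, is that $\pi_x$ is only locally Lipschitz near $x$, which one handles by deleting a small ball around $x$ from $A$ and exhausting by a countable collection of such truncations.
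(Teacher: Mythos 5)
Your proof of the existence assertion is correct and follows the paper's sketched route exactly: the translated Key Lemma reduces the question to whether $\pi_x(a)\notin E_+(A)$ for some $a$, Falconer's estimate gives $\dim E_+(A)\leq n-s$, and Corollary \ref{corollaryRenRadExceptional} (applied with $Y=A$) yields a full-dimensional set of pins $X'$ with $\dim\pi_x(A\setminus\{x\})>\min\{s,k\}-\epsilon>n-s$, so $\pi_x(A\setminus\{x\})\setminus E_+(A)\neq\emptyset$. The verification that $\min\{s,k\}>n-s$ under the given hypotheses on $s$ and $k$ is also correct.

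For the \emph{furthermore} assertion, you have correctly identified the actual gap, and this is worth dwelling on: the paper's remark---``apply Corollary \ref{corollaryRenRadExceptional} in place of the trivial radial projection estimate''---does not straightforwardly substitute in the argument of the corollary after Theorem \ref{BMS-DotProd}, because there the trivial bound $\dim\pi_x(A_{\mathrm{bad}}(x))\geq\dim A_{\mathrm{bad}}(x)-1$ is applied to a set $A_{\mathrm{bad}}(x)$ that \emph{depends on the pin $x$}, while Corollary \ref{corollaryRenRadExceptional} concerns a single fixed target set projected from varying pins. The naive pull-back $A_{\mathrm{bad}}(x)\subset\pi_x^{-1}(E_+(A))$ gives only $\dim A_{\mathrm{bad}}(x)\leq n-s+1$, which is a contradiction precisely when $s>(n+1)/2$, not $s>n/2$. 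Your proposed repair via the thin-tubes (Frostman-exponent) formulations from \cite{OSW,RenRadProj} is the conceptually right move: $\pi_{x*}\mu$ having Frostman exponent exceeding $n-s$ does force $\pi_{x*}\mu(E_+(A))=0$, hence $\mu(A_{\mathrm{bad}}(x))=0$ and $\dim A_{\mathrm{good}}(x)\geq s-\delta$. However, the diagonalization over $\delta_j\searrow 0$ is not a minor formality: since each application of the thin-tubes theorem produces its own exceptional pin set $F_j=X\setminus X'_{\delta_j}$, and Corollary \ref{corollaryRenRadExceptional} and its proof-by-contradiction only give $\dim(X\setminus E_\epsilon)=\dim X$ for each fixed $\epsilon$ (with no uniform bound $\dim E_\epsilon\leq\dim X-\eta_0$ as $\epsilon\to 0$), the countable stability argument you invoke could fail if $\sup_j\dim F_j=\dim X$. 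Extracting a $\delta$-uniform bound on the exceptional pin sets requires going into the quantitative (``thin tubes'') Frostman machinery rather than just the Hausdorff-dimensional corollary, and you have rightly flagged this as the unresolved obstacle. Note also that even with the cleaner observation $\dim A_{\mathrm{good}}(x)\geq\dim\pi_x(A_{\mathrm{good}}(x)\setminus\{x\})=\dim\bigl(\pi_x(A\setminus\{x\})\setminus E_+(A)\bigr)=\dim\pi_x(A\setminus\{x\})$ (using $\dim E_+(A)<\dim\pi_x(A\setminus\{x\})$), one only gets $\dim A_{\mathrm{good}}(x)>\min\{s,k\}-\epsilon$, which still leaves a gap when $k<s$ or as $\epsilon\to 0$. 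In short: your existence argument is complete; your furthermore argument correctly diagnoses the gap and proposes the right remedy, but the remedy is not yet fully carried out.
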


\begin{remark}
The proof follows similarly to Theorem \ref{BMS-DotProd}, applying Corollary \ref{corollaryRenRadExceptional} in place of the trivial radial projection estimate and noting 
\[
\mathcal H^s(P_{\pi_x(a)}(A)) = |a-x|^{-s}\mathcal H^s(\Pi_{\pi_x(a)}(A))
\]
where $\pi_x(a) = \frac{a-x}{|a-x|}$ is the radial projection of $a$ onto $x$.
\end{remark}

Notice that by allowing for translations of $A$ by points $X$, we have improved the bound $\dim A > \frac{n+1}{2}$ in Theorem \ref{BMS-DotProd} to $\frac{n}{2}$ in Theorem \ref{BMS-DotProd2}. This begs the question: can we lower the bound on $\dim A$ in Theorem \ref{BMS-DotProd} to $\frac{n}{2}$ instead of $\frac{n+1}{2}$? We conjecture that the answer, at least in the plane, is \textit{yes}.

\begin{conjecture}[\cite{BrightMarshallSenger}] \label{BMS-DotProdConj}
    Let $A\subset \R^2$ be Borel with $\dim A > 1$. Then, there exists an $a\in A$ such that 
    \[
    \mathcal L^1(\Pi^a(A)) >0.
    \]
\end{conjecture}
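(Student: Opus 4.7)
My plan is to follow the strategy of Theorem \ref{BMS-DotProd} but push the threshold from $\dim A > 3/2$ down to $\dim A > 1$ by combining sharper orthogonal projection estimates with sharper bounds on the radial projection from $0$. By Key Lemma \ref{DotProdKeyLemma}, the conjecture reduces to finding a direction $\theta \in \pi_0(A \setminus \{0\})$ such that $\mathcal{L}^1(P_\theta(A)) > 0$; equivalently, we want $\pi_0(A \setminus \{0\}) \not\subseteq E_+(A)$, where $E_+(A) := \{\theta \in \mathbb{S}^1 : \mathcal{L}^1(P_\theta(A)) = 0\}$. Falconer's Theorem \ref{FalconerNull} gives $\dim E_+(A) \leq 2 - \dim A < 1$, so it suffices to show $\dim \pi_0(A \setminus \{0\}) > 2 - \dim A$.

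Using only the trivial bound $\dim \pi_0(A \setminus \{0\}) \geq \dim A - 1$ from Lemma \ref{radprojtrivialbound}, the required inequality $\dim A - 1 > 2 - \dim A$ already handles $\dim A > 3/2$ (this is the $n = 2$ case of Theorem \ref{BMS-DotProd}). For the hard range $1 < \dim A \leq 3/2$, I would first attempt to replace the trivial radial projection bound with a quantitative radial projection estimate for the specific pin $x = 0$. The global radial projection theorems (Theorems \ref{oswThm1.1}, \ref{renradprojTheorem}) guarantee a good pin \emph{somewhere} in $A$, but for the conjecture we specifically need good behavior at $x = 0$.

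The most promising concrete attack is an energy/Fourier argument: choose a Frostman measure $\mu$ on $A$ with $I_s(\mu) < \infty$ for some $s \in (1, \dim A)$, let $\nu_a := (y \mapsto a \cdot y)_\ast \mu$ for $a \in \R^2$, and try to show that $\nu_a \in L^2(\R)$ for $\mu$-positively many $a$, which would give $\mathcal{L}^1(\mathrm{supp}\,\nu_a) = \mathcal{L}^1(\Pi^a(A)) > 0$. A direct computation after writing $a = r\theta$ with $r = |a|$, $\theta = a/|a|$, and substituting $t = \tau r$ in $\hat\nu_a(\tau) = \hat\mu(\tau a)$ gives
\[
\int \|\nu_a\|_{L^2}^2 \, d\mu(a) \;=\; \int |a|^{-1}\, g(a/|a|) \, d\mu(a), \qquad g(\theta) := \int_{\ell_\theta} |\hat\mu(\xi)|^2 \, d\mathcal{H}^1(\xi).
\]
By Plancherel and polar integration, $\int_{\mathbb{S}^1} g(\theta)\, d\sigma(\theta) \lesssim I_1(\mu) < \infty$, so $g \in L^1(\sigma)$ and in particular $g(\theta) < \infty$ for $\sigma$-a.e.\ $\theta$. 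The question is whether $g$ is finite on a large enough subset of the support of the radial pushforward $(\pi_0)_\ast(|a|^{-1}\,d\mu)$.

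The hard part, and I expect the genuine obstacle, is controlling this radial pushforward: to integrate $g$ against $(\pi_0)_\ast \mu$ rather than against $\sigma$, one needs some form of absolute continuity (or sharp nonconcentration) of $(\pi_0)_\ast \mu$ on $\mathbb{S}^1$, which does not follow from $\dim A > 1$ alone. This is exactly the ``specific pin $x = 0$'' phenomenon: the sharp radial projection theorems of Ren and of Orponen--Shmerkin--Wang find one good pin but cannot force it to be any particular point. Bridging the gap from $\dim A > 3/2$ down to $\dim A > 1$ feels comparable in difficulty to the planar Falconer distance conjecture, and I suspect a complete proof would ultimately require decoupling-type machinery analogous to Guth--Iosevich--Ou--Wang \cite{GuthIosevichOuWang}, adapted to the bilinear map $(x,y) \mapsto x \cdot y$ pinned at the origin, combined with the sharp planar exceptional set estimates of Theorem \ref{OSRW}.
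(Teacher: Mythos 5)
The statement you are asked to prove is a \emph{conjecture}, and the paper does not give a proof of it; it only discusses why the projection-theoretic approach that works for $\dim A > \tfrac{3}{2}$ stalls below that threshold. Your proposal is therefore not competing with an existing proof; it should be judged as an analysis of the problem, and on that score it is essentially faithful to what the paper says. Your reduction to showing $\pi_0(A\setminus\{0\})\not\subseteq E_+(A)$, via Lemma \ref{DotProdKeyLemma} and Theorem \ref{FalconerNull}, and your observation that the trivial radial bound of Lemma \ref{radprojtrivialbound} only closes the gap when $\dim A - 1 > 2 - \dim A$, i.e.\ $\dim A > \tfrac{3}{2}$, is precisely the chain of inequalities in \eqref{sharpwhy}. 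You have in effect rediscovered the paper's stronger conjecture that $\dim \pi_0(A\setminus\{0\}) > \dim E_+(A)$ whenever $\dim A > 1$.

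Where your discussion is incomplete relative to the paper is on \emph{why} simply strengthening either inequality in \eqref{sharpwhy} cannot work: the paper observes that the bound $\dim\pi_0(A\setminus\{0\}) \geq \dim A - 1$ is sharp (radial lattices $[0,1]\times\Theta$) and that $\dim E_+(A) \leq 2 - \dim A$ is also sharp (lattice-type sets), so no improvement to either estimate \emph{uniformly over all $A$} can close the gap. The content of the conjecture is a dichotomy: the examples making the first inequality sharp happen to have $E_+(A) = \emptyset$, and the examples making the last inequality sharp happen to have $\dim \pi_0(A\setminus\{0\}) = 1$. Your proposed energy/Fourier attack --- pushing forward a Frostman measure $\mu$ under the bilinear map and trying to show $\nu_a \in L^2$ for $\mu$-many $a$ --- is a reasonable concrete attempt that the paper does not pursue, and your identification of the obstacle (controlling the pushforward $(\pi_0)_\ast\mu$ against $\sigma$, i.e.\ forcing good behavior at the fixed pin $x = 0$ rather than at a generically chosen pin) is exactly right and is the same ``specific pin'' obstruction that the paper's discussion of Ren's and Orponen--Shmerkin--Wang's theorems alludes to. In short: you have not proved the conjecture, nor does the paper, and your reasons for being stuck are the correct ones; but it would strengthen your writeup to explicitly record the two sharpness examples, since they show that the failure mode is structural rather than a matter of sharpening any single estimate.
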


\begin{remark}
    Note by Lemma \ref{DotProdKeyLemma} that Conjecture \ref{BMS-DotProdConj} is equivalent to the statement: for all $A\subset \R^2$ Borel with $\dim A>1$,
    \begin{equation}\label{bmsconjeq}
    \pi_0(A \setminus \{0\}) \setminus E_+(A) \neq \emptyset.
    \end{equation}
\end{remark}

Firstly, we can notice that the theorem fails if $\dim A \leq 1$, Indeed, let $\mathcal C \subset \R$ be a $1$-dimensional set with null Lebesgue measure (see Section \ref{ch:background} for a construction using countable stability of Hausdorff dimension). Then, letting $A = \mathcal C\times \{0\}$, we have $\dim A = \dim \mathcal C = 1$, and for all $(a,0) \in A$,
\[
\mathcal L^1(\Pi^a(A)) = \mathcal L^1(\{ac : c\in \mathcal C\}) = |a| \mathcal L^1(\mathcal C) = 0.
\]
One may hope to apply the same projection theoretic methodology of that in the proof of Theorem \ref{BMS-DotProd} to prove Conjecture \ref{BMS-DotProdConj}.
When $\dim A > \frac{3}{2}$, we are able to prove \eqref{bmsconjeq} by noting 
\begin{equation}\label{sharpwhy}
\dim \pi_0(A\setminus \{0\}) \geq  \dim A - 1 > 2-\dim A \geq \dim E_+(A).
\end{equation}
One might hope to improve the above estimates in trying to prove Conjecture \ref{BMS-DotProdConj}. However, both the first and last inequalities are sharp for all $n\geq 2$! 

The sharpness of the first can be seen by taking $A$ to be a radial lattice of the form (in polar coordinates) $[0,1]\times \Theta$ for $\Theta \subset \mathbb{S}^{n-1}$; the sharpness of the last can be seen by modifying the square lattice example, see \cite[Example 5.13]{Mattila15}. This however does not disprove Conjecture \ref{BMS-DotProdConj}. In fact, these examples if anything may bolster one's confidence in the plausibility and difficulty of this conjecture! Indeed, the radial example making the first inequality in \eqref{sharpwhy} has $E_+(A) = \emptyset$, and the square lattice example had full dimensional radial projection onto the origin. In this sense, the examples that make the first inequality in \eqref{sharpwhy} sharp seem to have much smaller exceptional sets, and the examples with large exceptional sets seem to have large radial projection onto the origin! In this way, Conjecture \ref{BMS-DotProdConj} seeks to understand whether or not this dichotomy is always true.

As a first step towards understanding this dichotomy (or lack thereof), it would be interesting to have more classes of sets $A$ that make the first or last inequality in \eqref{sharpwhy} sharp. As a final step, it would be interesting to know whether or not the following stronger conjecture holds.

\begin{conjecture}[\cite{BrightMarshallSenger}]
    Let $A\subset \R^2$ be Borel with $\dim A>1$, Then, 
    \[
    \dim \pi_0(A\setminus \{0\}) > \dim E_+(A).
    \]
\end{conjecture}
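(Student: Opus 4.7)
My plan is to split the argument according to $\dim A$. When $\dim A > 3/2$, the conjecture follows immediately by combining the trivial radial projection lower bound $\dim \pi_0(A\setminus\{0\}) \geq \dim A - 1$ from Lemma \ref{radprojtrivialbound} with Falconer's exceptional set estimate $\dim E_+(A) \leq 2 - \dim A$ from Theorem \ref{FalconerNull}. In this range the strict inequality $\dim A - 1 > 2 - \dim A$ settles the conjecture without new ideas, and in particular recovers Conjecture \ref{BMS-DotProdConj} in its stronger dimensional form in the easy regime.

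The interesting range is $1 < \dim A \leq 3/2$, which I would attack by contradiction. Suppose $\sigma := \dim \pi_0(A\setminus\{0\}) \leq \tau := \dim E_+(A)$; then Falconer's bound and the trivial lower bound force $\dim A - 1 \leq \sigma \leq \tau \leq 2 - \dim A$. Setting $D := \pi_0(A\setminus\{0\})$ and applying a Marstrand-type slicing theorem along the locally Lipschitz map $\pi_0$ (away from $0$), one arranges that $\mathcal H^\sigma$-typical fibers $A_{\theta} := A \cap \ell_{\theta}$ have dimension at least $\dim A - \sigma \geq 2\dim A - 2 > 0$. Since $P_\theta$ restricted to $\ell_{\theta'}$ is, under the natural identification $\ell_{\theta'} \simeq \R$, simply multiplication by $\cos(\theta - \theta')$, the slices control $P_\theta(A)$ from below via the decomposition $P_\theta(A) = \bigcup_{\theta' \in D} \cos(\theta - \theta')\, A_{\theta'}$.

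This sets up a natural dichotomy. In the first subcase, if some slice $A_{\theta_0}$ has positive $\mathcal L^1$-measure, then $P_\theta(A) \supset \cos(\theta - \theta_0)\, A_{\theta_0}$ has positive $\mathcal L^1$-measure for every $\theta$ not perpendicular to $\theta_0$, forcing $E_+(A)$ to lie in the single direction orthogonal to $\theta_0$, and hence $\dim E_+(A) = 0 < \sigma$ (since $\sigma \geq \dim A - 1 > 0$), a contradiction. In the second subcase every typical slice has dimension at least $2\dim A - 2 > 0$ but is $\mathcal L^1$-null, so $A$ is essentially a dimensionally-thick radial fractal. Here I would attempt a Fourier-analytic argument: pick a Frostman measure $\mu$ on $A$ of exponent just below $\dim A$, disintegrate $\mu$ along $\pi_0$ into fiber measures $\mu_{\theta'}$, and estimate the $L^2$-norm of the pushforward $(P_\theta)_* \mu$ as an oscillatory average in $\theta' \in D$ of scaled copies of $\mu_{\theta'}$, aiming to show that this average has an $L^2$ density for $\theta$ outside an exceptional set of dimension strictly less than $\sigma$.

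The principal obstacle is the second subcase. The conjecture sits precisely at the joint sharpness threshold of two separate extremal configurations: lattice-like sets saturate Falconer's bound on $\dim E_+(A)$, while radial-cone sets saturate the trivial lower bound on $\dim \pi_0(A\setminus\{0\})$, and the conjecture amounts to the assertion that no single $A$ can be extremal for both at the distinguished pin $0$. Making this rigorous seems to require either a rigidity theorem stating that near-saturation of the trivial radial projection bound forces a product-like structure with $\mathcal L^1$-positive fibers (thereby ruling out large $E_+(A)$ via the first subcase), or a genuinely joint inequality of the form $\dim \pi_0(A\setminus\{0\}) + \dim E_+(A) < 2\dim A - 1$ for $\dim A \in (1, 3/2]$. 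Neither appears to follow from the results surveyed in the thesis, and I anticipate that the needed cancellation in the Fourier argument is of comparable depth to the open Falconer distance conjecture itself --- which is likely why the authors state this only as a conjecture.
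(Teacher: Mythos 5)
You have correctly identified that this is a \emph{conjecture}, not a theorem: the paper itself provides no proof of it and presents it as open. Your recognition of this at the end of the proposal is exactly right, and is what matters most here. Your treatment of the easy range $\dim A > 3/2$ is precisely the paper's equation \eqref{sharpwhy}, and your observation that the conjecture ``sits precisely at the joint sharpness threshold of two separate extremal configurations'' is exactly the phenomenon the paper highlights: radial examples of the form $[0,1]\times\Theta$ in polar coordinates saturate the trivial bound $\dim\pi_0(A\setminus\{0\})\ge \dim A-1$ while having $E_+(A)=\emptyset$, and lattice-like examples saturate Falconer's bound $\dim E_+(A)\le 2-\dim A$ while having $\dim\pi_0(A\setminus\{0\})=1$; the conjecture is that no single $A$ can simultaneously be extremal for both.

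One technical caution about the middle of your sketch. The ``Marstrand-type slicing theorem'' you invoke---that $\mathcal H^\sigma$-typical fibers $A_\theta=A\cap\ell_\theta$ satisfy $\dim A_\theta\ge \dim A-\sigma$ where $\sigma=\dim\pi_0(A\setminus\{0\})$---is not a standard result and does not follow from the usual Marstrand slicing theorem. The classical slicing statements are indexed by the dimension of the base of the fibration (here, the circle $\mathbb{S}^1$, dimension 1), giving fibers of dimension roughly $\dim A - 1$; they are not indexed by the dimension of the image $\pi_0(A\setminus\{0\})$, which may be strictly smaller than 1. In the regime $1<\dim A\le 3/2$ you are concerned with, $\dim A - \sigma$ could be negative or zero for reasonable $\sigma$, and even where it is positive the stated inequality would require a disintegration argument that depends on the Frostman exponent one can place on the \emph{pushforward} measure $(\pi_0)_*\mu$, which is not $\sigma$ in general. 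Your final paragraph, however, correctly diagnoses that even with a repair here the second subcase appears inaccessible by known methods, which is consistent with the paper stating this as a conjecture rather than a theorem.
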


\newpage

\bibliographystyle{plain}
\bibliography{references}

\begin{thebibliography}{100}

\bibitem{AlgomShmerkin}
Amir Algom and Pablo Shmerkin.
\newblock On the dimension of orthogonal projections of self-similar measures.
\newblock {\em Journal of the London Mathematical Society}, 112(1):e70245, 2025.

\bibitem{KolmogorovEx4}
Iqra Altaf, Ryan Bushling, and Bobby Wilson.
\newblock {Distance Sets Bounds for Polyhedral Norms via Effective Dimension}.
\newblock {\em Real Analysis Exchange}, pages 1 -- 13, 2025.

\bibitem{Beck83}
J\'{o}zsef Beck.
\newblock On the lattice property of the plane and some problems of {D}irac, {M}otzkin and {E}rdős in combinatorial geometry.
\newblock {\em Combinatorica}, 22:281--297, 1983.

\bibitem{BeckSpencer}
J\'{o}zsef Beck and Joel Spencer.
\newblock Unit distances.
\newblock {\em Journal of Combinatorial Theory, Series A}, 37(3):231--238, 1984.

\bibitem{Besicovitch17}
Abram~S. Besicovitch.
\newblock On {K}akeya's problem and a similar one.
\newblock {\em Mathematische Zeitschrift}, 27:312--320, 1928.

\bibitem{BourgainSubring}
Jean Bourgain.
\newblock On the {E}rd{\H{o}}s--{V}olkmann and {K}atz--{T}ao ring conjectures.
\newblock {\em Geometric and Functional Analysis}, 13(2):334–365, 2003.

\bibitem{BourgainProjectionTheorem}
Jean Bourgain.
\newblock The discretized sum-product and projection theorems.
\newblock {\em Journal d’Analyse Mathématique}, 112(1):193–236, 2010.

\bibitem{BrightBushlingMarshallOrtiz}
Paige Bright, Ryan Bushling, Caleb Marshall, and Alex Ortiz.
\newblock A study guide to ``{K}aufman and {F}alconer estimates for radial projections''.
\newblock {\em arXiv preprint arXiv:2402.11847}, 2024.

\bibitem{BrightDhar}
Paige Bright and Manik Dhar.
\newblock Spread {F}urstenberg sets.
\newblock {\em arXiv preprint arXiv:2412.18193}, 2025.

\bibitem{BrightIosevichEtAlGeneralized}
Paige Bright, Xinyu Fang, Barrett Heritage, Alex Iosevich, Tingsong Jiang, Hans Parshall, and Maxwell Sun.
\newblock Generalized point configurations in fqd.
\newblock {\em Finite Fields and Their Applications}, 99:102472, 2024.

\bibitem{BrightIosevichEtAl}
Paige Bright, Xinyu Fang, Barrett Heritage, Alex Iosevich, and Maxwell Sun.
\newblock Improved bounds for embedding certain configurations in subsets of vector spaces over finite fields.
\newblock {\em Bulletin of the Hellenic Mathematical Society}, 68:10--30, 2024.

\bibitem{BrightFuRen}
Paige Bright, Yuqiu Fu, and Kevin Ren.
\newblock Radial projections in $\mathbb{R}^n$ revisited.
\newblock {\em arXiv preprint arXiv:2406.09707}, 2024.

\bibitem{BrightGan}
Paige Bright and Shengwen Gan.
\newblock Exceptional set estimates for radial projections in $\mathbb{R}^n$.
\newblock {\em Annales Fennici Mathematici}, 49(2):631–661, 2024.

\bibitem{BrightGan2}
Paige Bright and Shengwen Gan.
\newblock Exceptional set estimates in finite fields.
\newblock {\em Annales Fennici Mathematici}, 50(2):467–481, 2025.

\bibitem{BrightLundPham}
Paige Bright, Ben Lund, and Thang Pham.
\newblock On a radial projection conjecture in $\mathbb{F}_q^d$.
\newblock {\em arXiv preprint arXiv:2311.05127}, 2023.

\bibitem{BrightMarshall}
Paige Bright and Caleb Marshall.
\newblock A continuum {E}rd{\H{o}}s-{B}eck theorem.
\newblock {\em arXiv preprint arXiv:2406.10058}, 2024.

\bibitem{BrightMarshallSenger}
Paige Bright, Caleb Marshall, and Steven Senger.
\newblock Pinned dot product set estimates.
\newblock {\em arXiv preprint arXiv:2412.17985}, 2024.

\bibitem{KolmogorovEx3}
Ryan Bushling and Jacob~B. Fiedler.
\newblock Bounds on the dimension of lineal extensions.
\newblock {\em Journal of Fractal Geometry, Mathematics of Fractals and Related Topics}, 12(1):105–133, 2025.

\bibitem{IosevichSimplexInterpolation}
Timothy Cheek, Joseph Cooper, Pico Gilman, Alex Iosevich, Kareem Jaber, Eyvindur Palsson, Vismay Sharan, Jenna Shuffelton, and Marie-H{\'{e}}l{\`{e}}ne Tom{\`{e}}.
\newblock Congruence classes of simplex structures in finite field vector spaces.
\newblock {\em arXiv preprint arXiv:2408.07912}, 2024.

\bibitem{Chen2018}
Changhao Chen.
\newblock Projections in vector spaces over finite fields.
\newblock {\em Annales Fennici Mathematici}, 43(1):171–185, 2018.

\bibitem{CholakEtAl}
Peter Cholak, Marianna Cs\"{o}rnyei, Neil Lutz, Patrick Lutz, Elvira Mayordomo, and D.~M. Stull.
\newblock Bounding the dimension of exceptional sets for orthogonal projections.
\newblock {\em arXiv preprint arXiv:2411.04959}, 2024.

\bibitem{DabrowskiOrponenVilla}
Damian D{\k{a}}browski, Tuomas Orponen, and Michele Villa.
\newblock Integrability of orthogonal projections, and applications to {F}urstenberg sets.
\newblock {\em Advances in Mathematics}, 407:108567, 2022.

\bibitem{Davies71}
Roy~O. Davies.
\newblock Some remarks on the {K}akeya problem.
\newblock {\em Mathematical Proceedings of the Cambridge Philosophical Society}, 69(3):417–421, 1971.

\bibitem{BenedettoZahl}
Daniel Di~Benedetto and Joshua Zahl.
\newblock New estimates on the size of $(\alpha,2\alpha)$-{F}urstenberg sets.
\newblock {\em arXiv preprint arXiv:2112.08249}, 2022.

\bibitem{DiracConj}
G.~A. Dirac.
\newblock Collinearity properties of sets of points.
\newblock {\em The Quarterly Journal of Mathematics}, 2(1):221–227, 1951.

\bibitem{DuIosevichOuWangZhang}
Xiumin Du, Alex Iosevich, Yumeng Ou, Hong Wang, and Ruixiang Zhang.
\newblock An improved result for {F}alconer's distance set problem in even dimensions.
\newblock {\em Mathematische Annalen}, 380(3-4):1215--1231, 2021.

\bibitem{DuOuRenZhang}
Xiumin Du, Yumeng Ou, Kevin Ren, and Ruixiang Zhang.
\newblock New improvement to {F}alconer distance set problem in higher dimensions.
\newblock {\em arXiv preprint arXiv:2309.04103}, 2023.

\bibitem{DuOuRenZhangDecoupling}
Xiumin Du, Yumeng Ou, Kevin Ren, and Ruixiang Zhang.
\newblock Weighted refined decoupling estimates and application to {F}alconer distance set problem.
\newblock {\em arXiv preprint arXiv:2309.04501}, 2023.

\bibitem{DuZhang}
Xiumin Du and Ruixiang Zhang.
\newblock Sharp ${L}^2$ estimates of the {S}chr\"{o}dinger maximal function in higher dimensions.
\newblock {\em Annals of Mathematics}, 189(3), 2019.

\bibitem{Dvir08}
Zeev Dvir.
\newblock {On the size of {K}akeya sets in finite fields}.
\newblock {\em Journal of American Mathematical Society}, 22:1093--1097, 2009.

\bibitem{EdgarMiller}
G.~A. Edgar and Chris Miller.
\newblock Borel subrings of the reals.
\newblock {\em Proceedings of the American Mathematical Society}, 131(4):1121–1129, 2002.

\bibitem{ElekesSharir}
Gy\"{o}rgy Elekes and Micha Sharir.
\newblock Incidences in three dimensions and distinct distances in the plane.
\newblock {\em Combinatorics, Probability and Computing}, 20(4):571–608, 2011.

\bibitem{Erdos1946}
Paul Erd\H{o}s.
\newblock On sets of distances of {$n$} points.
\newblock {\em The American Mathematical Monthly}, 53(5):248--250, 1946.

\bibitem{ErdosVolkmann}
Paul Erd\H{o}s and Bodo Volkmann.
\newblock Additive gruppen mit vorgegebener {H}ausdorffscher dimension.
\newblock {\em Journal für die reine und angewandte Mathematik}, 221:203--208, 1966.

\bibitem{ErdoganFalconerProb}
M.~Burak Erdo{\u{g}}an.
\newblock {On {F}alconer's distance set conjecture}.
\newblock {\em Revista Matem{\'a}tica Iberoamericana}, 22(2):649 -- 662, 2006.

\bibitem{Falconer82}
K.~J. Falconer.
\newblock {H}ausdorff dimension and the exceptional set of projections.
\newblock {\em Mathematika}, 29(1):109–115, 1982.

\bibitem{Falconer85}
K.~J. Falconer.
\newblock On the {H}ausdorff dimensions of distance sets.
\newblock {\em Mathematika}, 32(2):206–212, 1985.

\bibitem{FalconerBook}
K.~J. Falconer.
\newblock {\em Fractal {G}eometry: {M}athematical {F}oundations and {A}pplications}.
\newblock Wiley, 2003.

\bibitem{FasslerOrponen}
Katrin F{\"a}ssler and Tuomas Orponen.
\newblock On restricted families of projections in $\mathbb{R}^3$.
\newblock {\em Proceedings of the London Mathematical Society}, 109(2):353--381, 2014.

\bibitem{KolmogorovEx1}
Jacob~B. Fiedler and D.~M. Stull.
\newblock Pinned distances of planar sets with low dimension.
\newblock {\em arXiv preprint arXiv:2408.00889}, 2025.

\bibitem{KolmogorovEx2}
Jacob~B. Fiedler and D.~M. Stull.
\newblock Universal sets for projections.
\newblock {\em arXiv preprint arXiv:2411.16001}, 2025.

\bibitem{FraserdeOrellana}
Jonathan~M. Fraser and Ana~E. de~Orellana.
\newblock A {F}ourier analytic approach to exceptional set estimates for orthogonal projections.
\newblock {\em arXiv preprint arXiv:2404.11179}, 2024.

\bibitem{FuRen}
Yuqiu Fu and Kevin Ren.
\newblock Incidence estimates for $\alpha$-dimensional tubes and $\beta$-dimensional balls in $\mathbb{R}^{2}$.
\newblock {\em Journal of {F}ractal {G}eometry, {M}athematics of {F}ractals and {R}elated {T}opics}, 11(1):1–30, 2024.

\bibitem{Furstenberg70}
Harry Furstenberg.
\newblock {\em Intersections of {C}antor Sets and Transversality of Semigroups}, pages 41--60.
\newblock Princeton University Press, Princeton, 1971.

\bibitem{GanESEandFurst}
Shengwen Gan.
\newblock {F}urstenberg set problem and exceptional set estimate in prime fields: dimension two implies higher dimensions.
\newblock {\em arXiv preprint arXiv:2409.11637}, 2024.

\bibitem{GanGuoGuthHarrisMaldagueWang}
Shengwen Gan, Shaoming Guo, Larry Guth, Terence L.~J. Harris, Dominique Maldague, and Hong Wang.
\newblock On restricted projections to planes in $\mathbb{R}^3$.
\newblock {\em arXiv preprint arXiv:2207.13844}, 2024.

\bibitem{GanGuoWang}
Shengwen Gan, Shaoming Guo, and Hong Wang.
\newblock A restricted projection problem for fractal sets in $\mathbb{R}^n$.
\newblock {\em Cambridge Journal of Mathematics}, 12(3):535–561, 2024.

\bibitem{GaribaldiSengerIosevichBook}
Julia Garibaldi, Steven Senger, and Alex Iosevich.
\newblock {\em The Erd{\H{o}}s Distance Problem}.
\newblock Student mathematical library. American Mathematical Society, 2011.

\bibitem{GuthPolynomialMethods}
Larry Guth.
\newblock {\em Polynomial Methods in Combinatorics}.
\newblock University Lecture Series. American Mathematical Society, 2016.

\bibitem{GuthIosevichOuWang}
Larry Guth, Alex Iosevich, Yumeng Ou, and Hong Wang.
\newblock On {F}alconer's distance set problem in the plane.
\newblock {\em Invent. Math.}, 219(3):779--830, 2020.

\bibitem{GuthKatz}
Larry Guth and Nets Katz.
\newblock On the {E}rd{\H{o}}s distinct distances problem in the plane.
\newblock {\em Annals of Mathematics}, page 155–190, 2015.

\bibitem{HeraFurstenberg}
Korn{\'e}lia H{\'e}ra.
\newblock Hausdorff dimension of {F}urstenberg-type sets associated to families of affine subspaces.
\newblock {\em Ann. Acad. Sci. Fenn. Math.}, 44(2):903--923, 2019.

\bibitem{HeraShmerkinYavicoli}
Korn{\'e}lia H{\'e}ra, Pablo Shmerkin, and Alexia Yavicoli.
\newblock An improved bound for the dimension of $(\alpha, 2\alpha)$-{F}urstenberg sets.
\newblock {\em Revista Matemática Iberoamericana}, 38(1):295–322, 2021.

\bibitem{HeraKeletiMathe}
Kornélia Héra, Tamás Keleti, and András Máthé.
\newblock Hausdorff dimension of unions of affine subspaces and of {F}urstenberg-type sets.
\newblock {\em Journal of Fractal Geometry, Mathematics of Fractals and Related Topics}, 6(3):263–284, 2019.

\bibitem{IosevichAView}
Alex Iosevich.
\newblock {\em A {V}iew from the {T}op}.
\newblock Student mathematical library. American Mathematical Society, 2007.

\bibitem{IosevichWhatIsFalconer}
Alex Iosevich.
\newblock What is {F}alconer’s {C}onjecture?
\newblock {\em Notices of the American Mathematical Society}, 66(04):1, 2019.

\bibitem{IosevichRudnev}
Alex Iosevich and Misha Rudnev.
\newblock Erd{\H{o}}s distance problem in vector spaces over finite fields.
\newblock {\em Transactions of the American Mathematical Society}, 359(12):6127--6142, 2007.

\bibitem{KatzTaoSubringConjecture}
Nets Katz and Terence Tao.
\newblock Some connections between {F}alconer's distance set conjecture and sets of {F}urstenburg type.
\newblock {\em New York Journal of Mathematics}, 7:149--187, 2001.

\bibitem{KatzTaoKakeyaSurvey}
Nets Katz and Terence Tao.
\newblock Recent progress on the {K}akeya conjecture.
\newblock {\em Publicacions Matem{\`a}tiques}, 46:161--179, 2002.

\bibitem{Kaufman68}
Robert Kaufman.
\newblock On {H}ausdorff dimension of projections.
\newblock {\em Mathematika}, 15(2):153--155, 1968.

\bibitem{KeletiShmerkin}
Tam{\'a}s Keleti and Pablo Shmerkin.
\newblock New bounds on the dimensions of planar distance sets.
\newblock {\em Geometric and Functional Analysis}, 29(6):1886–1948, 2019.

\bibitem{primesSTpaper}
Ladia Khaing and Ada Kilian.
\newblock Incidences between points and lines.
\newblock {\em MIT PRIMES Circle Papers}, 2024.

\bibitem{LiLiuDualFurstenberg}
Longhui Li and Bochen Liu.
\newblock Orthogonal projection, dual {F}urstenberg problem, and discretized sum-product.
\newblock {\em arXiv preprint arXiv:2403.15784}, 2024.

\bibitem{Liu2020}
Bochen Liu.
\newblock On {H}ausdorff dimension of radial projections.
\newblock {\em Revista Matemática Iberoamericana}, 37(4):1307–1319, 2020.

\bibitem{LundPhamThu}
Ben Lund, Thang Pham, and Vu~Thu.
\newblock Radial projection theorems in finite spaces.
\newblock {\em Proceedings of the American Mathematical Society}, 2025.

\bibitem{LundPhamVinh}
Ben Lund, Thang Pham, and Le~Anh Vinh.
\newblock Orthogonal projections in the plane over prime order fields.
\newblock {\em arXiv preprint arXiv:2311.05148}, 2023.

\bibitem{LutzStull}
Neil Lutz and D.~M. Stull.
\newblock Bounding the dimension of points on a line.
\newblock {\em Information and Computation}, 275:104601, 2020.

\bibitem{Marstrand54}
J.~M. Marstrand.
\newblock Some fundamental geometrical properties of plane sets of fractional dimensions.
\newblock {\em Proc. Lond. Math. Soc. (3)}, s3-4(1):257--302, 1954.

\bibitem{Mattila75MarstrandMattilaProj}
Pertti Mattila.
\newblock Hausdorff dimension, orthogonal projections and intersections with planes.
\newblock {\em Annales Fennici Mathematici}, 1(2):227–244, 1975.

\bibitem{Mattila1985FalconerProb}
Pertti Mattila.
\newblock On the {H}ausdorff dimension and capacities of intersections.
\newblock {\em Mathematika}, 32:213--217, 1985.

\bibitem{Mattila95}
Pertti Mattila.
\newblock {\em Geometry of Sets and Measures in {E}uclidean Spaces: Fractals and Rectifiability}.
\newblock Cambridge Studies in Advanced Mathematics. Cambridge University Press, 1995.

\bibitem{Mattila04Survey}
Pertti Mattila.
\newblock Hausdorff dimension, projections, and the {F}ourier transform.
\newblock {\em Publicacions Matemàtiques}, 48(1):3--48, 2004.

\bibitem{Mattila15}
Pertti Mattila.
\newblock {\em {F}ourier Analysis and {H}ausdorff Dimension}.
\newblock Cambridge Studies in Advanced Mathematics. Cambridge University Press, 2015.

\bibitem{MattilaOrponenRadProjVisibility}
Pertti Mattila and Tuomas Orponen.
\newblock Hausdorff dimension, intersections of projections and exceptional plane sections.
\newblock {\em Proceedings of the American Mathematical Society}, 144(8):3419--3430, 2016.

\bibitem{melchior}
Eberhard Melchior.
\newblock {\"U}ber {Vielseite} der projektiven {Ebene}.
\newblock {\em Deutsche Math.}, 5:461--475, 1941.

\bibitem{MolterRela}
Ursula Molter and Ezequiel Rela.
\newblock Furstenberg sets for a fractal set of directions.
\newblock {\em Proceedings of the {A}merican {M}athematical {S}ociety}, 140(8):2753--2765, 2012.

\bibitem{MotzkinConj}
Theodore Motzkin.
\newblock The lines and planes connecting the points of a finite set.
\newblock {\em Transactions of the American Mathematical Society}, 70(3):451–464, 1951.

\bibitem{Oberlin2s-aConjecture}
Daniel Oberlin.
\newblock Restricted {R}adon transforms and projections of planar sets.
\newblock {\em Canadian Mathematical Bulletin}, 55(4):815–820, 2012.

\bibitem{OrponenRadProjVisibility}
Tuomas Orponen.
\newblock A sharp exceptional set estimate for visibility.
\newblock {\em Bulletin of the London Mathematical Society}, 50(1):1–6, 2017.

\bibitem{OrponenPackingEpsilonImprove}
Tuomas Orponen.
\newblock An improved bound on the packing dimension of {F}urstenberg sets in the plane.
\newblock {\em Journal of the European Mathematical Society}, 22(3):797–831, 2019.

\bibitem{OrponenRadProjSmoothness}
Tuomas Orponen.
\newblock {On the dimension and smoothness of radial projections}.
\newblock {\em Analysis \& PDE}, 12(5):1273 -- 1294, 2019.

\bibitem{OrponenSahlsten}
Tuomas Orponen and Tuomas Sahlsten.
\newblock Radial projections of rectifiable sets.
\newblock {\em Annales Academiae Scientiarum Fennicae Mathematica}, 36:677–681, 2011.

\bibitem{OrponenShmerkin22RadProj}
Tuomas Orponen and Pablo Shmerkin.
\newblock On exceptional sets of radial projections.
\newblock {\em arXiv preprint arXiv:2205.13890}, 2022.

\bibitem{OrponenShmerkinEpsilonImprove}
Tuomas Orponen and Pablo Shmerkin.
\newblock On the {H}ausdorff dimension of {F}urstenberg sets and orthogonal projections in the plane.
\newblock {\em Duke Mathematical Journal}, 172(18), 2023.

\bibitem{OrponenShmerkinABC}
Tuomas Orponen and Pablo Shmerkin.
\newblock Projections, {F}urstenberg sets, and the {$ABC$} sum-product problem.
\newblock {\em arXiv preprint arXiv:2301.10199}, 2023.

\bibitem{OSW}
Tuomas Orponen, Pablo Shmerkin, and Hong Wang.
\newblock {K}aufman and {F}alconer estimates for radial projections and a continuum version of {B}eck's theorem.
\newblock {\em GAFA Geom. Funct. Anal.}, 34(1):164--201, 2024.

\bibitem{PayneWood1/37Beck}
Michael~S. Payne and David~R. Wood.
\newblock Progress on {D}irac’s conjecture.
\newblock {\em The Electronic Journal of Combinatorics}, 21(2), 2014.

\bibitem{PramanikYangZahl}
Malabika Pramanik, Tongou Yang, and Joshua Zahl.
\newblock A {F}urstenberg-type problem for circles, and a {K}aufman-type restricted projection theorem in $\mathbb{R}^3$.
\newblock {\em arXiv preprint arXiv:2207.02259}, 2024.

\bibitem{RenRadProj}
Kevin Ren.
\newblock Discretized radial projections in $\mathbb{R}^d$.
\newblock {\em arXiv preprint arXiv:2309.04097}, 2023.

\bibitem{RenWang}
Kevin Ren and Hong Wang.
\newblock {F}urstenberg sets estimate in the plane.
\newblock {\em arXiv preprint arXiv:2308.08819}, 2023.

\bibitem{ShefferPolynomialMethods}
Adam Sheffer.
\newblock {\em Polynomial Methods and Incidence Theory}.
\newblock Cambridge Studies in Advanced Mathematics. Cambridge University Press, 2022.

\bibitem{ShmerkinWang}
Pablo Shmerkin and Hong Wang.
\newblock Dimensions of {F}urstenberg sets and an extension of {B}ourgain’s projection theorem.
\newblock {\em Analysis \& PDE}, 18(1):265–278, 2025.

\bibitem{SolymosiVu}
J{\'o}zsef Solymosi and Van Vu.
\newblock Near optimal bounds for the {E}rd{\H{o}}s distinct distances problem in high dimensions.
\newblock {\em Combinatorica}, 28(1):113–125, 2008.

\bibitem{Szekely97}
L{\'a}szl{\'o}~A. Sz{\'e}kely.
\newblock Crossing numbers and hard {E}rd{\H{o}}s problems in discrete geometry.
\newblock {\em Combinatorics, Probability and Computing}, 6(3):353–358, 1997.

\bibitem{SzemerediTrotter}
Endre Szemerédi and W.~T. Trotter.
\newblock Extremal problems in discrete geometry.
\newblock {\em Combinatorica}, 3(3–4):381–392, 1983.

\bibitem{Ungar82}
Peter Ungar.
\newblock 2{$N$} {N}oncollinear points determine at least 2{$N$} directions.
\newblock {\em Journal of Combinatorial Theory, Series A}, 33(3):343--347, 1982.

\bibitem{WangZahlKakeya}
Hong Wang and Joshua Zahl.
\newblock Volume estimates for unions of convex sets, and the {K}akeya set conjecture in three dimensions.
\newblock {\em arXiv preprint arXiv:2502.17655}, 2025.

\bibitem{WolffFalconerProb}
Thomas Wolff.
\newblock Decay of circular means of {F}ourier transforms of measures.
\newblock {\em Int. Math. Res. Not.}, pages 547--567, 1999.

\bibitem{Wolff99}
Thomas Wolff.
\newblock Recent work connected with the {K}akeya problem.
\newblock {\em Prospects in mathematics (Princeton, NJ, 1996)}, pages 129--162, 1999.

\bibitem{WolffLectureNotes}
Thomas Wolff.
\newblock {\em Lectures on Harmonic Analysis}.
\newblock University lecture series. American Mathematical Society, 2003.

\bibitem{Wu25}
Meng Wu.
\newblock Projection theorems with countably many exceptions and applications to the exact overlaps conjecture.
\newblock {\em arXiv preprint arXiv:2503.21923}, 2025.

\bibitem{ZahlRestrictedProj}
Joshua Zahl.
\newblock On maximal functions associated to families of curves in the plane.
\newblock {\em arXiv preprint arXiv:2307.05894}, 2023.

\bibitem{ZhaoGTAC}
Yufei Zhao.
\newblock {\em Graph Theory and Additive Combinatorics: Exploring Structure and Randomness}.
\newblock Cambridge University Press, 2023.

\end{thebibliography}


\end{document}